\newtheorem{thm}{Theorem}[section]
\newtheorem{prop}[thm]{Proposition}
\newtheorem{lem}[thm]{Lemma}
\newtheorem{cor}[thm]{Corollary}
\newtheorem*{thm*}{Theorem}
\theoremstyle{definition}
\newtheorem{defn}[thm]{Definition}
\newtheorem{rem}[thm]{Remark}
\newtheorem{exmp}[thm]{Example}
\newtheorem{prob}[thm]{Problem}
\newtheorem{const}[thm]{Construction}
\newcommand{\abs}[1]{\lvert{#1}\rvert}
\renewcommand{\bar}[1]{\overline{#1}}
\newcommand{\boundary}{\partial}
\newcommand{\set}[2]{\{\,{#1} \mid {#2} \,\}}
\newcommand{\bigset}[2]{ \bigl\{ \, {#1} \bigm| {#2} \, \bigr\} }
\renewcommand{\emptyset}{\varnothing}
\newcommand{\dist}{\textup{\textsf{d}}}
\newcommand{\field}[1]{\mathbb{#1}}
\newcommand{\Z}{\field{Z}}
\newcommand{\R}{\field{R}}
\newcommand{\E}{\field{E}}
\newcommand{\Hyp}{\field{H}}
\renewcommand{\P}{\field{P}}
\DeclareMathOperator{\CAT}{CAT}
\DeclareMathOperator{\Lk}{Lk}
\DeclareMathOperator{\Stab}{Stab}
\newcommand{\ball}[2]{B ( {#1}, {#2} )}
\newcommand{\nbd}[2]{\mathcal{N}_{#2}({#1})}  
\newcommand{\bignbd}[2]{\mathcal{N}_{#2} \bigl( {#1} \bigr)}
\newcommand{\Set}[1]{\mathcal{#1}}
\newcommand{\W}{{\mathcal W}}
\DeclareMathOperator{\Cayley}{Cayley}
\DeclareMathOperator{\diam}{diam}
\DeclareMathOperator{\Sat}{Sat} 
\newcommand{\neb}{\mathcal N}
\newcommand{\stabilizer}{\text{Stab}}
\def\RomanianComma#1{\setbox0=\hbox{#1}{\ooalign{\hidewidth
    \lower1.2ex\hbox{$\mspace{1mu}^{,}$}\hidewidth\crcr\unhbox0}}}
\newcommand{\Drutu}{Dru{\RomanianComma{t}u}}
\newcommand{\showcomments}{yes}
\renewcommand{\showcomments}{no}
\newsavebox{\commentbox}
\begin{document}

\title{Finiteness properties of Cubulated Groups}

\author{G. Christopher Hruska$^{\dag}$}
\address{Department of Mathematical Sciences\\
University of Wisconsin--Milwaukee\\
PO Box 413\\
Milwaukee, WI 53201\\
USA}
\email{chruska@uwm.edu}
\thanks{$^{\dag}$ Research supported by NSF grant DMS-0808809}

\author{Daniel T. Wise$^{\ddag}$}
\address{Dept. of Math. \& Stats.\\
McGill Univ.\\
Montreal, QC, Canada H3A 0B9}
\email{wise@math.mcgill.ca}

\subjclass[2010]{20F65, 20F67}

\keywords{cube complex, wallspace, relatively hyperbolic group}

\thanks{$^{\ddag}$ Research supported by NSERC}

\date{\today}

\begin{abstract}
We give a  generalized and self-contained account of Haglund--Paulin's wallspaces and
Sageev's construction of the CAT(0) cube complex dual to a wallspace.
We examine criteria on a wallspace leading to finiteness properties of its dual cube complex.
Our discussion is aimed at readers wishing to apply these methods to produce actions of groups on cube complexes and understand their nature. We develop the wallspace ideas in a level of generality that facilitates their application.

Our main result describes the structure of dual cube complexes arising from relatively hyperbolic groups.
Let $H_1,\dots, H_s$ be relatively quasiconvex
codimen\-sion-1  subgroups of a group $G$
that is  hyperbolic relative to $P_1, \dots, P_r$.
We prove that $G$ acts relatively cocompactly on the associated dual CAT(0) cube complex $C$.
This generalizes Sageev's result that $C$ is cocompact
when $G$ is hyperbolic.
When $P_1,\dots, P_r$ are abelian, we show that the dual CAT(0) cube complex  $C$ has a $G$-cocompact CAT(0) truncation.
\end{abstract}

\maketitle

\vspace{-.5cm}

{\tiny \tableofcontents}

\section{Introduction}
\label{sec:Introduction}

One of the most important themes permeating combinatorial group
theory during the past century has been splittings of groups
as free products, amalgamated products, and HNN extensions.
This topic finally matured when Serre advanced the viewpoint
of a group acting on a tree in the 1970s:
a group $G$ acts essentially on a tree if and only if $G$ splits.

Gromov introduced $\CAT(0)$ cube complexes in his
seminal essay on hyperbolic groups in the 1980s.
As trees are one-dimensional $\CAT(0)$ cube complexes,
there was already a plethora of interesting group actions.
In the 1990s,
Sageev showed how to obtain an action on a $\CAT(0)$ cube complex
from codimension--$1$ subgroups
in a manner that naturally, but surprisingly,
generalized the way Serre's trees arise as the graphs dual
to an embedded collection of surfaces in a $3$--manifold.

While only free groups can act freely on a one-dimensional
$\CAT(0)$ cube complex, the variety of groups acting freely in
higher dimensions is staggering.
When $G$ acts freely on a $\CAT(0)$ cube complex $C$,
the quotient $G\backslash C$
provides a natural $K(G,1)$
for $G$,
and many properties of $G$ are revealed
by examining combinatorial and geometric properties of $G \backslash C$
or the action on $C$.
For instance,
applications towards automaticity and the lack of Property~(T)
are given in
\cite{NibloReeves98,NibloReeves97,NibloRoller98}.

While a minimal action of a finitely generated
group $G$ on a tree $T$ immediately
provides a compact ``graph of groups'',
the situation is substantially more complex in higher
dimensions---even when the action is free.
For instance, in general the action of $G$ on the cube complex $C$
provided by Sageev's construction
may fail to be cocompact; $C$ may even fail to be finite dimensional.
However, Sageev proved that $G$ acts cocompactly on
$C$ when the codimension--$1$ subgroups are
quasiconvex and $G$ is hyperbolic---and generalizing
this theorem is a primary motivation of this paper.
There has been much recent research
finding appropriate codimension--$1$ subgroups
of a group $G$,
and then using the dual cube complex to illuminate $G$.
A notable success is the theorem on the
``structure of groups with a quasiconvex hierarchy''
\cite{WiseStructureAnnouncement09,WiseIsraelHierarchy}.
Our main result plays a critical role there
in understanding the dual cube complex arising for
a cusped hyperbolic $3$--manifold.

\subsection{The goal of this paper\except{toc}{:}}
The main goal of this paper is to examine the finiteness properties of the dual cube complex obtained by applying Sageev's construction
to a group together with a collection of
codimension--$1$ subgroups.
We have especially concentrated on the case where the group is relatively hyperbolic, as there are powerful results to be gleaned in this case that require some care.

The following is a simplified version of our main result, which is
Theorem~\ref{thm:MainResult}.
We emphasize that this result plays an important role in the
relatively hyperbolic case of the work
in \cite{WiseIsraelHierarchy}.

\begin{thm}[Relative cocompactness]
\label{quasithm:RelCocompact}
Let $G$ be hyperbolic relative to $P_1,\dots,P_j$,
and let $H_1,\dots,H_k$ be
relatively quasiconvex subgroups.
Choose an $H_i$--wall for each $i$.
Let $C$ be the cube complex dual to the wallspace
consisting of these $H_i$--walls and their $G$--translates.
The induced action of $G$ on $C$ is cocompact relative
to $P_i$--invariant subcomplexes $C_i$
\textup{(}see Definition~\ref{defn:RelCocompact}\,\textup{)}.
\end{thm}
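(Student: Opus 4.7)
The plan is to import Sageev's hyperbolic cocompactness argument into the relatively hyperbolic setting, handing off the unavoidable parabolic behavior into the peripheral subcomplexes $C_i$. First I would work inside a cusped model for $(G, \{P_1, \dots, P_r\})$ --- for concreteness the Groves--Manning cusped space $\widehat X$ --- which is $\delta$-hyperbolic and on which $G$ acts with stabilizers of deep horoballs equal to conjugates of the $P_i$. In this model each relatively quasiconvex $H_j$ has a quasiconvex orbit, and the chosen $H_j$-wall extends to a bi-infinite track in $\widehat X$ with uniformly quasiconvex $\Stab$-subgroup.

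Next, I would define $C_i$ as the subcomplex of $C$ spanned by cubes whose defining pairwise-crossing walls all meet a fixed deep horoball $\mathcal H_i$ based at the coset $P_i$. Since $P_i$ preserves $\mathcal H_i$, the subcomplex $C_i$ is automatically $P_i$-invariant, and $g \cdot C_i$ corresponds to cubes whose walls all meet the horoball over $gP_i$. The core geometric ingredient is a coarse-centering lemma: given a cube $\sigma \subset C$, its walls pairwise cross in $G$, so their tracks in $\widehat X$ pairwise coarsely intersect, and $\delta$-hyperbolicity together with uniform quasiconvexity then forces the family to have a common $R$-center $c(\sigma) \in \widehat X$ for a uniform $R$ depending only on $\delta$, on the quasiconvexity constant, and on $\dim C$.

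I would then dichotomize on the location of $c(\sigma)$. If $c(\sigma)$ lies in the thick part of $\widehat X$, all walls of $\sigma$ meet a uniform-radius ball in the Cayley graph of $G$; bounded packing of relatively quasiconvex subgroups --- available because $G$ is relatively hyperbolic and the $H_j$ are relatively quasiconvex --- then yields only finitely many $G$-orbits of such configurations, hence of such cubes, in direct analogy with Sageev's hyperbolic argument. If instead $c(\sigma)$ lies deep inside a horoball at some coset $gP_i$, every wall of $\sigma$ must pass through that horoball, so $\sigma \in g \cdot C_i$ after enlarging $\mathcal H_i$ by a uniform amount. The main obstacle is the coarse-centering and its hand-off: one has to verify that pairwise-crossing walls genuinely admit a uniform common center in $\widehat X$, and that when this center is cusp-deep \emph{every} wall of the cube is forced into the same cusp, so the cube lands inside a translate of a single $C_i$ rather than leaking between the thick and cusp regimes.
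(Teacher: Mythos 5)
Your overall architecture---a dichotomy between cubes whose walls coarsely meet near a thick point (finitely many orbits) and cubes whose walls are all absorbed by a single peripheral coset---has the same shape as the paper's proof of Theorem~\ref{thm:MainResult}. But the step you yourself flag as the main obstacle is a genuine gap, and it is exactly where the hard content lies. A coarse-centering lemma whose constant $R$ depends on $\dim C$ cannot be the engine: no finite-dimensionality of $C$ is available under the hypotheses (the paper explicitly treats the possibility that $C$ is infinite dimensional and handles cubes not contained in any maximal cube by a Zorn's lemma argument), and even if $\dim C$ were finite a priori, the thick case needs a centering radius that is uniform over all cubes, not one growing with the number of walls. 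Likewise, the bounded packing you invoke is not available: Theorem~\ref{thm:RelHypBoundedPacking} requires bounded packing of $H_j\cap gPg^{-1}$ in $gPg^{-1}$, which is not among the hypotheses, and the paper's argument is structured precisely to avoid it (in the thick case one instead translates the cube so that all associated cosets meet a fixed ball and uses that only finitely many walls cross a bounded subset of $G$). What replaces your centering lemma in the paper is Theorem~\ref{thm:RelPacking}: for a family of pairwise $D$--close relatively quasiconvex cosets (pairwise transverse walls yield such a family by Lemma~\ref{lem:TransverseToCloseSubgroups}), with constants $R,M$ depending only on $D$ and \emph{not} on the cardinality of the family, either all cosets meet a common $M$--ball, or there is a \emph{unique} peripheral coset $bP$ with which every coset has \emph{unbounded} coarse intersection. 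That uniformity in the size of the family, the uniqueness of $bP$, and the unboundedness (not mere nonemptiness) of the intersections are what make the hand-off work, and your proposal does not supply them. A further caveat about your model: closeness of thick points in the cusped space $\widehat X$ does not imply closeness in the word metric, so ``the center is thick'' does not by itself put all walls of the cube through a bounded subset of $G$.

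Separately, you never address condition (2) of Definition~\ref{defn:RelCocompact}: that $gC_i\cap g'C_j\subset GK$ unless the peripheral cosets coincide. This isolation statement is a substantial part of the paper's proof: one first shows the halfspaces of any cube are themselves relatively quasiconvex (Lemma~\ref{lem:HFinite}, using that the frontiers are $\ddot H_i$--finite and the $H_i$ are finitely generated), and then a cube represented in two distinct peripheral cosets has all its halfspaces near a single group element by Proposition~\ref{prop:NearTwoPeripherals}, hence lies in one of finitely many orbits; the $C_*$ variant also needs Lemma~\ref{lem:CoarseIntersection}. Finally, membership of a cube in a peripheral subcomplex must be checked for \emph{all} of its halfspaces, including the fixed orientations of the dependent walls, not only for the pairwise-crossing (independent) walls that your definition of $C_i$ refers to; the paper handles this by noting that each dependent halfspace contains a halfspace of some non-transverse independent wall, which already interacts deeply with $bP$. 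Without the uniform dichotomy, the uniqueness of the peripheral coset, and the isolation argument, the proposal does not yet prove relative cocompactness.
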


An $H_i$--wall is an $H_i$--invariant
way to ``cut $G$ in half'' along $H_i$.
This is of particular interest when $H_i$ is a codimension--$1$
subgroup of $G$ (see Section~\ref{sub:Codimension1}).
Sometimes there is coarsely a unique
``natural'' partition---for
instance when $G$ and the $H_i$ are fundamental groups of
closed aspherical manifolds \cite{KapovichKleiner2005}.
Since a subgroup $H_i$
might ``cut $G$ in half'' in more than one way,
we are unable to associate a specific dual cube complex
with a collection of subgroups.
Instead we must specify the additional
data of the chosen partitions of $G$,
which are the ``$H_i$--walls'' discussed in
Definition~\ref{def:HWall}.

\begin{defn}[Relative cocompactness]
\label{defn:RelCocompact}
Let $(G,\mathbb{P})$ be a group together with a collection of
subgroups $\mathbb{P} = \{P_1,\dots,P_r\}$.
We say $G$ acts \emph{relatively cocompactly} on the cube complex $C$
if there is a compact subcomplex $K$ and $P_i$--invariant
subcomplexes $C_i$
such that:
\begin{enumerate}
\item $C= GK \cup \bigcup GC_i$
\item $gC_i \cap g'C_j \subset GK$ unless $gC_i=g'C_j$ and $i=j$.
\end{enumerate}
\end{defn}

In many cases, the dual cube complex $C$ of
Theorem~\ref{quasithm:RelCocompact}
can be ``truncated'' by cutting off the infinite ``ends''
of the subcomplexes $C_i$.
This is analogous to the classical case, where one truncates the
cusps of a finite volume hyperbolic manifold.
We illustrate this in the following simplified version
of Corollary~\ref{cor:AbelianTruncation},
which can be applied to complete the proof that
B(6) groups are $\CAT(0)$
and similarly to complete an alternate proof of the Alibegovi\v{c}--Bestvina
result that limit groups are $\CAT(0)$ \cite{AlibegovicBestvina2006}.

\begin{thm}[$\CAT(0)$ truncation]
Let $G$ be hyperbolic relative to virtually abelian subgroups
$P_1,\dots,P_j$.
Let $H_1,\dots,H_k$ be relatively quasiconvex
subgroups with associated $H_i$--walls.
Suppose $G$ acts properly on the dual cube complex $C$.
Then $C$ contains a convex cocompact $G$--invariant subspace.
In particular, $G$ acts properly and cocompactly on a $\CAT(0)$ space.
\end{thm}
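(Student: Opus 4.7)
The plan is to combine the relative cocompactness of Theorem~\ref{quasithm:RelCocompact} with a ``cubical flat-torus'' argument for each virtually abelian peripheral subgroup. By Theorem~\ref{quasithm:RelCocompact}, we have a decomposition $C = GK \cup \bigcup_{i=1}^{j} GC_i$ with $K$ compact and each $C_i$ a $P_i$--invariant subcomplex; condition~(2) of Definition~\ref{defn:RelCocompact} ensures that distinct translates $gC_i$ and $g'C_{i'}$ only interact through $GK$. The strategy is to shrink each $C_i$ equivariantly to a $P_i$--invariant, $P_i$--cocompact, \emph{convex} subcomplex $D_i \subseteq C$, and then to verify that $Y = GK \cup \bigcup_i GD_i$ is $G$--invariant, $G$--cocompact, and convex in $C$. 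Since a convex subcomplex of a $\CAT(0)$ cube complex inherits a $\CAT(0)$ metric, the desired proper cocompact action on a $\CAT(0)$ space follows.

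The construction of the peripheral cores $D_i$ is the heart of the argument. Fix a finite-index free abelian subgroup $A \le P_i$ of rank $n$. Since $A$ acts properly on the $\CAT(0)$ cube complex $C$, I would invoke a cubical analogue of the Flat Torus Theorem to produce an $A$--invariant, $A$--cocompact, convex subcomplex of $C$ realizing an $n$--dimensional combinatorial ``flat''; concretely, the combinatorial convex hull of an $A$--orbit is the natural candidate, and one checks $A$--cocompactness using the free abelian action together with finite-dimensionality of $C$ (which is inherited from the relatively hyperbolic construction). Averaging over cosets of $A$ in $P_i$ then yields a $P_i$--invariant convex subcomplex $D_i \subseteq C_i$. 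After possibly enlarging $K$, I would arrange that $D_i$ contains the portion of $C_i$ that meets $GK$, so that no ``interface'' between peripheral and compact pieces is discarded in the truncation.

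With each $D_i$ in hand, the remaining verifications are largely formal. $G$--cocompactness of $Y$ follows from the cocompactness of $GK$ together with the $P_i$--cocompactness of each $D_i$ and the finiteness of the number of orbits $\{GC_i\}$. Convexity is checked combinatorially: a geodesic between two vertices of $Y$ either stays inside some translate $gC_i$, in which case it remains in $gD_i$ by convexity of $D_i \subseteq C$, or it crosses between pieces and therefore passes through $GK \subseteq Y$ by condition~(2) of Definition~\ref{defn:RelCocompact}. The main obstacle is the cubical flat-torus step: upgrading the $\CAT(0)$ flat supplied by the classical Flat Torus Theorem to a $P_i$--cocompact convex \emph{subcomplex}. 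This is genuinely subtle, since $\CAT(0)$ flats in a cube complex need not lie in any proper convex subcomplex, and one must exploit the hyperplane structure of $C$ and properness of the $G$--action (to bound $P_i$--orbits of hyperplanes meeting a compact set) in order to cut $C$ down to the correct peripheral core. Once this core is constructed, properness of the $G$--action on $Y$ is inherited from the properness hypothesis on $C$, completing the conclusion.
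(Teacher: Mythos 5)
Your overall architecture (decompose $C = GK \cup \bigcup GC_i$ via relative cocompactness, replace each peripheral piece by a convex cocompact core, take the union) matches the paper's, but the step you yourself flag as "genuinely subtle" is in fact impossible as you have set it up, and this is a genuine gap. You insist on producing a $P_i$--invariant, $P_i$--cocompact \emph{convex subcomplex} $D_i$, proposing the combinatorial convex hull of an $A$--orbit as the candidate. The paper opens Section~\ref{sec:Truncating} with a counterexample to exactly this: for the diagonal $\Z$--action on the standard squaring of $\E^2$, there is \emph{no} cocompact convex subcomplex at all, and the combinatorial convex hull of an orbit is the entire plane (every hyperplane has orbit points on both sides, so no halfspace constrains the hull). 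Finite dimensionality and properness do not rescue this; no amount of "exploiting the hyperplane structure" produces an object that does not exist. Note that the theorem only claims a convex \emph{subspace}, not a subcomplex, and this weakening is essential.

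The paper's actual route abandons cubical convexity in favor of $\CAT(0)$--metric convexity: this is the "convex core property" of Theorem~\ref{thm:convex core property}. For a virtually abelian $P_i$ acting semisimply on a proper $\CAT(0)$ space, the Flat Torus Theorem supplies a $P_i$--cocompact flat $F$, and one takes $E_i = \neb_r(F)$, a closed metric neighborhood --- convex in the $\CAT(0)$ metric (though not a subcomplex) and still cocompact. Your final convexity check also diverges from what is needed: arguing with combinatorial geodesics between vertices only gives convexity of the $1$--skeleton, whereas the conclusion concerns the $\CAT(0)$ metric. The paper instead shows $E = GK \cup \bigcup gE_i$ is closed, connected, and \emph{locally} convex (each point of $E - GK$ lies in a unique $gC_i$ by isolation, and $GK$ lies in the interior of $E$ after enlarging the cores to contain a $1$--neighborhood of $C_i \cap (GK \cup GD)$), and then invokes the local-to-global principle for closed connected locally convex subsets of complete $\CAT(0)$ spaces. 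To repair your proof you would need to replace the cubical flat-torus step by this metric construction and replace the combinatorial convexity argument by the local convexity argument.
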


We emphasize that the vast majority of our effort and the exposition
is focused around the proof of Theorem~\ref{quasithm:RelCocompact}
as well as its relation to the Bounded Packing Property
as developed in \cite{HruskaWise09}.
However we have also written the paper with the intention of introducing
the methods to new researchers as well as providing a self-contained
reference for future work in this area.
We have included together fundamental results, straightforward
results, and some folk results in a unified language.

While the fundamental tool is Sageev's dual cube complex construction,
the central unifying notion is a wallspace.
These were first introduced by Haglund--Paulin,
but we have developed their notion so that it is
both more flexible and has greater reach
in a manner that maintains its natural geometric motivation.

\medskip
\noindent\textbf{Wallspaces:}
Haglund--Paulin's original definition of a wallspace has an underlying
set $X$ whose walls are partitions $X = U \sqcup V$.
We soften this definition by relaxing their requirement that
$U \cap V = \emptyset$.
We do not believe there is a theoretical difference in the level
of applicability.
However, practically this more general definition facilitates
applications by avoiding awkward choices.
For instance, in a geometric setting when $X$ is not discrete,
a wall often naturally arises from a separating subspace $W$, and
it is convenient to define $U,V$ to be the two closed
halfspaces intersecting in $W$.
One already saw the arbitrary inclusion of $W$ on one side
but not the other in Sageev's thesis, where $W$ arises as a neighborhood
of a codimension--$1$ subgroup.

When the wallspace $(X,\W)$ is independently a metric space $(X,\dist)$,
we say $(X,\W)$ has the \emph{Linear Separation Property}
if there exist constants $\kappa>0$ and $\epsilon$ such that
for all points $x,y \in X$
we have $\#(x,y) \ge \kappa\,\dist(x,y) - \epsilon$.
Here $\#(x,y)$ denotes the number of walls separating $x$ and $y$.
The following theorem, which is restated and proven as
Theorem~\ref{thm:LinearSeparationProper}, has been observed repeatedly
in various forms by researchers applying Sageev's construction.
It is easy, but very useful.

\begin{thm}[Linear Separation $\Longrightarrow$ Proper]
\label{thm:LinearSeparationProperIntro}
Suppose $G$ acts on a wallspace $(X,\W)$,
and the action on the underlying metric space $(X,\dist)$
is metrically proper.
Then the Linear Separation Property for $(X,\W)$ implies that $G$
acts metrically properly on the dual cube complex $C=C(X,\W)$.

More generally $G$ acts metrically properly on $C$ provided that
for some $x\in X$ we have:
$\#(x,gx)\rightarrow \infty$ as $g\rightarrow \infty$.
\end{thm}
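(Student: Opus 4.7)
The plan is to pass the hypothesis about wall-separation through the standard correspondence between points of $X$ and vertices of the dual cube complex $C$, reducing metric properness of the $G$--action on $C$ to divergence of the separation count $\#(x,gx)$. I would fix a basepoint $x \in X$ and recall that $x$ determines a canonical vertex $v_x \in C^{(0)}$: the orientation (``ultrafilter'') on $\W$ that assigns to every wall the halfspace containing $x$. This assignment is $G$--equivariant, so $g v_x = v_{gx}$. The crucial input, which is part of the Sageev theory developed earlier in the paper, is that two canonical vertices differ as orientations exactly on the walls separating their basepoints, and the combinatorial distance in $C^{(1)}$ between them equals that count:
\[
\dist_{C^{(1)}}(v_x, v_y) \;=\; \#(x,y).
\]
Since the CAT(0) metric on the vertex set of $C$ is coarsely equivalent to the combinatorial one, we have $\dist_C(v_x, g v_x) \to \infty$ if and only if $\#(x, gx) \to \infty$.

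Given this identification, the second (more general) statement is immediate: if $\#(x, gx) \to \infty$ as $g \to \infty$ in $G$, then for every $R > 0$ only finitely many $g$ satisfy $\dist_C(v_x, g v_x) \le R$, which is exactly the definition of metric properness of the isometric $G$--action on $C$. The main statement then reduces to verifying this divergence hypothesis under the Linear Separation Property: metric properness of $G$ on $(X,\dist)$ forces $\dist(x, gx) \to \infty$ as $g \to \infty$ for any fixed $x$, and Linear Separation then yields
\[
\#(x, gx) \;\ge\; \kappa\, \dist(x, gx) - \epsilon \;\longrightarrow\; \infty,
\]
completing the reduction to the general case.

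The only non-trivial ingredient is the identification in the first paragraph of combinatorial distance in $C^{(1)}$ with the separation count $\#(\cdot,\cdot)$; this is a foundational property of Sageev's construction that earlier sections of the paper should already supply. So the main \emph{obstacle} is really just bookkeeping: once one has the canonical vertex map $x \mapsto v_x$ and the distance formula, the theorem is a short formal unwinding of definitions, with the Linear Separation Property playing the role of a quantitative bridge from metric properness on $X$ to metric properness on $C$.
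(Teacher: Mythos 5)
Your proposal is correct and is essentially the paper's own argument: fix a basepoint, use metric properness on $X$ plus Linear Separation to get $\#(x,gx)\to\infty$, and then translate this into distance in $C$ via the identity $\#(x,y)=\dist_C(c_x,c_y)$. The one adjustment needed for this paper's generalized wallspaces is that a point $x$ may betwixt finitely many walls, so ``the halfspace containing $x$'' is not well-defined for those walls and $x$ determines a canonical \emph{cube} rather than a canonical vertex; the distance formula still holds because any wall separating $x$ from $y$ betwixts neither point and is therefore a dependent wall oriented oppositely by $c_x$ and $c_y$, which is exactly how the paper establishes it.
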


Although Theorem~\ref{thm:LinearSeparationProperIntro} holds under the more
general hypothesis stated second, we have deliberately chosen to emphasize
the linear separation hypothesis, as this stronger property on $(X,\W)$
is often more natural to verify.

In practice it is often more difficult to verify the
properness of an action on the dual cube complex
than the (relative) cocompactness.
This is even true in a $\delta$--hyperbolic setting, where
quasiconvexity is a robust phenomenon,
but the property of having sufficiently many separating walls is not locally
discernible in general.

Metric properness translates directly into a very natural ``filling''
condition on the wallspace, but verifying this condition is treacherous
because a collection of walls that locally appears to be highly filling
can be very ineffective globally.
A suggestive (messy) example is given by a highly self-intersecting
essential curve $\sigma$ in a closed surface $S$
so that the complementary components of $S-\sigma$ are all simply connected.
When $\sigma$ is homotopic to a simple curve, then
the cube complex dual to the wallspace consisting of translates of
$\tilde \sigma$ in the universal cover $\tilde S$
is quasi-isometric to a tree.
In this case, while the action is cocompact it cannot be proper.

We illustrate some of the main results
in a simple and suggestive framework in Section~\ref{sec:ConvexWalls}.
For many of these examples, linear separation is obvious because
the simplicity of the walls makes the necessary computation transparent.
In settings that arise ``in the wild'', these computations
can be opaque.
For instance, linear separation is proven for Gromov's random groups
at density $< 1/6$ by Ollivier--Wise in \cite{OllivierWiseDensity},
but at density $< 1/5$
although a wallspace was constructed and the walls are quasiconvex
it remains unknown whether these random groups can
act properly---i.e., with finite stabilizers---on a cube complex.

We offer a second closely related criterion for verifying
properness that is particularly suited to a hyperbolic
setting, where each infinite order element has an axis.

The properness conclusion in the following
is a special case of Theorem~\ref{thm:AxisSeparation}.
The cocompactness follows from Sageev's hyperbolic special case of
Theorem~\ref{quasithm:RelCocompact}.

\begin{thm}[Axis Separation]
Let $G$ be a word-hyperbolic group.
Let $H_1,\dots,H_k$ be quasiconvex subgroups
with associated $H_i$--walls.
Suppose that for each infinite order element $g \in G$
its axis $A_g$ is cut by some translate $g'H_j$.
Then $G$ acts properly and cocompactly on the dual cube complex.
\end{thm}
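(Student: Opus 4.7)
The cocompactness is immediate from the hyperbolic special case of Theorem~\ref{quasithm:RelCocompact} (taking the peripheral family empty), recovering Sageev's original theorem, so the content lies in proving properness. My plan is to verify the hypothesis of Theorem~\ref{thm:LinearSeparationProperIntro}: for a chosen basepoint $x \in X$, that $\#(x, gx) \to \infty$ as $g \to \infty$ in $G$. Suppose, for contradiction, that a sequence $g_n \to \infty$ satisfies $\#(x, g_n x) \le N$. Interpreting $\#$ as the combinatorial $C$--distance between dual vertices, the orbit $\{g_n v_x\}$ lies in $B_N(v_x)$; cocompactness of the action then ensures this ball meets only finitely many $G$--orbits of vertices, so after passing to a subsequence $g_n v_x$ is constant, placing $g_1^{-1}g_n$ in $\Stab(v_x)$ for all $n$. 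Hence $\Stab(v_x)$ is infinite, and as a subgroup of a hyperbolic group it contains an element $h$ of infinite order.

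I then apply the axis-cutting hypothesis to $h$: some wall $W = g'H_j$ cuts the axis $A_h$. The crucial geometric claim is that $\{h^k W\}_{k\in\Z}$ is a family of pairwise distinct walls, each cutting $A_h$. Quasiconvexity of the coset $g'H_j$ together with hyperbolicity of $G$ forces the intersection of $A_h$ with any bounded neighborhood of the carrier $g'H_j$ to be bounded; otherwise both endpoints of $A_h$ would lie in the limit set of $g'H_j g'^{-1}$, making it impossible for the wall $W$ to cut $A_h$ transversely (one end of $A_h$ in each halfspace). Thus the crossing location of $W$ along $A_h$ is well-defined up to bounded error, and $h^k W$ crosses $A_h$ at a location shifted by $k$ times the translation length of $h$, giving both distinctness and, for any $y \in A_h$, a lower bound $\#(y, h^m y) \ge |m| - O(1)$.

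On the other hand $h \in \Stab(v_x)$ implies that the $h$--orbit of $v_x$ is a single point, so that $\#(x, h^m x) = d_C(v_x, h^m v_x) = 0$. Combined with $G$--equivariance and the triangle inequality for $\#$, this forces $\#(y, h^m y) \le 2\#(x, y)$, a uniform bound which contradicts the linear growth above. This contradiction establishes the hypothesis of Theorem~\ref{thm:LinearSeparationProperIntro} and therefore completes the proof.

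The main obstacle is the geometric claim that the translates $h^k W$ are distinct with well-separated crossings of $A_h$: this is where the quasiconvexity of $H_j$ and the hyperbolicity of $G$ enter in an essential way, via the principle that a quasiconvex subset meets a quasi-geodesic in a bounded set unless it asymptotically contains it. The remainder of the argument is bookkeeping, combining cocompactness on $C$ with the linear separation criterion to convert boundedness of $\#(x, g_n x)$ along a sequence $g_n \to \infty$ into the existence of an infinite-order element fixing a vertex of $C$, where the axial geometry then gives the desired contradiction.
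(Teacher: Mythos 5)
Your reduction from failure of (metric) properness to an infinite-order element fixing a $0$--cube has a gap. From $\#(x,g_nx)\le N$ you place the vertices $g_nv_x$ in the combinatorial ball $B_N(v_x)$ and argue that, since this ball meets only finitely many $G$--orbits of vertices, a subsequence of $g_nv_x$ is constant. But a ball meeting finitely many orbits can still contain infinitely many vertices: cocompactness does not imply local finiteness of $C$ (one orbit of edges wedged at a single vertex already gives a cocompact, non--locally finite complex). For your pigeonhole you need $B_N(v_x)$ itself to be finite, i.e.\ local finiteness of $C$, and in this setting that is normally deduced either from properness together with cocompactness --- exactly what you are trying to prove --- or from Theorem~\ref{thm:BoundedPacking}, whose osculation hypothesis is not yet available here. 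The repair is to drop the detour through $\#$ altogether: as noted in Section~\ref{sub:Properness}, an action on a cube complex is proper precisely when every $0$--cube has finite stabilizer, and every infinite subgroup of a word-hyperbolic group contains an infinite-order element; so assume directly that some $0$--cube has infinite stabilizer, extract your $h$ from it, and run your axis argument. Metric properness then follows a posteriori, since a proper cocompact action forces local finiteness.

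Your core axis argument is sound but genuinely different from the paper's. The paper's Theorem~\ref{thm:AxisSeparation} shows, purely combinatorially via the descending chain condition (Lemma~\ref{lem:NoInfiniteChains}), that if $h$ fixes a $0$--cube then the translates $h^mW$ are pairwise \emph{transverse}; this contradicts the finite-dimensionality of $C$, which comes from bounded packing of quasiconvex subgroups (Theorem~\ref{thm:BoundedPacking}). You instead use quasiconvexity of the wall carrier to localize the crossing of each $h^kW$ along $A_h$, obtain linear growth of $\#(y,h^my)$, and contradict $\#(x,h^mx)=0$ via an approximate triangle inequality for $\#$ (exact only up to the uniformly bounded betwixting numbers, as the paper's non-Hausdorff example warns). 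Both routes work; the paper's needs no metric estimates on individual walls, only global finite-dimensionality, whereas yours requires the bounded-intersection property of quasiconvex subsets against the axis and the observation that if a power of $h$ stabilized $W$ then $W$ could not cut $A_h$ --- note that for this you should say that \emph{at least one} endpoint of $A_h$ would lie in the limit set of the wall stabilizer (which already forces a power of $h$ into that stabilizer), not necessarily both.
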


In Bergeron--Wise \cite{BergeronWiseBoundary}, the properness and cocompactness are treated simultaneously in
a rather soft fashion that works effectively when there is a rich collection of codimension--$1$ subgroups to draw from.
The idea in \cite{BergeronWiseBoundary} is to use $\boundary G$
to choose finitely many quasiconvex codimension--$1$ subgroups
so that each geodesic (and hence each axis) is separated by a wall.

When $G$ acts properly and cocompactly on the dual cube
complex $C=C(X,\W)$ then we obtain local
finiteness of $C$ by a simple argument.
We formulate a characterization of the local finiteness of $C$
that does not require a group action.
The following is a simplified statement
of Theorem~\ref{thm:LocalFiniteness},
which is related to the Parallel Walls Separation Condition
of Brink--Howlett as applied by Niblo--Reeves to prove
local finiteness of their cubulation of a f.g.\ Coxeter group
\cite{BrinkHowlett93,NibloReeves03}.

\begin{thm}[Local finiteness]
Let $(X,\dist)$ be a metric space with a locally finite
collection $\W$ of walls.
Then the dual cube complex $C=C(X,\W)$ is locally finite if and only if
for each compact set $K \subseteq X$ there exists a constant $f(K)$
such that
whenever $\dist(K,W) \geq f(K)$ there is a wall $W'$
separating $K$ from $W$.
\end{thm}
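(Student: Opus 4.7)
The key observation is that at any vertex $v$ of $C$, the edges at $v$ correspond to walls $W \in \W$ that are \emph{flippable} at $v$, meaning $v(W)$ is minimal under inclusion among $\set{v(W')}{W' \in \W}$. For a principal vertex $\sigma_x$ (the ultrafilter assigning each wall the halfspace containing $x$), flippability of $W$ reduces to the condition that no other wall separates $x$ from $W$.

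For the $(\Leftarrow)$ direction, I first treat principal vertices: applying the hypothesis with $K = \{x\}$ shows that the flippable walls at $\sigma_x$ lie within $\ball{x}{f(\{x\})}$, a bounded set meeting only finitely many walls. For a non-principal vertex $v$, the standard finiteness conditions on vertices of $C$ produce a principal $\sigma_x$ differing from $v$ on only finitely many walls $W_1, \dots, W_n$; applying the hypothesis with $K$ a closed ball around $x$ of radius exceeding each $\dist(x, W_i)$ ensures that any far wall $W$ is separated from $K$ by some $W'$ not crossing $K$, hence lying outside $\set{W_i}{1 \le i \le n}$. This $W'$ then serves as a witness preventing flippability of $W$ at $v$ as well, so $v$ too has only finitely many flippable walls.

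For the $(\Rightarrow)$ direction, fix a compact $K$. Local finiteness of $\W$ gives a finite set $\mathcal F$ of walls crossing $K$; any wall outside $\mathcal F$ has $K$ in one of its halfspaces. The principal vertices $\set{\sigma_y}{y \in K}$ are distinguished only by their orientations on $\mathcal F$, so they form a finite set $V_K$ of at most $2^{\abs{\mathcal F}}$ vertices. By local finiteness of $C$, the set $I$ of walls flippable at some vertex of $V_K$ is finite, and I set $f(K) := 1 + \max \set{\dist(K, W)}{W \in I}$.

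For $W$ with $\dist(K, W) \geq f(K)$, we have $W \notin I$, so for each $y \in K$ there is some $W'_y$ with $\sigma_y(W'_y) \subsetneq \sigma_y(W)$. If any such witness $W'_y$ lies outside $\mathcal F$, then its $y$-halfspace contains all of $K$ and lies strictly inside the $K$-halfspace of $W$, so $W'_y$ separates $K$ from $W$ as required. If every witness lies in $\mathcal F$, I would derive a contradiction by constructing a vertex $v^{\ast}$ of $C$ --- extending the principal orientation on walls outside $\mathcal F$ by the sign pattern of a point on the non-$K$ side of $W$ --- at which $W$ is flippable, forcing $W \in I$. The main obstacle is ensuring that $v^{\ast}$ is a valid vertex of $C$ (both consistent and satisfying the standard finiteness conditions), which in the generalized wallspace setting requires careful selection within the finite subcomplex spanned by $V_K$.
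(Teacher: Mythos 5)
Your argument for the direction (compact--wall separation $\Rightarrow$ local finiteness) is essentially the paper's: at a vertex $v$ only finitely many walls $W_1,\dots,W_n$ are oriented away from a basepoint $x$, and any far wall is blocked by a separating wall $W'$ that misses the witness set and hence is none of the $W_i$, so it kills flippability at $v$ as well. One slip: you apply the hypothesis to a closed ball around $x$, but closed balls in a general metric space need not be compact; use instead the finite set $K=\{x,x_1,\dots,x_n\}$ with $x_i\in\overleftarrow{v}(W_i)$ (each $W_i$ then crosses $K$, which is all you need to exclude $W'$ from $\{W_1,\dots,W_n\}$), exactly as the paper does.

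The genuine gap is in the converse. You define $I$ as the set of walls flippable at one of the \emph{principal} vertices $\sigma_y$, $y\in K$, and set $f(K)$ from $I$; but the vertex $v^{\ast}$ you plan to build in the problematic case (principal orientation off the finite set $\mathcal{F}$ of walls crossing $K$, orientation toward a far point $z$ on the walls of $\mathcal{F}$) is in general \emph{not} one of the $\sigma_y$, so ``$W$ is flippable at $v^{\ast}$'' does not contradict $W\notin I$. Passing to the subcomplex spanned by the $\sigma_y$ does not repair this either: if every point of $K$ betwixts some wall of $\mathcal{F}$, all the $\sigma_y$ may orient that wall the same way, and the hull of $\{\sigma_y\}$ can miss $v^{\ast}$. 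What you need is the full finite convex subcomplex $C(K)$ dual to the hemiwallspace induced by $K$, i.e.\ \emph{all} $0$--cubes agreeing with the principal orientation off $\mathcal{F}$: local finiteness of $C$ together with finiteness of $C(K)$ makes the set of walls flippable somewhere on $C(K)$ finite, and your $v^{\ast}$ does lie in $C(K)$. With that change the construction can be completed (off-$\mathcal{F}$ halfspaces all contain $K$; $\mathcal{F}$-halfspaces all contain $z$; mixed pairs meet because each wall of $\mathcal{F}$ crosses $K$; cofiniteness holds since $v^{\ast}$ differs from a principal vertex only on $\mathcal{F}$; and flippability of $W$ at $v^{\ast}$ uses precisely the assumed absence of a wall separating $K$ from $W$, applied to the off-$\mathcal{F}$ walls, with $z$ taken in the open far halfspace of $W$). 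As written, though, this crucial case is left as an acknowledged obstacle and the contradiction does not close. The paper sidesteps the explicit construction: it takes a shortest combinatorial geodesic from $C(K)$ to the carrier of the hyperplane of $W$ and observes that each edge of it is dual to a hyperplane separating these convex subcomplexes, hence to a wall separating $K$ from $W$; if no such wall exists the geodesic is trivial, so $W$ is dual to a $1$--cube at a $0$--cube of $C(K)$, and there are only finitely many such walls.
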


In Section~\ref{sec:FinitenessRH}
we examine the finiteness properties of $C(X,\W)$
in much greater detail in our
principal setting where the relatively hyperbolic
group $G$ is acting on $X$.

\subsection{Why cubulate?}
\label{sec:Why}

We describe below a sample of the information one obtains about a group
admitting cubulations with various levels of finiteness properties.

The earliest motivation for defining nonpositively curved cube complexes
is simply that:
a proper/cocompact action on a $\CAT(0)$ cube complex yields
a proper/cocompact action on a $\CAT(0)$ metric space.
Interestingly enough, this remains a fascinating albeit simple
application, especially in view of the methodology
promoted in this paper, which often surprisingly
provides a $\CAT(0)$ metric from codimension--$1$ subgroups.
As mentioned earlier, the most desirable goal of a cocompact $\CAT(0)$
space can sometimes be obtained by truncating.

If $G$ acts essentially on a $\CAT(0)$ cube complex then $G$ cannot have
Property~(T) by Niblo--Roller \cite{NibloRoller98}.
Moreover, it is implicit in their argument that
if $G$ acts metrically properly then $G$ is a-T-menable
(see the discussion in \cite[Sec~7.4.1]{HaagerupBook01}).
If $G$ acts properly and cocompactly on a $\CAT(0)$ cube complex then
$G$ is biautomatic by Niblo--Reeves \cite{NibloReeves98}.
If $G$ has bounded torsion and acts properly on a finite dimensional
$\CAT(0)$ cube complex then $G$ satisfies a strong form of the
Tits Alternative by Sageev--Wise \cite{SageevWiseTits}.

Right-angled Artin groups arise as the fundamental groups of
well-known nonpositively curved cube complexes.
Exploiting their combinatorial geometry,
a connection has been developed by Haglund--Wise
in \cite{HaglundWiseSpecial},
which has the potential to virtually embed a nicely cubulated group $G$
into a right-angled Artin group,
and thus expose many algebraic features of $G$.

If $G$ acts properly on a $\CAT(0)$
cube complex $C$, then the asymptotic dimension of $G$
is bounded above by the dimension of $C$
by \cite{WrightDimension}.
One obtains a rich family of quasimorphisms of $G$
from irreducible actions on a $\CAT(0)$ cube complex
by recent work of Caprace--Sageev in \cite{CapraceSageev2011}.
When $G$ acts properly on a $\CAT(0)$ cube complex, then
Hagen has shown that $G$ is weakly hyperbolic relative to the hyperplane stabilizers \cite{HagenArboreal}.

\subsection{Survey of known results towards cubulation of groups}

There has been an increasing amount of work towards obtaining
actions of groups on $\CAT(0)$ cube complexes.

Although there has certainly been earlier work, the Bass--Serre theory
of groups acting on trees is a milestone here \cite{Serre77}.
While splittings of groups as amalgamated products or HNN extensions
had been studied for a long time previously,
great perspective was added to this subject through the
theory of groups acting on trees.
The edge groups of these splittings had already been identified as
codimension--$1$ subgroups by Scott \cite{Scott77},
but these ideas were only put into their natural context
with Sageev's thesis \cite{Sageev95}.
Sageev gave his dual cube complex construction and showed that
a group $G$ acts essentially on a $\CAT(0)$ cube complex if and only if
$G$ contains a codimension--$1$ subgroup.
The meaning of ``essential'' was subsequently refined by
Gerasimov and Niblo--Roller \cite{Gerasimov97,NibloRoller98}
to mean that $G$ acts with no global fixed point.
A beautiful exposition of these ideas in a crisp abstract setting
was given by Roller in \cite{RollerPocSets}.

Two early applications of Sageev's construction were given by Niblo--Reeves
\cite{NibloReeves03} towards cubulation of Coxeter groups
and by Wise \cite{WiseSmallCanCube04} towards cubulation of
certain small-cancellation groups.
The role played by wallspaces in Sageev's construction
was subsequently made explicit by Nica and Chatterji--Niblo in
\cite{NicaCubulating04,ChatterjiNiblo04}.

Further applications are
towards cubulating one-relator groups with torsion \cite{LauerWise07},
cubulating Gromov's random groups \cite{OllivierWiseDensity},
cubulating graphs of free groups with cyclic edge groups \cite{HsuWiseCubulating},
cubulating arithmetic hyperbolic groups of simple type
\cite{BergeronHaglundWiseSimple},
cubulating rhombus groups \cite{JanzenWise13},
cubulating certain malnormal amalgams of cubulated groups \cite{HsuWiseCubulatingMalnormal}, and
most recently cubulating the group of Formanek--Procesi
\cite{Gautero12}.

The variety of cubulations that one can obtain by applying Sageev's
construction to a fixed group $G$
is reflected in \cite{WiseFreeCubulation}, where it is shown that for each finitely generated infinite index subgroup $H$ of a finitely generated free group $G$, there is a free $G$--action on a $\CAT(0)$ cube complex with one orbit of hyperplane, such that the stabilizer of each hyperplane equals $H$.

There has been some other work obtaining a cube complex more directly
without an application of Sageev's construction. In particular,
we note the work of
Weinbaum using Dehn's presentation to cubulate prime alternating link
complements (see e.g., \cite{WiseFigure8}),
the work of Aitchison--Rubinstein
\cite{AitchisonRubinstein90} cubulating certain
$3$--manifolds,
the work of Brady--McCammond \cite{McCammond2009} who recognized how to thicken a complex
to obtain a cube complex and applied this to certain $3$--manifolds
and to $C'(1/4)-T(4)$ complexes, and finally with a very different
flavor, the work of Farley \cite{Farley2003} cubulating the diagram groups
of Guba--Sapir \cite{GubaSapir2005}.
At this point, it appears that the strategy of using Sageev's construction
affords the greatest variety of applications.

\section{Wallspaces}
\label{sec:Wallspaces}

\subsection{Wallspaces}
\label{sub:Wallspaces}

Let $X$ be a nonempty set.
A \emph{wall} of $X$ is a pair of subsets
$\{U,V\}$ called (closed) \emph{halfspaces} such that $X = U\cup V$.
We shall not assume that
$U \cap V = \emptyset$---see Remark~\ref{rem:Partition}.
The \emph{open halfspaces} associated to the wall $\{U,V\}$
are the sets $U-(U\cap V)$ and $V-(U\cap V)$.

Points $x,y$ of $X$ are \emph{separated} by a wall $W$ if they lie in
distinct open halfspaces of $W$.
The notation $\#(x,y)$ denotes the number of walls
separating $x$ and $y$.
We say that a point $x$ and a wall $\{U,V\}$ \emph{betwixt each other}
if $x \in U \cap V$.

A \emph{wallspace} is a set $X$ together with a collection of walls $\W$
on $X$ with the following two finiteness properties:
\begin{enumerate}
   \item $\#(x,y)<\infty$ for all $x,y\in X$, and
   \item each point $x$ betwixts only finitely many walls.
\end{enumerate}
We do allow our ``collection'' to contain
duplicates---see Remark~\ref{rem:Bourbaki}---but
we do not allow duplicate walls that are genuine
partitions---see Remark~\ref{rem:Duplicity}.

Distinct walls $W=\{U,V\}$ and $W' = \{U',V'\}$ are \emph{transverse}
if all four of the following intersections are nonempty:
\[
   U \cap U', \quad U \cap V', \quad V \cap U', \quad V\cap V'.
\]

The central example of a wallspace
arises from a $\CAT(0)$ cube complex $C$
by letting $X=C^0$ and letting $\W$ consist of the partitions of $C^0$ induced by hyperplanes.
Note that the walls are genuine partitions here.
A similar example is a wallspace on $C$ whose walls are
pairs of closed halfspaces associated to hyperplanes of $C$.
We discuss this type of example further in
Section~\ref{sub:GeometricWallspaces}.

A group $G$ \emph{acts} on the wallspace $(X,\W)$ if
$G$ acts on $X$ and stabilizes $\W$. Consequently $\#(x,y)=\#(gx,gy)$ for each $g\in G$.

\begin{rem}
\label{rem:Partition}
We note that in Haglund--Paulin's original definition
\cite{HaglundPaulin98} of
``\emph{espaces \`{a} murs}'' the walls are partitions of $X$;
in other words $U \cap V= \emptyset$.
We have chosen to work in a slightly more flexible setting.
\end{rem}

\begin{rem}[Duplicated walls]
\label{rem:Duplicity}
In Remark~\ref{rem:Bourbaki},
we describe a more general definition, essentially using indexed
sets, that allows ``multiplicity'' in our collection of walls,
in the sense that it is possible for $W_1 = \{U,V\}$ and $W_2 = \{U,V\}$.
However,
\begin{itemize}
\item[(\dag)] we do not allow duplicate walls that are
\emph{genuine partitions}
\end{itemize}
in the sense that $U \cap V = \emptyset$ and $U$ and $V$ are
both nonempty.
This will be important in the proof of connectedness of the canonical
component of the dual cube complex given in Proposition~\ref{prop:Connected}.
Duplicate
\emph{vacuous walls} of the form $\{X,\emptyset\}$ do not
create difficulties.

The definition of transversality given below
implies that duplicated walls $W_1,W_2$
are transverse if and only if they are not partitions.
It is conceivable that our treatment could be modified
in the exceptional partition case
so that all duplicated walls are transverse.

One natural way that duplicated walls arise is when $(X,\W)$ is a wallspace
and $Y \subset X$ and we give $Y$ the induced \emph{subwallspace} structure
whose walls have the form $(Y\cap U,Y\cap V)$,
see Definition~\ref{def:Subwallspace}.
We refer the reader to the right-hand side of
Figure~\ref{fig:Duplicity}, where the
discrete subset $Y$ has a duplicated wall that is not a partition.
In contrast, on the left-hand side of the figure, $Y$ has a duplicated
wall that is a genuine partition, which we do not allow.
The reader can imagine thick walls arising from two infinite cyclic subgroups
of a surface group and $Y$ is a third subgroup.
\end{rem}

\begin{figure}
\begin{center}
\includegraphics[width=.6\textwidth]{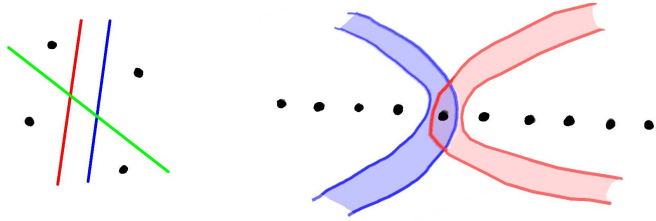}
\end{center}
\caption{On the left, the discrete subset $Y$ has a duplicated wall
that is a genuine partition, which we do not allow.
On the right, the discrete subset $Y$ has a duplicated wall
that is not a partition.}
\label{fig:Duplicity}
\end{figure}

The following more formal approach correctly permits duplicated walls.

\begin{rem}[Wallspaces using indexed collections of halfspaces]
\label{rem:Bourbaki}
A \emph{wallspace} is a pair of sets $(X,\W)$
where each element $W \in \W$
is a pair of indexed subsets $\{H_{-W}, H_{+W}\}$ of $X$
called the \emph{closed halfspaces} of $W$.
Note that the subsets are indexed by the elements of
$\{-,+\} \times \W$.
Moreover we require that the following properties are satisfied:
\begin{enumerate}
\item[(0)] $H_{-W} \cup H_{+W} = X$ for all $W \in \W$,
\item For all $x,y \in X$, there are finitely many $W \in \W$
such that $x \in (H_{-W}) - (H_{+W})$ and $y \in (H_{+W}) - (H_{-W})$.
\item For each $x \in X$, there are finitely many $W \in \W$
such that $x \in H_{-W} \cap H_{+W}$.
\item If $\{H_{-W},H_{+W}\} = \{H_{-W'},H_{+W'}\}$ and $H_{-W} \cap H_{+W} = \emptyset
= H_{-W'} \cap H_{+W'}$ then $W=W'$.
\end{enumerate}
Note that (3) excludes the duplicate walls that are genuine partitions,
as in (\dag).
However, for walls that are not partitions,
it is now sensible for $W,W'$ to be distinct walls
with the same closed halfspaces.
For though $H_{-W},H_{-W'}$ and $H_{+W},H_{+W'}$ have the same underlying
sets, their indices differ.
\end{rem}

\subsection{Geometric wallspaces}
\label{sub:GeometricWallspaces}

A \emph{geometric wallspace} is a metric space $(X,\dist)$
with a collection $\W$ of closed
subspaces called \emph{geometric walls}
such that each geometric wall $W$ is connected, and $X-W$ has exactly two components.

Letting $U$ and $V$ be the components of $X-W$, one obtains closed halfspaces
$W \cup U$ and $W \cup V$, and hence a wall
$\bigl\{(W \cup U), (W \cup V)\bigr\}$ associated to $W$.

Our earlier definition of separation
is motivated by the observation that
a geometric wall $W$ separates $x$ and $y$ if they
lie in distinct components of $X-W$.

Note that $\#(x,y)<\infty$ provided $x,y$
are connected by a path intersecting only finitely many walls of $\W$.
Thus when $X$ is path connected, the local finiteness of the
collection of geometric walls is the critical ingredient
ensuring that $\#(x,y)<\infty$ always holds.

To ensure that geometric wallspaces are associated to underlying
(abstract) wallspaces, we will therefore insist that
they are path connected and that the collection
of geometric walls is locally finite in the sense that
compact subsets of $X$ intersect only finitely many geometric walls.

Many natural examples of geometric wallspaces are geodesic metric
spaces whose walls are convex subspaces.
For instance, this was the case for hyperplanes in a $\CAT(0)$
cube complex $C$.

\begin{lem}
\label{lem:TransverseWallsMeet}
Suppose $(X,\W)$ is a geometric wallspace.
Two distinct abstract walls are transverse
if and only if their associated geometric walls intersect.
\end{lem}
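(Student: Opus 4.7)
The lemma has two directions, and one of them is essentially immediate while the other requires a little care.

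For the reverse direction, I would note that if the geometric walls $W$ and $W'$ meet in a point $p$, then $p$ belongs to both closed halfspaces of $W$ (since each closed halfspace contains $W$) and likewise to both closed halfspaces of $W'$. Hence $p$ lies in all four of the halfspace intersections that appear in the definition of transversality, so the abstract walls are transverse.

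For the forward direction I would argue the contrapositive: assuming that the geometric walls $W$ and $W'$ are disjoint, I would produce a pair of closed halfspaces with empty intersection. Because $W'$ is connected and contained in $X \setminus W = U_W \sqcup V_W$, it lies entirely in one of these two components; after renaming, say $W' \subseteq U_W$. Next I would show that the closed halfspace $W \cup V_W$ is connected. Since $W$ is closed, the component $V_W$ is open in $X$; because $X$ is path-connected and both $V_W$ and its complement are nonempty, $V_W$ is not closed, so $\overline{V_W}$ meets $W$. Then $W \cup V_W = W \cup \overline{V_W}$ is a union of two connected sets with nonempty intersection, hence connected. Since $W \cup V_W$ is disjoint from $W'$ (as $W \cap W' = \emptyset$ and $V_W \cap W' = \emptyset$) and connected, it lies inside a single component of $X \setminus W'$, say inside $U_{W'}$. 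Therefore
\[
 (W \cup V_W) \cap (W' \cup V_{W'}) \;=\; \emptyset,
\]
so one of the four halfspace intersections required for transversality is empty.

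The main obstacle is the verification that the closed halfspace $W \cup V_W$ is connected; this is the only spot where the hypotheses that geometric walls are closed and that $X$ is path-connected really enter. Once that is in hand, the rest reduces to the elementary observation that a connected subset of $X\setminus W$ must lie in one component of $X \setminus W$.
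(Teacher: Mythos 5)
Your argument is correct and is essentially the proof in the paper: the easy direction is the same observation that an intersection point lies in all four closed halfspaces, and the hard direction is the same contrapositive, placing the connected wall $W'$ in one component of $X - W$ and then placing the connected closed halfspace $W \cup V_W$ in a single component of $X - W'$ to exhibit two disjoint closed halfspaces (the paper simply asserts that connectedness, which you try to verify). One small repair to that verification: a component of an open set need not be open in a general metric space, so rather than deducing openness of $V_W$ from closedness of $W$, note that each of the two components of $X - W$ is closed in $X - W$, hence each is open in $X$ as the complement of the other inside the open set $X - W$; this also gives $\overline{V_W} \subseteq V_W \cup W$, which is what you implicitly use when concluding that $\overline{V_W}$ must meet $W$ once $V_W$ fails to be clopen in the connected space $X$.
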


\begin{proof}
If two geometric walls $W$ and $W'$ intersect, then it is clear that
their corresponding abstract walls are transverse, since
each geometric wall lies in both of its associated halfspaces.

Now suppose $W$ and $W'$ are disjoint geometric walls.
Let $U$ and $V$ be the components of $X-W$, and let $U'$ and $V'$ be
the components of $X-W'$.

Since $W$ is connected and disjoint from $W'$,
either $W \subset U'$ or $W \subset V'$.
Without loss of generality, assume $W \subset V'$.
Thus $W \cap U' = \emptyset$.
Similarly, we can assume that $W' \subset V$.
Thus $W' \cap U = \emptyset$.
Since $W \cup U$ is a connected set in the complement
of $W'$ that interects $V'$, we must have
$W \cup U \subseteq V'$.  Therefore
$(W \cup U) \cap (W' \cup U')$ is empty.

Thus the abstract walls associated to $W$ and $W'$ are not transverse.
\end{proof}

\begin{defn}[Subwallspace]
\label{def:Subwallspace}
Let $(X,\W_X)$ be a wallspace and $Y \subset X$.
The induced \emph{subwallspace} structure on $Y$
is the wallspace $(Y,\W_Y)$
whose \emph{induced walls}
have the form $(Y\cap U,Y\cap V)$, for $(U,V) \in \W_X$.
We will ignore the vacuous partitions $(Y,\emptyset)$ in
a subwallspace whenever convenient, as they play no role in the
dual cube complex.

Note that we allow duplicate walls, but
because of the difficulties discussed in Remark~\ref{rem:Duplicity},
we will only consider subwallspaces when there do not exist distinct
walls $(U,V)$ and $(U',V')$ of $X$ such that
$(Y\cap U,Y\cap V)$ and $(Y\cap U', Y\cap V')$
are the same nonvacuous partition.

When $X$ is a geometric wallspace and $Y$ is a connected
subspace, there do not exist (nonvacuous) induced walls
that are genuine partitions.
Indeed, if $Y\cap W = \emptyset$ then the connected set
$Y$ lies in one of the
open halfspaces of $W$, so the induced wall is vacuous.
\end{defn}

\begin{exmp}
Let $X = \R^2$ and let $\W_X$ be the $x$--axis and $y$--axis.
These two walls are transverse.
Indeed, they intersect as guaranteed by
Lemma~\ref{lem:TransverseWallsMeet}.
We now examine the transversality of walls in induced subwallspaces
on subsets of $X$.

Let $Y = S^1 \subset \R^2$, and let $(Y,\W_Y)$
be the induced subwallspace.
Note that this is not a geometric wallspace.
Observe that the two induced walls of $Y$ are transverse but do not
intersect.  Indeed the same holds when $Y = \{\pm 1\}^2$.

Now however, let us consider
$Z = \bigl\{ (+1,-1), (+1,+1), (-1,+1) \bigr\}$, and let $(Z,\W_Z)$ be the induced subwallspace.
In this case, the induced walls of $Z$ are no longer transverse.
\end{exmp}

\subsection{Tracks and transverse walls}

Consider a group $G$ that acts on a wallspace $(Y,\W)$
and also acts on a $2$--complex $X$.
A $G$--equivariant map $X^0\to Y$ induces a wallspace structure on $X^0$.
Of course we could extend this to a (noncontinuous) $G$--equivariant map
$X \to Y$ to make $X$ into a wallspace.
But this might not be geometrically satisfying.
Instead, the following proposition
explains how to enlarge $X$ to $X'$
so that $X'$ and $Y$ have consistent wall structures,
and $X'$ is a geometric wallspace whose walls are tracks
in the sense of Dunwoody.
It is proven by combining variants of Dunwoody resolutions and Stallings binding ties.

\begin{prop}[Walls as tracks]
\label{prop:WallsAsTracks}
Let $G$ act freely on a simply connected
$2$--complex $X$ and suppose $X^0$ is a wallspace with no betwixting
and the wall system is locally finite, with finitely many orbits of walls
each of which has finitely generated stabilizer.
Then there exists a finite $G$--equivariant expansion of $X$
to a $2$--complex $X'$ with the same $0$--skeleton and
the same set of walls and each wall in $X'$ is represented by
a track.
\end{prop}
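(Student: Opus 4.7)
The plan is to realize each wall as a geometric track in a refined $2$-complex $X'$ with $(X')^0 = X^0$, in three stages. First, for each wall $W = \{U,V\}$ and each $1$-cell $e=[x,y]$ of $X$, the no-betwixting hypothesis assigns each endpoint to a unique open halfspace of $W$. I would place a \emph{crossing point} at the midpoint of $e$ precisely when $x$ and $y$ lie in opposite open halfspaces of $W$; the resulting pattern of marked midpoints on the $1$-skeleton is automatically $G$-equivariant.

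Second, for each $2$-cell $f$ and each wall $W$ crossing $\partial f$, the number of crossings is even, since traversing $\partial f$ cyclically one flips halfspaces exactly at crossing points and must return to the starting halfspace. Inside $f$ I would pair these crossings by a planar system of disjoint arcs, for instance via the innermost-nested matching determined by a fixed starting point on $\partial f$. Local finiteness of $\W$ ensures that only finitely many walls cross $\partial f$, so finitely many arcs are drawn in $f$. Since $G$ acts freely on $X$ and any finite group acting on a disk has a fixed point by Brouwer, the stabilizer of each $2$-cell is trivial, so a pairing may be chosen once per $G$-orbit of $2$-cells and extended equivariantly.

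Third, two problems remain. Arcs from distinct walls may be tangent rather than transverse in $f$, and the arcs representing a single wall $W$ may form a disconnected $1$-submanifold rather than a single track. Both are remedied by a finite $G$-equivariant expansion of $X$ to $X'$, keeping the $0$-skeleton fixed, obtained by combining Dunwoody's resolution (iteratively splitting each $2$-cell along auxiliary $1$-cells to place all arcs in mutually transverse general position) with Stallings' \emph{binding ties} (inserting further $1$-cells that connect the distinct arc components of the pattern of each wall into a single embedded $1$-submanifold). The wallspace structure on $X^0$ is preserved since the expansion adds no $0$-cells and introduces no new separations.

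The principal obstacle is to carry out the binding-tie step $G$-equivariantly. The stabilizer $G_W$ of a wall may a priori act on the arc components of $W$ with infinitely many orbits, but the hypothesis that $G_W$ is finitely generated bounds how many binding ties are needed in a $G_W$-fundamental domain to join everything into a connected track. Together with the finite number of $G$-orbits of walls and the finite local complexity within each $2$-cell, this yields a finite $G$-equivariant expansion $X'$ in which each wall is represented by a single track, as required.
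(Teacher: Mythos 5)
Your proposal is correct in outline but reaches the pattern by a genuinely different route. The paper manufactures the tracks by mapping $X$ to the dual cube complex $C(X^0,\W)$ --- extending $X^0\to C$ over $X^1$ by combinatorial paths and over each $2$--cell by a minimal-area disc diagram --- and then pulls back the hyperplanes; the minimality of the diagrams is what guarantees that every dual curve ends on the boundary, i.e.\ that no spurious closed tracks appear, and a further subdivision is needed because the diagrams may be singular. You instead build the pattern directly: mark a crossing on a $1$--cell exactly when its endpoints lie in opposite open halfspaces (legitimate by the no-betwixting hypothesis), observe that the crossings of a fixed wall on $\partial f$ have even parity, and join them by a non-crossing matching of arcs in each $2$--cell. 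This is more elementary and sidesteps the disc-diagram machinery entirely; note that any non-crossing matching of the crossings of a single wall automatically two-colors the complementary regions of the disk consistently with the halfspaces, and arcs of distinct walls can simply be drawn in general position, so the Dunwoody-style subdivision you invoke for transversality is not really needed in your setting. Your observation that freeness plus Brouwer kills $2$--cell stabilizers, so the matchings can be chosen orbitwise and propagated equivariantly, is the right way to handle equivariance.

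The one place where you are thinner than you should be is the binding-tie stage, which is where the paper spends most of its effort. Two points need to be made explicit. First, when you attach a tie $2$--cell along a path $\sigma$ in $X^1$ together with a new edge $e_\sigma$ joining two crossing points of $W$ in distinct components, every \emph{other} wall separating the endpoints of $\sigma$ is forced to cross $e_\sigma$ an odd number of times, so you must draw its arcs in the new cell as well; you should check (it is true, since every new arc has at least one endpoint on a pre-existing component) that this never \emph{increases} the number of components of any wall's pattern, or the process need not terminate. This is exactly what the paper's geodesic-ladder diagram is engineered to guarantee in its framework. Second, "finitely generated stabilizer bounds the number of ties needed" is not by itself an argument: the standard argument connects one component $t_0$ to $h t_0$ for each generator $h$ of $\Stab(W)$ and then to representatives of the remaining $\Stab(W)$--orbits of components, and this terminates only because the pattern of $W$ is $\Stab(W)$--cocompact (equivalently, the set of $1$--cells joining the two halfspaces is finite modulo the stabilizer). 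That cocompactness is implicit in the intended $H$--wall setting and should be stated as the input, alongside finite generation, that makes the number of ties per orbit finite.
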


\begin{proof}
Let $C$ be the dual cube complex of $X^0$.
Consider the $G$--equivariant map $X^0 \to C$ that sends a $0$--cell
to its canonical $0$--cube.
For each $1$--cell of $X$, choose a combinatorial path in $C$,
and in this way obtain a $G$--equivariant map $X^1 \to C$.
For each $G$--representative of
$2$--cell $R$ of $X$, choose a minimal area
disc diagram $D \to C$ for the path $\boundary R \to C$.
We thus obtain a $G$--equivariant map $X \to C$.
Since $D$ has minimal area, all tracks within $D$ end on
$\boundary D$, and hence all tracks intersect $X^1$
so no ``new'' tracks have been added.

As disc diagrams might be singular, this map might not be
combinatorial.
However, $D$ is a square diagram since $C$ is cubical,
and the $2$--cell $R$ can be subdivided so that the map $R \to D$ is
well-behaved
in the sense that the preimage of a ``dual curve'' (a hyperplane) of $D$
is an embedded arc in $R$.
To see this, for each cut-vertex of $D$, we choose a tree in $R$
projecting to it.  For each cut-edge of $D$ we choose a rectangle in $R$
projecting to it---see the left of Figure~\ref{fig:ExpandedDiagram}.
Finally squares of $D$ which have a $0$--cube on a cut-point can be replaced by pentagons.
The result is that the preimage of dual curves of $D$ are tracks in $R$.

\begin{figure}
\labellist
\small
\pinlabel $e_\sigma$ [r] at 875 319
\pinlabel $p$ [b] at 964 542
\pinlabel $q$ [t] at 962 78
\pinlabel $D'$ at 1148 327
\pinlabel $\sigma$ [bl] at 1234 490
\pinlabel $\alpha$ [r] at 1620 390
\pinlabel $\beta$ [l] at 1718 390
\pinlabel $p$ [b] at 1660 551
\pinlabel $q$ [t] at 1660 82
\pinlabel $D'$ at 1868 327
\pinlabel $\sigma$ [bl] at 1943 493
\endlabellist
\begin{center}
\includegraphics[width=.85\textwidth]{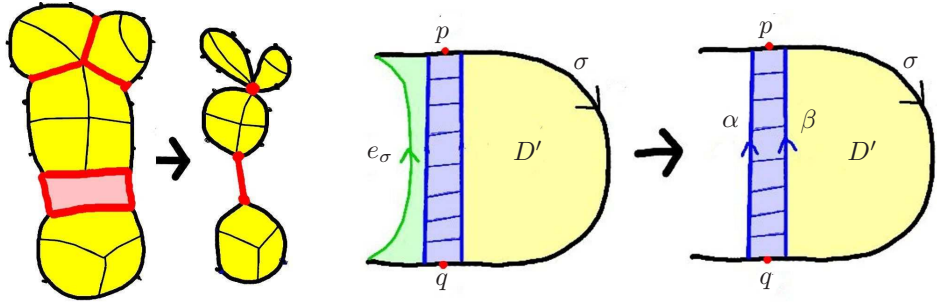}
\end{center}
\caption{On the left we illustrate the map $R\rightarrow D$.
On the right we illustrate the expanded 2-cell and associated disc diagram.}
\label{fig:ExpandedDiagram}
\end{figure}

In conclusion, the preimage of each hyperplane of $C$ is a
\emph{pattern} in $X$, meaning that it is a disjoint union of tracks.

We now use that the stabilizers of walls are finitely generated to
equivariantly connect each such pattern by equivariantly expanding along
$2$--cells.
Indeed, let $p$ and $q$ be points in $X^1$ that lie in distinct
tracks mapping to the same hyperplane $H$ of $C$.
Let $\sigma$ be a combinatorial path whose first $1$--cell contains $p$
and whose last $1$--cell contains $q$.
Replacing $\sigma$ with a subpath if necessary,
we can assume without loss of generality that $p,q$
lie in distinct tracks but that no point of $\sigma$
between $p,q$ maps to $H$.
We add a new $1$--cell $e_\sigma$ whose endpoints are $p,q$
and add a new $2$--cell $R_\sigma$ between $\sigma$ and $e_\sigma$.
We will now choose a disc diagram $D_\sigma$ that $R_\sigma$
will map to, similarly to the method used as above.

We now refer the reader to the right of Figure~\ref{fig:ExpandedDiagram}.
Let $L$ be a geodesic ladder in $C$ joining the $1$--cubes containing the
images $\bar p$ and $\bar q$.
Let $\alpha$ and $\beta$ be the two rails of $L$.
The image $\bar\sigma$ of $\sigma$ is a concatenation
$\bar\sigma_p \bar\sigma' \bar\sigma_q$,
where the last edge of $\bar\sigma_p$ contains $\bar{p}$ and
the first edge of $\bar\sigma_q$ contains $\bar{q}$.
Let $D'$ be a minimal area diagram between $\beta$ and $\bar\sigma'$.
Let $D_\sigma$ be the disc diagram built from
$\bar\sigma_p \cup \bar\sigma_q \cup L \cup_\beta D'$.
We map $e_\sigma$ to the concatenation
$\bar\sigma_p \alpha \bar\sigma_q$.
Finally observe that $\boundary R_\sigma \to C$ equals the boundary
path of $D_\sigma$.
This allows us to map $R_\sigma$ to $D_\sigma$

The ladder $L$ connects the track containing $p$ to the track containing $q$.
No new tracks are introduced.
Indeed the minimal area of $D'$ ensures that each dual curve of $D_\sigma$
intersects the boundary.
The geodesicity of $L$ ensures that dual curves crossing $L$ emerge on
$\bar\sigma'$.
And thus since $e_\sigma$ maps to $\bar\sigma_p \alpha \bar\sigma_q$,
no new tracks emerge on $e_\sigma$.

Using that the stabilizers are finitely generated, we must repeat this procedure only finitely many times to
connect all of the components.
\end{proof}

We now give a technical result asserting that transverse walls
coarsely intersect.  The result is stated using the language of
$H_i$--walls,
and we refer the reader to Definition~\ref{def:HWall}
for the requisite terminology.
Since we will only use this result one time, in the proof of
Theorem~\ref{thm:MainResult},
we suggest that this technical point be skipped on first reading.
We have chosen to  position this lemma together with
Lemma~\ref{lem:TransverseWallsMeet}
and Proposition~\ref{prop:WallsAsTracks}
in order to emphasize that transverse walls do coarsely intersect
in an appropriate sense.

\begin{lem}[Transverse $\Longrightarrow$ Close]
\label{lem:TransverseToCloseSubgroups}
Let $G$ be a group with finite generating set and
Cayley graph $\Gamma$.
Let $H_i$ and $H_j$ be finitely generated subgroups of $G$.
Let $\{U_i,V_i\}$ be an $H_i$--wall and let $\{U_j,V_j\}$ be an
$H_j$--wall.
Then there exists $D$ such that
for all $g,g' \in G$, if the walls
$g\{U_i,V_i\}$ and $g'\{U_j,V_j\}$ are transverse
then $\dist_{\Gamma}(gH_i,g'H_j) < D$.
\end{lem}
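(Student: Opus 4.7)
The plan is to represent the $H_i$- and $H_j$-walls geometrically as tracks in a $2$-complex, so that transversality of walls translates via Lemma~\ref{lem:TransverseWallsMeet} into intersection of the corresponding tracks, and the intersection point automatically lies at uniformly bounded distance from both subgroups in $\Gamma$.

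First I would form the Cayley $2$-complex $X$ of $G$ with respect to a finite presentation whose generating set includes finite generating sets for $H_i$ and $H_j$. Then $G$ acts freely and cocompactly on the simply connected $2$-complex $X$, and $X^{0}$ is identified with $G$. The wallspace on $X^{0}$ consisting of all $G$-translates of $\{U_i,V_i\}$ and $\{U_j,V_j\}$ is locally finite with two $G$-orbits of walls, whose stabilizers each contain $H_i$ or $H_j$ with finite index and are therefore finitely generated.

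To feed this data into Proposition~\ref{prop:WallsAsTracks} I must first eliminate betwixting, which an $H_i$-wall may exhibit on $U_i\cap V_i$. I would replace $\{U_i,V_i\}$ by an $H_i$-equivariantly chosen genuine partition $\{U_i',V_i'\}$ with $U_i - V_i\subseteq U_i'$ and $V_i - U_i\subseteq V_i'$, obtained by distributing $U_i\cap V_i$ between the two sides via a choice of $H_i$-transversal, and similarly for the $j$-wall. Since $U_i\cap V_i$ sits within uniformly bounded Hausdorff distance of $H_i$ in $\Gamma$, transversality of the refined walls coincides with transversality of the originals up to a uniformly bounded error. Now Proposition~\ref{prop:WallsAsTracks} produces an expansion $X'$ of $X$ with the same $0$-skeleton in which the chosen $i$- and $j$-walls are represented by connected tracks $T_i$ and $T_j$. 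Since $G$ acts cocompactly on $X'$ and the stabilizer of $T_i$ contains $H_i$ as a finite-index subgroup, $H_i$ acts cocompactly on $T_i$; hence there exists a constant $D_i$ with $\dist_\Gamma(p,H_i)\le D_i$ for every $p\in T_i$, and similarly a constant $D_j$ for $T_j$ and $H_j$.

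Finally, if $g\{U_i,V_i\}$ and $g'\{U_j,V_j\}$ are transverse, so are the $g$- and $g'$-translates of their refinements, hence by Lemma~\ref{lem:TransverseWallsMeet} applied in the geometric wallspace $X'$ the tracks $gT_i$ and $g'T_j$ intersect at some point $p$. The triangle inequality then yields $\dist_\Gamma(gH_i,g'H_j)\le D_i + D_j$, and we set $D:=D_i+D_j$. The main obstacle is the equivariant elimination of betwixting while preserving transversality and the commensurability of wall-stabilizers with the $H_i$; once this reduction to a geometric wallspace with tracks is carried out, the tracks argument combined with Lemma~\ref{lem:TransverseWallsMeet} does the rest of the work.
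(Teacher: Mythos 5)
Your overall strategy---turn the walls into connected geometric walls so that Lemma~\ref{lem:TransverseWallsMeet} converts transversality into actual intersection, then use cocompactness of each wall under its stabilizer---is the right idea, and the final two steps of your argument are sound. The gap is in the reduction that feeds the data into Proposition~\ref{prop:WallsAsTracks}. To remove betwixting you replace $\{U_i,V_i\}$ by a genuine partition $\{U_i',V_i'\}$ by pushing each point of $U_i\cap V_i$ to one side. But transversality of $g\{U_i,V_i\}$ and $g'\{U_j,V_j\}$ does not imply transversality of the refined walls: a quarterspace such as $gU_i\cap g'U_j$ may be entirely contained in $g(U_i\cap V_i)\cup g'(U_j\cap V_j)$, and after the reassignment every one of its points can land in $gV_i'$ or in $g'V_j'$, so that $gU_i'\cap g'U_j'=\emptyset$. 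Your phrase ``coincides \dots\ up to a uniformly bounded error'' is doing all the work here and is not justified; when a refined quarterspace is empty the tracks $gT_i$ and $g'T_j$ need not meet, and it is not clear how to recover the bound on $\dist_\Gamma(gH_i,g'H_j)$ in that case. (A secondary issue: the lemma assumes only that $G$ is finitely generated, so the Cayley $2$--complex of a finite presentation need not exist; you would have to work with a simply connected $2$--complex over $\Gamma$ with possibly infinitely many orbits of $2$--cells and check that Proposition~\ref{prop:WallsAsTracks} still applies.)

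The paper sidesteps both problems by going in the opposite direction: instead of thinning the walls to partitions and realizing them as tracks in a $2$--complex, it thickens them inside the Cayley graph itself. Using the $\ddot H$--finite frontiers, the $H$--finiteness of $U\cap V$, and the finite generation of $H$, one adds finitely many $H$--orbits of vertices and edges to the $1$--neighborhoods of $U$ and $V$ so that the resulting $U^3$, $V^3$, and $W^3=U^3\cap V^3$ are all connected, $H$--invariant, and $H$--cocompact; this exhibits $\Gamma$ as a geometric wallspace with connected geometric walls $\bar W_i$ and $\bar W_j$. Enlarging halfspaces manifestly preserves transversality, so Lemma~\ref{lem:TransverseWallsMeet} applies directly and yields $g\bar W_i\cap g'\bar W_j\ne\emptyset$, whence $D=\diam\bigl(H_i\backslash\bar W_i\bigr)+\diam\bigl(H_j\backslash\bar W_j\bigr)$ works. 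If you wish to rescue your track-based route, the ``refine to a partition'' step should be replaced by an analogous thickening step that preserves transversality.
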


Technically ``transverse'' was defined only in the context
of a wallspace, but this relation is obviously definable in the same way here.  Indeed $\Gamma$ can be regarded as a wallspace with two walls.

\begin{proof}
We now describe a construction for an arbitrary finitely generated
subgroup $H \le G$ and an
$H$--wall $\{U,V\}$ in $G$.
Let $U^1$ and $V^1$ be the closed $1$--neighborhoods of
$U$ and $V$ in $\Gamma$.
Then $H$ acts cocompactly on $U^1 \cap V^1$.
Here we use that $U$ and $V$ have $\ddot H$--finite frontiers
and $U \cap V$ is $H$--finite.

We now use that $H$ is finitely generated.
By adding finitely many $H$--orbits of vertices and edges to $U^1$ and $V^1$
we obtain $U^2$ and $V^2$ that are connected and where
$W^2 = U^2 \cap V^2$ is nonempty.
By adding finitely many $H$--orbits of vertices and edges to $W^2$,
we obtain an $H$--cocompact $H$--invariant connected graph $W^3$
containing $H$.
Finally let  $U^3 = U^2 \cup W^3$
and let $V^3 = V^2 \cup W^3$.

We note that $U^3$ and $V^3$ are connected and $H$--invariant, since
each is an intersecting union of connected $H$--invariant subspaces.
Moreover $U^3 \cap V^3$ is connected, since it equals $W^3$
by construction.

We apply the above construction to $\{U_i,V_i\}$ and $\{U_j,V_j\}$
to obtain geometric walls $\bigl\{\bar U_i,\bar V_i\bigr\}$
and $\bigl\{\bar U_j, \bar V_j\bigr\}$ in $\Gamma$.
We emphasize that $\bar W_i = \bar U_i \cap \bar V_i$
is a (connected) $H_i$--cocompact geometric wall,
and likewise for $\bar W_j = \bar U_j \cap \bar V_j$
and $H_j$.

Observe that transversality persists after thickening.
Now apply Lemma~\ref{lem:TransverseWallsMeet}
to the transverse geometric walls $g\bigl\{\bar U_i,\bar V_i\bigr\}$
and $g'\bigl\{\bar U_j,\bar V_j\bigr\}$.
Therefore $g \bar W_i \cap g' \bar W_j$ is nonempty.
The result follows by letting
$D = \diam\bigl( H_i \backslash \bar W_i\bigr)
+ \diam\bigl( H_j \backslash \bar W_j\bigr)$.
\end{proof}

\subsection{Codimension--$1$ subgroups}
\label{sub:Codimension1}

Let $G$ be a finitely generated group with Cayley graph $\Gamma=\Gamma(G,S)$.
A subgroup $H\subset G$ is \emph{codimension--$1$} if
the coset graph $\bar\Gamma = H\backslash \Gamma$
is a \emph{multi-ended} graph in the following sense:
$\bar\Gamma - \Lambda$ has $2$ or more infinite components
for some compact subgraph $\Lambda \subset \bar\Gamma$.
Note that this definition is independent of the finite generating set $S$.

For example, it is well known that any copy of $\Z^n$ in $\Z^{n+1}$
is codimension--$1$.
A frequently encountered example arises when
$G$ is isomorphic to a nontrivial amalgamated
product $A *_C B$ or an HNN extension $A *_C$,
in which case $C$ is a codimension--$1$ subgroup
by a result of Scott \cite{Scott77}.
Another suggestive example is any infinite cyclic subgroup of a
closed surface group.

Every finitely generated infinite index subgroup of a free group
is codimension--$1$.
However the same cannot be said for a closed surface group.
The reader is urged to consider a covering space with a compact core
having exactly one boundary circle;
the corresponding subgroup is not codimension--$1$,
but it does have a property we now briefly turn to.

Observe that if $H$ is codimension--$1$, then $H$ is a \emph{divisive}
subgroup of $G$
in the sense that
for some $d>0$ the complement
$\Gamma-\neb_d(H)$ has more than one component $K$ that is \emph{deep},
meaning that $K$ does not lie in $\neb_c(H)$ for any $c>0$.

Returning to the case when $G$ splits, we note that
$A$, $B$, and $C$ are all divisive subgroups
in a nontrivial amalgam $A *_C B$
and likewise $A$ and $C$ are divisive in $A *_C$.

A divisive subgroup is codimension--$1$ precisely when
there exists $d>0$ such that $\Gamma-\neb_d(H)$ has more than
one $H$--orbit of deep components.

The notions of codimension--$1$ and divisive subgroups
are not equivalent
but are occasionally confused in the literature.
Let $n$ denote the number of deep components in
$\Gamma - \neb_d(H)$.
In the special case when $1 < n < \infty$,
the divisive subgroup $H$ has a finite index subgroup $H'$ that is a
codimension--$1$ subgroup of $G$.
Indeed $H$ acts on the collection of deep components
and we can let $H'$ be the kernel of the resulting permutation
homomorphism.

It is often the case that $n$ increases with an increase in $d$.
As mentioned in the introduction, $n$ is stably equal to $2$ for the situation arising
from a codimension--$1$ submanifold.
Recently Caprace--Przytycki explored ``bipolar'' Coxeter groups,
which have the property that $n$ is stably equal to $2$ for the
divisive subgroups stabilizing the walls of the generating reflections
\cite{CapracePrzytycki2011}.

There is a more sensitive notion called the
\emph{number of ends of the group pair $(G,H)$},
which takes a value between $0$ and $\infty$
(see \cite{Scott77}).
A group is codimension--$1$ precisely when this end invariant
takes the value $e(G,H) \ge 2$.

The property that $H$ is divisive in $G$
is equivalent to saying that the pair
$(G,H)$ has more than one ``filtered end'',
indicated in the literature by $\tilde{e}(G,H) \ge 2$
(see \cite{KrophollerRoller89,GeogheganBook2008}).

\subsubsection{Codimension--$1$ subgroups on the boundary
of divisive subgroups}

We now show that if $G$ has a divisive subgroup $H$,
then $G$ has one or more codimension--$1$ subgroups,
which we refer to as \emph{boundary subgroups} of $H$.
A case of great interest here
is when $H$ is divisive but not codimension--$1$,
in which case these boundary subgroups are all conjugate
to each other in $H$.

For example, when $G = A *_C B$ the codimension--$1$
subgroup $C$ is a boundary
subgroup of the divisive subgroup $A$.
When $G= A *_{C^t=D}$ the divisive subgroup $A$ has
boundary subgroups $C$ and $D$.
When $G = \pi_1(S)$ for a surface $S$,
and $\hat{S} \to S$ is an infinite degree cover with a core $T$
having a unique boundary circle $\boundary T$,
the divisive subgroup $\pi_1(\hat{S})$
has boundary subgroup $\pi_1(\boundary T)$.

\begin{thm}
Let $G$ be a finitely generated group.
Every divisive subgroup of $G$ contains a codimension--$1$
subgroup of $G$.
Consequently, $G$ contains a divisive subgroup
if and only if $G$ contains a codimension--$1$ subgroup.
\end{thm}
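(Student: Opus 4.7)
The plan is to prove the first assertion; the biconditional then follows immediately, since codimension--$1$ subgroups are themselves divisive by the observation preceding the statement. So let $H\le G$ be divisive, fix $d>0$ so that $\Gamma-\nbd{H}{d}$ has at least two deep components, and let $\mathcal{K}$ denote this set of deep components, on which $H$ acts by left translation.

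First I dispose of the case where $H$ acts on $\mathcal{K}$ with two or more orbits. Since the closed $d$--ball about the identity is a finite fundamental domain for the action of $H$ on $\nbd{H}{d}$, the image of $\nbd{H}{d}$ in $\bar\Gamma=H\backslash\Gamma$ is a compact subgraph, and its complement has at least two infinite components---one for each $H$--orbit in $\mathcal{K}$. Thus $H$ itself is codimension--$1$ in $G$, and we are done (since $H\subseteq H$).

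Now assume $H$ acts transitively on $\mathcal{K}$. Pick $K\in\mathcal{K}$ and set $H'=\Stab_H(K)\le H$; I claim $H'$ is codimension--$1$ in $G$, witnessed by the $H'$--invariant partition $V(\Gamma)=A\sqcup B$ with $A=K$ and $B=V(\Gamma)-K$. Both $A$ and $B$ escape every bounded neighborhood of $H'$: the set $A=K$ is already deep with respect to $H\supseteq H'$, and $B$ contains the deep translate $hK$ for any $h\in H-H'$ (which exists since $\abs{\mathcal{K}}\ge 2$). The technical heart of the argument is to show that the edge boundary $\partial A=\set{v\in A}{v\text{ is adjacent to some }u\in B}$ is a finite union of $H'$--orbits. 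Since the only neighbors of $K$ outside $K$ lie in $\nbd{H}{d}$, we have $\partial A\subset\nbd{H}{d+1}$, on which $H$ acts cocompactly with some finite fundamental domain $F$. For each $v\in F$ only finitely many deep components are adjacent to $v$, and by the single--orbit hypothesis each such component has the form $h_jK$ for some $h_j\in H$. The condition $hv\in\partial A$ then reduces to $h^{-1}K\in\{h_1K,\ldots,h_rK\}$, carving out the finite union of right cosets $\bigcup_{j=1}^{r} H'h_j^{-1}\subset H$; so the $H$--orbit of $v$ meets $\partial A$ in finitely many $H'$--orbits, and summing over $v\in F$ gives the claimed $H'$--cofiniteness of $\partial A$.

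The conclusion is then routine in the coset graph $\bar\Gamma'=H'\backslash\Gamma$: the image of $\partial A$ is a finite set that separates the images of $A$ and $B$, both of which are infinite by the deepness established above. Removing a compact neighborhood of this finite set therefore leaves $\bar\Gamma'$ with at least two infinite components, certifying that $H'$ is codimension--$1$ in $G$. I expect the sole obstacle to be the $H'$--cofiniteness of $\partial A$; this is precisely where the single--orbit hypothesis is essential, as it is what converts ``finitely many deep components adjacent to $v$'' into ``finitely many right cosets of $H'$ in $H$.''
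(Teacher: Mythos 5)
Your proof is correct and follows essentially the same route as the paper: in both arguments the heart of the matter is that the stabilizer (in $H$) of a deep component acts cofinitely on that component's frontier, so the component and its complement give two infinite pieces in the corresponding coset graph, certifying codimension--$1$. The only real difference is your initial case split, which is superfluous: your case--2 coset computation never actually needs transitivity (a vertex of the fundamental domain lies in at most one deep component, so the elements $h$ with $hv\in\partial A$ form at most one right coset of $H'$ per vertex), and the paper runs the same stabilizer argument for an arbitrary deep component without distinguishing cases.
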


The \emph{frontier} $\boundary S$
of a subset $S \subseteq G$ is the set of vertices
of $S$ that are adjacent to a vertex of $G - S$ in $\Gamma$.

\begin{proof}
Choose a neighborhood $\neb_r(H)$ such that $\Gamma - \neb_r(H)$
has at least two deep components.
For each deep component $\Lambda$, let $K=\Stab_H(\Lambda)$.
Let $E$ denote the union of all the edges
that join $\Lambda$ to $\neb_r(H)$.
Note that $E$ is contained
in the $1$--neighborhood of the frontier of $\Lambda$,
and $\Gamma - E$ has at least two $K$--orbits of deep
components: namely $\Lambda$ and $\Gamma - (\Lambda \cup E)$.
The latter is $K$--deep since it is $H$--deep and $K\subset H$.

Observe that $H$ acts cocompactly on the frontier
$\boundary \bigl(\Gamma-\neb_r(H)\bigr)$,
and $H$ preserves the partition of the frontier into
$\sqcup \boundary \Lambda_i$ where $\{\Lambda_i\}$
is the collection of all components of $\Gamma - \neb_r(H)$.
Consequently each $K_i = \Stab_H(\Lambda_i)$ acts cocompactly
on $\boundary \Lambda_i$.
\end{proof}

\subsection{Wallspaces from codimension--$1$ subgroups}
\label{sub:WallspaceFromCondimension1}

Sageev gave a construction in \cite{Sageev95}
which produces a $G$--action on a $\CAT(0)$ cube complex
from a collection of codimension--$1$
subgroups $H_i$ of the finitely generated $G$.
His construction has two steps:
one first produces a system of walls associated to the subgroups,
and one then produces a cube complex dual to the system of walls.

A codimension--$1$ subgroup $H$ can be used to produce a wall in several
ways:
for instance, Sageev chose the wall $\{HK,\Gamma-HK\}$
where $K$ is a fixed deep complementary component of $\neb_d(H)$.
Alternately one could decompose $\Gamma - \neb_d(H)$
as a disjoint union $K \sqcup K'$ of
disjoint $H$--invariant subspaces that are unions of components.
We then obtain the wall:
\begin{equation}
\label{eqn:Alternate}
\tag{$\ddag$}
   \bigl\{ \neb_d(H) \cup K,
   \ \neb_d(H) \cup K' \bigr\}.
\end{equation}
While it is of greatest interest to insist that both $K$ and $K'$ are deep,
the alternate decomposition in (\ref{eqn:Alternate})
leads to a functional wall---even without assuming that $K$ or $K'$
are deep (in particular, $H$ need not be codimension--$1$).
We will examine such decompositions in
Section~\ref{sub:WallspaceOnGroup}.

A collection $H_1,\dots,H_k$ of subgroups of $G$
together with choices
$K_i,K'_i$ for each $H_i$
leads in this way to a collection of walls in $\Gamma$,
which we extend to a $G$--equivariant collection.
This transforms $\Gamma$, and hence $G$, into a wallspace.

\subsection{A wallspace structure for a group}
\label{sub:WallspaceOnGroup}

In this subsection, we provide a framework for endowing a group $G$
with a wallspace structure associated to finitely may ways of
cutting $G$ along subgroups $H_i$.
This provides important examples of wallspaces
generalizing the construction in
Section~\ref{sub:WallspaceFromCondimension1}.
We will use this material in Section~\ref{sec:RelCocompact}.

\begin{defn}[$H$--wall]
\label{def:HWall}
Let $H$ be a subgroup of $G$.
An \emph{$H$--wall} $\{U,V\}$ is a pair of subsets satisfying the following
conditions:
  \begin{enumerate}
  \item $G= U \cup V$
  \item $\{U,V\}$ is $H$--invariant,
  in the sense that $\{h U,h V\} = \{U,V\}$ for each
  $h \in H$.
  Note that an element might interchange the subsets.
  \item $U \cap V$ is $H$--finite
  \item $U$ and $V$ have $\ddot H$--finite frontiers,
where $\ddot H$ is the subgroup of $H$ (of index at most two)
stabilizing both $U$ and $V$.
  \end{enumerate}

The \emph{stabilizer} of a wall $\{U,V\}$ is the set
$\set{g \in G}{\{gU,gV\}=\{U,V\}}$.
Let $\bar{H}$ denote the stabilizer of the
$H$--wall $\{U,V\}$.
Note that
$H \le \bar{H}$ is finite index when $\{U,V\}$ is nonvacuous.
(The exceptional vacuous $H$--wall $\{G,\emptyset\}$ has $\bar{H}=G$.)
Indeed, $\bar{H}$ stabilizes the union of the
frontiers of $U$ and $V$, and hence stabilizes a collection
of finitely many left cosets of $\ddot{H}$.
When $\{U,V\}$ is nonvacuous, this is a \emph{nonempty}
collection of left cosets.
It thus follows that
$\bar{H}$ is commensurable with $H$.
\end{defn}

\begin{defn}[Wallspace from $H$--walls]
\label{defn:WallsFromHalfspacePairs}
Let $H_1,\dots,H_r$ be subgroups of a finitely generated group $G$.
For each $i$, choose an $H_i$--wall $\{U_i,V_i\}$.
(We do allow repeated subgroups
in the sense that $H_i$ might equal $H_j$ for $i\ne j$.
However, we do not allow repeated walls that are nonvacuous partitions.
So if $U_i \cap V_i = \emptyset$ and $\{U_i,V_i\}=\{U_j,V_j\}$
then either $i=j$ or $U_i,V_i \ne \emptyset$.)

These choices determine a $G$--wallspace whose underlying set is $G$ and
whose walls are $\{gU_i,gV_i\}_i$.
Walls $\{gU_i,gV_i\}_i$ and $\{g'U_j,g'V_j\}_j$
are considered to be equal
if their underlying pairs of halfspaces are equal and $i=j$.
$G$ acts on the wallspace by $g\{U,V\}_i = \{gU,gV\}_i$.
Note that the way in which walls are indexed by their
associated subgroups remains invariant by the $G$--action.
We shall sometimes suppress these indices.

Let us now verify that this construction produces a wallspace.
We first check that each $x \in G$ is betwixted by finitely many walls.
If $gx$ and $g'x$ are in the same $H_i$--orbit, then
$gx=hg'x$ for some $h \in H_i$. Thus $H_i g = H_i g'$.
Since $U_i \cap V_i$ contains finitely many $H_i$--orbits,
there are finitely many right cosets $H_i g$ such that $gx \in U_i \cap V_i$.
If $x$ is betwixted by a wall $\{ g^{-1}U_i,g^{-1}V_i\}_i$ then
$gx \in U_i \cap V_i$.
Therefore the elements $g$ lie in only finitely many right cosets,
i.e., $H_i$--orbits.
Since $\{U_i,V_i\}$ is $H_i$--invariant,
for each $i$ there are only finitely many walls betwixting $x$.

We now verify that $\#(x,y)<\infty$ for all $x,y \in G$.
Here we use that $G$ is finitely generated.
Let $\gamma$ be a path in the Cayley graph from $x$ to $y$.
Since $U_i$ has $\ddot{H}_i$--finite frontier,
we see that finitely many $\ddot{H}_i g$--translates of $\gamma$
intersect the frontier of $U_i$.
If $x$ and $y$ are separated by a wall $\{g^{-1}U_i,g^{-1}V_i\}_i$,
then $g \gamma$ intersects the frontier of $U_i$.
As above there are finitely many such possibilities for each $i$.
Consequently $\#(x,y) < \infty$ for all $x,y$.

There are two ways for distinct walls arising in this construction to
correspond to an identical pair of halfspaces of $G$.
Suppose $\{gU_i,gV_i\} = \{g'U_j,g'V_j\}$ and $i\ne j$.
If $\{U_i,V_i\} \ne \{G,\emptyset\}$ then $\bar{H}_i$ and $\bar{H}_j$
are conjugate, and hence $H_i$ and $H_j$ are commensurable within $G$.
If $\{U_i,V_i\}=\{G,\emptyset\}$ then $H_i$ and $H_j$
can be arbitrary subgroups of $G$.
\end{defn}

\begin{rem}
$H$--walls are related to ``$\ddot{H}$--almost-invariant subsets''
(see e.g., \cite{NibloRoller98}).
A nonvacuous
$H$--wall $\{U,V\}$ is the same as a pair of $\ddot{H}$--almost-invariant sets $U$ and $V$ that cover $G$ and whose intersection is $\ddot{H}$--finite.
If $U$ is an $H$--almost-invariant subset of $G$, then $\{U,G-U\}$ is an $H$--wall.
Conversely, if $\{U,V\}$ is a nonvacuous
$H$--wall then $U$ and $V$ are $\ddot{H}$--almost-invariant sets whose intersection is
$\ddot{H}$--finite.
\end{rem}

\section{The dual $\CAT(0)$ cube complex}
\label{sec:Cubulating}

\subsection{$\CAT(0)$ cube complexes}

\begin{defn}
An \emph{$n$--cube} is a copy of $[-1,1]^n$.
Restricting $i$ of its coordinates to $\pm 1$ yields a \emph{subcube}
which can be identified with an $(n-i)$--cube in various ways.
A \emph{cube complex} is obtained by gluing cubes along subcubes.
(The actual gluing maps are modeled on isometries but are determined completely by the combinatorial data.)

A \emph{flag complex} is a simplicial complex with the property that
$n+1$ vertices span an $n$--simplex if and only if they are pairwise adjacent.

The \emph{link} of a $0$--cube $c \in C^0$
is the complex associated to the ``$\epsilon$--sphere'' about $c$ in $C$,
as illustrated in Figure~\ref{fig:CubeComplexAndLinks}.
More precisely, $\text{link}(c)$ has an $n$--simplex for each corner
of $(n+1)$--cube at $c$, where such corners are glued together precisely
according to the way their associated cubes are glued together.

\begin{figure}
\begin{center}
\includegraphics[width=.7\textwidth]{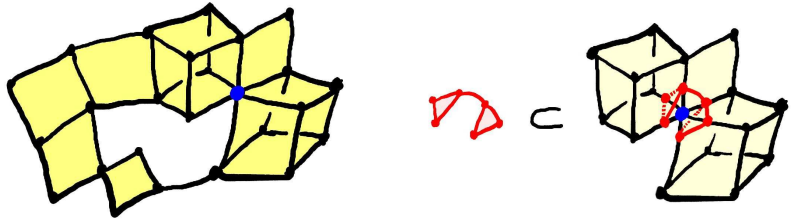}
\end{center}
\caption{On the left is a cube complex.
On the right the link of a vertex is shown.}
\label{fig:CubeComplexAndLinks}
\end{figure}

A cube complex $C$ is \emph{nonpositively curved} if $\text{link}(c)$
is a flag complex for each $c \in C^0$.

Finally, $C$ is $\CAT(0)$ if $C$ is simply connected and nonpositively curved.
Using a slightly different definition,
Gromov introduced nonpositively curved cube complexes as a source of examples
of $\CAT(0)$ metric spaces.
It is a fact that when $C$ is simply connected and satisfies the local nonpositive curvature condition, then $C$ has a $\CAT(0)$ metric
where each $n$--cube is isometric to the standard Euclidean $n$--cube.

This fact was verified by Moussong and Gromov
in the finite dimensional case \cite{Moussong88,Gromov87} and by Leary in
the general case \cite{Leary_KanThurston}.  We will
not call upon this fact until Section~\ref{sec:Truncating}.
\end{defn}

\subsection{The cube complex dual to a wallspace}

Following Sageev, cube complexes dual to wallspaces were constructed
by Niblo--Reeves in \cite{NibloReeves03} where the walls are the
``reflection walls'' naturally considered in the study of Coxeter groups,
and it was exploited by the second author
in \cite{WiseSmallCanCube04} where the walls
are ``hypergraphs'', which are tracks cutting across the $2$--cells in a
$2$--complex.
However the language of wallspaces had not yet been adopted.
Subsequently, the procedure was described
and carefully examined by Nica and Chatterji--Niblo
in the context of cubulating
the spaces with walls of Haglund--Paulin
in \cite{NicaCubulating04,ChatterjiNiblo04}.
We shall now review the construction in the slightly more general
context of wallspaces, as defined in Section~\ref{sub:Wallspaces}.

\begin{const}
\label{const:DualCubeComplex}
We now define the $\CAT(0)$ cube complex $C$ \emph{dual} to a wallspace
$(X,\W)$.
An \emph{orientation $\sigma(W)$ of a wall} $W =\{U,V\}$
is a choice of one of the two ordered pairs:
$(U,V)$ or $(V,U)$.%
\footnote{The reader might wish to review
the formalism advanced in Remark~\ref{rem:Bourbaki}.
In that setting, an orientation of $W$ is an ordered pair
of indexed halfspaces.
This corresponds to a choice of $(-,+)$ or $(+,-)$.
Following this formalism, a wall $W$
whose two halfspaces $H_{-W}$ and $H_{+W}$
are both equal to $X$ actually has two orientations.}
We use the notation $\sigma(W) =
\bigl(\overleftarrow{\sigma}(W),\overrightarrow{\sigma}(W)\bigr)$.

An \emph{orientation $\sigma$ of the wallspace $\W$} is a choice of orientation
$\sigma(W)$ for each wall $W \in \W$.
We emphasize that duplicated walls need not be oriented
in the same way by $\sigma$.

A $0$--cube $c^o$ of $C$ is an orientation of the wallspace
that satisfies the following conditions:
\begin{enumerate}
\item \label{item:Intersect}
$\overleftarrow{c^o}(W) \cap \overleftarrow{c^o}(W') \ne \emptyset$ for all $W,W'\in\W$.
\item \label{item:Finite}
For each $x \in X$, we have $x \in \overleftarrow{c^o}(W)$
for all but finitely many $W\in\W$.
\end{enumerate}

Two $0$--cubes are connected  by a $1$--cube $c^1$ if
there is a unique wall $W$ to which they assign opposite orientations.
In this case, $c^1$ is \emph{dual} to the wall $W$.

For $n\ge 2$, we add an $n$--cube whenever its $(n-1)$--skeleton is present.
\end{const}

\begin{figure}
\labellist
\small\hair 2pt
\pinlabel \color{blue}{$a$} at 10 87
\pinlabel $b$ at 33 114
\pinlabel $c$ at 60 80
\pinlabel $d$ at 118 32
\pinlabel $e$ at 83 54
\pinlabel $f$ at 17 40
\pinlabel ${ \bigl\{ \{a,b,f\}, \{b,c,d,e\} \bigr\} }$ at 71 135
\pinlabel ${ \bigl\{ \{a,b\}, \{a,c,d,e,f\} \bigr\} }$ [bl] at 72 100
\pinlabel ${ \bigl\{ \{a,b,c,e,f\}, \{d,e\} \bigr\} }$ [l] at 116 85
\pinlabel ${ \bigl\{ \{a,b,c,e\}, \{d,f\} \bigr\} }$ [l] at 124 52
\pinlabel ${ \bigl\{ \{a,b,c,e,f\}, \{d,e\} \bigr\} }$ [bl] at 109 -1
\endlabellist
\begin{center}
\includegraphics[width=.4\textwidth]{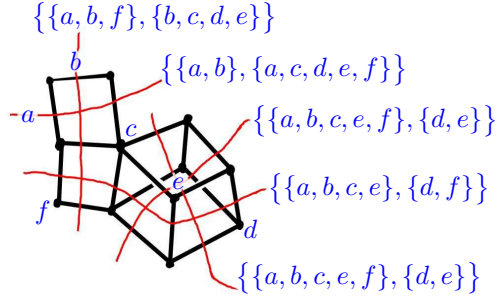}
\end{center}
\caption{A wallspace $(X,\W)$ and its dual cube complex
$C(X,\W)$ are indicated.
The set $X= \{a,b,c,d,e,f\}$
and $\W$ consists of the five walls indicated above.
Note that $a$ betwixts the second wall,
and $e$ betwixts the third and fifth walls.
The fourth wall is a genuine partition, while the others are not.
The third and fifth walls are duplicates.
We have drawn $C(X,\W)$ superimposed on $(X,\W)$
so that the reader can see the dual relationship.
In fact, $X$ can be regarded as a set of points in $C(X,\W)$.
Namely $a,b$ are centers of $1$--cubes,
$c,d,f$ are $0$--cubes, and $e$ is the center of a $3$--cube.
The pairs of closed halfspaces of $C(X,\W)$
induce the given wallspace structure on $X$.}
\label{fig:DummyExample}
\end{figure}

We refer the reader to Figure~\ref{fig:DummyExample}
for an example of a wallspace and its dual cube complex.

\begin{rem}
It is sometimes the case that there are no
``duplicate halfspaces'' in the sense that
no halfspace is associated to more than one wall.
Indeed, this holds in the
setting of Haglund--Paulin,
where the walls are distinct partitions of $X$.
In this case, a $0$--cube is equivalent to a certain
subcollection of the set of all halfspaces,
namely a choice of one halfspace from each wall.
(This is a separate issue from duplicate walls,
which were discussed in Remark~\ref{rem:Duplicity}.)
\end{rem}

Consideration of adjacent $0$--cubes leads to the following definition,
which plays a role in a number of fundamental proofs.

\begin{defn}
\label{def:PartialOrder}
We define a partial order $\preceq$ on $2^X \times 2^X$
as follows: $\bigl( U,V \bigr) \preceq
\bigl( U',V' \bigr)$ if either:
$U \subsetneq U'$ or: $U = U'$ and
$V \supseteq V'$.
As usual the notation $\prec$ indicates $\preceq$ but not equal.
\end{defn}

Let $c$ be a $0$--cube, and
consider the set
\[
   S \ = \ \bigset{c(W)}{W \in \W}
   \ = \
 \bigset{\bigl(\overleftarrow{c}(W),\overrightarrow{c}(W)\bigr)}{W \in \W}.
\]
If the ordered pair $c(W)$ is $\preceq$--minimal
in $S$, then $W$ is dual to a $1$--cube adjacent to the $0$--cube $c$.
However the converse does not hold in general---e.g.,
when $X = \{1,2,3\}$ and $\W =
\bigl\{ \{ \{1,2\},\{1,2,3\} \}, \{ \{2,3\},\{1\}\} \bigr\}$.
We frequently use the contrapositive of this fact, namely:
if a wall $W$ is not dual to an adjacent $1$--cube, then
$c(W)$ is not $\preceq$--minimal.

\begin{rem}[Boundary components]
\label{rem:BoundaryComponents}
If we were to omit Condition~(2) from the definition of a $0$--cube,
Construction~\ref{const:DualCubeComplex} would produce
a disjoint union of $\CAT(0)$
cube complexes, of which the dual cube complex
$C$ is one connected component.
The other components are \emph{boundary components}
and can be associated with a boundary at infinity
in a manner that has not yet been fully investigated
(see for instance Roller \cite{RollerPocSets} and
Guralnik \cite{GuralnikBoundaries,GuralnikLocalFiniteness}).
Some interesting applications towards the Poisson boundary
are explored by Nevo--Sageev in \cite{NevoSageevBoundary}.
We emphasize that Propositions
\ref{prop:SimplyConnected}~and~\ref{prop:NPC}
apply to each boundary component.

We recommend that the reader consider the example of a wallspace
$(X,\W)$
arising from two infinite systems of parallel lines in $\R^2$.
When we omit Condition~(2), Construction~\ref{const:DualCubeComplex}
provides a cube complex consisting of nine components,
each of which is a $\CAT(0)$ cube complex.
One of these components is the dual cube complex $C(X,\W)$.
\end{rem}

\begin{defn}[Canonical cubes]
Let $x_0 \in X$.
Let $\set{W_j}{j \in J}$ be the set of walls of $\W$ that betwixt $x_0$,
and let $\set{W_i}{i\in I}$ be the set of walls of $\W$ that do not
betwixt $x_0$.
For each $i \in I$, we orient $W_i$ so that its left halfspace
contains $x_0$.
Observe that $J$ is finite, by the definition of wallspace.
The $2^{\abs{J}}$ distinct orientations of $\set{W_j}{j\in J}$
determine a $\abs{J}$--cube.
This cube is the \emph{canonical cube} corresponding to $x_0$.

In the case where all walls are genuine partitions of $X$,
all canonical cubes are $0$--cubes,
and these have been termed ``canonical vertices'' in the literature.
\end{defn}

\begin{thm}
$C$ is a $\CAT(0)$ cube complex.
\end{thm}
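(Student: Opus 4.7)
The plan is to decompose the assertion into the three defining properties of a $\CAT(0)$ cube complex and establish each in turn: (i) $C$ is a cube complex with flag links at every $0$--cube; (ii) $C$ is connected; (iii) $C$ is simply connected. Properties (i)--(iii) correspond to the three propositions (\textbf{NPC}, \textbf{Connected}, \textbf{SimplyConnected}) foreshadowed in Remarks~\ref{rem:Duplicity}~and~\ref{rem:BoundaryComponents}, so once those are in hand the theorem will follow immediately.

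For the flag link condition, I would argue directly from Construction~\ref{const:DualCubeComplex}. Suppose $v_0,\dots,v_n$ are vertices in the link of a $0$--cube $c$ that are pairwise joined by edges; these correspond to walls $W_0,\dots,W_n$ such that every pairwise flip $c_{\{i,j\}}$ is a legal $0$--cube of $C$. I would verify that for \emph{every} subset $S\subseteq\{0,\dots,n\}$ the orientation $c_S$ obtained by flipping exactly the walls indexed by $S$ again satisfies Conditions~(\ref{item:Intersect})~and~(\ref{item:Finite}). Condition~(\ref{item:Finite}) is automatic since $c_S$ differs from $c$ on only finitely many walls. For Condition~(\ref{item:Intersect}), one checks case by case which of the two walls being compared lie in $S$, in $\{W_0,\dots,W_n\}\setminus S$, or outside $\{W_0,\dots,W_n\}$, and in each case reduces the required nonempty intersection to one of the pairwise-square hypotheses or to a property of $c$ itself. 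This yields all $2^{n+1}$ vertices of the prospective $(n{+}1)$--cube, whence the cube is included by construction.

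For connectedness, given $0$--cubes $c$ and $c'$, Condition~(\ref{item:Finite}) forces them to disagree on only finitely many walls $W_1,\dots,W_n$. The strategy is to flip the disagreeing walls one at a time, producing an edge path from $c$ to $c'$. The key device is the partial order $\preceq$ from Definition~\ref{def:PartialOrder}: at each stage, I would pick a wall $W_i$ on which $c$ and $c'$ still disagree and whose associated ordered pair $c(W_i)$ is $\preceq$--minimal among the disagreeing pairs. Minimality guarantees that after flipping $W_i$, the resulting orientation still satisfies Condition~(\ref{item:Intersect}) with every other wall, so the intermediate configuration is again a legitimate $0$--cube and we have moved one step closer to $c'$. (This is the step that genuinely uses the prohibition on duplicated partition walls discussed in Remark~\ref{rem:Duplicity}.)

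For simple connectedness, I would show that any combinatorial loop $\gamma$ in $C^{(1)}$ bounds a disc diagram of squares. Each wall is crossed an even number of times by $\gamma$; I would choose a pair of consecutive dual $1$--cubes of $\gamma$ crossing the same wall with no other crossing of that wall between them, and reduce them either by backtracking cancellation or by a sequence of square flips, each of which is justified by the flag condition already established. Iterating drives the loop down to a trivial loop, proving $\pi_1 C = 1$. I expect the main obstacle to lie in the connectedness step: the $\preceq$--minimality argument is subtle, and one must be careful that the relation $\preceq$ on ordered halfspace-pairs (rather than on halfspaces themselves) really does force the newly flipped orientation to intersect all others. The remaining steps are largely formal once the flag condition is in place.
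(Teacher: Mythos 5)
Your decomposition is exactly the paper's (Propositions~\ref{prop:Connected}, \ref{prop:SimplyConnected}, \ref{prop:NPC}), and your plans for connectedness and for the flag condition are essentially the paper's own arguments. The connectedness step differs only cosmetically: you join two arbitrary $0$--cubes by repeatedly flipping a $\preceq$--minimal disagreeing wall, while the paper joins an arbitrary $0$--cube to the canonical cube of a basepoint; the case analysis you would need (and the role of forbidding duplicate genuine partitions) is precisely the Claim in the proof of Proposition~\ref{prop:Connected}, with the nonemptiness of the flipped halfspace supplied by the fact that it is a halfspace of the target $0$--cube. One small imprecision: the finiteness of the set of walls on which two $0$--cubes disagree does not follow from Condition~(\ref{item:Finite}) alone; a point witnessing agreement there could betwixt a wall on which the two $0$--cubes disagree, so you also need the wallspace axiom that each point betwixts only finitely many walls.

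The genuine gap is in the simple-connectedness step. The flag condition does not justify your ``square flips'': by Construction~\ref{const:DualCubeComplex} a $2$--cube is glued in only once its boundary $4$--cycle is present in the $1$--skeleton, and the entire issue is to produce that $4$--cycle, i.e.\ to show that flipping $W_1$ and the wall $W_2$ dual to the adjacent edge (separately and simultaneously) again yields $0$--cubes. The paper gets this by choosing the pair $e_p,e_q$ innermost in the \emph{strong} sense---no pair $(r,s)\ne(p,q)$ with $p\le r<s\le q$ has $e_r,e_s$ dual to a common wall---so that $W_2$ is crossed exactly once strictly between them; then the four $0$--cubes at the ends of $e_p$ and $e_q$ realize all four orientation combinations of $(W_1,W_2)$, and Condition~(\ref{item:Intersect}) forces $W_1,W_2$ to be transverse, which is what produces the square. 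Your weaker choice (``no other crossing of \emph{that} wall between them'') does not prevent $W_2$ from being crossed several times inside the segment, in which case the four-corner argument breaks down and the swap is not yet justified. Supplying the strong innermost choice and this transversality argument is the missing idea; the rest of your plan then goes through as in the paper.
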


\begin{proof}
We follow Sageev \cite{Sageev95} here.  We must show that $C$ is connected,
that $\pi_1(C)=1$, and that $C$ is nonpositively curved.
These are verified in the following three propositions.
\end{proof}

\begin{prop}
\label{prop:Connected}
$C$ is connected.
\end{prop}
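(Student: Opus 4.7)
The plan is to reduce connectedness of $C$ to one technical claim, a ``Key Lemma'': for any two $0$--cubes $c,c'$ the \emph{difference set}
$\Delta(c,c') := \set{W \in \W}{c(W) \ne c'(W)}$
is finite, and whenever $\Delta(c,c') \ne \emptyset$ there is $W_0 \in \Delta(c,c')$ such that reversing $c$'s orientation at $W_0$ produces a $0$--cube $c''$ with $\Delta(c'',c') = \Delta(c,c') \setminus \{W_0\}$. Granted the Key Lemma, induction on $\abs{\Delta(c,c')}$ strings together at most $\abs{\Delta(c,c')}$ consecutive $1$--cubes to join $c$ to $c'$, proving $C$ is connected.

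First I would verify finiteness of $\Delta(c,c')$. Fix any $x \in X$. Condition~(\ref{item:Finite}) in the definition of a $0$--cube forces the two sets $\set{W}{x \notin \overleftarrow{c}(W)}$ and $\set{W}{x \notin \overleftarrow{c'}(W)}$ to be finite. For a wall $W$ outside both of them that does not betwixt $x$, the point $x$ lies in exactly one halfspace of $W$, and both $c$ and $c'$ must place that halfspace on the left, so $c(W) = c'(W)$. Together with the finitely many walls that betwixt $x$, this confines $\Delta(c,c')$ to a finite set.

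Next I would produce the flippable wall $W_0$. Choose $W_0 \in \Delta(c,c')$ so that $c(W_0)$ is $\preceq$--minimal in $\bigset{c(W)}{W \in \Delta(c,c')}$. Since $c''$ differs from $c$ only at $W_0$, Condition~(\ref{item:Finite}) transfers from $c$ to $c''$ at once. For Condition~(\ref{item:Intersect}) I must verify that $\overrightarrow{c}(W_0) \cap \overleftarrow{c}(W) \ne \emptyset$ for every $W \ne W_0$. Walls $W \notin \Delta(c,c')$ are free: the intersection equals $\overleftarrow{c'}(W_0) \cap \overleftarrow{c'}(W)$, which is nonempty by Condition~(\ref{item:Intersect}) applied to $c'$. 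For $W \in \Delta(c,c') \setminus \{W_0\}$ I would argue by contradiction: emptiness of the intersection and the covering identity $X = \overleftarrow{c}(W_0) \cup \overrightarrow{c}(W_0)$ force $\overleftarrow{c}(W) \subseteq \overleftarrow{c}(W_0)$, and any proper inclusion yields $c(W) \prec c(W_0)$, contradicting minimality.

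The hard part will be the equality case $\overleftarrow{c}(W) = \overleftarrow{c}(W_0)$, where the first clause of $\preceq$ is silent. Here the same assumption additionally gives $\overleftarrow{c}(W_0) \cap \overrightarrow{c}(W_0) = \emptyset$, so $W_0$ is a genuine partition and $\overrightarrow{c}(W_0) = X \setminus \overleftarrow{c}(W_0)$; covering then forces $\overrightarrow{c}(W) \supseteq \overrightarrow{c}(W_0)$. A strict inclusion activates the second clause of $\preceq$ and delivers $c(W) \prec c(W_0)$, another minimality violation. Exact equality makes $c(W) = c(W_0)$ as ordered pairs, so $W$ and $W_0$ are duplicated walls that are both genuine partitions, which is precisely what condition~$(\dag)$ of Remark~\ref{rem:Duplicity} outlaws. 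This equality subcase is the one place where the paper's relaxation of the Haglund--Paulin wall definition needs tracking, and $(\dag)$ is doing the essential work there.
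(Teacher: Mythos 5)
Your proposal is correct and follows essentially the same route as the paper's proof: a greedy descent that flips a $\preceq$--minimal ``misoriented'' wall at each step and invokes the no-duplicate-genuine-partitions condition $(\dag)$ in exactly the same equality subcase, the only difference being that you measure disagreement between the two $0$--cubes directly via the finite set $\Delta(c,c')$, whereas the paper measures it against a basepoint $x_0$ and walks each $0$--cube to the canonical cube of $x_0$. The one detail to add in your equality subcase is that both halfspaces of $W_0$ are nonempty, so that $(\dag)$ genuinely applies: $\overleftarrow{c}(W_0)\ne\emptyset$ by Condition~(\ref{item:Intersect}), and $\overrightarrow{c}(W_0)=\overleftarrow{c'}(W_0)\ne\emptyset$ because $W_0\in\Delta(c,c')$ and $c'$ is a $0$--cube --- the same point the paper checks when it observes that $W_1$ is not vacuous.
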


Proposition~\ref{prop:Connected}
would be false if there are duplicate nonvacuous walls
that are genuine partitions $\{U,{X-U}\}$
(see Remark~\ref{rem:Duplicity}).
Indeed
if $u \in U$ and $v \in X-U$ and there are two walls
of the form $\{U,X-U\}$, then there would be no path
between the canonical cubes corresponding to $u$ and $v$.
Therefore $C$ would be disconnected.
Vacuous walls must always be oriented towards $X$.
Thus their presence has no effect on $C$.

\begin{proof}[Proof of Proposition~\ref{prop:Connected}]
It is critical that there are no duplicate walls that are genuine partitions (see Remark~\ref{rem:Duplicity}).
Choose a basepoint $x_0 \in X$.
Let $d$ be an arbitrary $0$--cube of $C$.
Let $W_1,\dots,W_n$ be the walls
such that $x_0 \notin \overleftarrow{d}(W_i)$.

Consider the ordered pairs $\bigl\{ d(W_1),\dots,d(W_n) \bigr\}$.
One of these, say $d(W_1)$, must be $\preceq$--minimal
in $\bigl\{ d(W_1),\dots,d(W_n) \bigr\}$
in the sense of Definition~\ref{def:PartialOrder}.
As claimed below,
the orientation of $W_1$ can be reversed
to obtain another $0$--cube adjacent to $d$.
Repeating this procedure $n$ times, we eventually reach a $0$--cube $c$
such that $x_0 \in \overleftarrow{c}(W)$ for all $W \in \W$.
Finally we note that $c$ lies on the canonical cube corresponding to
$x_0$.

\textit{Claim:}
$\overrightarrow{d}(W_1) \cap \overleftarrow{d}(W) \ne \emptyset$
for all $W \in \W$.

We first note that
if $W\notin\{W_1,\dots,W_n\}$ then clearly
$\overrightarrow{d}(W_1) \cap \overleftarrow{d}(W) \ne \emptyset$ since both halfspaces contain $x_0$.

We now show that
$\overleftarrow{d}(W_i) \cap \overrightarrow{d}(W_1) \ne \emptyset$
for $i\in\{2,\dots,n\}$.
There are two cases:

\textit{Case~1:} If
$\overleftarrow{d}(W_i) \not\subseteq \overleftarrow{d}(W_1)$, then
$\overleftarrow{d}(W_i)$ intersects
$\bigl( X - \overleftarrow{d}(W_1) \bigr)$
which lies in $\overrightarrow{d}(W_1)$.
Hence
$\overleftarrow{d}(W_i) \cap \overrightarrow{d}(W_1) \ne \emptyset$.

\textit{Case~2:} If
$\overleftarrow{d}(W_i) \subseteq \overleftarrow{d}(W_1)$,
then we must have
$\overleftarrow{d}(W_i) = \overleftarrow{d}(W_1)$
since $d(W_i) \nprec d(W_1)$.
Arguing by contradiction, suppose that
$\overrightarrow{d}(W_1) \cap \overleftarrow{d}(W_i) = \emptyset$.
Then $\overrightarrow{d}(W_1) \cap \overleftarrow{d}(W_1) = \emptyset$;
in other words, $W_1$ is a genuine partition.
Observe that $W_1$ is not a vacuous wall $\{X,\emptyset\}$
since both of its halfspaces are nonempty.
Indeed, $x_0 \in \overrightarrow{d}(W_1)$ by hypothesis,
and $\overleftarrow{d}(W_1) \cap \overleftarrow{d}(W_i) \ne \emptyset$
by the definition of $0$--cube.
It is impossible for $W_i$ to be a duplicate of $W_1$, since
by hypothesis that there are no duplicate genuine partitions
(with the possible exception of vacuous walls).
Therefore $W_i$ is not (also) a genuine partition;
i.e., $\overleftarrow{d}(W_i) \cap \overrightarrow{d}(W_i)
\ne\emptyset$.
Therefore
\[
   \overrightarrow{d}(W_1) = X - \overleftarrow{d}(W_1)
   = X- \overleftarrow{d}(W_i)
   \subsetneq \overrightarrow{d}(W_i).
\]
Consequently $d(W_i) \prec d(W_1)$,
which contradicts the minimality of $d(W_1)$.
Thus we must have
$\overleftarrow{d}(W_i) \cap \overrightarrow{d}(W_1) \ne \emptyset$.
\end{proof}

\begin{prop}
\label{prop:SimplyConnected}
$C$ is simply connected.
\end{prop}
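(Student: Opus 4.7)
The plan is to show, by induction on length, that every closed edge loop $\gamma$ in the $1$--skeleton of $C$ is null-homotopic through the squares of $C$. Writing $\gamma = e_1 e_2 \cdots e_n$ with $e_k$ dual to a wall $W_k$, and denoting the successive $0$--cube vertices by $c_0, c_1, \ldots, c_n = c_0$, the fact that $\gamma$ closes up forces each wall's orientation to be restored at the end, so each wall appears an even number of times in $(W_1, \ldots, W_n)$. The case $n = 0$ is trivial, and when $n > 0$ I would select indices $i < j$ with $W_i = W_j$ minimizing $j - i$; by this minimality, no wall appears twice in the sublist $W_{i+1}, \ldots, W_{j-1}$.

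The crux of the argument, and the only real obstacle, is the claim that for each $k$ with $i < k < j$ the walls $W_i$ and $W_k$ are transverse. My plan is to track the orientations of $W_i = \{A, B\}$ and $W_k = \{C, D\}$ at the four $0$--cubes $c_{i-1}, c_i, c_{j-1}, c_j$ flanking the two crossings of $W_i$. Along these, $W_i$ receives the orientations $(A, B)$, $(B, A)$, $(B, A)$, $(A, B)$ respectively, since only $e_i$ and $e_j$ flip it. By the minimality of $j - i$, the wall $W_k$ is flipped exactly once in the subloop from $c_{i-1}$ to $c_j$ (namely by $e_k$), so $W_k$ carries a single orientation, say $(C, D)$, at $c_{i-1}$ and $c_i$, and the opposite orientation $(D, C)$ at $c_{j-1}$ and $c_j$. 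Applying Condition~(\ref{item:Intersect}) of Construction~\ref{const:DualCubeComplex} to these four $0$--cubes then forces the four intersections $A \cap C$, $B \cap C$, $B \cap D$, $A \cap D$ all to be non-empty, which is precisely the transversality of $W_i$ and $W_k$.

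With the key claim established, the loop can be homotoped to a shorter one. For each consecutive pair of walls encountered during the commutation, the transversality produces a $0$--cube realizing the ``fourth corner'' of a potential square, and hence an actual $2$--cube of $C$; around this square is a null-homotopy that replaces a subpath $e_m e_{m+1}$ by the parallel subpath along the opposite sides. Iterating these square homotopies, the crossing of $W_i$ originally at position $i$ can be commuted rightward across positions $i + 1, \ldots, j - 1$ until it sits adjacent to the other crossing of $W_i$ at position $j$; two consecutive edges dual to the same wall form a backtrack that cancels. The result is a closed loop of length $n - 2$ homotopic to $\gamma$, and the inductive hypothesis completes the argument. Beyond the transversality claim, the remaining verifications (that each intermediate square is actually a cube of $C$) are routine applications of the pairwise-intersection condition together with the transversality produced by the claim.
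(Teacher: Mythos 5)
Your proof is correct and follows essentially the same route as the paper's: choose a repeated wall with minimal gap, use the four $0$--cubes flanking its two crossings to realize all four orientation combinations (hence transversality via Condition~(1)), then shorten the loop by square swaps and a backtrack cancellation. The only cosmetic difference is that you establish transversality of $W_i$ with every intermediate wall upfront, whereas the paper proves it only for the wall dual to the adjacent edge and re-selects an innermost pair after each swap.
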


\begin{proof}
Consider a closed edge path $P=e_1\cdots e_t$.
Choose an ``innermost'' pair of edges $e_p,e_q$
dual to the same wall $W_1$.
By this we mean that there does not exist a pair $(r,s) \ne (p,q)$
with $p\le r < s \le q$ such that $e_r$ and $e_s$ are dual to the same
wall $W$.

If they are consecutive then $e_p = e_q^{-1}$, and
we homotope by removing a backtrack.

Otherwise let $W_2$ be the wall dual to $e_{p+1}$.
We show below that $W_1$ and $W_2$ are transverse.
Consequently the orientations of
$W_1,W_2$ can be independently reversed
at the terminal $0$--cube $c_{++}$ of $e_p$ to provide a
$4$--cycle $e_p e_{p+1} (e'_p)^{-1} (e'_{p+1})^{-1}$
in the $1$--skeleton,
which bounds a $2$--cube by construction.
Specifically, its four $0$--cubes are $c_{\pm \pm}$, where
$c_{+-}$ is the initial $0$--cube of $e_p$ and $c_{-+}$ is the terminal
$0$--cube of $e_{p+1}$ and $c_{--}$ is obtained from
$c_{++}$ be reversing the orientations of both $W_1$ and $W_2$.

We are thus able to homotope $e_p e_{p+1}$ to $e'_{p+1} e'_p$.
The resulting path
\[
   e_1 \cdots e_{p-1} e'_{p+1} e'_p e_{p+2} \cdots e_t
\]
contains a shorter innermost pair of edges dual to $W_1$.
Continuing this process and removing backtracks, we eventually arrive at
a trivial path.

To see that $W_1$ and $W_2$ are transverse,
observe that
since the edges $e_{p+1},\dots,e_{q-1}$ are dual to distinct walls,
the edge $e_{p+1}$ is the only one dual to $W_2$.
Let $x$ and $y$ be the initial and terminal $0$--cubes of $e_p$.
Let $z$ and $w$ be the initial and terminal $0$--cubes of $e_q$.
Then $w,x,y,z$ assign orientations to the walls $W_1$ and $W_2$
in all four possible combinations.
By the definition of $0$--cube, all four intersections of
halfspaces are therefore nonempty.
Therefore $W_1$ and $W_2$ are transverse.
By the definition of the dual cube complex $C$,
the $1$--cubes $e_p,e_{p+1}$ dual to $W_1,W_2$ form the corner of a
$2$--cube at $y$.  The path $e'_{p+1} e'_p$ is at the opposite corner.
\end{proof}

\begin{prop}
\label{prop:NPC}
$C$ is nonpositively curved.
\end{prop}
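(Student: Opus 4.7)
The plan is to characterize the link of a $0$--cube $c \in C$ in terms of walls and show that pairwise adjacency of $n+1$ vertices of $\text{link}(c)$ forces a full $(n+1)$--cube at $c$, hence the link is flag.

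First I would identify the vertices of $\text{link}(c)$: each vertex corresponds to a $1$--cube at $c$, which in turn corresponds to a wall $W$ such that reversing $c(W)$ produces another $0$--cube $c'$. Call such a wall \emph{flippable at $c$}. Next I would identify the edges: a $2$--cube at $c$ is built on a $4$--cycle $c,c',c'',c'''$ whose four $0$--cubes differ in the orientations of exactly two walls $W,W'$ (in all four combinations). Exactly as at the end of the proof of Proposition~\ref{prop:SimplyConnected}, the four intersections
\[
\overleftarrow{c}(W)\cap \overleftarrow{c}(W'),\ \overleftarrow{c}(W)\cap \overrightarrow{c}(W'),\ \overrightarrow{c}(W)\cap \overleftarrow{c}(W'),\ \overrightarrow{c}(W)\cap \overrightarrow{c}(W')
\]
are all nonempty precisely when $W$ and $W'$ are transverse. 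Thus an edge of $\text{link}(c)$ between the vertices of $W$ and $W'$ exists if and only if $W$ and $W'$ are flippable at $c$ and transverse.

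Now suppose $n+1$ vertices of $\text{link}(c)$ are pairwise adjacent; let $W_0,\dots,W_n$ be the corresponding pairwise transverse flippable walls. By construction an $(n+1)$--cube is present whenever its $n$--skeleton is, so it suffices to exhibit the $2^{n+1}$ corner $0$--cubes. For each subset $S \subseteq \{0,\dots,n\}$ define an orientation $c_S$ of $\W$ by
\[
c_S(W) \ = \ \begin{cases} \text{the flip of } c(W_i), & W = W_i,\ i \in S, \\ c(W), & \text{otherwise.}\end{cases}
\]
Condition~(\ref{item:Finite}) of Construction~\ref{const:DualCubeComplex} holds for $c_S$ since $c_S$ differs from $c$ on only finitely many walls. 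Condition~(\ref{item:Intersect}) requires checking that $\overleftarrow{c_S}(W)\cap \overleftarrow{c_S}(W')\ne\emptyset$ for all $W,W'\in\W$, which I would verify by case analysis. If neither $W$ nor $W'$ lies in $\{W_i : i \in S\}$, the intersection equals the corresponding one for $c$. If exactly one of them is flipped, say $W = W_i$ with $i \in S$, then $\overleftarrow{c_S}(W_i)=\overrightarrow{c}(W_i)$, and nonemptiness of $\overrightarrow{c}(W_i) \cap \overleftarrow{c}(W')$ follows from the fact that the flipped orientation $c_{\{i\}}$ is itself a $0$--cube (which holds because $W_i$ is flippable at $c$). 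Finally, if both are flipped, say $W=W_i$ and $W'=W_j$ with $i,j\in S$, then we need $\overrightarrow{c}(W_i)\cap \overrightarrow{c}(W_j)\ne\emptyset$, which is precisely one of the four intersections guaranteed by transversality of $W_i$ and $W_j$.

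Hence each $c_S$ is a valid $0$--cube of $C$. The $0$--cubes $\{c_S\}$ and the $1$--cubes obtained by reversing a single $W_i$ form the $1$--skeleton of a combinatorial $(n+1)$--cube, so the cube-addition rule in Construction~\ref{const:DualCubeComplex} inductively produces the $(n+1)$--cube, giving the required $n$--simplex in $\text{link}(c)$. Therefore $\text{link}(c)$ is a flag complex for every $0$--cube $c$, and $C$ is nonpositively curved. The main obstacle is the case analysis for Condition~(\ref{item:Intersect}) above, where the ``both flipped'' case is exactly the place where transversality (all four pairwise intersections nonempty) is essential.
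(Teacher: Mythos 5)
Your proof is correct and takes essentially the same route as the paper's (sketched) argument: pairwise adjacent vertices of $\text{link}(c)$ correspond to pairwise transverse flippable walls, and flipping subsets of these walls produces the corner $0$--cubes of a cube at $c$, so the link is flag. You simply fill in details the paper leaves to a sketch; the only point the paper mentions that you omit is the (easy) observation that $\text{link}(c)$ is simplicial, i.e.\ only a single cube is added for each collection of pairwise crossing walls.
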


\begin{proof}[Sketch]
We first observe that $\text{link}(c)$ is a \emph{simplicial} complex.
The main point is that only a single $i$--cube is added for each $i$--fold
collection of pairwise crossing walls.

Consider $n$ pairwise adjacent vertices in $\text{link}(c)$.
Their associated $1$--cubes are dual to $n$ pairwise crossing walls.
These $n$ vertices span an $(n-1)$--simplex that is a corner of the cube
associated to these $n$ walls (and this $0$--cube).
\end{proof}

\begin{rem}[The induced action on $C(X,\W)$]
\label{rem:InducedAction}
In conclusion, we note that if $G$ acts on a wallspace $(X,\W)$,
there is an induced action on $C(X,\W)$.
The action on the $0$--skeleton is given by
$\overleftarrow{gc} (gW) = g \overleftarrow{c}(W)$
or equivalently $\overleftarrow{gc} (W) = g \overleftarrow{c}(g^{-1}W)$.
We note that the action preserves the correspondence between
walls and hyperplanes.
\end{rem}

\subsection{Cubes}
\label{subsec:Cubes}

In this subsection we examine the cubes of the dual cube complex
in more detail.
It is easiest to describe the maximal cubes of $C(X,\W)$,
which correspond to finite cardinality
maximal collections of pairwise transverse walls.
The description of an arbitrary cube is a bit more subtle
since an arbitrary cube is not always contained in a maximal cube.
Instead we find that an arbitrary cube corresponds to a certain
collection of halfspaces.
Much of what we discuss holds for the cubes
in the boundary components mentioned in
Remark~\ref{rem:BoundaryComponents}.
Indeed one is led to these boundary components by considering infinite
cardinality maximal collections of pairwise transverse walls.

Consider a $k$--dimensional cube $c$ in $C(X,\W)$,
and let $d$ be a $0$--cube of $c$.
The $k$ distinct $1$--cubes at the corner of $c$ containing $d$
are dual to pairwise transverse walls $\{W_1,\dots,W_k\}$.
Traveling from $0$--cube to $0$--cube in the $1$--skeleton of $c$
corresponds to reversing the orientations of these walls,
whilst preserving the orientations of all other walls in $\W$.
We refer to $\{W_1,\dots,W_k\}$ as the \emph{independent}
walls of $c$ and the remaining walls as \emph{dependent} walls.
See Figure~\ref{fig:IndependentWalls} for an example.
In Section~\ref{subsec:Hemi} we introduce the notion
of a hemiwallspace, which generalizes this viewpoint on cubes
to interpret arbitrary convex subcomplexes.

\begin{figure}[ht]
\begin{center}
\includegraphics[width=.4\textwidth]{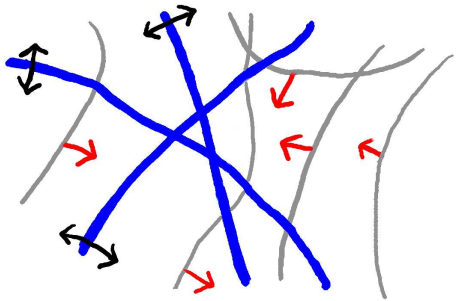}
\end{center}
\caption{A collection of walls that corresponds to a $3$--cube
in $C(X,\W)$.  The figure shows $3$ independent walls and $5$ dependent walls.
Note that this $3$--cube lies in a $4$--cube.}
\label{fig:IndependentWalls}
\end{figure}

We note that a $0$--cube has no independent walls---indeed
each wall in $\W$ is assigned a fixed orientation.
A $1$--cube has a single independent wall, namely the wall dual to it.

We emphasize that the data determining the $k$--cube $c$
requires a fixed choice of orientation for each dependent wall,
but allows the orientation to vary for the $k$ independent walls
to reach all $2^k$ of its $0$--cubes.
The fixed orientations of the dependent walls must be chosen to
satisfy Conditions (1)~and~(2) of the definition of $0$--cube.
There is always at least one way to make these fixed choices
(see Remark~\ref{rem:WallsAndPoint-Cube}).
But often there is more than one way; in particular, there is more than one
way if and only if
$\{W_1,\dots,W_k\}$ is not a maximal collection of pairwise transverse
walls.

An orientation $d$ of a wall $W$ is \emph{towards} a point $x \in X$
if $x \in \overleftarrow{d}(W)$.
More generally $d$ orients $W$ \emph{towards} a subset $S \subseteq X$
if $d$ orients $W$ towards some point of $S$.
We say that $d$ orients $W$ \emph{towards} a wall $W'$
if $d$ orients $W$ towards each halfspace of $W'$.

If $c$ is a cube of $C(X,\W)$, then
each dependent wall $W$ of $c$ has a well-defined orientation by $c$.
This dependent wall is oriented by $c$
towards each independent wall of $c$ and towards the chosen halfspace of
each dependent wall.
The reader may with to consider this language in conjunction with
Figure~\ref{fig:IndependentWalls}.

\begin{rem}
\label{rem:WallsAndPoint-Cube}
Given a finite collection of pairwise transverse walls
$W_1,\dots,W_k$ and a point $p \in X$,
there is an associated cube $c$ in $C(X,\W)$.
The independent walls of $c$ are precisely the walls
$W_1,\dots,W_k$ together with those walls $W$ that betwixt $p$ and
are transverse to every $W_i$.
If $W$ is a dependent wall that is not transverse with some $W_i$,
then $c$ orients $W$ towards $W_i$.
If $W$ is transverse with every $W_i$ and does not betwixt $p$,
then $c$ orients $W$ towards $p$.
We leave it to the reader to verify that the choices above
do indeed determine a cube in $C(X,\W)$.
A similar conclusion is explicitly verified in
Lemma~\ref{lem:RepresentingFiniteMaximal}.
\end{rem}

Remark~\ref{rem:WallsAndPoint-Cube} leads to the following:

\begin{cor}
\label{cor:FiniteDim}
The dual cube complex $C(X,\W)$ is finite dimensional
if and only if there is a finite upper bound on the size of collections
of pairwise transverse walls. \qed
\end{cor}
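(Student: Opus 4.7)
The plan is to unpack both implications directly from the correspondence between cubes and finite collections of pairwise transverse walls, as set up in the discussion preceding the corollary and formalized in Remark~\ref{rem:WallsAndPoint-Cube}.

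For the forward direction I would argue by contrapositive. Assume there exist arbitrarily large finite collections of pairwise transverse walls, so for every $k$ there is a set $\{W_1,\dots,W_k\}$ of pairwise transverse walls. Choose any point $p\in X$. By Remark~\ref{rem:WallsAndPoint-Cube}, this data determines a cube $c$ in $C(X,\W)$ whose independent walls include $W_1,\dots,W_k$, and in particular $\dim(c) \geq k$. Since $k$ is arbitrary, $C(X,\W)$ is not finite dimensional.

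For the reverse direction, suppose every collection of pairwise transverse walls has cardinality at most $n$. If $c$ is any cube of $C(X,\W)$ of dimension $k$, then as explained just before Remark~\ref{rem:WallsAndPoint-Cube}, travelling along the $1$--skeleton of $c$ from a fixed $0$--cube $d$ exhibits $k$ distinct $1$--cubes at $d$ that are dual to pairwise transverse walls, the independent walls $\{W_1,\dots,W_k\}$ of $c$. Hence $k\leq n$, so $\dim C(X,\W)\leq n$.

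The two directions together give the biconditional. There is no real obstacle here: the only substantive content has already been established in the definition of an independent wall and in Remark~\ref{rem:WallsAndPoint-Cube}, which together give a bijective-style correspondence between cubes and finite collections of pairwise transverse walls (together with a choice of orientations on the dependent walls). The corollary simply reads off the dimension bound from this correspondence, and can be stated without any further calculation or lemma.
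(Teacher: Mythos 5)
Your proposal is correct and is exactly the argument the paper intends: the corollary is stated with a \qed precisely because it follows immediately from the correspondence between cubes and pairwise transverse families set up in Section~\ref{subsec:Cubes} and Remark~\ref{rem:WallsAndPoint-Cube}, which is what you unpack in both directions.
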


Having interpreted arbitrary cubes in $C(X,\W)$ in terms of independent
and dependent walls, we now examine the (finite-dimensional)
maximal cubes.

\begin{prop}
\label{prop:MaximalCorrespondence}
Maximal cubes in the dual cube complex are in one-to-one
correspondence with finite maximal collections of pairwise transverse
\textup{(}nonvacuous\textup{)} walls in a wallspace.
\end{prop}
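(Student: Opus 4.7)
The plan is to construct explicit maps in both directions and verify that they are mutually inverse. In one direction I send a maximal cube $c$ of $C(X,\W)$ to its set $T(c)$ of independent walls; in the other I send a finite maximal pairwise transverse collection $T$ of nonvacuous walls to the cube produced by Remark~\ref{rem:WallsAndPoint-Cube} applied to $T$ together with any point $p \in X$.

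For the forward map, the set $T(c)$ of independent walls of a $k$-cube $c$ is automatically a finite pairwise transverse collection by the discussion in Section~\ref{subsec:Cubes}, so the content is proving maximality. I would argue by contradiction: suppose some wall $W \notin T(c)$ is transverse to every wall in $T(c)$. Picking any $0$-cube $d$ of $c$, I would show that toggling $d$'s orientation of $W$ produces another $0$-cube $d'$ adjacent to $d$ along a $1$-cube dual to $W$. Condition (2) in the definition of $0$-cube is preserved because only one wall's orientation has changed, and transversality of $W$ with each $W_i \in T(c)$ ensures that the reversed halfspace of $W$ meets the halfspace of $W_i$ chosen by $d$. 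For a dependent wall $W''$ of $c$, if $W$ were non-transverse to $W''$, I would use the forced orientations coming from the unique empty intersection pair of $(W,W'')$ to exhibit a $(k+1)$-cube strictly containing a face of $c$ obtained by exchanging $W''$ for $W$ as a dependent wall, contradicting the maximality of $c$.

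For the backward map, given a finite maximal pairwise transverse collection $T = \{W_1,\ldots,W_k\}$ and any $p \in X$, Remark~\ref{rem:WallsAndPoint-Cube} supplies a cube $c_T$ in $C(X,\W)$. Its independent walls consist of $T$ together with those walls that betwixt $p$ and are transverse to every $W_i$; by maximality of $T$, no wall outside $T$ can be transverse to every $W_i$, so the independent walls of $c_T$ are exactly $T$. Maximality of $c_T$ is then immediate, for a cube strictly containing $c_T$ would exhibit a strictly larger pairwise transverse collection containing $T$, contradicting maximality of $T$.

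The main obstacle lies in checking that the two maps are mutually inverse, and in particular that $c_T$ is independent of the auxiliary choice of $p$. This reduces to showing that the fixed orientations of the dependent walls of any maximal cube are completely forced: each dependent wall $W''$ of $c$ must be non-transverse to some independent $W_i$, and the unique empty pair of halfspaces of $(W'',W_i)$ dictates the orientation of $W''$ in $c$. This is essentially the converse side of the forward direction—that a wall transverse to every member of $T(c)$ must itself lie in $T(c)$—and it is exactly where the maximality hypotheses on $c$ and on $T$ are essential.
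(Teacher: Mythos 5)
Your backward map is essentially the paper's Lemma~\ref{lem:RepresentingFiniteMaximal} and is fine (modulo the cube-existence check you delegate to Remark~\ref{rem:WallsAndPoint-Cube}, which the paper in fact verifies inside that lemma). The genuine gap is in the forward direction, i.e.\ the analogue of Lemma~\ref{lem:CubeToTransverseWalls}, at exactly the point you flag: a dependent wall $W''$ of $c$ that is not transverse to $W$ can block the flip of $W$ at a $0$--cube $d$ of $c$, namely when $\overrightarrow{d}(W)\cap\overleftarrow{d}(W'')=\emptyset$. Your remedy is to build a $(k+1)$--cube with independent walls $T(c)\cup\{W\}$ in which $W''$ receives the other, forced orientation, and to call this a contradiction with the maximality of $c$. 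But that cube does not contain $c$: it disagrees with $c$ on the dependent wall $W''$, so at best it shares a proper face with $c$. Maximality of a cube means it is not a proper face of a strictly larger cube; it is entirely consistent with a maximal $k$--cube that some $(k+1)$--cube meets it along a smaller face. So no contradiction is reached, and the forward direction is not proved.

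The underlying difficulty is a possible regress: the blocking wall $W''$ may itself be blocked by a further wall when you try to adjust it, and so on. The paper resolves this by a minimality argument: among all dependent walls of $c$ that are transverse to every independent wall, choose one, $W_0$, whose ordered pair $c(W_0)$ is $\preceq$--minimal --- this is where the descending chain condition of Lemma~\ref{lem:NoInfiniteChains}, and hence the finiteness axioms of a wallspace, are essential --- then show that any wall $W_1$ obstructing the flip of $W_0$ is again dependent, again transverse to every independent wall, and satisfies $c(W_1)\prec c(W_0)$, contradicting minimality. The borderline case $\overleftarrow{c}(W_1)=\overleftarrow{c}(W_0)$ is ruled out using the standing exclusion of duplicate genuine partitions from Remark~\ref{rem:Duplicity}, a hypothesis your argument never invokes but which is also what makes the ``forced'' orientations in your last paragraph well defined. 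Without a mechanism of this kind your one-step exchange does not close the argument.
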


The proof of Proposition~\ref{prop:MaximalCorrespondence}
is broken into
Lemmas \ref{lem:CubeToTransverseWalls}
and~\ref{lem:RepresentingFiniteMaximal} below.

\begin{lem}[Descending chain condition]
\label{lem:NoInfiniteChains}
Let $c$ be a $0$--cube of the dual cube complex $C(X,\W)$.
Then the set $\bigset{c(W)}{W \in \W}$ does not contain
an infinite properly descending chain with respect to $\preceq$.
\end{lem}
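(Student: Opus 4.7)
The plan is to argue by contradiction: suppose $\bigset{c(W)}{W \in \W}$ contains an infinite strictly descending chain $c(W_1) \succ c(W_2) \succ c(W_3) \succ \cdots$, and write $c(W_i) = (U_i, V_i)$ with $U_i = \overleftarrow{c}(W_i)$ and $V_i = \overrightarrow{c}(W_i)$. Unpacking the definition of $\prec$, each strict descent from $c(W_i)$ to $c(W_{i+1})$ falls into exactly one of two types: either (a) $U_{i+1} \subsetneq U_i$, or (b) $U_{i+1} = U_i$ and $V_{i+1} \supsetneq V_i$. I would split into cases according to which type occurs infinitely often, and use the two defining conditions of a $0$--cube together with the wallspace axioms to derive a contradiction.

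\textbf{Case A: Type (a) occurs infinitely often.} Passing to a subsequence, I may assume $U_1 \supsetneq U_2 \supsetneq U_3 \supsetneq \cdots$ strictly. Choose any $x \in U_1 \setminus U_2$. Since the $U_i$ are nested and decreasing, $x \notin U_i = \overleftarrow{c}(W_i)$ for all $i \geq 2$. This contradicts Condition~(\ref{item:Finite}) of the definition of a $0$--cube, which requires $x \in \overleftarrow{c}(W)$ for all but finitely many $W$.

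\textbf{Case B: Only type (b) occurs eventually.} Then for some $N$ the sets $U_i = U$ are all equal for $i \geq N$, and $V_N \subsetneq V_{N+1} \subsetneq V_{N+2} \subsetneq \cdots$. If $U \neq X$, pick $y \in X - U$; then $y \notin \overleftarrow{c}(W_i)$ for all $i \geq N$, again contradicting Condition~(\ref{item:Finite}). If instead $U = X$, choose $z \in V_{N+1} - V_N$, which is nonempty since $V_N \subsetneq V_{N+1}$. Because the $V_j$ are increasing for $j \geq N$, we have $z \in V_j$ for every $j \geq N+1$, and trivially $z \in X = U = U_j$. Thus $z \in \overleftarrow{c}(W_j) \cap \overrightarrow{c}(W_j)$ for infinitely many $j$, so $z$ betwixts infinitely many walls, contradicting the wallspace finiteness axiom that each point betwixts only finitely many walls.

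No step in this argument is technically deep; the proof is a clean case analysis. The only mildly subtle point to keep track of is that the degenerate subcase where every $U_j$ equals $X$ is not ruled out by the $0$--cube axioms alone---one must invoke the second finiteness requirement built into the definition of a wallspace in order to rule it out. Once one recognizes that this axiom handles precisely this degenerate subcase, the remaining possibilities are each directly excluded by Condition~(\ref{item:Finite}) applied to a well-chosen point.
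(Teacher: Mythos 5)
Your proof is correct and follows essentially the same strategy as the paper: the strictly decreasing first coordinates are ruled out by Condition~(2) of the $0$--cube definition, and the stabilized-$U$/increasing-$V$ case is ruled out by the betwixting finiteness axiom. The only difference is your extra sub-split on whether $U=X$; the paper handles Case~B uniformly by noting $V_{N+1}-V_N\subseteq U$ (since $X=U\cup V_N$), so a point there betwixts all later walls regardless of whether $U=X$.
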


\begin{proof}
Suppose there is an infinite properly descending chain:
\[
   \bigl(\overleftarrow{c}(W_1),\overrightarrow{c}(W_1)\bigr)
   \,\succ\, \bigl(\overleftarrow{c}(W_2),\overrightarrow{c}(W_2)\bigr)
   \,\succ\, \cdots
\]
The first coordinates cannot be an infinite properly descending
chain
\[
   \overleftarrow{c}(W_1) \supsetneq \overleftarrow{c}(W_2) \supsetneq \cdots
\]
because such a chain would contradict Condition~(2)
from the definition of a $0$--cube indicated in
Construction~\ref{const:DualCubeComplex}.
Indeed if $x \in \overleftarrow{c}(W_1) - \overleftarrow{c}(W_2)$, then $x \notin \overleftarrow{c}(W_i)$ for all
$i \ge 2$.  This contradicts the fact that $c$
satisfies Condition~(2).

It follows that if there is an infinite
properly descending chain, then there must be one of the form:
\[
   \bigl(H,\overrightarrow{c}(W_1)\bigr) \succ \bigl(H,\overrightarrow{c}(W_2)\bigr)
   \succ \cdots
\]
where $H = \overleftarrow{c}(W_i)$ for all $i$.
In particular $\overrightarrow{c}(W_1) \subsetneq \overrightarrow{c}(W_2) \subsetneq \cdots$.
Letting $x \in H \cap \overrightarrow{c}(W_2)$, we see that $x$ betwixts $W_i$ for all
$i \ge 2$.
This contradicts the definition of a wallspace.
\end{proof}

\begin{lem}
\label{lem:CubeToTransverseWalls}
For each maximal cube $c$ of the dual cube complex $C(X,\W)$,
the collection of independent walls is a maximal family of
pairwise transverse walls.
\end{lem}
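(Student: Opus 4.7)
The independent walls $W_1,\dots,W_k$ of $c$ are pairwise transverse by definition, since they are dual to the $k$ mutually adjacent $1$--cubes at any corner of $c$. The content of the lemma is therefore the maximality of this family, which I would establish by contradiction: assume there is $W\in\W$ transverse to every $W_i$ with $W\ne W_i$ for all $i$, and aim to produce a $(k+1)$--cube containing $c$, contradicting the maximality of $c$.

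Fix a vertex $d$ of $c$. Every non-independent wall of $c$ (including $W$) receives a fixed orientation across all vertices of $c$, so the candidate $(k+1)$--cube with independent walls $\{W_1,\dots,W_k,W\}$, whose $2^k$ new vertices are obtained from the vertices of $c$ by flipping $W$, will exist provided a single condition holds at $d$: that $\overrightarrow{d}(W)\cap \overleftarrow{d}(W'')\ne \emptyset$ for every $W''\ne W$. The transversality of $W$ with each $W_i$ handles the case $W''=W_i$, so the only possible obstruction is a non-independent wall $W'$ with $\overleftarrow{d}(W')\cap \overrightarrow{d}(W)=\emptyset$; such a failure forces $\overleftarrow{d}(W')\subseteq \overleftarrow{d}(W)$ and $\overrightarrow{d}(W)\subseteq \overrightarrow{d}(W')$, and hence $d(W')\preceq d(W)$ in the partial order of Definition~\ref{def:PartialOrder}.

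If no such $W'$ exists, the $(k+1)$--cube is realized and we are done. Otherwise I would form the nonempty set $T=\{W'\in\W:d(W')\preceq d(W)\}$ and invoke the descending chain condition (Lemma~\ref{lem:NoInfiniteChains}) to extract a $\preceq$--minimal element $W^*\in T$. The next step is to verify that $W^*$ is transverse to every $W_i$: the inclusions $\overleftarrow{d}(W^*)\subseteq \overleftarrow{d}(W)$ and $\overrightarrow{d}(W^*)\supseteq \overrightarrow{d}(W)$ combined with the transversality of $W$ and $W_i$ supply two of the four halfspace intersections, while the $0$--cube axiom applied at $d$ and at the vertex of $c$ obtained by flipping $W_i$ supplies the remaining two. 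Moreover $W^*\ne W_i$, since the $W_i$ are independent walls of $c$ while $W^*$ is not.

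The hard step is to show that flipping $W^*$ at $d$ produces a valid $0$--cube; after this, the same uniformity principle as above extends $c$ to a $(k+1)$--cube with independent walls $\{W_1,\dots,W_k,W^*\}$, contradicting maximality. I would argue by contradiction: if some $W''\ne W^*$ satisfies $\overrightarrow{d}(W^*)\cap \overleftarrow{d}(W'')=\emptyset$, the same halfspace analysis yields $d(W'')\preceq d(W^*)\preceq d(W)$, and the minimality of $W^*$ in $T$ forces $d(W'')=d(W^*)$. Then $W''$ and $W^*$ are duplicate walls whose common halfspaces form a genuine partition; this configuration is precisely excluded by condition~(3) of Remark~\ref{rem:Bourbaki}, completing the contradiction. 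The main obstacle is this final partial-order chase, where care is needed to isolate the duplicated-wall subcase and invoke the wallspace axiom forbidding duplicated genuine partitions.
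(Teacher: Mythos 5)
Your overall strategy---extract a $\preceq$--minimal wall via Lemma~\ref{lem:NoInfiniteChains}, dispose of the equality case using the prohibition on duplicate genuine partitions, and contradict maximality of $c$---is in the spirit of the paper's proof, but you minimize over the wrong set, and this creates two genuine gaps. You set $T=\{W'\in\W : d(W')\preceq d(W)\}$ and then claim the minimal element $W^*$ satisfies $\overleftarrow{d}(W^*)\subseteq\overleftarrow{d}(W)$ and $\overrightarrow{d}(W^*)\supseteq\overrightarrow{d}(W)$. The second inclusion does not follow: by Definition~\ref{def:PartialOrder}, $d(W^*)\preceq d(W)$ with $\overleftarrow{d}(W^*)\subsetneq\overleftarrow{d}(W)$ imposes no condition at all on the right halfspaces (for instance, with thick walls on $\Z$ one can have $d(W)=\bigl((-\infty,10],[0,\infty)\bigr)$ and $d(W^*)=\bigl((-\infty,5],[5,\infty)\bigr)$, so $W^*\in T$ yet $\overrightarrow{d}(W^*)\not\supseteq\overrightarrow{d}(W)$). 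Since your verification that $W^*$ is transverse to every $W_i$ rests on that inclusion, and since realizing the $(k+1)$--cube requires flipping $W^*$ at \emph{every} vertex of $c$ (which needs exactly this transversality, not just flippability at $d$), the argument does not close. Separately, your assertion that $W^*\ne W_i$ ``since the $W_i$ are independent walls of $c$ while $W^*$ is not'' is unsupported: nothing in the definition of $T$ prevents an independent wall from being its minimal element when halfspaces overlap, and if $W^*$ were independent, flipping it produces no new cube and no contradiction; note that your appeal to the $0$--cube axiom ``at the vertex of $c$ obtained by flipping $W_i$'' also tacitly uses that $W^*$ is dependent, i.e.\ precisely this unproved claim.

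The paper's proof avoids both problems by choosing the set to minimize over more carefully: it takes $\mathcal{S}$ to consist of the orientations $c(W')$ of \emph{dependent} walls that are \emph{already transverse to every independent wall of $c$} (your hypothesis gives $\mathcal{S}\ne\emptyset$, since $W\in\mathcal{S}$). For its $\preceq$--minimal element $c(W_0)$, maximality of $c$ produces an obstructing wall $W_1$ with $\overrightarrow{c}(W_0)\cap\overleftarrow{c}(W_1)=\emptyset$; then $W_1$ is automatically dependent (it is not transverse to $W_0$, whereas every independent wall is), one gets $c(W_1)\prec c(W_0)$ with the equality case killed by the no-duplicate-partition axiom (as you anticipated), and $W_1$ is transverse to every independent wall because $\overrightarrow{c}(W_1)\supseteq\overrightarrow{c}(W_0)$ while the orientation of the dependent wall $W_1$ is constant over the vertices of $c$. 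That contradicts minimality directly, with no need to construct the $(k+1)$--cube at all. Your argument could be repaired along these lines---for example by minimizing over dependent walls satisfying both halfspace inclusions relative to $d(W)$---but as written the transversality of $W^*$ with the $W_i$ and the claim $W^*\notin\{W_1,\dots,W_k\}$ are genuine gaps.
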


\begin{proof}
Let $c$ be a maximal cube.
Observe that if $W$ is a dependent wall of $c$, then
the orientation $c(W)$ is well-defined.
Consider the set
\[
   \mathcal{S} = \bigset{c(W)}{ \text{$W$ is dependent in $c$ and transverse to every independent wall of $c$} }.
\]
Suppose by way of contradiction that $\mathcal{S}$ is nonempty.
Applying Lemma~\ref{lem:NoInfiniteChains} to any $0$--cube $d$
of $c$,
we see that $\mathcal{S}$ must have a minimal element $c(W_0)$
with respect to $\preceq$.

Since $c$ is maximal, the orientation of $W_0$ cannot be reversed,
which means there is another wall $W_1$ so that
$\overrightarrow{c} (W_0) \cap \overleftarrow{c}(W_1) = \emptyset$.
Observe that $W_0$ is transverse to each independent wall, but is not transverse to $W_1$.
Therefore $W_1$ is dependent.
Since $X - \overleftarrow{c}(W_0) \subseteq \overrightarrow{c}(W_0)$,
it follows that $\overleftarrow{c}(W_1) \subseteq \overleftarrow{c}(W_0)$.
If $\overleftarrow{c}(W_1)=\overleftarrow{c}(W_0)$, then $W_0 = \bigl\{\overleftarrow{c}(W_0), \overrightarrow{c}(W_0)\bigr\}$ is a partition
of $X$.
Now our hypothesis that there are no duplicate genuine partitions
implies that $\overrightarrow{c}(W_0) \subsetneq \overrightarrow{c}(W_1)$.
In particular, we have shown that
$\bigl(\overleftarrow{c}(W_1),\overrightarrow{c}(W_1)\bigr) \prec \bigl(\overleftarrow{c}(W_0),\overrightarrow{c}(W_0)\bigr)$.
In other words, $c(W_1) \prec c(W_0)$.

We will now show that $W_1$ is transverse to every independent wall of $c$.
Indeed let $W$ be independent.  Then both halfspaces of $W$ intersect
$\overleftarrow{c}(W_1)$.
Since $\overrightarrow{c}(W_0) \cap \overleftarrow{c}(W_1) = \emptyset$,
we have
$\overrightarrow{c}(W_0) \subseteq X- \overleftarrow{c}(W_1)
\subseteq \overrightarrow{c}(W_1)$.
But $W$ is transverse with $W_0$, so both halfspaces of $W$
intersect $\overrightarrow{c}(W_0)$, and hence $\overrightarrow{c}(W_1)$.
Therefore $W$ is transverse to $W_1$.
Since $W$ is arbitrary, it follows that $W_1$ is transverse to
every independent wall of $c$,
contradicting the fact that $W_0$ was the minimal such wall.
\end{proof}

\begin{lem}
\label{lem:RepresentingFiniteMaximal}
Each finite maximal collection $\{W_1,\dots,W_k\}$
of pairwise transverse nonvacuous
walls is associated to a maximal cube in the dual cube complex
$C(X,\W)$.
Specifically this cube is the unique cube of $C(X,\W)$ whose independent
walls are $\{W_1,\dots,W_k\}$.
\end{lem}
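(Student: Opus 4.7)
The plan is to construct the claimed cube $c$ explicitly and then verify the required properties. I would begin by declaring $\{W_1,\dots,W_k\}$ to be the independent walls of $c$, so the orientations of these walls are free, yielding $2^k$ candidate $0$-cubes. For every other wall $W\in\W$, the maximality of the collection forces $W$ to fail transversality with some $W_i$. Since $W$ and $W_i$ are distinct nonvacuous walls and duplicated genuine partitions are disallowed, a short case analysis confirms that exactly one of the four intersections $\alpha\cap U_i$, $\alpha\cap V_i$, $\beta\cap U_i$, $\beta\cap V_i$ is empty, where $\{\alpha,\beta\}$ denote the halfspaces of $W$ and $\{U_i,V_i\}$ the halfspaces of $W_i$. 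This selects the ``towards $W_i$'' orientation of $W$, taking $\overleftarrow{c}(W)$ to be the halfspace of $W$ that contains a halfspace of $W_i$.

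The main obstacle is to confirm that this orientation of $W$ does not depend on the choice of the witness $W_i$. Suppose by way of contradiction that two witnesses $W_i$ and $W_j$ prescribed opposite orientations, say with empty intersections $\alpha\cap U_i=\emptyset$ and $\beta\cap U_j=\emptyset$, so that one witness dictates $\overleftarrow{c}(W)=\beta$ and the other $\overleftarrow{c}(W)=\alpha$. The first empty intersection gives $U_i\subseteq\beta$ and $\alpha\subseteq V_i$, while the second gives $U_j\subseteq\alpha$. Combining these yields $U_i\cap U_j\subseteq U_i\cap\alpha=\emptyset$, contradicting the transversality of $W_i$ with $W_j$ guaranteed by the pairwise transverse hypothesis. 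The symmetric conflicting pairings are dispatched in the same fashion, so the orientation of every dependent wall is well defined.

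With orientations defined on all walls, I would next verify that each of the $2^k$ candidate orientations is a valid $0$-cube. Condition~(1) of Construction~\ref{const:DualCubeComplex} is checked case by case on whether the two walls involved are both independent (use transversality directly), one independent and one dependent (the ``towards'' convention places a halfspace of some $W_i$ inside the selected halfspace of the dependent wall, and transversality of this $W_i$ with the other independent wall provides the witness), or both dependent (either the two witnesses differ, giving transversality, or they coincide, in which case a subtler argument using the disallowance of duplicate genuine partitions rules out the empty intersection). Condition~(2) is verified by fixing a basepoint $p\in X$ and comparing with the canonical $0$-cube at $p$: the two orientations differ on only finitely many walls (the independent walls together with the finitely many walls betwixting $p$), so cofinite membership at each $x\in X$ transfers from the canonical $0$-cube via the wallspace axiom $\#(x,p)<\infty$. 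Two $0$-cubes of our collection that differ in only the orientation of a single $W_i$ are joined by a $1$-cube dual to $W_i$, so the collection spans a $k$-cube with independent walls exactly $\{W_1,\dots,W_k\}$.

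Finally, maximality and uniqueness of $c$ are immediate from maximality of the collection. By Lemma~\ref{lem:CubeToTransverseWalls}, any cube strictly containing $c$ would have an enlarged collection of pairwise transverse independent walls, contradicting the maximality of $\{W_1,\dots,W_k\}$. Uniqueness follows because the orientation of each dependent wall is forced by condition~(1) once the $W_i$ are allowed to range over all $2^k$ possible orientations, so any cube in $C(X,\W)$ with independent walls $\{W_1,\dots,W_k\}$ must coincide with $c$ on every wall.
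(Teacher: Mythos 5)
Your construction is essentially the paper's (free orientations on $W_1,\dots,W_k$, each remaining wall oriented towards a non-transverse independent witness), and you are in fact more careful than the printed proof about conflicting witnesses and about the dependent--dependent case with a shared witness. However, your verification of Condition~(2) has a genuine gap. You assert that $c$ and the canonical $0$--cube at an arbitrary basepoint $p$ differ only on the independent walls and the finitely many walls betwixting $p$. That is false: a dependent wall $W$ is oriented towards its witness $W_i$, and nothing forces the halfspace $\overleftarrow{c}(W)$ to contain $p$ even when $W$ does not betwixt $p$. For instance, let $X=\R$ with walls $\bigl\{(-\infty,n],[n,\infty)\bigr\}$ for $n\in\Z$; a maximal pairwise transverse collection is the single wall $W_0$, so every dependent $W_n$ is oriented towards $0$, and with $p=9/2$ the two orientations disagree on $W_1,\dots,W_4$, none of which betwixt $p$ and none of which are independent. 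Worse, the finiteness of the set of walls that $c$ orients away from $p$ is exactly Condition~(2) at the basepoint, so asserting that the difference set is finite without a correct reason begs the question; this is the one step of the lemma where a real argument is needed.

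The missing idea is the paper's choice of auxiliary points: for each $i$ pick $x_i$ in the halfspace of $W_i$ not containing the basepoint (possible since the $W_i$ are nonvacuous). If a dependent wall $W$ satisfies $p\notin\overleftarrow{c}(W)$, then the witness halfspace $\overleftarrow{\sigma}(W_i)\subseteq\overleftarrow{c}(W)$ avoids $p$, hence equals the far halfspace of $W_i$ and contains $x_i$; and $x_i\notin\overrightarrow{c}(W)$ because $\overleftarrow{\sigma}(W_i)\cap\overrightarrow{c}(W)=\emptyset$. Thus $W$ separates $p$ from $x_i$, and $\#(p,x_i)<\infty$ bounds the number of such $W$ for each of the finitely many $i$. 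With that in place, your transfer to an arbitrary $x$ via $\#(x,p)<\infty$ (together with the finitely many walls betwixting $p$) is fine, as are your maximality and uniqueness arguments. One further cosmetic slip: ``exactly one of the four intersections is empty'' fails when the dependent wall is vacuous (two intersections are empty), though the forced orientation towards $X$ is still well defined, so this does not affect the construction.
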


It is interesting that the associated maximal cube actually arises
in $C(X,\W)$ and \emph{not} in one of the boundary components.
This contrasts with the infinite case:
An infinite maximal collection of pairwise transverse walls
could be associated with an infinite increasing sequence of cubes in
many ways.
One of these might lie in $C(X,\W)$
but uncountably many lie in distinct boundary components.
While $C(X,\W)$ may contain some infinite increasing sequences of cubes,
there could be maximal collections of pairwise transverse walls that are
only associated with boundary components and not associated at all with
$C(X,\W)$
(see Figure~\ref{fig:MaximalNotCanonical}).

\begin{figure}
\begin{center}
\includegraphics[width=.4\textwidth]{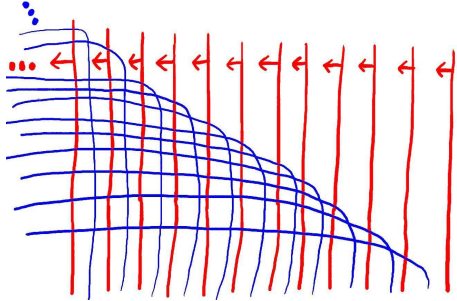}
\end{center}
\caption{A maximal collection of pairwise transverse walls
in the plane $X=\R^2$ that is not associated to an infinite increasing
sequence of cubes in the canonical component.}
\label{fig:MaximalNotCanonical}
\end{figure}

\begin{proof}[Proof of Lemma~\ref{lem:RepresentingFiniteMaximal}]
Suppose $W_1,\dots,W_k$ is a finite collection of pairwise transverse
nondegenerate walls.
We consider these to be independent and all other walls to be dependent.
Let $x \in X$ be a fixed point.
We define an orientation $c$ of $\W$ as follows.
Orient the independent walls towards $x$.
(Due to betwixting, there may be more than one way of doing this.)
For each $i$ choose $x_i \in \overrightarrow{c}(W_i)$.
(This can be done since all halfspaces are nonempty.)
Let $W$ be a dependent wall.  Then there exists at least one independent wall
$W_i$ that is not transverse to $W$.
In particular, there are orientations $\sigma(W_i)$ and $\sigma(W)$
so that $\overleftarrow{\sigma}(W_i) \cap \overrightarrow{\sigma}(W) =\emptyset$.
In particular, we have
$\overleftarrow{\sigma}(W_i) \subseteq \overleftarrow{\sigma}(W)$.
Furthermore $\sigma(W)$ is the unique orientation of $W$
towards the independent wall $W_i$.
We define $c(W) = \sigma(W)$ for each dependent wall $W$.

We now verify that $c$ is a $0$--cube.
Let us first check that each dependent wall is oriented by $c$ towards
the chosen halfspace of each other dependent wall.
Let $W$ and $W'$ be dependent walls.
Then there are independent walls $W_i = \{U_i,V_i\}$ and
$W_j = \{U'_i,V'_i\}$ such that $U_i \subseteq \overleftarrow{c}(W)$
and $U'_i \subseteq \overleftarrow{c}(W')$.
Since $W_i$ and $W_j$ are transverse, it follows that $U_i \cap U'_i
\ne \emptyset$.
Therefore $\overleftarrow{c}(W) \cap \overleftarrow{c}(W') \ne \emptyset$.

If $\overleftarrow{c}(W)$ does not contain $x$, then $\overleftarrow{c}(W_i)$
also does not contain $x$. Consequently, $\overleftarrow{c}(W_i)$
contains $x_i$, and therefore so does $\overleftarrow{c}(W)$.
Therefore $W$ is one of the finitely many walls separating $x$ from $x_i$.
Since there are only finitely many independent walls, all but finitely many dependent walls are oriented towards $x$.
\end{proof}

\subsection{Hemiwallspaces and convex subcomplexes}
\label{subsec:Hemi}

\newcommand{\V}{\mathcal V}
\newcommand{\U}{\mathcal U}

In this section we introduce and study hemiwallspaces, which
are associated to convex subcomplexes of the dual cube complex.
Hemiwallspaces generalize our association of cubes in $C(X,\W)$
with collections of independent and dependent walls in $\W$.

To accommodate this study,
we will shift gears towards a viewpoint of a wallspace
as a collection of halfspaces of $X$.
When equipped with an involution $\iota$
(generalized complementation),
such a collection of halfspaces leads to a very elegant theory,
and Roller has adopted this as the main viewpoint \cite{RollerPocSets}.
However our own viewpoint will be slightly different and aimed
at natural applications in group theory.

When there are no duplicated halfspaces\footnote{To handle duplicated halfspaces,
one must work with indexed collections of halfspaces;
cf.~Remark~\ref{rem:Bourbaki}.},
a wallspace is equivalent to a collection of subsets $\V$
together with a \emph{pairing} $\iota\colon \V \to \V$
that is a fixed point free involution
such that for each $V \in \V$, we have $V \cup \iota(V) = X$,
each $x$ lies in finitely many $V \cap \iota(V)$, and
for each $x,y \in X$ there are finitely
many $V$ such that $x \in V - \iota(V)$ and $y \in \iota(V) - V$.
To emphasize that we are thinking of a collection of halfspaces $\V$
rather than a collection of walls $\W$,
we will often use the notation $(X,\V)$.
Technically we have in mind $(X,\V,\iota)$, but
we shall suppress mentioning $\iota$,
which is sometimes even uniquely determined by $\V$.

We will use this halfspace terminology
throughout this section,
and our arguments and definitions will implicitly adhere to this convention.
We will later return to this viewpoint in Section~\ref{sec:RelCocompact}.
The general case requires few modifications and a slightly
different language---$\iota$ simply exchanges the two indexed halfspaces
of each wall.

Observe that for Haglund--Paulin wallspaces complementation is the
only possible way
of defining the pairing $\iota$ on the collection of halfspaces $\V$,
and so the wallspace is uniquely determined by $\V$.
More generally, we say a collection of halfspaces is \emph{unambiguous}
if there is only one possible way of defining $\iota$ so that $V \cup \iota(V) = X$ for all $V \in \V$.
In practice however, there are two commonly encountered ways
that $\V$ can fail to be unambiguous.
Firstly, if $\V$ contains distinct
halfspaces $U_1,U_2,V_1,V_2$ such that each
$U_i \cup V_j = X$ then $\iota$ might not be uniquely determined by $\V$.
Secondly, there are scenarios where it is natural to consider
a collection $\V$ where certain halfspaces are repeated
(we would then regard $\V$ as an indexed collection of sets).
We emphasize that this second type of failure is
more severe than the first, as it is not simply a matter of suppressing
$\iota$, but rather a matter of the difficulty of defining $\iota$.

\begin{defn}[Hemiwallspace]
\label{def:Hemi}
For a wallspace $(X,\V)$,
an associated \emph{hemiwallspace} is a subcollection $\U$ of halfspaces
of $\V$
such that for each wall $\bigl\{V,\iota(V)\bigr\}$ at least one of $V$ or $\iota(V)$
is contained in $\U$.%
\footnote{In Construction~\ref{const:CubulatingHemi},
we shall also adopt the convention
that some subcollection of $\U$ is a (canonical) $0$--cube of $X$.
Hence hemiwallspaces correspond precisely with convex subcomplexes
of $C(X)$.}
We note that there is a partial pairing of $\U$ induced by $\iota$.

\begin{rem}[Dependent and independent walls]
An alternate approach to defining hemiwallspaces
involves restricting orientations.
We earlier defined an orientation $\sigma$ on a wallspace as a choice
$\sigma(W) \in W=\{U,V\}$
and used this to define a $0$--cube as a particular type of orientation
whose nature was restricted only by two simple axioms.
An associated \emph{hemiwallspace} is equivalent to
a specific way of fixing the orientations of certain walls.
These are the \emph{dependent} walls, and the other walls are the
\emph{independent} walls.
More precisely, a hemiwallspace is an orientation of the set of dependent
walls.
\end{rem}

As we saw in Section~\ref{subsec:Cubes}, a finite set of pairwise transverse walls provide us with a set of independent walls.
Such a set,
together with an appropriate choice of orientations for the remaining
dependent walls,
leads to a cube in the dual cube complex.
Similarly
a hemiwallspace leads to a certain convex subcomplex of the dual cube complex
as we shall see in Lemma~\ref{lem:ConvexSubcomplex}.
\end{defn}

\begin{exmp}
\label{exmp:SubspaceHemi}
For a subset $P\subset X$ we let
\[
   \U_P = \set{U \in \V}{P \cap U \ne \emptyset},
\]
and we regard $(X,\U_P)$ as a hemiwallspace.
We discuss related but more elaborate hemiwallspaces in
Definition~\ref{defn:InducedHemiwallspaces}.

Another structure associated to $P \subseteq X$
is the induced subwallspace described in Definition~\ref{def:Subwallspace}.
We urge the reader to consider the differences between
these.
\end{exmp}

\begin{const}[Cubulating a hemiwallspace]
\label{const:CubulatingHemi}
Let $\U$ be a hemiwallspace in a wallspace $(X,\V)$.
The cube complex $C(\U)$ dual to $(X,\U)$ is defined as follows:
$0$--cubes are subcollections $c \subset \U$ that have nonempty
pairwise intersection such that for each halfspace $V \in \V$
exactly one of $V$ or $\iota(V)$ is in $c$.
Two $0$--cubes are connected by a $1$--cube if they differ on
two complementary halfspaces.
For $n\ge 2$ we add an $n$--cube if its $(n-1)$--skeleton appears.

In practice, the hemiwallspaces we consider are induced from
subspaces of $X$, for instance as in Example~\ref{exmp:SubspaceHemi}.
In that case,
there is always at least one $0$--cube in the dual cube complex $C(\U)$.
[We shall always assume that this is the case for purposes of
the lemmas below, but their statements and proofs hold for
boundary components as well.]
It is then a consequence of Lemma~\ref{lem:ConvexSubcomplex}
that there is a unique connected component $C(\U)$
with the property that for one and hence any $x \in X$
almost all halfspaces of $c$ contain $x$.

In the more general setting where a hemiwallspace corresponds to restricted orientations, our $0$--cubes are simply chosen so that they conform with the restriction determining the hemiwallspace,
and higher cubes are added as before.
\end{const}

\begin{rem}
\label{rem:Subwallspace}
A hemiwallspace $(X,\U)$
naturally yields a wallspace $(X,\ddot{\U})$ by removing all dependent walls;
that is, ignoring all unpaired halfspaces.
The cube complex $C(\ddot{\U})$ dual to $(X,\ddot{\U})$
is isomorphic to the cube complex $C(\U)$ associated above to $(X,\U)$.
In Section~\ref{sec:RelCocompact}, $C(\ddot{\U})$ will be the cubulation
of the peripheral subspace and $C(\U)$ naturally corresponds to
a convex subcomplex of $C(\V)$.
\end{rem}

\begin{lem}
$C(\ddot{\U})$ embeds naturally in $C(\V)$.
\end{lem}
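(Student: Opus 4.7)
The plan is to exhibit an explicit natural map on $0$--cubes and verify that it extends to an injective cubical map. Via the isomorphism $C(\ddot{\U}) \cong C(\U)$ asserted in Remark~\ref{rem:Subwallspace}, I work instead with $C(\U)$. A $0$--cube $c$ of $C(\U)$ is, by Construction~\ref{const:CubulatingHemi}, a subcollection $c \subseteq \U \subseteq \V$ that (i) contains exactly one of $V$ or $\iota(V)$ for each wall of $\V$, (ii) has pairwise nonempty intersection, and (iii) lies in the canonical component in the sense that for some (hence any) $x \in X$ almost all halfspaces of $c$ contain $x$. These three conditions are precisely the defining conditions of a $0$--cube of $C(\V)$ from Construction~\ref{const:DualCubeComplex}. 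I therefore define $\phi \colon C(\ddot{\U}) \to C(\V)$ on $0$--cubes by $\phi(c) = c$, the same subcollection now viewed inside $\V$.

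Next I would verify that $\phi$ preserves adjacency and hence extends to a cubical map. Two $0$--cubes $c, c'$ of $C(\U)$ are joined by a $1$--cube exactly when they differ on a single complementary pair $\{V, \iota(V)\}$ with both halfspaces in $\U$---equivalently, on a wall of $\ddot{\U}$. Since this pair remains a wall of $\V$, the images $\phi(c)$ and $\phi(c')$ are joined in $C(\V)$ by the $1$--cube dual to this wall. For $n \geq 2$, both cube complexes fill in an $n$--cube whenever its $(n-1)$--skeleton appears, so $\phi$ extends uniquely to a cubical map; equivalently, a $k$--cube corresponds to $k$ pairwise transverse walls, and such a family in $\ddot{\U}$ is still pairwise transverse in $\V$. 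Injectivity on $0$--cubes is immediate from the formula $\phi(c) = c$, and since a cube in either complex is determined by its $0$--skeleton, $\phi$ is injective on all cubes.

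The main point to pin down is that $\phi$ is genuinely a cubical embedding, in the sense that the cubical structure on $\phi\bigl(C(\ddot{\U})\bigr)$ agrees with the restriction from $C(\V)$. In the independent/dependent wall language of Section~\ref{subsec:Cubes}, the image $\phi\bigl(C(\ddot{\U})\bigr)$ consists precisely of those $0$--cubes of $C(\V)$ whose orientation of each \emph{dependent} wall of $\U$ matches the fixed orientation prescribed by $\U$; the walls of $\ddot{\U}$ are exactly the \emph{independent} walls whose duals appear as $1$--cubes of the image. Flipping any independent wall (i.e., a wall of $\ddot{\U}$) yields an adjacent $0$--cube still in the image, whereas flipping a dependent wall exits the image. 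Thus any cube of $C(\V)$ all of whose $0$--cubes lie in the image has all its dual walls in $\ddot{\U}$, so it arises from a cube of $C(\ddot{\U})$. This shows $\phi$ is a cubical embedding onto a full subcomplex---indeed a convex one, as foreshadowed in Remark~\ref{rem:Subwallspace}. The main obstacle is just this bookkeeping: verifying that the only way to leave the image is to flip a dependent wall, which is immediate once one tracks that dependent halfspaces of $\U$ are frozen in every $0$--cube of the image.
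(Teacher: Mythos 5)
Your proposal is correct and follows essentially the same route as the paper: pass through the isomorphism $C(\ddot{\U}) \cong C(\U)$ of Remark~\ref{rem:Subwallspace}, then observe that a $0$--cube of $C(\U)$ is literally a $0$--cube of $C(\V)$ and that adjacency and higher cubes are preserved, which is exactly the ``unrestricting the orientations'' step the paper cites from Construction~\ref{const:CubulatingHemi}. You simply spell out the verification (adjacency, injectivity, fullness of the image) that the paper leaves implicit.
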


\begin{proof}
By Remark~\ref{rem:Subwallspace} there is a natural
isomorphism $C({\U}) \to C(\ddot{\U})$ (forgetting the unpaired walls,
i.e., the dependent walls).
The natural embedding $C(\U) \to C(\V)$ is a feature of
Construction~\ref{const:CubulatingHemi} (unrestricting the orientations,
i.e., considering all walls to be independent).
\end{proof}

\begin{lem}
\label{lem:ConvexSubcomplex}
$C(\U)$ is a convex subcomplex of $C(\V)$.
Specifically, combinatorial geodesics in $C(\V)$ that start and end
in $C(\U)$ are actually contained in $C(\U)$.
\end{lem}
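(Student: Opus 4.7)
The plan is to exploit the description of $C(\mathcal{U})$ in terms of ``fixing'' the orientations of the dependent walls. Recall from Construction~\ref{const:CubulatingHemi} (and Remark~\ref{rem:Subwallspace}) that $C(\mathcal{U})$ naturally sits inside $C(\mathcal{V})$ as the set of $0$--cubes $c$ of $C(\mathcal{V})$ that restrict to $\mathcal{U}$ in the sense that for each dependent wall $W = \{V, \iota(V)\}$ (i.e.\ a wall for which only one halfspace lies in $\mathcal{U}$, say $V$), we have $c(W) = V$. Higher-dimensional cubes are then obtained automatically since an $n$--cube of $C(\mathcal{V})$ lies in $C(\mathcal{U})$ iff each of its $0$--cubes does. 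In particular, on the set of $0$--cubes of $C(\mathcal{U})$, every dependent wall is oriented in the unique prescribed way.

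I would then invoke the standard fact about combinatorial geodesics in a $\CAT(0)$ cube complex $C(\mathcal{V})$: a combinatorial path from $c$ to $c'$ is a geodesic iff no hyperplane is crossed more than once, and in that case the set of hyperplanes crossed by the path is exactly the set of walls $W$ with $c(W) \ne c'(W)$. (This follows directly from the cube complex structure established in Propositions \ref{prop:Connected}, \ref{prop:SimplyConnected} and~\ref{prop:NPC}, together with the correspondence between $1$--cubes and walls, so I would either cite it or sketch the two-line argument.)

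With these two ingredients in hand, the proof is immediate. Let $c, c'$ be $0$--cubes of $C(\mathcal{U})$ and let $\gamma = e_1 e_2 \cdots e_n$ be a combinatorial geodesic in $C(\mathcal{V})$ joining them. Since every dependent wall $W$ satisfies $c(W) = c'(W)$ (both equal to the prescribed halfspace in $\mathcal{U}$), $\gamma$ crosses no dependent walls. Consequently, every intermediate $0$--cube $c_k$ along $\gamma$ differs from $c$ only by having reversed the orientations of some independent walls, so $c_k$ still assigns every dependent wall its prescribed orientation. Hence every $c_k$ lies in $C(\mathcal{U})$, and so does the geodesic $\gamma$.

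The only potentially delicate point is the ``standard fact'' about combinatorial geodesics, namely that a hyperplane separates the endpoints iff the geodesic crosses it exactly once. I expect this to be the main obstacle in the sense of being the only nontrivial input; but it is a routine consequence of the nonpositive curvature established earlier, and in the author's treatment it is likely already available by this point in the paper, so the lemma reduces to little more than bookkeeping.
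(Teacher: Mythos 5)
Your proof is correct and is essentially the paper's argument: the paper simply observes that for any $0$--cube $b$ on a geodesic from $a$ to $c$, each halfspace of $b$ is a halfspace of $a$ or of $c$ (i.e.\ a geodesic never crosses a wall on which the endpoints agree), so all halfspaces of $b$ lie in $\U$; your dependent/independent-wall phrasing is just a spelled-out version of the same observation.
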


\begin{proof}
Consider a geodesic edge path $\gamma$ in $C(\V)$ whose endpoints $a$ and $c$
lie in $C(\U)$.
Observe that each $0$--cube of $\gamma$ lies in $C(\U)$.
Indeed for each $0$--cube $b$ of $\gamma$ each halfspace of $b$ is either
a halfspace of $a$ or a halfspace of $c$ (or both).
\end{proof}

We note that $C(\U)$ is also convex in $C(\V)$ with respect to the
$\CAT(0)$ metric since it is an intersection of ``shifted
halfspaces''.
A \emph{shifted halfspace} is the closure of a component of the complement of $\neb_{\epsilon}(H)$ for some hyperplane $H$ and some $\epsilon<1$.
Shifted halfspaces are slightly smaller than ordinary halfspaces
and are convex in the $\CAT(0)$ metric.

\subsection{The dimension of the dual cube complex and uniform local finiteness}

\begin{defn}
A wallspace
has the \emph{$k$--Plane Intersection Property}
if each collection of $k+1$ walls contains a nontransverse pair.
In other words, every collection of walls that is pairwise transverse
has cardinality at most $k$.
\end{defn}

Equivalently, a wallspace has the $k$--Plane Intersection Property
if and only if
the corresponding $\CAT(0)$ cube complex $C$ has dimension at most $k$.

The $k$--Plane Intersection Property generalizes a property
introduced by Scott \cite{Scott83} in the setting of surfaces in
$3$--manifolds and studied further by Hass--Scott \cite{HassScott92} and
Rubinstein--Sageev \cite{RubinsteinSageev99}.
Let $M$ be a closed, orientable, irreducible $3$--manifold.
An \emph{essential surface} $S \to M$ is a map from a
closed, connected, orientable, aspherical surface $S$
to a $3$--manifold $M$ such that $\tilde{S} \to \tilde{M}$ is an embedding.
Note that the collection of all translates of $\tilde{S}$
provides a geometric wallspace on $\tilde{M}$.
We thus say that an essential surface $S \to M$
has the \emph{$k$--Plane Intersection Property}
if this wallspace satisfies the $k$--Plane Intersection Property
as defined above.

Embedded essential surfaces in $3$--manifolds obviously have
the $1$--Plane Intersection Property.
More generally, if a cover of an essential surface $S$ in a $3$--manifold $M$
lifts to an embedding in a finite cover $\hat{M}$
of $M$, then $S$ has the $k$--Plane Intersection Property
where $k$ is the degree of any regular covering $\bar{M} \to M$
factoring through $\hat{M}$.

Rubinstein--Wang \cite{RubinsteinWang98} constructed an example of
an essential surface in a graph manifold that does not have the $k$--Plane
Intersection Property for any $k$.
In particular, the corresponding $\CAT(0)$ cube complex is infinite
dimensional.

On the other hand, Rubinstein--Sageev \cite{RubinsteinSageev99} showed that
if $M$ is a finite volume hyperbolic $3$--manifold, then every
acylindrical essential surface in $M$ has the $k$--Plane Intersection
Property for some $k$.  (If $M$ is compact and hyperbolic,
then every essential surface in $M$ is acylindrical.)
They also proved a similar result for geometrically finite essential
surfaces in $3$--manifolds $M$ whose JSJ decomposition
contains only hyperbolic pieces glued along tori.

\begin{defn}[Bounded packing]
\label{defn:BoundedPacking}
If $G$ is a group with finite generating set $\mathcal{A}$,
a pair of subsets $A,B \subseteq G$ are \emph{$D$--close}
if $\dist_{\mathcal{A}}(A,B) :=
\min \set{\dist_{\mathcal{A}}(a,b)}{a\in A, \ b\in B} < D$.
A subgroup $H \le G$ has \emph{bounded packing} in $G$
if for each $D>0$ there exists $k$ such that
every collection of $k+1$ left cosets $gH$ of $H$ contains a pair that is
not $D$--close.  In other words, every collection of left cosets of $H$
that is pairwise $D$--close has cardinality at most $k$.
Note that while the specific value of $k$ depends on the metric,
having the bounded packing property is independent of the
generating set.
\end{defn}

Sageev established bounded packing for quasiconvex subgroups of
hyperbolic groups in \cite{Sageev97}.
We prove the following generalization in \cite{HruskaWise09}.

\begin{thm}\label{thm:RelHypBoundedPacking}
Let $H$ be a relatively quasiconvex subgroup of a relatively
hyperbolic group $G$.
Suppose $H\cap gPg^{-1}$ has bounded packing in $gPg^{-1}$ for each
conjugate of each peripheral subgroup $P$.
Then $H$ has bounded packing in $G$.
\end{thm}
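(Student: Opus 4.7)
The plan is to generalize Sageev's bounded packing theorem for quasiconvex subgroups of hyperbolic groups to the relatively hyperbolic setting by working in a coned-off Cayley graph and separating ``bulk'' closeness from ``peripheral'' closeness. First I would fix a finite generating set $S$ of $G$, let $\Gamma$ denote the Cayley graph, and form the coned-off Cayley graph $\hat\Gamma$ by adjoining a cone vertex $v_{aP}$ for each left coset $aP$ of each peripheral $P \in \{P_1,\dots,P_r\}$, joined by an edge to every element of $aP$. By Farb's theorem, relative hyperbolicity makes $\hat\Gamma$ Gromov hyperbolic and endows the pair $(\Gamma,\hat\Gamma)$ with the Bounded Coset Penetration (BCP) property. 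The inclusion $\Gamma \inclusion \hat\Gamma$ is distance-nonincreasing. By the equivalence of definitions of relative quasiconvexity, the cosets $\{gH\}_{g\in G}$ form a family of uniformly quasiconvex subsets of $\hat\Gamma$.

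Now suppose $\mathcal{C} = \{g_0H,\dots,g_nH\}$ is pairwise $D$--close in $\Gamma$; then the same holds in $\hat\Gamma$, and for each pair $(i,j)$ one selects a witness path $\gamma_{ij}$ in $\Gamma$ of length at most $D$ from $g_iH$ to $g_jH$. The BCP sorts such witnesses into two types: a \emph{bulk} witness whose $\hat\Gamma$--image stays uniformly close to a geodesic avoiding cone vertices; and a \emph{peripheral} witness lying essentially inside one peripheral coset $aP$. I would next bound the bulk contributions by a hyperbolic argument in $\hat\Gamma$ modeled on Sageev's original proof: uniformly quasiconvex subsets of the hyperbolic graph $\hat\Gamma$ that are pairwise within distance $D$ form families of bounded cardinality, say at most $k_1 = k_1(D,H,\delta)$. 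For the peripheral contributions, I would argue that if $m$ cosets of $\mathcal{C}$ share deep excursions in a common peripheral coset $aP$, then after translating by $a^{-1}$ and adjusting representatives each such coset meets $P$, and each nonempty intersection $g_{i_s}H \cap P$ equals a single left coset of $H \cap P$ in $P$. Pairwise $D$--closeness in $\Gamma$ of the $g_{i_s}H$ translates to pairwise $D'$--closeness in the word metric on $P$ of the corresponding cosets of $H \cap P$, for a constant $D' = D'(D)$ depending only on $D$ and the peripheral generating sets. The hypothesis that $H \cap aPa^{-1}$ has bounded packing in $aPa^{-1}$ then bounds $m$ by some $k_2 = k_2(D,H,aPa^{-1})$. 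Combining the two bounds across the finitely many relevant peripheral cosets produces the global upper bound on $n$ depending only on $D$, $H$, and $\{P_i\}$.

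The hard part will be making the bulk-versus-peripheral dichotomy uniform: for a pairwise $D$--close family one must verify that pairs with peripheral witnesses concentrate into only boundedly many peripheral cosets $aP$, rather than being spread across arbitrarily many peripherals each carrying only a few cosets of $H$. This control is what the BCP is designed to provide, together with the observation that any two deep peripheral excursions witnessing closeness of members of a single $D$--close family must occur in a common peripheral coset; uniform quasiconvexity of $\{gH\}$ in $\hat\Gamma$ is what forces this alignment. A secondary technical point is passing cleanly from $\Gamma$--closeness to closeness in the intrinsic word metric on each $P$, using that peripheral excursions may be coarsely straightened into paths inside the peripheral subgroup via BCP.
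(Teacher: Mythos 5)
This theorem is not proved in the paper at all: it is quoted from \cite{HruskaWise09}, so there is no in-paper proof to compare against. Your overall strategy (split the analysis of a pairwise $D$--close family into a ``generic'' part handled by hyperbolicity and a ``peripheral'' part handled by the packing hypothesis in the peripheral subgroups) is indeed the strategy of the cited proof, but as written the proposal has two genuine gaps.

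First, the bulk bound $k_1$ does not exist as stated. The coned-off graph $\hat\Gamma$ is not locally finite and $G$ does not act on it properly, so uniformly quasiconvex subsets of $\hat\Gamma$ that are pairwise $D$--close need \emph{not} form a family of bounded cardinality. Concretely, if $H\cap P$ has infinite index in $P$, the infinitely many distinct cosets $pH$ with $p\in P$ all meet $P$ and hence are pairwise at distance $2$ in $\hat\Gamma$ via the cone vertex over $P$. Sageev's counting step requires properness and must therefore be carried out in $\Gamma$: one first produces a single point of $\Gamma$ uniformly close to \emph{every} coset in the family and then counts cosets through a ball of the proper word metric. Producing such a point is exactly what fails in the peripheral case, so the bulk case cannot be closed off by an argument that lives entirely in $\hat\Gamma$.

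Second---and you flag this yourself as the hard part---the concentration of peripheral witnesses is asserted rather than proven, and it is the heart of the matter. The statement actually needed is: if $\bigcap_{aH\in\Set{A}}\nbd{aH}{M}=\emptyset$ for a pairwise $D$--close family $\Set{A}$, then there is a \emph{unique} peripheral coset $bP$ such that $\nbd{bP}{R}\cap\nbd{aH}{R}$ is nonempty (indeed unbounded) for every $aH\in\Set{A}$. This is precisely Theorem~\ref{thm:RelPacking}, quoted later in this paper from \cite{HruskaWise09}; its proof uses relatively thin triangles, saturations, and the isolation of peripheral cosets, not merely BCP. Without it, your pairwise dichotomy does not globalize: a priori different pairs could be witnessed deep in different peripheral cosets, each carrying only two members of the family, and no single application of the peripheral packing hypothesis bounds the total. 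A smaller but real issue in the peripheral reduction is that the cosets $g_{i_s}H$ need not literally meet $bP$, only come $R$--close to it; one must first identify $\nbd{bP}{R}\cap\nbd{g_{i_s}H}{R}$, up to bounded Hausdorff distance, with a coset of $g_{i_s}Hg_{i_s}^{-1}\cap bPb^{-1}$ (compare Lemma~\ref{lem:CoarseIntersection}) before the bounded packing hypothesis in $bPb^{-1}$ can be invoked.
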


\begin{defn}
\label{defn:Osculating}
Let $(X,\W)$ be a wallspace.
Generalizing the notion of a wall separating two points,
we say $W$ \emph{separates} subsets $P,Q$ of $X$
if $P$ and $Q$ lie in distinct open halfspaces
of $W$.
A wall $W''$ \emph{separates} $W,W'$ if
$W''\ne W$, \ $W'' \ne W'$, and
there are closed halfspaces $U$ of $W$ and $V'$ of $W'$
such that $W''$ separates $U$ and $V'$.

Distinct walls $W,W'$ \emph{osculate} if
they are not transverse and there is no wall separating
them.
\end{defn}

\begin{rem}
Walls $W,W'$
osculate in $X$ if and only if
the corresponding hyperplanes $H,H'$ of $C(X)$
are dual to $1$--cubes $e,e'$
that intersect at a $0$--cube
but do not lie in a $2$--cube.
Indeed, let $W=\{U,V\}$ and $W'=\{U',V'\}$
be osculating walls with $V \cap V' = \emptyset$.
Choose a minimal length path $\gamma$ in $C(X)$
between
$C(\mathcal{U}_V)$ and $C(\mathcal{U}_{V'})$.
Reasoning as in the proof of
Proposition~\ref{prop:Connected}
one reaches the conclusion that $\gamma$
is the concatenation of two edges dual to $W$ and
$W'$ respectively.
Conversely, walls $W,W'$ corresponding to hyperplanes
dual to adjacent $1$--cubes $e,e'$ not in a $2$--cube
cannot be separated by any other wall
since it would then be impossible for both $W,W'$
to be flippable.
\end{rem}

The following theorem explains how the bounded packing
property applies to ensure the $k$--Plane
Intersection Property, and hence the finite dimensionality of
the dual cube complex
and similarly its uniform local finiteness.
We emphasize that the additional necessary hypothesis
for finite dimensionality is usually immediate
in a geometric setting.
However the additional hypothesis for uniform local finiteness is far from transparent because it entails
a more global condition
related to the Wall--Wall Separation Property
(see Sections \ref{subsec:WallWallRH}~and~\ref{sec:RHLocalFiniteness}).

\begin{thm}
\label{thm:BoundedPacking}
Let $G$ be a finitely generated group that
acts on a wallspace $(X,\W)$ with finitely many $G$--orbits of walls.

Suppose there are finitely many $G$--orbits
of pairs of transverse walls
and that the stabilizer of each wall has bounded packing in $G$.
Then the wallspace has the $k$--Plane Intersection Property.
Consequently the dual cube complex $C(X,\W)$ is finite dimensional.

Suppose in addition that there are finitely many $G$--orbits
of pairs of osculating walls.
Then $C(X,\W)$ is uniformly locally finite,
in the sense that there is a uniform upper bound
on the number of $1$--cubes incident to each $0$--cube.
\end{thm}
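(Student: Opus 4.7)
The plan is to translate the orbit-counting hypotheses into uniform numerical bounds by combining pigeonhole with the bounded packing property of the wall stabilizers. For the $k$--Plane Intersection Property, let $W_1, \dots, W_n$ be a pairwise transverse collection of walls. Since there are finitely many $G$--orbits of walls, it suffices to bound the number in any single orbit, so write $W_i = g_i W$ for a fixed representative $W$. The $G$--orbits of pairs of walls inside the orbit of $W$ correspond, under the diagonal action, to double cosets $\Stab(W) \backslash G / \Stab(W)$; the hypothesis thus yields finitely many elements $h_1, \dots, h_N \in G$ such that whenever $g_i W$ and $g_j W$ are transverse, $g_i^{-1} g_j \in \Stab(W) \, h_\ell \, \Stab(W)$ for some $\ell$. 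Fixing a finite generating set for $G$ and letting $D = \max_\ell \abs{h_\ell}$, it follows that the Cayley graph distance from $g_i \Stab(W)$ to $g_j \Stab(W)$ is at most $D$. Bounded packing of $\Stab(W)$ then bounds the number of pairwise $D$--close cosets, and summing the resulting bounds over the finitely many orbits of walls produces the desired uniform $k$. Corollary \ref{cor:FiniteDim} then gives finite dimensionality of $C(X, \W)$.

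For uniform local finiteness, fix a $0$--cube $c$ and consider its link $L$. The vertices of $L$ are the $1$--cubes incident to $c$, each dual to a wall of $\W$ that is flippable at $c$; two vertices are joined by an edge of $L$ precisely when their associated walls are transverse. The finite dimensionality just established shows that the clique number of $L$ is at most $k$. To bound the independence number, note that two non-adjacent vertices of $L$ correspond to non-transverse walls whose dual $1$--cubes at $c$ do not bound a common $2$--cube; by the remark following Definition \ref{defn:Osculating}, these walls must osculate. An independent set in $L$ thus consists of pairwise osculating walls, and the pigeonhole-plus-bounded-packing argument of the previous paragraph, applied with osculating pairs in place of transverse pairs, produces a uniform bound on the size of any such family. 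Ramsey's theorem now converts the clique and independence number bounds into a uniform bound on $\abs{L^0}$, which is exactly uniform local finiteness.

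The main technical point is the organizing identification between $G$--orbits of transverse (respectively osculating) pairs of walls in a single orbit and double cosets of the wall stabilizer: it is this correspondence that makes bounded packing applicable, by converting an abstract orbit-counting hypothesis into a geometric bound on Cayley-graph distances between cosets. Once this is in hand, both conclusions are essentially formal, relying on the clean characterization (from the remark cited above) that two flippable walls at $c$ which fail to be transverse must osculate, together with a routine Ramsey-type combination of the two resulting bounds on $L$.
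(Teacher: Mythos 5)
Your proof is correct, and the engine is the same as the paper's: finitely many $G$--orbits of transverse (resp.\ osculating) pairs yields a uniform bound $D$ on the distance between the associated stabilizer cosets, and bounded packing then caps the size of any pairwise $D$--close family of cosets, summed over the finitely many orbits of walls. Your double-coset formulation of the first step is just a more explicit rendering of the paper's ``choose representatives of the finitely many orbits of pairs and let $D$ be the maximum distance between their associated cosets'' (together with $G$--equivariance of the coset distance). The one place you genuinely diverge is the assembly of the local finiteness bound: you bound the clique number and the independence number of the link separately and invoke Ramsey, whereas the paper simply observes that the walls dual to the $1$--cubes at a fixed $0$--cube are \emph{pairwise} either transverse or osculating, so the entire family is pairwise $D'$--close for $D'$ the larger of the two constants, and a single application of bounded packing finishes — no Ramsey needed. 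Both are valid; the paper's route is shorter, while yours makes the combinatorial structure of the link (transverse $=$ adjacent, osculating $=$ non-adjacent among flippable walls) more explicit, which is a correct and worthwhile observation in its own right.
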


\begin{proof}
Choose representatives $W_1,\dots,W_r$
of the finitely many $G$--orbits of walls,
and let $H_1,\dots,H_r$ denote their stabilizers.
Observe that each wall $W\in \W$ is a translate $gW_i$ of one of our
representatives,
and is associated with the coset $gH_i$.

Choose finitely many pairs of walls
representing the finitely many $G$--orbits of pairs of transverse walls.
Let $D$ be an upper bound
on the distance between their associated cosets.
As there are finitely many distinct $G$--orbits of walls in $X$
and by hypothesis the bounded packing property holds for their stabilizers,
we can choose $k$ to be an upper bound on the size of a collection of
pairwise $D$--close cosets.

By Corollary~\ref{cor:FiniteDim}, the $k$--Plane Intersection Property
for $(X,\W)$ implies
the finite dimensionality of $C(X,\W)$.

Similarly, a collection of $1$--cubes at a $0$--cube
correspond to a collection of walls that
pairwise either osculate or are transverse.
Hence the result follows from the bounded packing property
in the same manner as above.
\end{proof}

If we assume the subgroups in question
are finitely generated, then by
Lemma~\ref{lem:TransverseToCloseSubgroups}
there are finitely many orbits of pairs of transverse walls.
We thus deduce the following corollary.

\begin{cor}
Suppose the finitely generated group $G$ has bounded packing with respect to finitely generated subgroups $H_1,\dots, H_r$.
Let $C(X)$ be the dual CAT(0) cube complex with respect to a system of $H_i$--walls in $X=\Gamma(G)$.
Then $C(X)$ is finite dimensional.
\end{cor}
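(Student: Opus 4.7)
The plan is to reduce the corollary to Theorem~\ref{thm:BoundedPacking} by verifying the three hypotheses of that theorem for the wallspace $(X,\W)$, where $X = \Gamma(G)$ and $\W$ consists of all $G$-translates of a chosen collection of $H_i$-walls $\{U_i,V_i\}$, one for each $i=1,\dots,r$.

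First I would set up orbits and stabilizers. By construction $\W$ decomposes into exactly $r$ orbits under $G$, with orbit representatives $\{U_i,V_i\}$. By Definition~\ref{def:HWall} the stabilizer $\bar H_i$ of $\{U_i,V_i\}$ is commensurable with $H_i$. Since bounded packing is a commensurability invariant of subgroups (passing to a finite index subgroup changes the constants but not the property, and a subgroup of finite index in $\bar H_i$ is $H_i$-translate-close to $\bar H_i$), the hypothesis that $H_i$ has bounded packing in $G$ transfers to $\bar H_i$. Thus the wall-stabilizer hypothesis of Theorem~\ref{thm:BoundedPacking} holds.

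Next I would verify that there are finitely many $G$-orbits of transverse pairs, which is the main content of the argument. Any pair of transverse walls is $G$-equivalent to one of the form $(\{U_i,V_i\},\, g\{U_j,V_j\})$ for some $i,j \in \{1,\dots,r\}$ and some $g \in G$. Applying Lemma~\ref{lem:TransverseToCloseSubgroups} to this pair, there is a constant $D = D(i,j)$ so that transversality forces $\dist_{\Gamma}(H_i, gH_j) < D$. Hence there exist $h \in H_i$ and $h' \in H_j$ with $\dist_{\Gamma}(h, gh') < D$, which means $g$ lies in $H_i \cdot B_D \cdot H_j^{-1}$, where $B_D$ is the ball of radius $D$ in $\Gamma$. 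Since $G$ is finitely generated, $B_D$ is finite, so $g$ lies in one of finitely many $(H_i, H_j)$-double cosets. Left-multiplying by $\bar H_i$ and right-multiplying by $\bar H_j$ preserves the $G$-orbit of the pair of walls, so we obtain only finitely many $G$-orbits of transverse pairs for each $(i,j)$, and hence finitely many overall.

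Finally, with the three hypotheses of Theorem~\ref{thm:BoundedPacking} in hand—finitely many $G$-orbits of walls (by construction), finitely many $G$-orbits of transverse pairs (just verified), and bounded packing of each wall stabilizer (from commensurability with $H_i$)—that theorem yields the $k$-Plane Intersection Property for $(X,\W)$, and Corollary~\ref{cor:FiniteDim} concludes that $C(X)$ is finite dimensional. The main obstacle is the orbit-counting argument for transverse pairs; this is where Lemma~\ref{lem:TransverseToCloseSubgroups} is essential, since without a uniform coarse-intersection bound one cannot convert transversality into closeness of cosets and thus control the number of orbits.
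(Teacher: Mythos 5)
Your proposal is correct and follows exactly the route the paper intends: the paper deduces this corollary from Theorem~\ref{thm:BoundedPacking} by noting, in the sentence immediately preceding it, that Lemma~\ref{lem:TransverseToCloseSubgroups} yields finitely many $G$--orbits of pairs of transverse walls when the $H_i$ are finitely generated. Your double-coset count and the commensurability transfer of bounded packing from $H_i$ to the wall stabilizer $\bar H_i$ simply fill in details the paper leaves implicit.
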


The following converse to the second part of
Theorem~\ref{thm:BoundedPacking}
was proven in \cite[Thm~3.2(1)]{HruskaWise09}.
(The statement there incorrectly
omits the word ``uniformly'',  but uniform local finiteness
is explicitly used in the proof.)

\begin{thm}
Suppose a finitely generated group $G$ acts on a $\CAT(0)$ cube complex $C$
and $H\le G$ is the stabilizer of a hyperplane.
If $C$ is uniformly locally finite then $H$ has bounded packing in $G$.
\end{thm}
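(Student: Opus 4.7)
The plan is to translate bounded packing of $H$ in $G$ into a counting statement for translates of the hyperplane $\hat H$ stabilized by $H$, and then to extract the required bound from uniform local finiteness of $C$ via the circumcenter of a bounded set in the $\CAT(0)$ metric. Fix a finite generating set $S$ for $G$ and a $0$--cube $x_0$ on the carrier of $\hat H$. The orbit map $g\mapsto gx_0$ is Lipschitz with constant $M=\max_{s\in S}d_C(x_0,sx_0)$. If cosets $g_1H$ and $g_2H$ are $D$--close, choose $h_i\in H$ with $d_G(g_1h_1,g_2h_2)\le D$; since each $h_i$ stabilizes $\hat H$, the points $g_ih_ix_0$ lie on $g_i\hat H$ and are at $C$--distance at most $MD$ apart. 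Hence the translate hyperplanes $g_1\hat H$ and $g_2\hat H$ are within $\CAT(0)$ distance $R:=MD$ in $C$, and it suffices to bound the size of any family of pairwise $R$--close hyperplanes in $C$.

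For this geometric step I would produce a single ball of radius depending only on $R$ that meets every member of the family. Let $K_1,\dots,K_n$ be hyperplanes pairwise within $\CAT(0)$ distance $R$. For each pair $i<j$ choose a geodesic segment $\sigma_{ij}$ from $K_i$ to $K_j$ of length at most $R$, and let $m_{ij}$ denote its midpoint. The triangle inequality bounds the diameter of the finite set $\{m_{ij}\}$ by $2R$: each $m_{ij}$ lies within $R/2$ of both $K_i$ and $K_j$, so $d_C(m_{ij},m_{kl})\le R/2+R+R/2=2R$. Since $C$ is a complete $\CAT(0)$ space, this bounded set has a unique circumcenter $c\in C$ of circumradius at most $2R$, and hence each $K_i$ lies within $R/2+2R=5R/2$ of $c$. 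Replacing $c$ by a nearby $0$--cube $v$ (at bounded distance, since uniform local finiteness bounds the dimension of $C$), each $K_i$ contains a dual $1$--cube in some combinatorial ball $B(v,r)$ of radius $r$ depending only on $R$.

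Finally, uniform local finiteness bounds the number of $1$--cubes in $B(v,r)$ by some $N=N(r)$ independent of $v$. Since distinct hyperplanes are dual to disjoint families of $1$--cubes, we obtain $n\le N$, and this $N$ serves as the packing constant $k=k(D)$ promised by the definition of bounded packing.

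The main obstacle is primarily bookkeeping rather than conceptual: one has to track carefully how the $\CAT(0)$ circumradius bound translates into a combinatorial neighborhood of a $0$--cube, and confirm that uniform local finiteness yields both the dimension bound needed to round $c$ to a $0$--cube and the ball-counting bound $N(r)$. The circumcenter step itself relies only on the completeness of $C$ together with the classical uniqueness of circumcenters for bounded sets in $\CAT(0)$ spaces, so no hypothesis beyond uniform local finiteness is required.
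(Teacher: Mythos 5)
Your reduction from cosets to hyperplanes is fine (since $H$ is the full stabilizer, $g_1H\mapsto g_1\hat H$ is injective and $D$--close cosets give translates of $\hat H$ whose carriers come within $MD$ of each other), and the final counting step is fine too. The genuine gap is the middle, geometric step: the inequality $d_C(m_{ij},m_{kl})\le R/2+R+R/2$ is a non sequitur. Knowing $\dist(K_j,K_k)\le R$ only says that \emph{some} point of $K_j$ is within $R$ of \emph{some} point of $K_k$; it does not bound the distance from an arbitrary point near $K_j$ (such as your midpoint $m_{ij}$) to an arbitrary point near $K_k$ (such as $m_{kl}$), because hyperplanes are unbounded. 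Already in the standard squaring of $\R^2$ with $K_1=\{x=\tfrac12\}$, $K_2=\{x=\tfrac32\}$, $K_3=\{y=\tfrac12\}$, a length--$1$ bridge from $K_1$ to $K_2$ may be chosen at any height $t$, so $m_{12}=(1,t)$ can be arbitrarily far from $m_{13}\in K_1\cap K_3$; nothing in your argument selects compatible bridges. Since the whole point of the theorem is precisely that a pairwise--close family of (unbounded) hyperplanes must interact with a common bounded region, this is not a bookkeeping issue but the heart of the proof, and the circumcenter machinery that follows has nothing to act on once the diameter bound fails.

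For comparison: the paper does not prove this statement at all; it quotes it from \cite[Thm~3.2(1)]{HruskaWise09}, where the bound is obtained by an argument in which uniform local finiteness enters essentially (the paper even remarks that the word ``uniformly'' was wrongly omitted there but is used in the proof), rather than by a soft metric Helly/circumcenter argument with constants independent of $C$. If you want to salvage your ``common ball'' strategy, the correct tool is combinatorial rather than $\CAT(0)$--metric: carriers of hyperplanes are convex subcomplexes, iterated cubical neighborhoods of convex subcomplexes are again convex, and finite families of pairwise intersecting convex subcomplexes of a $\CAT(0)$ cube complex satisfy the Helly property. Using the dimension bound coming from uniform local finiteness to compare the $\CAT(0)$ and combinatorial metrics, pairwise $R$--closeness makes suitable iterated neighborhoods of the carriers pairwise intersect, Helly then produces a single $0$--cube uniformly close to all the hyperplanes, and only then does your counting argument in a uniformly bounded ball go through.
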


\section{$\CAT(0)$ spaces with convex walls}
\label{sec:ConvexWalls}

In Section~\ref{sec:HighlyGeometric}
we describe our main theorems in the setting
of a $\CAT(0)$ space with convex walls.
In Section~\ref{sec:2ComplexExamples}
we give simple examples of $2$--dimensional $\CAT(0)$ spaces
with convex walls.
The discussion of these examples incorporates
the subsequent finiteness results on dual cube complexes
from Sections \ref{sec:Properness}~and~\ref{sec:RelCocompact}
that were stated in the introduction.
We hope that the reader will be able to navigate
Section~\ref{sec:ConvexWalls} to have concrete examples in mind
and then revisit the section later.

\subsection{Main results illustrated in a highly geometric setting}
\label{sec:HighlyGeometric}

The following is a motivating example of a geometric wallspace
similar to the Coxeter and cubical settings.

\begin{exmp}
Let $X$ be a $\CAT(0)$ space.
Let $\W$ be a locally finite
collection of convex $2$--sided subspaces.
\end{exmp}

We will describe how some such examples arise in certain naturally
occurring $\CAT(0)$ $2$--complexes in Section~\ref{sec:2ComplexExamples}.
Higher dimensional versions of this scenario can be described analogously
but appear to be scarcer.
In a hyperbolic high dimensional instance,
$X$ is $\Hyp^n$ and the convex walls are copies of $\Hyp^{n-1}$
(see for instance Bergeron--Haglund--Wise \cite{BergeronHaglundWiseSimple}).

\textbf{A cocompact action:}
Assume that a group $G$ acts properly and cocompactly on $(X,\W)$.
Observe that stabilizers of walls are quasi-isometrically embedded in $G$.
According to Theorem~\ref{thm:HyperbolicCocompact},
if $G$ is word-hyperbolic, then $G$ acts cocompactly on the
dual cube complex.

\textbf{A relatively cocompact action:}
Let us now consider a motivating case of
the relatively hyperbolic situation;
namely suppose that $X$ is also a $\CAT(0)$ space with isolated flats.
According to Theorem~\ref{thm:RelCocompactWallspace},
$G$ acts relatively cocompactly on the dual cube
complex (see Definition~\ref{defn:RelCocompact}),
and in fact the dual cube complex looks like a
quasi-copy of $G$ together with various cubulations of
free abelian groups hanging off.
The simplest such example arises from a piecewise Euclidean $2$--complex
all of whose $2$--cells are regular even-sided
polygons with at least $6$ sides (see Section~\ref{sec:2ComplexExamples}).
Note that the same result holds without assuming that the polygons have an even number of sides \cite{WiseSmallCanCube04},
but there is no longer an obvious natural choice
of isometrically embedded walls.

\textbf{A proper action:}
Assume now that a group $G$ acts properly on $(X,\W)$.
According to Theorem~\ref{thm:LinearSeparationProper},
$G$ acts metrically properly on the dual cube complex provided that there exists
$L>0$ such that any two points at distance at least $L$ are
separated by some wall.

\begin{prob}
Is there an example of a $\CAT(0)$ space $X$ with a proper cocompact
$G$--action and a locally finite $G$--invariant system $\W$ of convex
walls such that the dual
cube complex $C(X,\W)$ is not finite dimensional?
\end{prob}

\subsection{Examples of $\CAT(0)$ $2$--complexes with convex walls}
\label{sec:2ComplexExamples}

In this section we will describe simple conditions on a $\CAT(0)$
$2$--complex $X$ that endow it with a locally finite collection $\W$
of convex $2$--sided trees.
Moreover this collection \emph{fills} $X$ in the sense that the Linear
Separation condition holds (see Definition~\ref{def:LinearSeparation}).

We first describe a condition yielding trees that are disjoint from
the $0$--cells.

\begin{exmp}
Suppose $X$ is a $\CAT(0)$ $2$--complex such that each $2$--cell is
a regular Euclidean polygon with an even number of sides.
For each $1$--cell $e$, the tree $T$ dual to $e$ passes through the midpoint
$m$ of $e$ as a perpendicular bisector.  If $e$ lies on $r$ $2$--cells
then $m$ is a valence $r$ vertex of $T$.
The tree bisects any $2$--cell $c$ that it passes through---$T \cap c$
is an edge of $T$, which ends at the midpoints of opposite $1$--cells
of $c$, as shown on the left in Figure~\ref{fig:LinkCutters}.
Similar walls have been examined in a slightly more general context
in \cite{HaglundPaulin98}.
\end{exmp}

\begin{figure}
\begin{center}
\includegraphics[width=.8\textwidth]{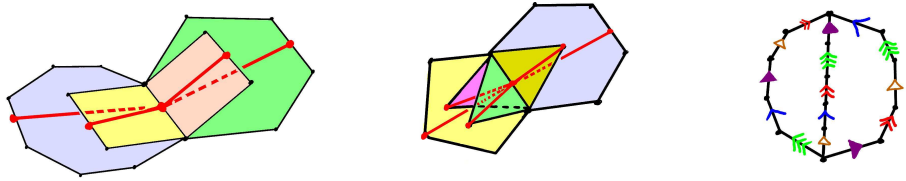}
\end{center}
\caption{The figure on the left shows a portion of the dual tree in
the case when all $2$--cells are regular polygons with an even number
of sides.
The figure in the middle illustrates the case when some $2$--cells
have an odd number of sides, and the dual tree is forced to
pass through $0$--cells.
On the right is a $\ominus$--graph and several sets of midpoints
that are $\pi$--separating,
provided that the edges have length $\ge \pi/3$.}
\label{fig:LinkCutters}
\end{figure}

We now consider the more general setting where polygons are not regular and may not have an even number of sides.
Conditions guaranteeing the existence of such trees are
trickier to state, especially considering that there are
such examples where no isometric $2$--sided trees exist.

The way in which a tree $T$ locally cuts $X$ in half is modeled by
two situations, illustrated in Figure~\ref{fig:LinkCutters}:
\begin{enumerate}
\item perpendicularly cutting through a $1$--cell at an internal point, and
\item perpendicularly cutting through a $0$--cell $v$.
\end{enumerate}

In the second case, $v$ is actually a vertex of $T$, and we now describe what we mean by ``perpendicularly cutting through $v$.''
There is an embedding $\Lk_T(v) \hookrightarrow \Lk_X(v)$.
Note that the edges of $\Lk_X(v)$ are assigned lengths according to the
angles of the corresponding corners of $2$--cells in $X$.
The \emph{angular metric} on $\Lk_X(v)$ is the
induced path metric on $\Lk_X(v)$.

\emph{Perpendicularly cutting through $v$}
requires that:
\begin{enumerate}
\item points of $\Lk_T(v)$ are \emph{$\pi$--separated}
in $\Lk_X(v)$, in the sense that for each distinct
$a,b \in \Lk_T(v)$, the angular
distance in $\Lk_X(v)$ between $a$ and $b$ is at least $\pi$, and
\item $\Lk_T(v)$ separates $\Lk_X(v)$ into two complementary components.
\end{enumerate}

When $Lk_T(v)$ is a $\ominus$--graph whose three arcs have length $\pi$,
then there are two $\pi$--separated separating sets.
But there are more possibilities when the lengths of the arcs are increased,
as shown on the right of Figure~\ref{fig:LinkCutters}.

Patching together these local models to yield a locally finite collection of $2$--sided trees can be tricky.
A simple condition that guarantees that the local models can be patched together requires that:
\begin{enumerate}
\item all polygons are regular,
\item the embedding $\Lk_T(v) \subset \Lk_X(v)$ lies at the centers of edges,
and
\item the center of each edge lies in exactly one $\Lk_T(v)$ subset.
\end{enumerate}
An example of this is illustrated on the right of
Figure~\ref{fig:LinkCutters}.

The $\Lk_T(v)$ subsets make $\Lk_X(v)$ into a wallspace.
This type of local wallspace structure is exploited in
\cite{WiseIsraelHierarchy} to create many global wallspaces in a
similar fashion.

The following example arose recently in the work of Barr{\'e}--Pichot
\cite{BarrePichot2010}.
Barr{\'e}--Pichot use these walls to see that $\pi_1X$ is a-T-menable, by applying a criterion of Haglund--Paulin \cite{HaglundPaulin98}.
\begin{exmp}
Let $X$ be the standard $2$--complex of the following presentation, and let $x$ be the $0$--cell of $X$:
\[
   \langle \,a,b,c\mid abc, acb, ad^{-1}bd^{-1}cd^{-1}\,\rangle
\]
The three $2$--cells and $\text{link}(x)$ are illustrated
in Figure~\ref{fig:BarrePichot}.
Observe that $X$ has a nonpositively curved piecewise-Euclidean $2$--complex whose $2$--cells are equilateral triangles and a regular hexagon. The walls in $\widetilde X$ are ``generated'' by geodesic segments that cut through the $2$--cells as illustrated in Figure~\ref{fig:BarrePichot}. The three such segments in each triangle are the perpendicular bisectors of its sides.
There are nine such segments in the hexagon: three are the perpendicular bisectors of the hexagon, and the other six
cut across the hexagon to form a triangle with consecutive sides of the hexagon.

The graph $\text{link}(x)$ is illustrated on the right of Figure~\ref{fig:BarrePichot}.
There are three walls that perpendicularly cut through  $\text{link}(x)$. We have illustrated one of these walls,
namely the one  corresponding to the four bold vertices of the wall segments on the left.

The linear separation property for the geometric wallspace on $\widetilde X$ generated by these walls is immediate.
Indeed, let $L$ denote the union of all the walls, and consider $\widetilde X- L$.
Each component of $\widetilde X-L$ is a subspace of the union of cells containing a single $0$-cell,
and hence has uniformly bounded diameter.
  \end{exmp}

\begin{figure}
\begin{center}
\includegraphics[width=.9\textwidth]{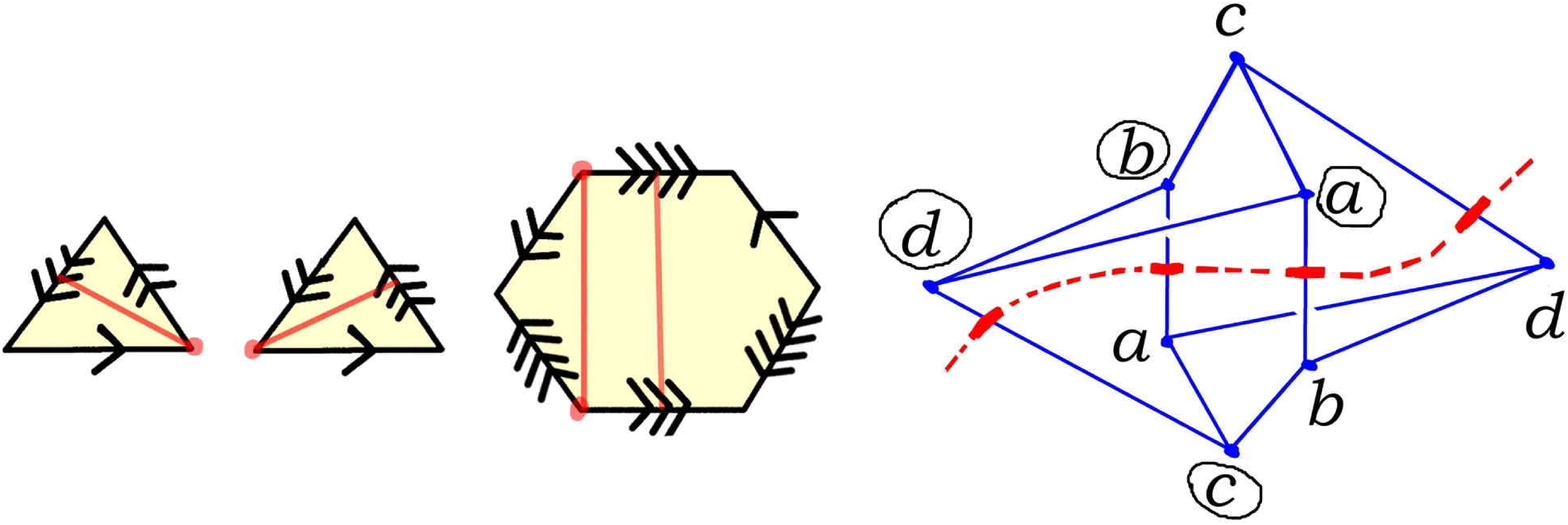}
\end{center}
\caption{The walls in $\widetilde X$ are generated by segments cutting through the $2$--cells, some of these are indicated on the left.
Each wall perpendicularly cuts $\text{link}(x)$. One of the three types of such cuts is illustrated within $\text{link}(x)$ on the right. }
\label{fig:BarrePichot}
\end{figure}

\section{Properness and local finiteness}
\label{sec:Properness}

\subsection{Properness of the action}
\label{sub:Properness}

\begin{defn}[Linear Separation]
\label{def:LinearSeparation}
Let $G$ act on a geometric wallspace $(X,\W)$.
Recall that the underlying space $(X,\dist)$ is a metric space.
The geometric wallspace has the \emph{Linear Separation Property}
if there exists constants $\kappa,\epsilon>0$ such that
for all points $x,y \in X$ we have
\[
   \#(x,y) \ge \kappa\,\dist(x,y) - \epsilon.
\]
\end{defn}

The reader may wish to verify that Linear Separation holds for
the examples in Section~\ref{sec:2ComplexExamples}.

An inequality of the form $\#(x,y) \le \kappa\,\dist(x,y) +\epsilon$
is of less interest here.  For instance this inequality
holds whenever the following ubiquitous condition is satisfied:
$X$ is a geodesic metric space $G$ acts on $(X,\W)$, there exists a bounded
set $B$ such that $X = GB$,
and there are finitely many distinct walls that separate points of $B$.

An action of a group $G$ on a topological space $X$
is \emph{proper} if
$\set{g \in G}{gK \cap K \ne \emptyset}$
is finite for each compact subspace $K \le X$.
We note that an action $G$ on a cube complex $C$
is proper precisely when all $0$--cubes
of $C$ have finite stabilizers.

An action of a group $G$ on a metric space $X$
is \emph{metrically proper}
if for each $r>0$ and each $x \in X$
the set $\set{g}{g B_r(x) \cap B_r(x) \ne \emptyset}$ is finite.
Note that if $G$ acts metrically properly then $G$ acts properly.
The two notions coincide when $X$ is \emph{proper},
which means that every closed ball in $X$ is compact.

In particular, observe that a proper action
on a locally finite cube complex is always
metrically proper.

\begin{thm}
\label{thm:LinearSeparationProper}
Suppose $G$ acts on a geometric wallspace $(X,\W)$,
and the action on the underlying metric space $(X,\dist)$
is metrically proper.
Then the Linear Separation Property for $(X,\W)$ implies that $G$
acts metrically properly on $C(X,\W)$.

More generally $G$ acts metrically properly on $C(X,\W)$ provided that
for some $x\in X$ we have:
$\#(x,gx)\rightarrow \infty$ as $g\rightarrow \infty$.
\end{thm}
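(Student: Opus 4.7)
The plan is to compare the combinatorial distance between natural $0$--cubes in $C = C(X,\W)$ with the wall count $\#(x_0, g x_0)$, and then invoke metric properness of the $G$--action on $X$.

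First I fix a basepoint $x_0 \in X$ and select a $0$--cube $d_0$ of the canonical cube associated to $x_0$. Since $x_0$ betwixts only finitely many walls by the definition of wallspace, this canonical cube is finite-dimensional, so such a $d_0$ exists. By construction $d_0$ orients each wall not betwixting $x_0$ towards $x_0$. By the equivariance described in Remark~\ref{rem:InducedAction}, the translate $g d_0$ is a $0$--cube of the canonical cube associated to $g x_0$, and it orients each wall not betwixting $g x_0$ towards $g x_0$.

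The key estimate I would prove is
\[
   \dist_{C^{(1)}}\bigl(d_0,\, g d_0 \bigr) \ \geq \ \#(x_0,\, g x_0),
\]
where the left-hand side is the combinatorial edge-path distance in the $1$--skeleton of $C$, equal to the number of walls on which $d_0$ and $g d_0$ disagree. If a wall $W$ strictly separates $x_0$ from $g x_0$, then $W$ is betwixted by neither $x_0$ nor $g x_0$; consequently $d_0$ orients $W$ towards $x_0$ while $g d_0$ orients $W$ towards $g x_0$, and the two $0$--cubes disagree on $W$. Summing over the $\#(x_0, g x_0)$ such walls yields the inequality.

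From this estimate, both assertions follow quickly. Given $r > 0$, if $\dist_{C^{(1)}}(d_0, g d_0) \leq r$ then $\#(x_0, g x_0) \leq r$. Under the more general hypothesis $\#(x_0, g x_0) \to \infty$ as $g \to \infty$, only finitely many such $g$ exist. Under the Linear Separation hypothesis with constants $\kappa, \epsilon$, the bound $\#(x_0, g x_0) \leq r$ gives $\dist(x_0, g x_0) \leq (r + \epsilon)/\kappa$, and the metric properness of $G \curvearrowright X$ then yields finiteness; in particular, Linear Separation implies the more general wall-count hypothesis, confirming that the second statement indeed generalizes the first. The main technical subtlety to flag is specifying the metric on $C$: the argument above gives metric properness for the combinatorial ($\ell^1$) metric, and passing to the $\CAT(0)$ metric requires a separate comparison of the two metrics on the $0$--skeleton. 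I expect this accounting to be routine and not the essential content, which lies in the wall-count inequality above.
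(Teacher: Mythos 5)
Your proposal is correct and follows essentially the same route as the paper's proof: both compare the distance in $C(X,\W)$ between canonical cubes (you use a $0$--cube of the canonical cube) with $\#(x_0,gx_0)$, via the observation that a wall separating $x_0$ from $gx_0$ betwixts neither point and is therefore oriented towards $x_0$ by one and towards $gx_0$ by the other, and then conclude using Linear Separation together with metric properness of the action on $X$. The only cosmetic difference is that the paper records the equality $\#(x,y)=\dist_C(c_x,c_y)$ while you use only the inequality $\dist_{C^{(1)}}(d_0,gd_0)\ge\#(x_0,gx_0)$, which indeed suffices.
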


We have in mind the case when $G$ has a proper metric
(e.g., a word metric),
in which case
a sequence of distinct elements in $G$ always tends to infinity.

Although Theorem~\ref{thm:LinearSeparationProper} holds under the more
general hypothesis stated second, we have deliberately chosen to emphasize
the Linear Separation hypothesis as this stronger property on $(X,\W)$
is often the natural one to verify.

\begin{proof}
Suppose $(g_i)$ is a sequence of distinct elements in $G$,
and let $x \in X$ be any basepoint.
Since $G$ acts metrically properly on $X$, we have $\dist(x,g_ix) \to \infty$
as $i\to\infty$.
Thus $\#(x,g_i x) \to \infty$.
We shall now verify that $\#(x,y) = \dist_C (c_x, c_y)$,
where $c_x$ denotes the canonical cube containing $x$.
Consequently $\dist_C( c_x, g_i c_x) = \dist_C( c_x, c_{g_i x}) \to \infty$.

Recall that the independent walls of $c_x$ are precisely
those that betwixt $x$.
Observe that if $W$ separates $x$ from $y$
then it is betwixted by neither of these points.
Hence $W$ is a dependent wall oriented towards $x$ in the cube $c_x$
and towards $y$ in $c_y$.
Consequently each such separating $W$ contributes a value of $1$ to
$\dist_C(c_x,c_y)$.
Moreover each hyperplane separating $c_x$ from $c_y$
corresponds to such a separating wall.
\end{proof}

A wall $W$ \emph{crosses}
a set $S$ if $S$ intersects each closed halfspace of $W$.

\begin{lem}[Ball-Ball separation]
\label{lem:compact-compact}
Let $X$ be a metric space and also a wallspace
such that each finite radius ball in $X$ is crossed by finitely many walls.
Suppose $G$ acts cocompactly on $(X,\W)$ and acts properly on the dual cube complex $C(X)$.
For each $r$ there exists $m$ such that
if\/ $\dist_X(x_1,x_2)>m$ then there is a wall separating $\neb_r(x_1)$ from $\neb_r(x_2)$.
\end{lem}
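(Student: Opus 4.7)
I argue by contradiction: fix $r>0$ and suppose no such $m$ exists, so there is a sequence of pairs $(x_1^n,x_2^n)$ in $X$ with $\dist_X(x_1^n,x_2^n)\to\infty$ and no wall of $\W$ separates $\neb_r(x_1^n)$ from $\neb_r(x_2^n)$. Using cocompactness, fix a compact $K\subseteq X$ with $GK=X$ and translate each pair by a suitable $g_n\in G$ so that, after relabeling, $x_1^n\in K$ for every $n$; the non-separation property is preserved because $\W$ is $G$-invariant.

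The first key step is to bound $\#(x_1^n,x_2^n)$ uniformly. By hypothesis $\neb_r(K)$ is crossed by finitely many walls, say at most $N$, and $G$-invariance of $\W$ together with cocompactness makes $N$ an upper bound on the number of walls crossing $\neb_r(y)$ for every $y\in X$: writing $y=hz$ with $z\in K$ yields $\neb_r(y)=h\neb_r(z)\subseteq h\neb_r(K)$. A wall that separates $x_1^n$ from $x_2^n$ but fails to separate $\neb_r(x_1^n)$ from $\neb_r(x_2^n)$ must cross at least one of these two $r$-neighborhoods, so $\#(x_1^n,x_2^n)\le 2N$.

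Next I transfer the problem to the dual cube complex. Exactly as in the proof of Theorem~\ref{thm:LinearSeparationProper}, $\dist_{C(X)}(c_{x_1^n},c_{x_2^n})=\#(x_1^n,x_2^n)\le 2N$. Because only finitely many walls cross $K$, the orientations assigned by any $z\in K$ to the non-crossing walls are constant in $z$, so the family of canonical cubes $\{c_z:z\in K\}$ lies in a finite subcomplex $F$ of $C(X)$. Write $x_2^n=h_nz_n$ with $z_n\in K$ and $h_n\in G$; then $c_{x_1^n}\in F$ and $c_{x_2^n}=h_nc_{z_n}\in h_nF$. After passing to a subsequence I may assume $c_{x_1^n}=c_*$ and $c_{z_n}=c_{**}$ are constant cubes, so $h_nc_{**}$ lies within combinatorial distance $2N$ of $c_*$ for every $n$.

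The main obstacle is to invoke proper action on $C(X)$ to force $\{h_n\}$ to be finite. The finite-radius-ball hypothesis on $\W$, together with cocompactness of the action on $X$, yields uniform local finiteness of $C(X)$, so the combinatorial ball $B(c_*,2N)$ is a finite and therefore compact subcomplex. Setting $L=B(c_*,2N)\cup c_{**}$, properness gives that $\{g\in G:gL\cap L\ne\emptyset\}$ is finite, and this set contains every $h_n$. Passing to a further subsequence I may assume $h_n=h$ is constant, so $x_2^n=hz_n\in hK$ is bounded while $x_1^n\in K$ is bounded, contradicting $\dist_X(x_1^n,x_2^n)\to\infty$.
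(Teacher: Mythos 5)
Your argument is fine up to the last step, but the assertion there that ``the finite-radius-ball hypothesis on $\W$, together with cocompactness of the action on $X$, yields uniform local finiteness of $C(X)$'' is a genuine gap: it is unproved, it does not follow from the stated hypotheses, and it is false in the situations this lemma is designed for. Under the finite-ball-crossing hypothesis, local finiteness of $C(X)$ is \emph{equivalent} to compact--wall separation (Theorem~\ref{thm:LocalFiniteness}), which is a substantially stronger condition than anything you have assumed; establishing local finiteness in the relatively hyperbolic setting is precisely the content of Section~\ref{sec:FinitenessRH} and requires extra hypotheses (bounded packing of $H\cap P$ in $P$, WallNbd-WallNbd separation in the peripheries, etc.). Concretely, a geometrically finite wall meeting a peripheral $\Z^2$ in a $\Z$ can have infinitely many peripheral translates that pairwise osculate, producing a $0$--cube of infinite valence even though $G$ acts properly on $C(X)$, cocompactly on $X$, and every ball is crossed by finitely many walls. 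Since a combinatorial ball $B(c_*,2N)$ need not be finite, the set $L$ in your final paragraph need not be compact, and properness cannot be applied to it.

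The fix is to replace the combinatorial ball by an object whose compactness follows from the finite-ball-crossing hypothesis alone, and this is what the paper's proof does: the hemiwallspace $\mathcal{U}_0\bigl(\neb_r(x)\bigr)$ has only finitely many independent walls (namely the walls crossing $\neb_r(x)$), so its dual $C_r\bigl(\{x\}\bigr)$ is a \emph{compact} convex subcomplex of $C(X)$ regardless of whether $C(X)$ is locally finite. Properness then makes $S=\bigset{g}{gC_r(\{x\})\cap C_r(\{x\})\neq\emptyset}$ finite, and for $g\notin S$ the two disjoint convex subcomplexes are separated by a hyperplane (Roller), whose wall separates $\neb_r(x)$ from $\neb_r(gx)$; one takes $m=\max_{g\in S}\dist_X(x,gx)$. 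Your preliminary reductions (translating into $K$, bounding $\#(x_1^n,x_2^n)$ by $2N$, identifying $\#$ with cube-complex distance between canonical cubes) are correct but become unnecessary once this substitution is made.
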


``Ball-Ball Separation'' is considered among other
separation properties in
Section~\ref{subsec:WallWallRH}.

\begin{proof}
By increasing $r$, we can assume without loss of generality that each $x_i=g_ix$ Êfor some $g_i\in G$ and a chosen point $x$.
The hemiwallspace associated with $\neb_r(x)$ corresponds to a compact convex
subcomplex $C_r\bigl(\{x\}\bigr)$ of $C(X)$.
By properness of the $G$ action on $C(X)$, the set of elements
$S = \bigset{g}{g C_r\bigl(\{x\}\bigr)\cap C_r\bigl(\{x\}\bigr)\neq \emptyset}$ is finite.
Disjoint convex subcomplexes of $C(X)$ are separated by a hyperplane
(see for instance \cite[Thm~2.7]{RollerPocSets}).
Consequently $\neb_r(gx)$ can be separated from $\neb_r(x)$ provided $g\not\in S$.
Let $m= \max_{g\in S} \dist_X(x,gx)$.
Then Ê$\neb_r(g_1x)$ is separated from $\neb_r(g_2x)$ provided that $\dist(g_1x,g_2x)>m$.
\end{proof}

\begin{rem}
A wallspace is \emph{Hausdorff} if $\#(x,y) \ne 0$ whenever $x\ne y$.
Note that $(X,\#)$ is a metric space
when $(X,\W)$ is a Hausdorff wallspace.
The condition that $\#(x,gx)\rightarrow \infty$ as $g\rightarrow \infty$
can then be restated as: $G$ acts metrically properly on $(X,\#)$.
Theorem~\ref{thm:LinearSeparationProper} is well-known in this setting.

The following non-Hausdorff
example shows that $\#$ can fail to satisfy the triangle
inequality.
Let $X=\{x,y,z\}$. Let $\W$ consist of a single wall given by
$\bigl(\{x,z\}, \{y,z\}\bigr)$.
Then $\#(x,y) = 1$ but $\#(x,z)=\#(y,z)=0$.
Nevertheless $\#$ is related to a distance function since,
as explained in the proof above $\#(x,y) = \dist_C(c_x,c_y)$.
\end{rem}

We motivate Theorem~\ref{thm:AxisSeparation} by the following scenario:
Let $g$ act on $X$ by an isometry preserving an axis $A$
isometric to the real line.
Let $W$ be a wall in $X$ that separates the ends $\pm\infty$ of $A$
in the sense that $(-\infty,-n)$ and $(n,+\infty)$ lie in distinct
open halfspaces of $W$.
We think of $W$ as ``cutting'' $g$.

A more extreme scenario is where $W=\{U,V\}$
and we have $g(U) \subsetneq U$ and $V \subsetneq g(V)$.

\begin{thm}[Axis Separation]
\label{thm:AxisSeparation}
Let $g \in G$ be an infinite order element.
Suppose an infinite order element $g \in G$ is cut by a wall $W=\{U,V\}$
in the following sense:
\begin{enumerate}
\item $W \ne g^n W$ when $n \ne 0$
\item $U \cap g^n U \ne \emptyset$
for all $n$
\item $V \cap g^n V \ne \emptyset$
for all $n$
\end{enumerate}

If $g$ fixes a $0$--cube $c$ of $C(X,\W)$,
then $(X,\W)$ has an infinite collection of pairwise transverse walls,
and hence $C(X,\W)$ is infinite dimensional.
In particular, if $C(X,\W)$ is finite dimensional,
then $G$ acts with torsion stabilizers.
\end{thm}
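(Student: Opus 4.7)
The plan is to show that the walls $\{g^nW : n\in\Z\}$ are pairwise transverse and pairwise distinct, whereupon Corollary~\ref{cor:FiniteDim} delivers the infinite dimensionality of $C(X,\W)$. Distinctness is immediate from hypothesis~(1). By $G$-equivariance, pairwise transversality reduces to showing that $W$ and $g^nW$ are transverse for every $n\ne 0$. The main tool will be the descending chain condition provided by Lemma~\ref{lem:NoInfiniteChains}, applied to the $0$-cube $c$ that is fixed by $g$.

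Fix the orientation $\overleftarrow{c}(W)=U$. Since $gc=c$, the formula of Remark~\ref{rem:InducedAction} gives $\overleftarrow{c}(g^nW) = g^nU$ for every integer $n$. Suppose for contradiction that $W$ and $g^nW$ are not transverse for some $n\ne 0$. Hypotheses~(2) and~(3) guarantee that $U\cap g^nU$ and $V\cap g^nV$ are nonempty, so nontransversality forces either $U\cap g^nV=\emptyset$ or $V\cap g^nU=\emptyset$. In the first case, $U\subseteq X\setminus g^nV\subseteq g^nU$, with strict inclusion by~(1); iterating under $g^{-n}$ produces a strictly descending chain $U\supsetneq g^{-n}U\supsetneq g^{-2n}U\supsetneq\cdots$, which via Definition~\ref{def:PartialOrder} yields the infinite descending chain
\[
   c(W)\succ c(g^{-n}W)\succ c(g^{-2n}W)\succ\cdots,
\]
contradicting Lemma~\ref{lem:NoInfiniteChains}. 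In the second case one obtains $g^nU\subsetneq U$ analogously, and the infinite descending chain is instead $c(W)\succ c(g^nW)\succ c(g^{2n}W)\succ\cdots$.

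Thus $W$ and $g^nW$ are transverse for every nonzero $n$; translating by $g^m$ shows the same for $g^mW$ versus $g^nW$ whenever $m\ne n$. The collection $\{g^nW\}_{n\in\Z}$ is therefore an infinite family of pairwise transverse walls, so $C(X,\W)$ is infinite dimensional by Corollary~\ref{cor:FiniteDim}, and the final sentence follows by contraposition. The delicate point in executing this plan is that we are not free to assume $U$ and $V$ are disjoint, so the step deriving a strict containment $U\subsetneq g^nU$ (or $g^nU\subsetneq U$) from the emptiness of a single cross-intersection must lean on the coverings $X=U\cup V = g^nU\cup g^nV$ rather than on genuine complementation; once that bookkeeping is done the descending chain condition supplies the contradiction immediately.
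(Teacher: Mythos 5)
Your proof is correct and follows essentially the same route as the paper: transfer the fixed orientation along powers of $g$, use hypotheses (2)--(3) to force an empty cross-intersection when transversality fails, extract a strictly descending chain of halfspaces at the fixed $0$--cube, contradict Lemma~\ref{lem:NoInfiniteChains}, and conclude via Corollary~\ref{cor:FiniteDim}. The ``delicate point'' you flag does close: if, say, $U\cap g^nV=\emptyset$ and $U=g^nU$, then $U\cap V=\emptyset$, so both walls are genuine partitions and $g^nV=X-g^nU=X-U=V$, giving $W=g^nW$ and contradicting (1), which is exactly the strictness you need.
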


\begin{proof}
Suppose $gc=c$ for some $c\in C^0$,
and hence $g^{-m} c = c$ for each $m$.
This means that $g^{-m}c$ and $c$ orient each wall $W$
in the same way, so we have
$\overleftarrow{g^{-m} c}(W) = \overleftarrow{c} (W)$.
As mentioned in Remark~\ref{rem:InducedAction},
$\overleftarrow{g^{-m} c}(W) = g^{-m} \overleftarrow{c} (g^m W)$.
Thus $\overleftarrow{c} (g^m W) = g^m \overleftarrow{c} (W)$
for all $m$.

We will see that the walls $\set{g^{m}W}{m \in \Z}$
are pairwise transverse.
Suppose by way of contradiction that
$W$ and $g^{n}W$ are not transverse for some $n$.
By hypothesis the quarterspaces
$\overleftarrow{c}(W) \cap g^n\overleftarrow{c}(W)$ and
$\overrightarrow{c}(W) \cap g^n \overrightarrow{c}(W)$
are nonempty.
It follows that at least one of the quarterspaces
$\overleftarrow{c}(W) \cap \overrightarrow{c}(g^nW)$ and $\overrightarrow{c}(W) \cap \overleftarrow{c}(g^nW)$ is empty.
Therefore one of the halfspaces $\overleftarrow{c}(W)$ and $\overleftarrow{c}(g^{n}W)$
is contained in the other.
Replacing $n$ with $-n$ if necessary, we can arrange that
$c(g^n W) \prec c(W)$.
This yields an infinite properly descending sequence
\[
   c(W) \succ c(g^n W) \succ c(g^{2n} W) \succ \cdots
\]
which is impossible by Lemma~\ref{lem:NoInfiniteChains}.

We thus conclude that $W$ and $g^{m}W$ are transverse for each $m$,
and thus $g^{k}W$ and $g^{m}W$ are transverse for each $k\neq m$.
By Corollary~\ref{cor:FiniteDim}, the dual cube complex is infinite dimensional.
\end{proof}

\subsection{Local finiteness}
The investigation of local finiteness of $C$ is often
simplified in the presence of a group action on $(X,\W)$ because
if $G$ acts properly and cocompactly on $C$ then $C$ is locally finite.

More generally suppose $G$ acts properly on $C$.
Assume there is a collection of subcomplexes $C_i \subset C$
and a compact subcomplex $K \subset C$ such that
$C= GK \cup \bigcup_i GC_i$.
Suppose each $0$--cube has a neighborhood
that either lies in some $gC_i$ or lies in $GK$.
Then $C$ is locally finite provided that each $C_i$ is
locally finite.  We emphasize that the subcomplexes $gC_i$
might intersect,
and this intersection could be unrelated to $GK$.

We will show in Section~\ref{sec:RelHypApplication}
that the above decomposition of $C$
arises naturally when cubulating certain relatively hyperbolic groups.
In that setting we obtain the stronger conclusion that
points outside of $GK$ lie in a \emph{unique} $gC_i$.
In other words, the subcomplexes $gC_i$ can intersect
only inside $GK$.
We then examine the local finiteness of $C$ in
that setting in Section~\ref{sec:RHLocalFiniteness}.

In this subsection we describe a simple criterion on a wallspace to recognize local finiteness of its cubulation.
We give conditions similar to the Parallel Wall Separation
Condition of Brink--Howlett that was used by Niblo--Reeves to
prove local finiteness of their cubulations of Coxeter groups
\cite{BrinkHowlett93,NibloReeves03}.
Guralnik has undertaken a study of local finiteness of the
cube complex dual to a system of convex walls in a $\CAT(0)$
space obtaining a result similar to one implication of Theorem~\ref{thm:LocalFiniteness}
\cite{GuralnikLocalFiniteness}.

\begin{defn}[Compact--Wall separation]
Let $X$ be a wallspace that is also a metric space.
Then $X$ has \emph{compact--wall separation}
if for each compact set $K \subseteq X$ there exists a constant $f(K)$
such that
whenever $\dist(K,W) \geq f(K)$ there is a wall $W'$
separating $K$ from $W$.
By this we mean that $K$ lies in one open halfspace of $W'$
and a closed halfspace of $W$ lies in the other.

In particular, this condition
holds when $X$ has \emph{ball--wall separation at
$x \in X$}, in the sense that
for each $r\geq0$ there exists $f(r,x)$ such that
if $\dist(x,W)\geq f$ then there is a wall $W'$ separating
$\neb_r(x)$ from $W$.
Note that ball--wall separation with respect to a particular $x\in X$
implies ball--wall separation with respect to every $y\in X$.
\end{defn}

\begin{thm}
\label{thm:LocalFiniteness}
Suppose $X$ is a metric space and also a wallspace
\textup{(}with no vacuous walls\textup{)},
and that each finite radius ball is crossed by finitely many walls.
Then $(X,\W)$ has compact--wall separation if and only if
$C(X,\W)$ is locally finite.
\end{thm}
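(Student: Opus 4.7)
The plan is to prove both implications via the flippability criterion for a $0$--cube $c$: a wall $W$ is dual to a $1$--cube adjacent to $c$ exactly when $\overrightarrow{c}(W)\cap\overleftarrow{c}(W')\ne\emptyset$ for every wall $W'\ne W$. (A single flip automatically preserves condition~(2) in the $0$--cube definition, so only condition~(1) must be rechecked.)

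For direction $(\Rightarrow)$, I would argue the contrapositive. Suppose some $0$--cube $c$ of $C(X,\W)$ is adjacent to infinitely many $1$--cubes, dual to walls $W_1,W_2,\dots$\,. Fix $x_0\in X$. The ball-crossing hypothesis gives $\dist(x_0,W_i)\to\infty$, and condition~(2) makes $\mathcal{B}:=\bigset{W'\in\W}{x_0\notin\overleftarrow{c}(W')}$ finite. Since no wall is vacuous, each $\overleftarrow{c}(W')$ with $W'\in\mathcal{B}$ is nonempty and at finite distance from $x_0$; I choose $r$ exceeding all these distances and set $K:=\bigcball{x_0}{r}$. No $W'\in\mathcal{B}$ can then separate $K$ from any wall, since a separator needs $K$ entirely inside one open halfspace of $W'$, yet $K$ meets $\overleftarrow{c}(W')$ by the choice of $r$ while $x_0\in K$ lies outside $\overleftarrow{c}(W')$. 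Passing to a subsequence I may assume $\dist(K,W_i)\ge f(K)$ and that $c$ orients each $W_i$ toward $x_0$. Compact-wall separation supplies $W'_i$ separating $K$ from $W_i$; by the observation $W'_i\notin\mathcal{B}$, so $\overleftarrow{c}(W'_i)\ni x_0$. The separation then forces the closed halfspace of $W_i$ lying in the opposite open halfspace of $W'_i$ to be $\overrightarrow{c}(W_i)$, yielding $\overrightarrow{c}(W_i)\cap\overleftarrow{c}(W'_i)=\emptyset$, a contradiction to the flippability of $W_i$ at $c$.

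For direction $(\Leftarrow)$, assume local finiteness. Given compact $K\subseteq X$, let $\mathcal{W}_K$ denote the finite set of walls meeting $K$. The subcomplex $C_K\subseteq C(X,\W)$ of $0$--cubes that orient every wall outside $\mathcal{W}_K$ toward $K$ is the hemiwallspace subcomplex of Section~\ref{subsec:Hemi} associated to $K$; it is a finite convex subcomplex since only the $|\mathcal{W}_K|$ many internal orientations are free. Let $\mathcal{F}$ be the finite (by local finiteness of $C(X,\W)$) set of walls dual to $1$--cubes with one endpoint in $C_K$ and the other outside; each $W'\in\mathcal{F}$ lies outside $\mathcal{W}_K$, so $K$ sits in one open halfspace of $W'$. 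Set $f(K):=1+\max_{W'\in\mathcal{F}}\dist(K,W')$. For a wall $W$ with $\dist(K,W)\ge f(K)$ one has $W\notin\mathcal{F}\cup\mathcal{W}_K$, hence $W$ is not flippable at any $d\in C_K$. Fixing $d\in C_K$, non-flippability yields a wall $W''$ whose $d$-left halfspace lies in the open halfspace of $W$ containing $K$. The descending chain condition (Lemma~\ref{lem:NoInfiniteChains}) applied to the ordering of Definition~\ref{def:PartialOrder} produces a $\preceq$-minimal such $W'$; standard minimality arguments force $W'$ to be flippable at $d$, and a case analysis on whether the flip exits $C_K$ --- iterating via flipping within $C_K$ whenever the minimal obstruction happens to be internal --- yields an external $W'\in\mathcal{F}$ whose $d$-left halfspace still lies in the open $K$-side halfspace of $W$. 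This containment directly exhibits $W'$ as a separator of $K$ from $W$.

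The main obstacle in both directions is matching orientations and quarantining exceptional walls. In $(\Rightarrow)$, the separator supplied by compact-wall separation is not a priori oriented by $c$ in the way needed to contradict flippability; enlarging $\{x_0\}$ to a ball $K$ whose radius outstrips the reach of every wall in the finite exceptional set $\mathcal{B}$ is precisely what forces the correct orientation. In $(\Leftarrow)$, the delicate step is ensuring that the minimal obstruction $W'$ lies in $\mathcal{F}$ rather than $\mathcal{W}_K$; internal obstructions can be walked past by flipping within $C_K$, and the iteration terminates because $\mathcal{W}_K$ is finite. The edge cases around genuine partitions and duplicated halfspaces in the minimality argument are absorbed cleanly by the wallspace axioms, notably the ``no duplicate genuine partitions'' condition.
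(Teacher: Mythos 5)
Your forward direction is, in contrapositive form, essentially the paper's own argument: isolate the finite set $\mathcal{B}$ of walls oriented away from the basepoint, enlarge the basepoint to a compact set meeting $\overleftarrow{c}(W')$ for each $W'\in\mathcal{B}$, and show a flippable wall far from that set would be killed by its separator. The one slip is your choice $K=\bigcball{x_0}{r}$: the theorem is stated for an arbitrary metric space, where closed balls need not be compact, and compact--wall separation is hypothesized only for compact sets. Replace $K$ by the finite set $\{x_0\}\cup\set{y_{W'}}{W'\in\mathcal{B}}$ with $y_{W'}\in\overleftarrow{c}(W')$ (this is exactly the paper's choice); your argument uses nothing about $K$ beyond compactness, $x_0\in K$, and $K\cap\overleftarrow{c}(W')\ne\emptyset$ for $W'\in\mathcal{B}$.

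Your reverse direction takes a genuinely different route. The paper passes to the convex subcomplexes $C(K)$ and the carrier of the hyperplane dual to $W$, takes a minimal combinatorial geodesic between them, and uses that each of its edges is dual to a hyperplane---hence to a wall---separating $K$ from $W$; an unseparated wall therefore has its hyperplane dual to a $1$--cube meeting $C(K)$, and local finiteness bounds these. You instead extract the separator directly via a $\preceq$--minimal obstruction to flipping $W$ at a $0$--cube of $C_K$, flipping away obstructions lying in $\mathcal{W}_K$. This is more self-contained and does work, but your termination claim (``the iteration terminates because $\mathcal{W}_K$ is finite'') needs its one-line justification: after flipping the internal obstruction $W'$ at $d_j$ one has $\emptyset\ne\overrightarrow{d_j}(W)\subseteq\overrightarrow{d_j}(W')=\overleftarrow{d_{j+1}}(W')$, and since $W$ is never reoriented, $W'$ is never again an obstruction to flipping $W$ (and hence is never flipped again); so each wall of $\mathcal{W}_K$ is flipped at most once and the process halts at an obstruction in $\mathcal{F}$, which is then the desired separator. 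With that line supplied, and the genuine-partition edge cases in the minimality step written out as in Lemma~\ref{lem:CubeToTransverseWalls}, both directions are sound.
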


\begin{proof}
First assume that $(X,\W)$ has compact--wall separation.
Let $c$ be a $0$--cube of $C(X,\W)$.
In order to bound the valence of $c$,
we need to bound the number of oriented walls $c(W)$
that are \emph{reversible}, in the sense that
reversing only the orientation of $W$ yields a new $0$--cube.

Let $x \in X$ be a basepoint.
Consider the finitely many walls
$W_1,\dots,W_n$ that are \emph{oriented away} from $x$
by $c$, in the sense that $x \notin \overleftarrow{c}(W_j)$ for all $j$.
For each $j$ choose $x_j \in \overleftarrow{c}(W_j)$.
(Here we use that there are no vacuous walls.)
Let $K = \{x,x_1,\dots,x_n\}$.

Note that each $W_j$ crosses $K$ since $x \in \overrightarrow{c}(W_j)$ but
$x_j \in \overleftarrow{c}(W_j)$.
Note that by hypothesis
there are finitely many walls crossing $\neb_{f}(K)$,
where $f=f(K)$ is also given by hypothesis.

We claim that each reversible oriented wall
of $c$ is one of these finitely many walls
crossing $\neb_{f}(K)$.
Indeed, if $W$ is reversible and doesn't cross $\neb_{f}(K)$
then by compact--wall separation,
there exists $W'$ separating $K$ and $W$.
Observe that all walls not crossing $K$ are oriented towards $x$.
Since $W'$ does not cross $K$,
it follows that $\overleftarrow{c}(W')$ contains $K$.
For the same reason $\overleftarrow{c}(W)$ contains $K$.
As $W'$ separates $W$ from $K$, it follows that
$\overleftarrow{c}(W')$ is disjoint from $\overrightarrow{c}(W)$,
which contradicts the reversibility of $W$.

We now show that if $C(X,\W)$ is locally finite, then $(X,\W)$ has
compact--wall separation.
Let $K$ be a compact set in $X$.

The local finiteness of walls in $X$ implies that the hemiwallspace
induced by $K$ has finitely many independent walls.
This ensures the compactness of the dual $C(K)$, which is a convex
subcomplex of $C(X,\W)$.
The local finiteness of $C(X,\W)$ implies that there are finitely many
hyperplanes dual to $1$--cubes with a $0$--cube on $C(K)$.

Suppose there is a sequence of walls $W_i$ in $\W$ such that
$\dist(K,W_i) \ge i$, but each $W_i$ is not separated from $K$.
We claim that each $W_i$ corresponds to a hyperplane $H_i$ in $C(X,\W)$
dual to a $1$--cube with a $0$--cube on $C(K)$,
which is impossible.
To verify the claim, consider a minimal (combinatorial)
geodesic $\gamma$ between
$C(K)$ and $C(W_i)$, which is the cubical neighborhood of $H_i$.
One can verify that each edge of $\gamma$ corresponds to a hyperplane
separating $C(K)$ and $C(W_i)$.  This is associated to a wall
separating $K$ and $W_i$.
\end{proof}

We close this section with the following example
of a wallspace $(X,\W)$,
which has compact-wall separation even though
its dual cube complex $C(X,\W)$ is not locally finite.

\begin{exmp}
\label{exmp:Rbad}
Let $X = \R$ and the walls in $\W$ are of the following two types:
\begin{enumerate}
\item $\bigl\{ (-\infty,n],[n,\infty) \bigr\}$
for each $n \in \Z$, and
\item $\bigl\{ \{r\}, \R-\{r\} \bigr\}$ for each $r \in \R$.
\end{enumerate}
The dual cube complex $C(X,\W)$ is the union of a horizontal
line $L$ consisting of infinitely many horizontal edges
with an uncountable set of vertical
edges attached at each vertex
of $L$ and a square freely attached at each horizontal
edge of $L$.

The $n$-th square arises since
$\bigl\{ (-\infty,n],[n,\infty) \bigr\}$
is transverse to $\bigl\{ \{n\}, \R-\{n\} \bigr\}$.
The vertical edges between the $n$-th and $(n+1)$-th
square correspond to
$\bigl\{ \{r\}, \R-\{r\} \bigr\}$
with $n\le r \le n+1$.
\end{exmp}

\section{Review of relative hyperbolicity and quasiconvexity}

In this section we review the definition of a relatively hyperbolic
group and a relatively quasiconvex subgroup of such a group.

A geodesic metric space
$(\widetriangle{X},\dist)$ is \emph{$\delta$--hyperbolic} if every geodesic
triangle with vertices in $\widetriangle{X}$ is $\delta$--thin
in the sense that each side lies in the $\delta$--neighborhood
of the union of the other two sides.
If $\widetriangle{X}$ is a $\delta$--hyperbolic space, the \emph{boundary} (or \emph{boundary at infinity}) of $\widetriangle{X}$,
denoted $\boundary \widetriangle{X}$, is the set of all equivalence classes of
geodesic rays in $\widetriangle{X}$, where two rays $\gamma,\gamma'$ are equivalent if
the distance
$\dist\bigl(\gamma(t),\gamma't)\bigr)$ remains bounded as $t \to \infty$.
The set $\boundary \widetriangle{X}$ has a natural topology, which is compact
and metrizable (see, for instance, Bridson--Haefliger
\cite{BridsonHaefliger} for details).

Increasing the constant $\delta$ if necessary, we will also assume
that every geodesic triangle with vertices in $\widetriangle{X} \cup \boundary \widetriangle{X}$
is $\delta$--thin.

Let $\Delta=\Delta(x,y,z)$ be a triangle
with vertices $x,y,z \in \widetriangle{X} \cup \boundary \widetriangle{X}$.
A \emph{center} of $\Delta$ is a point $w \in \widetriangle{X}$ such that
the ball $\ball{w}{\delta}$ intersects all three sides of the triangle.
It is clear that each side of $\Delta$ contains a center of $\Delta$.

If $\xi \in \boundary \widetriangle{X}$, a function $h \colon \widetriangle{X} \to \R$ is a
\emph{horofunction centered at $\xi$}
if there is a constant $D_0$ such that the following holds.
Let $\Delta(x,y,\xi)$ be a geodesic triangle, and let $w \in \widetriangle{X}$ be a center
of the triangle.
Then
\[
   \left| \bigl( h(x) + \dist(x,w) \bigr) - \bigl( h(y) + \dist(y,w) \bigr)
   \right|  < D_0.
\]
A subset $B \subseteq \widetriangle{X}$ is a \emph{horoball centered at $\xi$}
if there is a horofunction $h$ centered at $\xi$ and a constant $D_1$
such that $h(x) \ge - D_1$ for all $x \in B$ and $h(x) \le D_1$
for all $x \in \widetriangle{X} - B$.

We remark that every horoball or horofunction has a unique center,
and every $\xi \in \boundary \widetriangle{X}$ is the center of a horofunction
(see Gromov \cite[Sec~7.5]{Gromov87} for details).

The following is a well-known result about the interaction between
geodesics and horoballs.
We leave the proof as an exercise for the reader.

\begin{lem}
\label{lem:GeodesicPenetratesHoroball}
Let $B$ and $B'$ be two horoballs of $\widetriangle{X}$ centered at the same point $\xi$
such that $B \subset B'$.
There is a constant $M=M(B,B')$ such that the following holds.
Let $\gamma$ be a geodesic of $\widetriangle{X}$ with endpoints $x,y \in B'-B$.
Then
\[
   \gamma \cap (\widetriangle{X}-B) \subseteq \bignbd{\{x,y\}}{M}.
\]

In particular, if $\dist(x,y) > 2M$, then $\gamma$ must intersect $B$.
\end{lem}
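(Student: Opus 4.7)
The plan is to normalize the horofunction data, interpret both horoballs as approximate sublevel sets of a single horofunction $h$, and then track the behavior of $h$ along the geodesic $\gamma$ from $x$ to $y$. The shape we expect is that $h\circ\gamma$ looks like a ``tent'': it rises roughly linearly from $x$ (along the portion of $\gamma$ that $\delta$--fellow-travels the ray $\alpha$ from $x$ to $\xi$), reaches a maximum near the center of the ideal triangle $(x,y,\xi)$, and then descends linearly to $y$ (along the portion that fellow-travels the ray $\beta$ from $y$ to $\xi$). Since $x$ and $y$ lie just outside $B$, the tent cannot climb very high before entering $B$, which forces $\gamma\cap(\widetriangle X - B)$ to be concentrated near the endpoints.

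First I would observe that any two horofunctions centered at $\xi$ differ by a constant up to uniform error $2D_0$: applying the defining inequality to both $h_1$ and $h_2$ on the same triangle and subtracting yields $|(h_1-h_2)(x) - (h_1-h_2)(y)|<2D_0$. Replacing one of the horofunctions by the other plus a constant, we may assume both $B$ and $B'$ are described by the same $h$, so there exist constants $a>b$ (depending only on $B$ and $B'$) such that
\[
   \{h\ge a+C_0\}\subseteq B\subseteq\{h\ge a-C_0\},\qquad
   \{h\ge b+C_0\}\subseteq B'\subseteq\{h\ge b-C_0\}
\]
for some constant $C_0$ absorbing the original $D_1$--type slack. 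In particular $x,y\in B'-B$ forces $h(x),h(y)\in[b-C_0,a+C_0]$.

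Next I would study $h$ along a geodesic ray $\alpha\colon[0,\infty)\to\widetriangle X$ from $x$ to $\xi$. Applying the horofunction inequality to the triangle $(x,\alpha(t),\xi)$, whose center is within $\delta$ of $\alpha(t)$, gives $|h(\alpha(t))-(h(x)+t)|\le D_1'$ for a constant $D_1'$ depending only on $\delta$ and $D_0$; a symmetric statement holds along the ray $\beta$ from $y$ to $\xi$. Then I would use $\delta$--thinness of the ideal triangle $(x,y,\xi)$: every point of $\gamma$ lies within $\delta$ of $\alpha\cup\beta$, and moreover there is a breakpoint $t^{*}$ such that $\gamma([0,t^{*}])$ is $\delta$--close to $\alpha$ and $\gamma([t^{*},L])$ is $\delta$--close to $\beta$ (where $L=\dist(x,y)$). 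Combining the fellow-traveling estimate with the ray computation and the fact that $h$ is $D_0$--Lipschitz up to error (a quick triangle argument with small triangles), I would derive
\[
   h(\gamma(t))\ge h(x)+t-C_1 \text{ for } t\le t^{*},\qquad
   h(\gamma(t))\ge h(y)+(L-t)-C_1 \text{ for } t\ge t^{*},
\]
where $C_1$ depends only on $\delta$, $D_0$.

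Finally, if $\gamma(t)\in\widetriangle X - B$ then $h(\gamma(t))\le a+C_0$, so the two inequalities force either $t\le (a-h(x))+C_0+C_1\le (a-b)+2C_0+C_1$ or $L-t\le(a-b)+2C_0+C_1$. Setting $M:=(a-b)+2C_0+C_1$ yields $\gamma\cap(\widetriangle X-B)\subseteq\bignbd{\{x,y\}}{M}$, which depends only on $B$ and $B'$. The ``in particular'' clause then follows, since if $\dist(x,y)>2M$, the midpoint of $\gamma$ lies outside $\bignbd{\{x,y\}}{M}$ and therefore must be in $B$. The main technical obstacle, which I would handle carefully, is the bookkeeping of the constants in the fellow-traveling estimate near the center $w$ of the ideal triangle: on the $\alpha$--side one controls $h(\gamma(t))$ via $\alpha$ and on the $\beta$--side via $\beta$, so I need uniform control of how $h$ behaves on the $\delta$--ball of $w$, which follows from the defining inequality of the horofunction applied to small triangles whose vertices lie on $\gamma$, $\alpha$, $\beta$.
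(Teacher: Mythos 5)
The paper offers no proof of this lemma to compare against --- it is explicitly left as an exercise for the reader --- so the only question is whether your argument stands on its own, and it does: normalizing the two horoballs to approximate superlevel sets of one horofunction $h$, using the degenerate triangle $(x,\alpha(t),\xi)$ with center $\alpha(t)$ to get $h(\alpha(t))\approx h(x)+t$ along rays, and then using $\delta$--thinness of the ideal triangle $(x,y,\xi)$ to force the ``tent'' lower bound on $h\circ\gamma$ is exactly the standard route, and your bookkeeping of constants is sound. One cosmetic remark: you do not need the single breakpoint $t^{*}$ splitting $\gamma$ into an $\alpha$--close half and a $\beta$--close half (which thinness alone does not literally give); it suffices that \emph{each} point $\gamma(t)$ is $\delta$--close to $\alpha$ or to $\beta$, since either alternative already yields one of your two inequalities, and the conclusion $t\le M$ or $L-t\le M$ follows pointwise.
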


Suppose $G$ has a proper, isometric action on a proper
$\delta$--hyperbolic space $\widetriangle{X}$.
An element $g \in G$ is \emph{loxodromic} if it has infinite order
and fixes exactly two points of $\boundary \widetriangle{X}$.
A subgroup $P \le G$ is \emph{parabolic} if it is infinite and contains
no loxodromic element.  A parabolic subgroup $P$ has a unique fixed point in $\boundary \widetriangle{X}$, called a \emph{parabolic point}.
The stabilizer of a parabolic point is always a maximal parabolic group.
Thus there is a natural one-to-one correspondence between parabolic points
and maximal parabolic subgroups.

The following is Gromov's definition
of relative hyperbolicity from \cite{Gromov87}, as elaborated by Bowditch
in \cite{Bowditch12}.

\begin{defn}[Relatively hyperbolic]
Let $G$ be a countable group with a finite collection $\P$ of infinite
subgroups.
Suppose $G$ acts properly on a proper $\delta$--hyperbolic
space $\widetriangle{X}$, and $\P$ is a set of representatives of the conjugacy classes
of maximal parabolic subgroups.
Suppose also that there is a $G$--invariant collection of disjoint
open horoballs centered at the parabolic points of $G$,
with union ${\mathcal{B}}$, such that the quotient of $\widetriangle{X}-{\mathcal{B}}$ by the action of $G$
is compact.
Then $(G,\P)$ is \emph{relatively hyperbolic}.

In this case, we say that the action of $(G,\P)$ on $\widetriangle{X}$ is
\emph{cusp uniform}, and the space $X = \widetriangle{X}-{\mathcal{B}}$ is a \emph{truncated space}
for the action.  We refer to the horoball components $B$
of ${\mathcal{B}}$ as \emph{horoballs of $X$}.
The subgroups $P \in \P$ are called \emph{peripheral subgroups}.
When $\P=\emptyset$ the truncated space $X$ is equal to $\widetriangle{X}$,
and the group $G$ is word-hyperbolic.

The hypothesis that all parabolic subgroups are infinite is a technical
luxury and not at all necessary for the theory.
\end{defn}

In \cite{Dahmani03}, Dahmani defined a subgroup $H \le G$
to be quasiconvex relative to the peripheral subgroups
if the action of $G$ as a geometrically finite convergence group
restricts to a geometrically finite convergence group action of $H$
on its limit set.
Subsequently Osin \cite{OsinBook06}
gave the following alternate formulation of relative quasiconvexity.
In \cite{HruskaRelQC}, Hruska showed that these definitions
are equivalent.

\begin{defn}[Relatively quasiconvex (A)]
Let $(G,\mathbb{P})$ be relatively hyperbolic with a finite generating set
$\mathcal{S}$.
Let $\mathcal{P} = \bigcup_{P\in \mathbb{P}} P-\{e\}$.
A subgroup $H \le G$ is \emph{relatively quasiconvex} if there is a constant $\kappa$ such that the following
condition holds:
For each geodesic $\gamma$ in $\Cayley(G,\mathcal{S} \cup \mathcal{P})$
connecting two points of $H$, every vertex of $\gamma$
lies within a distance $\kappa$ of $H$ in $\Cayley(G,\mathcal{S})$.
\end{defn}

We occasionally find the following formulation of relative
quasiconvexity useful.
This is also equivalent to the above by \cite{HruskaRelQC}.

\begin{defn}[Relatively quasiconvex (B)]
\label{def:RelQC}
Let $(G,\P)$ be relatively hyperbolic.
A subset $H \subseteq G$ is \emph{$\kappa$--relatively quasiconvex} if the
following holds.
Let $(\widetriangle{X},\dist)$ be some proper $\delta$--hyperbolic space
on which $(G,\P)$ has a cusp uniform action.
Let $\widetriangle{X}-{\mathcal{B}}$ be some truncated space for $G$ acting on $\widetriangle{X}$.
For some basepoint $x \in \widetriangle{X}-{\mathcal{B}}$ there is a constant $\kappa$
such that whenever $\gamma$ is a geodesic in $\widetriangle{X}$ with endpoints in
the orbit $Hx$, we have
\[
   \gamma \subseteq \nbd{Hx}{\kappa} \cup {\mathcal{B}}.
\]
We say that $H$ is \emph{relatively quasiconvex}
if it is $\kappa$--relatively quasiconvex for some $\kappa$.

Relative quasiconvexity of $H$ is independent of
the choice of $\widetriangle{X}$, ${\mathcal{B}}$, and $x$.
However the value of $\kappa$ depends on these choices.
See \cite{HruskaRelQC} for details.
\end{defn}

\begin{defn}[Quasiconvex]
Let $Z$ be a geodesic metric space and $\kappa >0$.
A subset $A \subseteq Z$ is \emph{$\kappa$--quasiconvex}
if $\gamma \subseteq \neb_\kappa (A)$ whenever $\gamma$ is a geodesic
in $Z$ whose endpoints lie on $A$.
We say $A$ is \emph{quasiconvex} if it is $\kappa$--quasiconvex
for some $\kappa>0$.

We often refer to quasiconvex subsets of a word-hyperbolic group $G$,
i.e., those that are quasiconvex as subsets of $\Cayley(G,S)$ for a finite
generating set $S$.
Note that quasiconvexity is independent of the particular choice
of $S$ when $G$ is word-hyperbolic,
but this can fail for general $G$.

When $G$ is relatively hyperbolic, quasiconvexity is a stronger property
than relative quasiconvexity.
\end{defn}

We refer the reader to Section~\ref{sub:Saturation}
for the notion of saturation, denoted $\Sat$.

\begin{prop}[\cite{HruskaWise09}, Prop 6.5]
\label{prop:SaturationRelQC}
Let $(G,\mathbb{P})$ be relatively hyperbolic.
For each $\tau>0$ there is a constant $\lambda=\lambda(\tau)>0$ such that
the following holds.
Let $\gamma$ and $\gamma'$ be geodesics in
$\Cayley(G,\Set{S} \cup \Set{P})$,
such that the endpoints $u_0$ and $u_1$ of $\gamma$ lie within
an $\Set{S}$--distance $\tau$
of\/ $\Sat_\tau\bigl(\textup{Vert}(\gamma')\bigr)$.
Then each vertex $v$ of $\gamma$
lies within an $\Set{S}$--distance $\lambda$
of either $u_0$, $u_1$, or a vertex of $\gamma'$.
\end{prop}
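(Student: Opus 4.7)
The plan is to use hyperbolicity of the coned-off Cayley graph $\widehat{\Gamma} = \Cayley(G,\Set{S}\cup\Set{P})$, together with a Bounded Coset Penetration argument, to promote $\widehat{\Gamma}$-closeness to $\Set{S}$-closeness. First I would translate the saturation hypothesis into proximity in $\widehat{\Gamma}$: since $u_i$ lies within $\Set{S}$-distance $\tau$ of a point of $\Sat_\tau\bigl(\textup{Vert}(\gamma')\bigr)$, either $u_i$ is $\Set{S}$-close (hence $\widehat{\Gamma}$-close) to a vertex $u_i'$ of $\gamma'$, or $u_i$ is $\Set{S}$-close to a peripheral coset $g_iP_i$ which is itself $\Set{S}$-close to $\gamma'$. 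In the latter case, routing through the cone point of $g_iP_i$ produces a $\widehat{\Gamma}$-path of length at most $2\tau+1$ from $u_i$ to a vertex $u_i'$ of $\gamma'$.

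Next, I would form a $\widehat{\Gamma}$-geodesic quadrilateral with vertices $u_0,u_0',u_1',u_1$, taking one side to be $\gamma$, the opposite side to be the subpath $\gamma''$ of $\gamma'$ from $u_0'$ to $u_1'$, and the remaining two sides to be $\widehat{\Gamma}$-geodesics of length at most $2\tau+1$. Farb's theorem that $\widehat{\Gamma}$ is $\delta$-hyperbolic (equivalent to the Gromov--Bowditch formulation of relative hyperbolicity) then gives that this quadrilateral is $2\delta$-thin, so every vertex $v$ of $\gamma$ lies within $\widehat{\Gamma}$-distance at most $2\delta + (2\tau+1)$ of either an endpoint $u_i$ or a vertex $w$ of $\gamma'$.

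The main obstacle is the final step: upgrading a $\widehat{\Gamma}$-distance bound to an $\Set{S}$-distance bound. A short $\widehat{\Gamma}$-path between two vertices may traverse cone-edges that each jump across a peripheral coset of potentially unbounded $\Set{S}$-diameter, so $\widehat{\Gamma}$-distance generally does not control $\Set{S}$-distance. The appropriate tool is the Bounded Coset Penetration property of Farb (in its relatively hyperbolic formulation due to Osin): since $\gamma$ and $\gamma'$ are both $\widehat{\Gamma}$-geodesics with $\widehat{\Gamma}$-close endpoints, their penetrations of any peripheral coset must be synchronized up to a bounded $\Set{S}$-error. Consequently, whenever the short $\widehat{\Gamma}$-path from $v$ to $w$ crosses the cone over a coset $gP$, both $\gamma$ and $\gamma'$ must enter $gP$ near $v$ and $w$ respectively, forcing $v$ to be $\Set{S}$-close to an actual vertex of $\gamma'$ (or to an endpoint of $\gamma$), with all constants depending only on $\tau$ and the hyperbolicity data. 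A cleaner variant is to lift the entire configuration to a cusped hyperbolic space $\widetriangle{X}$ with horoballs, where Lemma~\ref{lem:GeodesicPenetratesHoroball} plays the role of BCP and delivers a uniform horoball-penetration estimate that more transparently converts hyperbolic-space closeness into $\Set{S}$-closeness outside horoballs.
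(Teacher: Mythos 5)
A preliminary remark: the paper does not prove Proposition~\ref{prop:SaturationRelQC}; it is imported from \cite{HruskaWise09} (Prop.~6.5), so there is no in-paper argument to measure yours against, and I am judging the proposal on its own terms. Your overall route is the standard one for statements of this type, and the first two steps are fine: the saturation hypothesis does place each $u_i$ within $\widehat{\Gamma}$--distance $2\tau+1$ of a vertex $u_i'$ of $\gamma'$ (a single peripheral edge crosses the intervening coset), and thinness of quadrilaterals in the hyperbolic graph $\widehat{\Gamma}=\Cayley(G,\Set{S}\cup\Set{P})$ then puts every vertex $v$ of $\gamma$ within bounded $\widehat{\Gamma}$--distance of $u_0$, $u_1$, or a vertex $w$ of $\gamma'$. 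You also correctly isolate the real content of the proposition, namely upgrading $\widehat{\Gamma}$--proximity to $\Set{S}$--proximity.

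It is precisely at that step that there is a genuine gap. Bounded Coset Penetration compares the coset penetrations of two quasi-geodesics with common endpoints; it says nothing about an auxiliary short path from $v$ to $w$. Your key assertion --- that whenever the short $\widehat{\Gamma}$--path from $v$ to $w$ crosses the cone over $gP$, both $\gamma$ and $\gamma'$ must enter $gP$ near $v$ and $w$ --- is false as stated: both geodesics can run alongside $gP$ at $\widehat{\Gamma}$--distance $1$ without ever penetrating it, while the connecting path jumps across $gP$ via a peripheral edge of enormous $\Set{S}$--length, so no contradiction or $\Set{S}$--bound is extracted. The correct execution is to concatenate $\hat{q}=\sigma_0\cdot\gamma''\cdot\sigma_1^{-1}$ into a single $(1,c(\tau))$--quasi-geodesic with the same endpoints as $\gamma$, reduce it so that it is without backtracking, and invoke the full stability theorem for quasi-geodesics in $\widehat{\Gamma}$ \cite[Thm~3.23(1)]{OsinBook06}: phase vertices of $\gamma$ lie in a uniform $\Set{S}$--neighborhood of phase vertices of $\hat{q}$, and every phase vertex of $\sigma_i$ is within $\Set{S}$--distance $\tau$ of $u_i$ or of a vertex of $\gamma'$. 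That theorem (whose proof is not a formal consequence of the penetration-synchronization clauses of BCP) does all the work, and it renders your thin-quadrilateral step unnecessary. Your alternative of passing to the cusped space $\widetriangle{X}$ is viable --- it is essentially the route of \cite{HruskaRelQC} --- but it likewise requires the non-trivial input that geodesics of $\Cayley(G,\Set{S}\cup\Set{P})$ fellow-travel quasi-geodesics of $\widetriangle{X}$ relative to the horoballs, which your sketch does not supply.
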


\begin{lem}\label{lem:CoarseIntersection}
Let $H$ be a $\kappa$--relatively quasiconvex subgroup of $(G,\P)$
and let $P \in \P$ be a peripheral
subgroup.
Assume $G$ is finitely generated, and fix a word metric on $G$.
Suppose cosets $gP$ and $aH$
have infinite coarse intersection, meaning that
$\nbd{gP}{r} \cap aH$ has infinite diameter for some $r<\infty$.
Then $\nbd{gP}{s} \cap aH$ has infinite diameter,
where $s$ is a constant depending on $H$ and $P$
but not depending on $r,g$, and $a$.
\end{lem}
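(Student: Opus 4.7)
\emph{Plan.} The plan is to work in a cusp-uniform model for $(G,\P)$ and combine the relative quasiconvexity of $H$ with the horoball-penetration Lemma~\ref{lem:GeodesicPenetratesHoroball}. Fix a cusp uniform action of $(G,\P)$ on a proper $\delta$--hyperbolic space $\widetriangle{X}$ with basepoint $x \in \widetriangle{X}-\mathcal{B}$ and $G$--invariant disjoint open horoballs $\mathcal{B}$. Let $B \in \mathcal{B}$ be the horoball stabilized by $gPg^{-1}$; by cusp uniformity, the orbit $gPx$ lies within a uniformly bounded $\widetriangle{X}$--neighborhood of $\partial B$.

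From the hypothesis, extract a sequence $h_n \in aH \cap \nbd{gP}{r}$ with $\dist_S(h_1,h_n) \to \infty$; by properness of the action on $\widetriangle{X}$, also $\dist_{\widetriangle{X}}(h_1 x, h_n x) \to \infty$. Enlarge $B$ to $B' \supset B$ so that $h_n x \in B'$ for every $n$; this is possible because each $h_n x$ lies uniformly close to $\partial B$. For $n$ sufficiently large, Lemma~\ref{lem:GeodesicPenetratesHoroball} applied to the pair $B \subset B'$ forces the $\widetriangle{X}$--geodesic $\gamma_n$ from $h_1 x$ to $h_n x$ to enter $B$. Let $z_n \in \partial B$ be the exit point of $\gamma_n$ on the $h_n x$ side; reversing the same lemma bounds $\dist_{\widetriangle{X}}(z_n, h_n x) \leq M$ where $M = M(B,B')$ depends only on $r$.

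Now invoke Definition~\ref{def:RelQC} for the coset $aH$. Left translation by $a$ takes geodesics to geodesics and $\mathcal{B}$ to $\mathcal{B}$, so $aH$ is $\kappa$--relatively quasiconvex with the \emph{same} constant $\kappa$ as $H$, and $\gamma_n \subseteq \nbd{aHx}{\kappa} \cup \mathcal{B}$. Since the horoballs of $\mathcal{B}$ are open and pairwise disjoint and $z_n$ lies on $\partial B$, the point $z_n$ is outside $\mathcal{B}$, so $z_n \in \nbd{aHx}{\kappa}$ and there is $a\eta_n \in aH$ with $\dist_{\widetriangle{X}}(a\eta_n x, z_n) \leq \kappa$. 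Translating this $\widetriangle{X}$--bound into a $\dist_S$--bound via cocompactness of $P$ on a horosphere in the model horoball (conjugate by $g^{-1}$ to move $B$ to the horoball stabilized by $P$), we obtain $\dist_S(a\eta_n, gP) \leq s$ for a constant $s$ depending only on $\kappa$ and a fixed compact fundamental domain of $P$ on its horosphere. Finally $\dist_{\widetriangle{X}}(a\eta_n x, h_n x) \leq \kappa + M$, so $\dist_S(a\eta_n, h_n)$ is bounded in terms of $r$ alone, and $\{a\eta_n\}$ inherits the infinite diameter of $\{h_n\}$ as a subset of $\nbd{gP}{s} \cap aH$.

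The main obstacle is verifying that $s$ depends only on $H$ and $P$. Independence from $r$ is essential: $r$ enters the argument only through the auxiliary constant $M(r)$ controlling how close $a\eta_n$ sits to $h_n$, not through the bound from $a\eta_n$ to $gP$. Independence from $g$ reduces to the $G$--equivariance of $\mathcal{B}$, which makes the conversion ``$\partial B$ to $gP$'' uniform across all left cosets of $P$. Independence from $a$ follows from left-invariance of $\dist_S$, which preserves the relative quasiconvexity constant $\kappa$ when passing from $H$ to $aH$.
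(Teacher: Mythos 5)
Your proposal is correct and follows essentially the same route as the paper: a cusp-uniform model, the horoball pair $B\subset B'$ with Lemma~\ref{lem:GeodesicPenetratesHoroball} forcing long geodesics between points of $\nbd{gP}{r}\cap aH$ to penetrate $B$, relative quasiconvexity placing the horosphere crossing points within $\kappa$ of $aHx$, and cocompactness of $gPg^{-1}$ on the horosphere to convert that into a uniform bound $s=\eta+\kappa$ independent of $r$, $g$, and $a$. The only cosmetic difference is that you anchor all geodesics at $h_1x$ and track a sequence of exit points, whereas the paper argues with an arbitrary pair $ah_i(x),ah_j(x)$; the quantitative bookkeeping (in particular, that $M(B,B')$ depends on $r$ but only affects nearness to the $h_n$, not nearness to $gP$) is the same.
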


\begin{proof}
Choose a cusp uniform action of $(G,\P)$ on a $\delta$--hyperbolic space
$(\widetriangle{X},\dist)$,
with truncated space $\widetriangle{X}-{\mathcal{B}}$, and basepoint $x \in \widetriangle{X}-{\mathcal{B}}$.
Although we claim that $k$ is independent using the
word metric, we shall actually work with a
(pseudo)metric coming from $\widetriangle{X}$.
Note that the result holds for any proper, left-invariant
metric on $G$
if and only if it holds for a particular such metric
(although the exact value of $k$ will typically change).
For convenience, we will identify $G$ with the orbit $Gx$
and use the induced metric from $\widetriangle{X}$.

The maximal parabolic group $gPg^{-1}$
stabilizes $gP(x)$ and has a fixed point $\xi \in \boundary \widetriangle{X}$.
Let $B$ be the horoball component of ${\mathcal{B}}$ centered at $\xi$,
and let $h \colon \widetriangle{X} \to \R$ be an associated horofunction.
Let $D_0$ and $D_1$ be the constants given by the definition of horofunction
and horoball.
Let $S$ denote the ``horosphere'' obtained as
the topological boundary of $B$.
Without loss of generality, we can assume that $B$ and $h$ are
invariant under the action of $gPg^{-1}$.
In other words, the horofunction $h$ is constant on the orbit
$gP(x)$, and this orbit lies in some horoball $B'$ centered at $\xi$ and
containing $B$.

Choose a sequence of distinct points ${ah_i(x)}$
in $\nbd{gP(x)}{r} \cap aH(x)$.
Relative quasiconvexity of $H$, implies that a geodesic between
any two of these points lies in $\nbd{aH(x)}{\kappa} \cup {\mathcal{B}}$.
On the other hand, if $\dist \bigl( ah_i(x),ah_j(x) \bigr)$
is sufficiently large,
then a geodesic $\gamma$ joining them is forced to penetrate the horoball
$B$ by Lemma~\ref{lem:GeodesicPenetratesHoroball}.
This yields a point of the horosphere $S$ that lies within a distance
$\kappa$ of $aH(x)$.
Since $gPg^{-1}$ acts cocompactly on both $gP(x)$ and $S$, the Hausdorff
distance between $gP(x)$ and $S$ is finite, say $\eta$.
Thus $\nbd{gP}{\eta} \cap \nbd{aH(x)}{\kappa}$ is nonempty, since it has a
nontrivial intersection with $S$.
Observe that $\eta$ is a uniform constant which we could have chosen at the beginning relating the geometry of horoballs to their corresponding
peripheral cosets;
in particular $\eta$ is independent
of our choice of coset $gP$ and even independent of $H$.

The first and last points $y$ and $z$
of $\gamma \cap \bar{B}$ lie within a distance $M$
of $ah_i(x)$ and $ah_j(x)$ respectively, where $M$ is the constant
given by Lemma~\ref{lem:GeodesicPenetratesHoroball}.
We thus find that the diameter of $\nbd{gP}{\eta} \cap \nbd{aH}{\kappa}$
is infinite, as it exceeds $\dist\bigl(ah_i(x),ah_j(x)\bigr) - 2M$.
The result therefore follows with $s=\eta+\kappa$.
\end{proof}

\section{Relative cocompactness}
\label{sec:RelCocompact}

\subsection{The hyperbolic case}
In \cite{Sageev97},
Sageev proved the following result.
We note that his actual statement focuses on the case where each $H_i$
is a codimension--$1$ subgroup and he produced his walls
as described in Section~\ref{sub:WallspaceFromCondimension1}.

\begin{thm}
\label{thm:HyperbolicCocompact}
Let $H_1,\dots,H_k$ be quasiconvex
subgroups of a word-hyperbolic group $G$.
Choose an $H_i$--wall for each $i$.
Then $G$ acts cocompactly on the associated dual cube complex.
\end{thm}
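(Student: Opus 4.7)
The plan is to reduce cocompactness to finiteness of the $G$-orbits of maximal cubes in $C(X,\W)$, using the combinatorial machinery established in Sections~\ref{sec:Wallspaces} and~\ref{sec:Cubulating}. The key input is Sageev's bounded packing theorem \cite{Sageev97} for quasiconvex subgroups of word-hyperbolic groups, which guarantees that each $H_i$ has bounded packing in $G$. Combined with Lemma~\ref{lem:TransverseToCloseSubgroups}, this controls the combinatorics of pairwise transverse walls and ultimately yields cocompactness.

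First I would verify the hypotheses of Theorem~\ref{thm:BoundedPacking}. There are $k$ orbits of walls by construction of the wallspace from the chosen $H_i$-walls. For finitely many orbits of pairs of transverse walls, I would apply Lemma~\ref{lem:TransverseToCloseSubgroups} to obtain a uniform constant $D$ such that transversality of $gW_i$ and $g'W_j$ forces $\dist_\Gamma(gH_i,g'H_j)<D$. The $G$-orbits of such pairs correspond to double cosets $H_i \backslash G / H_j$ with representative in the finite ball $\ball{e}{D}$, of which there are only finitely many. Theorem~\ref{thm:BoundedPacking} then yields that $C(X,\W)$ is finite dimensional and uniformly locally finite.

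Next I would count $G$-orbits of maximal cubes. By Proposition~\ref{prop:MaximalCorrespondence}, a maximal cube corresponds to a finite maximal collection $\{g_1 W_{i_1}, \dots, g_n W_{i_n}\}$ of pairwise transverse walls. After translating by $g_1^{-1}$ we may assume $g_1=e$ and $W_{i_1}$ is one of the $k$ chosen representatives. By Lemma~\ref{lem:TransverseToCloseSubgroups}, each remaining $g_l W_{i_l}$ is transverse to $W_{i_1}$, hence $g_l H_{i_l}$ is within distance $D$ of $H_{i_1}$; by bounded packing, $n$ is bounded by a uniform constant $k_0$. Thus, up to the action of $\Stab(W_{i_1})$, each such tuple is a choice of at most $k_0$ walls from a finite set of $\Stab(W_{i_1})$-orbit representatives of walls near $W_{i_1}$. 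Finitely many possibilities remain, so there are finitely many $G$-orbits of maximal cubes. Finally, since $C(X,\W)$ is finite dimensional, every cube is a face of some maximal cube, and each maximal cube has $2^{\dim}\le 2^{k_0}$ faces, yielding finitely many $G$-orbits of cubes overall, i.e., cocompactness.

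The main obstacle is the invocation of bounded packing in the hyperbolic setting, which is essential: without it, pairwise $D$-close cosets can be of unbounded cardinality and the cube complex can fail to be finite-dimensional, let alone cocompact. The subsequent relatively hyperbolic generalization in this paper requires replacing this ingredient with the authors' companion result Theorem~\ref{thm:RelHypBoundedPacking}, together with a careful treatment of peripheral subgroups (so that the conclusion weakens to \emph{relative} cocompactness in the sense of Definition~\ref{defn:RelCocompact}).
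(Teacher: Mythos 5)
Your setup is fine through the application of Theorem~\ref{thm:BoundedPacking}: transversality forces $D$--closeness of cosets by Lemma~\ref{lem:TransverseToCloseSubgroups}, the double cosets $H_i\backslash G/H_j$ with a representative in $\ball{e}{D}$ are finite in number, and bounded packing (Sageev) gives finite dimensionality and a uniform bound $k_0$ on the cardinality of a pairwise transverse family. The gap is in the orbit count for maximal cubes. After normalizing $W_{i_1}$, you correctly observe that each remaining wall of the tuple lies in one of finitely many $\Stab(W_{i_1})$--orbits, but you then conclude that the \emph{tuple} is determined up to finitely many possibilities by choosing orbit representatives. This does not follow: $\Stab(W_{i_1})$ acts diagonally on tuples, and knowing the $\Stab(W_{i_1})$--orbit of each coordinate does not pin down the orbit of the tuple. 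Concretely, if $W'=hV$ and $W''=h'V$ with $h,h'\in\Stab(W_{i_1})$ and $\Stab(W_{i_1})$ is infinite while $\Stab(W_{i_1})\cap\Stab(V)$ is small, the pairs $(W',W'')$ a priori range over infinitely many diagonal orbits, indexed by double cosets of $h^{-1}h'$. What rules most of these out is the requirement that $W'$ and $W''$ also be transverse to \emph{each other}, and exploiting that constraint is exactly where hyperbolicity must enter beyond bounded packing.

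The missing ingredient is the coarse Helly property, which is the first half of Lemma~\ref{lem:Sageev} (as the paper notes, following Niblo--Reeves): any pairwise $D$--close collection of $\kappa$--quasiconvex cosets in a $\delta$--hyperbolic group admits a point $x$ within a uniform distance $M$ of every member. Granting this, the paper's route is short: translate the collection by $x^{-1}$ so that every associated coset meets $\ball{e}{M}$; since only finitely many cosets (counted with the finite multiplicity of the wall-to-coset map) meet a fixed finite ball, there are finitely many $G$--orbits of collections of pairwise transverse walls, and Lemma~\ref{lem:maximalcocompactness} then gives cocompactness directly, without separately invoking Proposition~\ref{prop:MaximalCorrespondence} or the face-counting step. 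Bounded packing is the second half of Lemma~\ref{lem:Sageev} (it bounds the cardinality of the collection), so your instinct about its role is right, but by itself it cannot replace the Helly-type statement that produces the common basepoint.
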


As a prelude to the work in this section, let us examine Sageev's theorem in more detail.  The following lemma is an easy consequence of
Corollary~\ref{cor:FiniteDim}.

\begin{lem}\label{lem:maximalcocompactness}
Let $G$ acts on a wallspace $(X,\W)$.
Suppose there are finitely many orbits of collections
of pairwise transverse walls in $X$.
Then $G$ acts cocompactly on $C(X,\W)$.
\end{lem}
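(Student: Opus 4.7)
The plan is to show that $C(X,\W)$ has only finitely many $G$--orbits of cubes, which gives cocompactness. The main ingredients are Corollary~\ref{cor:FiniteDim} and Proposition~\ref{prop:MaximalCorrespondence} together with the fact that finite dimensional cube complexes let us reduce to considering maximal cubes.

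First I would extract a dimension bound. Since each collection in a given $G$--orbit has the same cardinality, and there are only finitely many orbits of collections of pairwise transverse walls, only finitely many cardinalities occur. In particular, there is a finite upper bound on the size of any pairwise transverse collection, so Corollary~\ref{cor:FiniteDim} guarantees that $C(X,\W)$ is finite dimensional.

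Next I would argue that every cube of $C(X,\W)$ is contained in some maximal cube. Given a cube $c$ with $0$--cube $d$ and independent walls $S$, finite dimensionality lets us enlarge $S$ to a finite maximal collection $S'$ of pairwise transverse walls. Retaining the orientations prescribed by $d$ on $\W - S'$ while letting the walls of $S'$ vary yields a maximal cube $c'$ containing $c$: the verification that $c'$ really is a cube of $C(X,\W)$ is an immediate analogue of the construction in Remark~\ref{rem:WallsAndPoint-Cube} and Lemma~\ref{lem:RepresentingFiniteMaximal}, using that $d$ is already a $0$--cube to supply the needed consistency and finiteness of orientations.

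By Proposition~\ref{prop:MaximalCorrespondence}, maximal cubes of $C(X,\W)$ are in $G$--equivariant bijection with finite maximal collections of pairwise transverse walls, where the correspondence sends a cube to its collection of independent walls. The hypothesis therefore gives only finitely many $G$--orbits of maximal cubes. Since $C(X,\W)$ is finite dimensional, each maximal cube has only finitely many subcubes, so the collection of cubes splits into finitely many $G$--orbits. Hence $G \backslash C(X,\W)$ is a finite cube complex and $G$ acts cocompactly on $C(X,\W)$. The only slightly delicate point is the extension step to a maximal cube, but given finite dimensionality this is routine.
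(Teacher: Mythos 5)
Your overall route---bound the dimension via Corollary~\ref{cor:FiniteDim}, then count maximal cubes via Proposition~\ref{prop:MaximalCorrespondence} and note that each has only finitely many faces---is exactly the ``easy consequence'' the paper has in mind (it records no further details), so the strategy is the right one. However, the one step you dismiss as routine is the step that is wrong as stated. If you enlarge the independent walls $S$ of a cube $c$ to an \emph{arbitrary} finite maximal pairwise transverse collection $S'$ and retain the orientations that a $0$--cube $d$ of $c$ assigns to $\W - S'$, the result need not be a cube: for a wall $W \in S' - S$ and a wall $W'$ not transverse to $W$, the halfspace $\overleftarrow{d}(W')$ may be disjoint from $\overrightarrow{d}(W)$, so reversing $W$ at $d$ violates Condition~(1) of Construction~\ref{const:DualCubeComplex}. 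Concretely, in the wallspace $X=\R$ with walls $W_n=\bigl\{(-\infty,n],[n,\infty)\bigr\}$, $n\in\Z$ (no two of which are transverse), let $d$ orient every wall towards $1/2$ and take $S=\emptyset$, $S'=\{W_5\}$: flipping $W_5$ yields the chosen halfspaces $[5,\infty)$ and $(-\infty,1]$, which are disjoint. This is precisely why Lemma~\ref{lem:RepresentingFiniteMaximal} orients the dependent walls carefully (towards the independent walls) rather than arbitrarily; that construction is not an ``immediate analogue'' here because you are constrained to keep $d$'s orientations.

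The gap is local and easily repaired. All you need is that every cube of $C(X,\W)$ lies in \emph{some} maximal cube, and this follows from finite dimensionality directly: among the cubes containing $c$, choose one of maximal dimension; it cannot be a proper face of a larger cube, hence is a maximal cube, and by Lemma~\ref{lem:CubeToTransverseWalls} its independent walls form a finite maximal pairwise transverse family extending those of $c$ (you simply do not get to choose which one). With that substitution the rest of your count goes through: finitely many $G$--orbits of maximal transverse collections give, by the uniqueness statement in Lemma~\ref{lem:RepresentingFiniteMaximal}, finitely many orbits of maximal cubes, each with finitely many faces, hence finitely many orbits of cubes and cocompactness. One further small point: your deduction of a finite dimension bound tacitly assumes all pairwise transverse collections are finite; this does follow from the hypothesis, since an infinite pairwise transverse collection would contain finite subcollections of every cardinality, and these would lie in infinitely many distinct $G$--orbits, but it deserves a sentence.
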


Note that the above condition implies in particular that there are only finitely many orbits of walls.
Lemma~\ref{lem:maximalcocompactness}
applies naturally when $X$ is $\delta$--hyperbolic and walls are quasiconvex.
Note that
pairwise transverse walls correspond to $D$--close cosets.
The hypothesis of Lemma~\ref{lem:maximalcocompactness} follows
from the following lemma, which is implicit in Sageev
\cite{Sageev97} where it is deduced from results
of Gitik--Mitra--Rips--Sageev \cite{GMRS98}.

\begin{lem}
\label{lem:Sageev}
Let $\mathcal{H}$ be a finite collection of
$\kappa$--quasiconvex subgroups of the
$\delta$--hyperbolic group $G$
with Cayley graph $\Gamma=\Gamma(G,S)$.
For each $D\geq 0$, there exists $M=M(D,\delta,\kappa)$ such that
the following holds.
Let $\mathcal{A}$ be any collection of left cosets $aH$ with
$a \in G$ and $H \in \mathcal{H}$ such that all pairs of cosets
$aH, a'H' \in \mathcal{A}$ are $D$--close.
Then the following intersection is nonempty:
\[
   \bigcap_{aH \in \mathcal{A}} \neb_M(aH) \ne \emptyset
\]
\end{lem}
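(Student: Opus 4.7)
The plan is to establish a Helly-style property for $\kappa$-quasiconvex subsets of a $\delta$-hyperbolic space: finitely many pairwise $D$-close quasiconvex sets share a point in a uniform neighborhood. Since left translation is an isometry of $\Gamma$, each coset $aH$ with $H\in\mathcal{H}$ is $\kappa$-quasiconvex, with a uniform $\kappa$ because $\mathcal{H}$ is finite, so the coset version follows from the Helly version.

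First I would dispose of the case of three cosets $A,B,C$. Choose witness pairs realizing the pairwise closeness: $p_A\in A$ within $D$ of $p_B'\in B$, $p_B\in B$ within $D$ of $p_C'\in C$, and $p_C\in C$ within $D$ of $p_A'\in A$. Form the geodesic hexagon with successive vertices $p_A,p_B',p_B,p_C',p_C,p_A'$: three alternating edges have length at most $D$, and the other three lie in $\neb_\kappa(A)$, $\neb_\kappa(B)$, $\neb_\kappa(C)$ respectively by quasiconvexity. Triangulate with diagonals $[p_A,p_B]$ and $[p_A,p_C]$ to obtain three "outer" triangles and one inner triangle $\triangle(p_A,p_B,p_C)$. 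Applying $\delta$-thinness to each outer triangle shows that each side of the inner triangle lies in the $(\delta+\kappa+D)$-neighborhood of one of $A$, $B$, or $C$; for instance $[p_A,p_B]$ is within $\delta$ of $[p_A,p_B']\cup[p_B',p_B]$, the first of which is contained in $\neb_D(B)$ and the second in $\neb_\kappa(B)$, yielding $[p_A,p_B]\subseteq\neb_{\delta+\kappa+D}(B)$. A center of $\triangle(p_A,p_B,p_C)$ then lies within $\delta$ of each side, hence within $M_3:=2\delta+\kappa+D$ of all three of $A,B,C$.

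To extend the conclusion to arbitrary $\mathcal{A}$, I would invoke bounded packing for quasiconvex subgroups of hyperbolic groups (Sageev's theorem, which is proved using GMRS): the number of distinct pairwise $D$-close cosets of the subgroups in $\mathcal{H}$ is bounded by some $N=N(D,\delta,\kappa,\mathcal{H})$. Thus after discarding duplicates we may assume $|\mathcal{A}|\le N$. A finite induction on $|\mathcal{A}|$ then yields a uniform constant $M$: at each step replace three of the sets by a single "composite" quasiconvex set associated to the three-set center (say a uniformly thin tube joining the center to its three witnesses), which is quasiconvex with a controlled constant and remains pairwise close to each of the remaining cosets. Since only $O(N)$ iterations are required, the final constant $M$ depends only on $D,\delta,\kappa,\mathcal{H}$.

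The main obstacle is exactly this extension past three sets: the inductive "composite" must be genuinely quasiconvex with a bounded constant and must remain close (in a controlled sense) to all the other cosets, so that the outer induction step preserves the hypotheses. Without a uniform bound on $|\mathcal{A}|$ the inductive blowup of constants would be unbounded, and it is the bounded packing property (itself a consequence of the GMRS height/width machinery) that closes this loop and produces an $M$ independent of $|\mathcal{A}|$.
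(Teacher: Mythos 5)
Your overall architecture is the same as the one the paper adopts (following the Niblo--Reeves clarification it cites): first a Helly-type statement for $n$ pairwise $D$--close $\kappa$--quasiconvex subsets, with a constant $M(n)$ allowed to depend on $n$, and second an appeal to bounded packing to bound $n$ by some $N$, so that $M=M(N)$ is uniform. Your three-set hexagon argument is correct and gives $M_3=2\delta+\kappa+D$ (modulo the slip that you need all three diagonals $[p_A,p_B]$, $[p_B,p_C]$, $[p_C,p_A]$, not two, to produce three outer triangles and one inner triangle), and invoking Sageev's bounded packing theorem to cap $\lvert\mathcal{A}\rvert$ is exactly the paper's second step.

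The genuine gap is in the inductive passage from three sets to $n$ sets. Your ``composite'' is a tripod of three geodesics of length at most $M_3$, hence a set of diameter at most $2M_3$ meeting each of $A_1,A_2,A_3$ in a single point. The hypothesis that $A_1$ is $D$--close to $A_4$ is witnessed at a pair of points that may lie arbitrarily far from that tripod, so nothing in the hypotheses forces the tripod to be close to $A_4$; asserting that the three-set center is uniformly close to $A_4$ is precisely the four-set case of the Helly statement you are trying to prove, so the induction as set up is circular. The standard repair is to combine \emph{two} sets at a time: replace $A_1,A_2$ by the coarse intersection $I=\neb_{M_3}(A_1)\cap\neb_{M_3}(A_2)$, which is quasiconvex with a constant depending only on $\delta$, $\kappa$, and $M_3$, and which \emph{is} $M_3$--close to every remaining $A_j$, because the already-established three-set case applied to $A_1,A_2,A_j$ produces a point of $I$ within $M_3$ of $A_j$. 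A point near $I$ is near both $A_1$ and $A_2$, so iterating (with constants growing at each of the at most $N$ steps) yields $M(n)$, and bounded packing then closes the argument as you intended.
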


As clarified by Niblo--Reeves \cite{NibloReeves03},
a proof of Lemma~\ref{lem:Sageev} involves two steps.
Firstly, for any collection of $n$ distinct $D$--close $\kappa$--quasiconvex
subspaces of a $\delta$--hyperbolic space there is a point $g$
that lies within a distance $M(n)$ of each of these spaces.
Secondly, there is an upper bound $N$ on the maximal number of $D$--close
left cosets $g_j H_i$.

It was this second step that motivated us to introduce and
explore the notion of bounded packing described in
Definition~\ref{defn:BoundedPacking}.

\subsection{Various hemiwallspaces induced by a peripheral subspace}
\label{subsec:VariousHemi}

\begin{defn}[Hemiwallspaces induced by a subspace]
\label{defn:InducedHemiwallspaces}
Let $(X,\V)$ be a wallspace, where $\V$ denotes the collection of halfspaces
with pairing $\iota$ arising from a collection of walls $\W$.
For a nonempty subset $P \subset X$, we list below
several natural ways of inducing a hemiwallspace,
the first of which was already seen in Example~\ref{exmp:SubspaceHemi}.
The other ways presume that $X$ is a metric space,
and $P$ is an infinite diameter subspace.
Indeed, if $\diam(P) <\infty$ then (3), (4), and (5)
do not lead to a hemiwallspace, as there is a violation of
the requirement that at least one halfspace of each
wall be included.
\begin{enumerate}
\item $\U_0=\set{U\in \V}{U\cap P\neq \emptyset}$.
\item $\U_r = \bigset{U\in \V}{U\cap \neb_r(P) \neq \emptyset }$.
\item $\U_\infty=\bigset{U\in \V}{\diam(U\cap P)=\infty }$.
\item $\U_*=\bigset{U\in \V}{\diam\bigl(U\cap \neb_r(P)\bigr)
   =\infty \text{ for some $r>0$} }$.
\item $\U_{r*} = \bigset{U\in \V}{\diam\bigl(U\cap \neb_r(P)\bigr)
   =\infty }$.
\end{enumerate}
Here and below we shall use the symbol $\U_{_{\heartsuit}}$ for a chosen hemiwallspace from the above list.
Several of these hemiwallspaces are illustrated in
Figure~\ref{fig:InducedHemiWallspace}.
In each case it is easily verified that
for each halfspace in $\V$, either it, its complement, or both lie in
$\U_{_\heartsuit}$.
Note that if a group $J$ acts on $(X,\V)$ and stabilizes $P$,
then $J$ acts on $\U_{_\heartsuit}$ as well.

\begin{figure}
\begin{center}
\includegraphics[width=.9\textwidth]{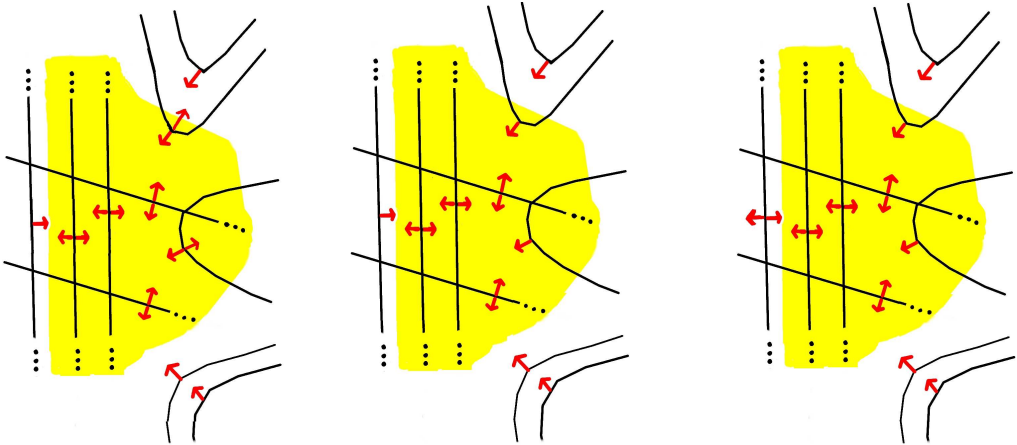}
\end{center}
\caption{A wallspace $(X,\V)$ and a subset $Y$.
The left-hand side shows the halfspaces of $\U_0$.
In the center, we see the halfspaces of $\U_\infty$.
The right-hand side illustrates $\U_*$.
}
\label{fig:InducedHemiWallspace}
\end{figure}

$\U_r$ is similar to $\U_0$ but
will be appropriate for applications, since sometimes
$P$ needs to be ``thickened up'' to capture the properties of
its interaction with walls.
$\U_*$ is a compromise that
is appropriate for an application towards
relatively hyperbolic groups we shall give later.

When $\neb_r(P)$ is a finite set, the axioms imply that each $\U_r$
has finitely many independent walls, and hence $C(\U_r)$ is finite.
As mentioned above, we do not consider $\U_\infty$ in this case,
as it contains \emph{no} halfspaces
and is not a hemiwallspace.
However, when $\diam(P)=\infty$ each of
$\U_\infty$ and $\U_*$ and $\U_{r*}$ is a hemiwallspace.
Indeed, just as either $U\cap P$ or $(X-U)\cap P$ is nonempty,
when $\diam(P)=\infty$ either $U\cap P$ or $(X-U) \cap P$ has infinite
diameter.
\end{defn}

\begin{rem}
\label{rem:HemiVsSubwallspace}
Another natural way to associate a wallspace to a subspace $P \subset X$,
where $X$ is itself a wallspace, is to consider the collection of all
partitions of $P$ that arise from partitions of $X$.
Namely, for each wall $(U,X-U)$ we obtain a wall $(U\cap P, P-U)$.
This is the induced \emph{subwallspace} of
Definition~\ref{def:Subwallspace}.
Subwallspaces and hemiwallspaces play different roles.
We will return to subwallspaces in Section~\ref{sec:FinitenessRH}.

There are several subtleties.
A minor issue is that several different partitions of $X$ might
induce the same partition of $P$.
In particular, many partitions of $X$ could induce the vacuous
partition $\{P,\emptyset\}$ of $P$.
There are ways to disregard this nuisance.
A more serious issue is that we would like to have a natural way of
embedding the cube complex dual to the $P$ wallspace in the
cube complex dual to the $X$ wallspace.
This approach would require some additional work to
suggest such an embedding.
The most important pitfall to this approach is that walls that are transverse
in $X$ might induce walls that are not transverse in $P$.
If one were to use the subwallspace approach and aim for
an embedding of the dual cube complexes, then one would have to
declare walls of $P$ to cross by decree.
It is for this reason that we have adopted the approach of taking a hemiwallspace, consisting of a subcollection
of halfspaces as it appears to naturally avoid all of these difficulties.
\end{rem}

\subsection{Structure of cubulation with peripheral subspaces}

\renewcommand{\P}{\mathcal P}

We will now examine the structure of a cube complex dual to a wallspace
$X$ that has a system $\P$ of peripheral subspaces.
The strategy is to understand the structure of the cube complex
$C(X)$ dual to $(X,\V)$ by recognizing that
important parts of it are convex subcomplexes $C(P)$ where $P \in\P$.
We call elements of $\P$ \emph{peripheral subspaces} or \emph{peripheries}
because of our later relatively hyperbolic applications,
but the initial results do not employ relative hyperbolicity.

Technically speaking we should use the notation $C(X,\V)$ for the cube complex dual to $(X,\V)$ and
$C\bigl(X,\U_{_\heartsuit}(P)\bigr)$ for the cube complex dual to the hemiwallspace
induced by some $P\in\P$.
We have elected to adopt the notation $C(X)$ for the former and
$C_{_\heartsuit}(P)$
for the latter,
as it highlights the method of inducing hemiwallspaces.

Recall that each $n$--cube in $C(X)$ corresponds to a collection of unpaired halfspaces,
together with $n$ walls, which are paired halfspaces,
such that any orientation of these walls yields a $0$--cube,
as explained in Section~\ref{subsec:Cubes}.

Having chosen a way of inducing a hemiwallspace $\U_{_\heartsuit}$
from a subspace $P$,
we say a cube $c$ is \emph{represented} in a peripheral subspace $P$
if all these halfspaces lie in $\U_{_\heartsuit}$.
Technically $c$ is a ``subhemiwallspace''---yikes!

The \emph{depth} of a cube $c \in C(X)$ is its distance to a nearest
canonical cube.

We say the peripheries in $\P$ are \emph{isolated} with respect to $\V$
if there exists $m$ such that each
cube of depth~$\geq m$ is represented in a unique periphery.

It is an interesting problem to find
geometric hypotheses on $\P$ that imply isolation.
Our main results in the relatively hyperbolic setting
depend upon the fundamental point that the coarse geometric isolation
of the peripheral subspaces $\P$
implies their isolation with respect to $\V$.

The following tautology sets a framework for our main results.
In its strongest form, it concludes that $G$ acts
relatively cocompactly on $C(X)$.

\begin{prop}
\label{prop:Tautology}
Let $(X,\V)$ be a wallspace with a collection $\P$
of peripheral subspaces.
Suppose $G$ acts on $X$ stabilizing $\V$ and $\P$.
Let $\mathbb{P}$
be a collection of representatives of the $G$--orbits
of peripheries in $\P$.
Let $\mathcal{U}_{_\heartsuit}$ be one of the five
ways of inducing hemiwallspaces described in
Definition~\ref{defn:InducedHemiwallspaces}.

Suppose there exists $m$ such that each \textup{(}maximal\textup{)}
cube of depth~$\geq m$ is represented in some periphery.
Then $C(X)=GK\cup \bigcup_{P \in \mathbb{P}} GC_{_\heartsuit}(P)$ where
$K$ is a collection of depth~$\leq m$ cubes.

Suppose the family of all peripheries $gP$ is isolated
with respect to $\V$.
Then $g C_{_\heartsuit}(P) \cap g'C_{_\heartsuit}(P') \subset GK$ unless
$gP=g'P'$.
\end{prop}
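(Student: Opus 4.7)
The plan is to set up a simple dictionary: a cube $c$ of $C(X)$ is represented in a periphery $P$ precisely when $c$ lies in the convex subcomplex $C_{_\heartsuit}(P)$, and the construction of $C_{_\heartsuit}$ is $G$--equivariant. With these tools in hand, both claims become case analyses on the depth of a cube.

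First I would unpack Construction~\ref{const:CubulatingHemi} applied to $\U = \U_{_\heartsuit}(P)$: a $0$--cube of $C_{_\heartsuit}(P)$ is a $0$--cube of $C(X)$ all of whose halfspaces lie in $\U_{_\heartsuit}(P)$, and an $n$--cube $c$ lies in $C_{_\heartsuit}(P)$ if and only if each of its fixed dependent halfspaces is in $\U_{_\heartsuit}(P)$ and both halfspaces of each of its independent walls are in $\U_{_\heartsuit}(P)$. This is exactly the condition that $c$ is represented in $P$. Since the assignment $P \mapsto \U_{_\heartsuit}(P)$ depends only on $(X,\V)$, $P$, and the metric, it is $G$--equivariant: $\U_{_\heartsuit}(gP) = g\,\U_{_\heartsuit}(P)$, and hence $C_{_\heartsuit}(gP) = g\,C_{_\heartsuit}(P)$. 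A short aside addresses the parenthetical ``(maximal)'' in the first hypothesis: if $c \subset c'$, every dependent wall of $c$ is either dependent in $c'$ (same halfspace chosen) or independent in $c'$ (in which case \emph{both} of its halfspaces lie in $\U_{_\heartsuit}(P)$ whenever $c' \in C_{_\heartsuit}(P)$), while the independent walls of $c$ form a subset of those of $c'$; thus $c' \in C_{_\heartsuit}(P)$ implies $c \in C_{_\heartsuit}(P)$, and it suffices to verify the hypothesis on maximal cubes.

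Now let $K$ denote the collection of cubes $c$ with $\text{depth}(c) < m$. For the first statement, given $x \in C(X)$, let $c'$ be any maximal cube containing $x$. If $\text{depth}(c') < m$ then $c' \in K$ and $x \in GK$. Otherwise the hypothesis says $c'$ is represented in some periphery $P = gP_0$ with $P_0 \in \mathbb{P}$, so equivariance yields $c' \subseteq C_{_\heartsuit}(gP_0) = g\,C_{_\heartsuit}(P_0)$, placing $x$ in $G\,C_{_\heartsuit}(P_0)$. For the second statement, assume $\{gP\}$ is isolated with respect to $\V$ with constant $m$, and suppose $x \in g\,C_{_\heartsuit}(P) \cap g'\,C_{_\heartsuit}(P')$. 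Let $c_x$ be the carrier of $x$; since both subcomplexes contain $x$ and $x$ lies in the interior of $c_x$, we have $c_x \in C_{_\heartsuit}(gP) \cap C_{_\heartsuit}(g'P')$, so $c_x$ is represented in both $gP$ and $g'P'$. If $\text{depth}(c_x) \geq m$, the uniqueness clause of isolation forces $gP = g'P'$; otherwise $c_x \in K$ and $x \in GK$.

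The only genuine content is the dictionary between being represented in $P$ and membership in $C_{_\heartsuit}(P)$, combined with the $G$--equivariance $C_{_\heartsuit}(gP) = g\,C_{_\heartsuit}(P)$; once these are in hand, both parts reduce to bookkeeping on depth. I do not anticipate a serious obstacle: the statement really is a tautology once the hemiwallspace framework and the notion of isolation have been unpacked, and its value is in providing the framework that the substantive theorems of Section~\ref{sec:FinitenessRH} will feed into.
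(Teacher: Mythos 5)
Your proposal is correct and follows essentially the same route as the paper: decompose cubes by depth, send shallow cubes into $GK$, use the representation hypothesis to place deep cubes in some $gC_{_\heartsuit}(P)$, and use isolation to force deep cubes in two such subcomplexes to have $gP=g'P'$. The extra bookkeeping you supply (the dictionary between ``represented in $P$'' and membership in $C_{_\heartsuit}(P)$, the $G$--equivariance $C_{_\heartsuit}(gP)=gC_{_\heartsuit}(P)$, and the reduction from arbitrary to maximal cubes) is left implicit in the paper but is consistent with its argument.
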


\begin{proof}
Let $K$ denote a collection of cubes representing all the $G$--orbits
of cubes of depth $\leq m$.
By hypothesis, each cube $c$ either has depth~$\leq m$ so lies in
$gK$ for some $g\in G$, or $c$ has depth~$>m$ and so lies in
$gC_{_\heartsuit}(P)$ for some $g \in G$ and $P \in \mathbb{P}$.

If depth~$>m$ cubes are represented in unique peripheries,
then each intersection
$gC_{_\heartsuit}(P)\cap g'C_{_\heartsuit}(P')$ consists of cubes of depth~$\leq m$
and hence $gC_{_\heartsuit}(P)\cap g'C_{_\heartsuit}(P')\subset GK$,
unless $gP =g'P'$.
\end{proof}

\subsection{Application towards relatively hyperbolic groups}
\label{sec:RelHypApplication}

The following result is obtained in \cite{HruskaWise09}.

\begin{thm}[Thm~8.9, \cite{HruskaWise09}]
\label{thm:RelPacking}
Let $(G,\mathbb{P})$ be relatively hyperbolic,
choose a finite generating set $\Set{S}$ for $G$,
and let $\Set{H}$ be a finite collection of
$\kappa$--relatively quasiconvex subgroups of $G$.
For each constant $D$, there are constants $R$ and $M$ such that
the following holds.
Let $\Set{A}$ be any set of left cosets $aH$
with $a\in G$ and $H \in \Set{H}$ such that
for all $aH,a'H' \in \Set{A}$ we have $\dist_{\Set{S}}(aH,a'H') <D$.
Suppose the following intersection is empty:
\begin{equation*}
\label{eqn:Empty}
   \bigcap_{aH \in \Set{A}} \nbd{aH}{M} = \emptyset
\end{equation*}
Then we have the following:
\begin{enumerate}
\item \label{item:RP:Unique}
There is a unique peripheral coset $bP$ such that for all $aH \in \Set{A}$
the intersection $\nbd{bP}{R} \cap \nbd{aH}{R}$ is nonempty.
\item \label{item:RP:Triple}
$\nbd{bP}{R} \cap \nbd{aH}{R} \cap \nbd{a'H'}{R}$ is nonempty
for all $aH,a'H' \in \Set{A}$.
\item \label{item:RP:UnboundedCosets}
$\nbd{bP}{R} \cap \nbd{aH}{R}$ is unbounded
for all $aH \in \Set{A}$.
\item \label{item:RP:UnboundedConjugates}
$bPb^{-1} \cap aHa^{-1}$
is infinite for all $aH \in \Set{A}$.
\end{enumerate}
\end{thm}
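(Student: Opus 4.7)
The plan is to pass to a hyperbolic model of $(G,\mathbb{P})$ and apply the hyperbolic coarse Helly property (Lemma~\ref{lem:Sageev}), treating each relatively quasiconvex coset as a genuinely quasiconvex subspace of the cusped space. Fix a cusp-uniform action of $(G,\mathbb{P})$ on a proper $\delta$--hyperbolic space $\widetriangle{X}$, with truncated space $X = \widetriangle{X}-\mathcal{B}$ and basepoint $x \in X$. Each $H \in \Set{H}$ has a $\kappa'$--quasiconvex $H$--cocompact ``hull'' $\Xi_H \subseteq \widetriangle{X}$ containing the orbit $Hx$, with $\kappa'$ depending only on $\kappa$ and $\delta$. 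A pairwise $D$--close family $\Set{A}$ in $\Cayley(G,\Set{S})$ then gives a family $\{a\Xi_H : aH \in \Set{A}\}$ of pairwise $D'$--close $\kappa'$--quasiconvex subspaces of $\widetriangle{X}$, with $D'=D'(D)$ depending on the bi-Lipschitz constant between the $\Set{S}$--metric on $G$ and the induced metric on $Gx \subset X$.

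By the hyperbolic coarse Helly lemma (the quasiconvex-subspace version of Lemma~\ref{lem:Sageev}) there exist a constant $M'=M'(D',\delta,\kappa')$ and a point $w \in \widetriangle{X}$ with $w \in \bigcap_{aH \in \Set{A}} \nbd{a\Xi_H}{M'}$. The next step is a dichotomy: fixing a depth threshold $\eta$ large relative to $M'$, $\kappa'$, $\delta$, either $w$ lies in $\nbd{X}{\eta}$ (the \emph{shallow} case) or $w$ lies at depth greater than $\eta$ in some horoball $B \subset \mathcal{B}$ (the \emph{deep} case). In the shallow case, projecting $w$ onto a nearby orbit point $gx \in Gx$ and passing back to the $\Set{S}$--metric on $G$ gives $g \in \bigcap_{aH \in \Set{A}} \nbd{aH}{M}$ for a uniform $M=M(\eta, M', \delta,\kappa)$, contradicting the hypothesis. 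So we are in the deep case, in which the horoball $B$ is uniquely determined (two distinct horoballs, each penetrated to depth beyond $\eta$, cannot both be within distance $M'$ of a common point, since the horospheres in $\mathcal{B}$ are pairwise a uniform distance apart). Letting $bPb^{-1} = \Stab_G(B)$ produces the peripheral coset $bP$ of conclusion~(\ref{item:RP:Unique}), with uniqueness inherited from uniqueness of $B$.

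To derive conclusions (\ref{item:RP:Triple})--(\ref{item:RP:UnboundedConjugates}), the key estimate is that since $w$ lies within $M'$ of each $\kappa'$--quasiconvex hull $a\Xi_H$ and at depth $>\eta$ in $B$, the hull $a\Xi_H$ itself must penetrate $B$ to depth at least $\eta - M' - \kappa' - O(\delta)$. By relative quasiconvexity of $H$, geodesics in $\widetriangle{X}$ joining far-apart points of $aHx$ on opposite ``horospherical sides'' of this deep penetration zone lie in $\nbd{aHx}{\kappa} \cup \mathcal{B}$, and Lemma~\ref{lem:GeodesicPenetratesHoroball} applied to nested horoballs around $B$ forces $aHx$ itself to have points within a uniform distance of arbitrary horospheres of $B$. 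This gives infinite coarse intersection of the orbits $aHx$ and $bPx$ in $\widetriangle{X}$; translating back via Lemma~\ref{lem:CoarseIntersection} yields the unbounded intersection $\nbd{bP}{R} \cap \nbd{aH}{R}$ of~(\ref{item:RP:UnboundedCosets}) for a uniform $R$. Since $bPb^{-1}$ acts cocompactly on $\nbd{bP}{R}$, a pigeonhole argument produces infinitely many elements of $aHa^{-1}$ lying in a single $bPb^{-1}$--orbit, giving the infinite conjugate intersection of~(\ref{item:RP:UnboundedConjugates}). For the triple intersection~(\ref{item:RP:Triple}), one selects a horosphere $S_0$ of $B$ inside the common penetration region: $S_0$ lies within a uniform distance of $bPx$ while meeting each $a\Xi_H$, simultaneously witnessing closeness to $bP$, $aH$, and $a'H'$.

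The main obstacle is calibrating $\eta$, $R$, and $M$ uniformly so that ``deepness of $w$'' in $B$ actually transfers through the $M'$--neighborhood into ``deepness of each hull $a\Xi_H$'' in $B$, and then into infinite coarse intersection of $aHx$ with $bPx$, with no parameters depending on $\abs{\Set{A}}$ or on the specific cosets. The delicate piece is the transfer step: a point within $M'$ of a $\kappa'$--quasiconvex set and deep inside a horoball forces the set to penetrate that horoball, which requires applying Lemma~\ref{lem:GeodesicPenetratesHoroball} to a pair of nested horoballs together with thin-triangle bookkeeping. Once this uniform depth transfer is in hand, the remaining conclusions follow mechanically from relative quasiconvexity, Lemma~\ref{lem:CoarseIntersection}, and cocompactness of the $bPb^{-1}$--action on its horosphere.
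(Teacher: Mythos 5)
This paper does not actually prove Theorem~\ref{thm:RelPacking}: it is imported from \cite{HruskaWise09}, where the argument is carried out in the word metric on $G$ using saturations, transition points, and relatively thin polygons (the technology surrounding Proposition~\ref{prop:SaturationRelQC} and Proposition~\ref{prop:NearTwoPeripherals}). Your cusped-space strategy --- quasiconvex hulls in $\widetriangle{X}$, a coarse Helly point, and a shallow/deep dichotomy --- is a genuinely different route and is in principle viable, but as written it has a gap exactly where the content of the theorem lives.

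The gap is the first substantive step: the existence of a single point $w\in\bigcap_{aH\in\Set{A}}\nbd{a\Xi_H}{M'}$ with $M'$ independent of $\Set{A}$. The Helly input you invoke, Lemma~\ref{lem:Sageev}, is described in this paper as a two-step argument whose first step yields a constant $M(n)$ depending on the cardinality $n$ of the family, the second step (bounded packing) supplying a uniform bound on $n$. In the relatively hyperbolic setting there is no such bound: Theorem~\ref{thm:RelPacking} is designed precisely for families of unbounded, even infinite, cardinality clustering around a peripheral coset, and the proof of Theorem~\ref{thm:MainResult} applies it to infinite $\Set{A}$. For \emph{finite} families a cardinality-free Helly constant does hold (project a basepoint to the farthest member of the family; a nearest-point-projection argument shows that projection is uniformly close to every other member), but you neither state nor prove this, and for \emph{infinite} families the uniform coarse Helly property is false for general quasiconvex subsets --- nested half-lines in $\R$, or hulls penetrating a horoball to unbounded depths, are pairwise close with empty coarse intersection in every uniform neighborhood. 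Since ``pairwise close implies all close to one object'' is the crux, outsourcing it to an unproved uniform Helly lemma leaves the theorem essentially unestablished; the infinite case must be handled by a separate argument (e.g., reducing to triples via Theorem~\ref{thm:Triangle}-type centers and then using almost malnormality of peripheral cosets to show all triples select the same $bP$, which is closer to how \cite{HruskaWise09} proceeds). Two secondary issues: the hull $\Xi_H$ is not $H$--cocompact unless $H$ is undistorted (its horoball portions are cocompact only modulo the peripheral subgroups of $H$), and in your shallow case, closeness of $w$ to $a\Xi_H$ at bounded depth does not directly give word-metric closeness to $aH$, since a bounded-depth point of a horoball contained in the hull can be arbitrarily far from $aHx$; the hull must be built from sub-horoballs sitting over $\nbd{aHx}{\kappa}$ for that transfer to work.
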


The next result follows from the proof of \cite[Prop~7.8]{HruskaWise09}.
Specifically, for the transition point $z$ that was proven to lie in 
$\bigcap_{A\in\Set{A}} \nbd{A}{M_1}$,
it was also shown in the proof
that $z \in \nbd{g_0P_0}{M_1}$.

\begin{prop}[Prop~7.8, \cite{HruskaWise09}]
\label{prop:NearTwoPeripherals}
Let $(G,\mathbb{P})$ be relatively hyperbolic with a finite
generating set $\Set{S}$.
For each $M>0$ and $\kappa>0$
there exists $M_1=M_1(M,\kappa)$ such that the following holds.
Suppose $\Set{A}$ is a collection of $\kappa$--relatively quasiconvex
subsets and $g_0P_0\ne g_1P_1$ are distinct peripheral cosets.
If $\dist_{\Set{S}}(g_iP_i,A)<M$ for all $i\in \{0,1\}$
and for all $A \in \Set{A}$,
then the following intersection is nonempty:
\[
   \nbd{g_0P_0}{M_1} \cap \bigcap_{A\in\Set{A}} \nbd{A}{M_1}.
\]
\end{prop}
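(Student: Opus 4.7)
The plan is to work in a proper $\delta$-hyperbolic space $\widetriangle{X}$ on which $(G,\mathbb{P})$ has a cusp uniform action with truncated space $\widetriangle{X}-\mathcal{B}$; an orbit map pushes the $\Set{S}$-hypothesis over to a bounded-distance hypothesis in $\widetriangle{X}$ at the cost of uniform quasi-isometric constants. Let $B_0,B_1\in\mathcal{B}$ be the horoballs centered at the parabolic fixed points of $g_0P_0g_0^{-1}$ and $g_1P_1g_1^{-1}$; because $g_0P_0\neq g_1P_1$ these horoballs are distinct. I will produce the point witnessing the intersection by selecting a well-chosen transition point on a single auxiliary geodesic.

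First, for each $A\in\Set{A}$ I pick $a_i^A\in g_iP_i$ and $p_i^A\in A$ with $\dist_{\Set{S}}(a_i^A,p_i^A)<M$ for $i=0,1$, and let $\gamma_A$ be a geodesic in $\widetriangle{X}$ from $a_0^A$ to $a_1^A$. Relative quasiconvexity (Definition~\ref{def:RelQC}) gives that a geodesic between $p_0^A$ and $p_1^A$ is contained in $\nbd{A}{\kappa}\cup\mathcal{B}$; combined with $\delta$-thinness of a geodesic quadrilateral and the uniform bound on $\dist(a_i^A,p_i^A)$, this upgrades to $\gamma_A\subseteq\nbd{A}{\kappa'}\cup\mathcal{B}$ for a uniform $\kappa'=\kappa'(\kappa,M,\delta)$.

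Next, I fix a small thickening $B_0^+$ of $B_0$ and let $t_A$ denote the first exit point of $\gamma_A$ from $B_0^+$. Lemma~\ref{lem:GeodesicPenetratesHoroball} applied to the pair $B_0\subset B_0^+$ shows that past $t_A$ the geodesic $\gamma_A$ does not re-enter $B_0$, so $t_A$ sits on $\partial B_0^+$ at bounded distance from $g_0P_0$. Now fix a reference $A_0\in\Set{A}$ and compare the geodesics $\gamma_{A_0}$ and $\gamma_A$: their four endpoints lie in a uniform neighborhood of the two fixed horospheres $\partial B_0$ and $\partial B_1$, so the thin-quadrilateral property in $\widetriangle{X}$ forces $\gamma_A$ and $\gamma_{A_0}$ to fellow-travel along the portion of $\widetriangle{X}-\mathcal{B}$ that they traverse between $\partial B_0$ and $\partial B_1$. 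In particular $t_{A_0}$ lies within some uniform $L$ of every $\gamma_A$, and hence within $\kappa'+L$ of $A$ once we verify that $t_{A_0}$ is not buried inside any horoball. Choosing $M_1$ larger than $\kappa'+L$ and the orbit-map distortion constants then yields a common element of $\nbd{g_0P_0}{M_1}\cap\bigcap_{A\in\Set{A}}\nbd{A}{M_1}$, as required.

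The hard part will be handling the possibility that $t_{A_0}$ (or the portion of $\gamma_A$ near it) hides inside a \emph{third} horoball $B'\ne B_0,B_1$: then the inclusion $\gamma_A\subseteq\nbd{A}{\kappa'}\cup\mathcal{B}$ only gives closeness to $A\cup B'$, not to $A$. Ruling this out requires that $t_{A_0}$ be chosen on $\partial B_0^+$ and then invoking the uniform separation of distinct horoballs in a cusp uniform action---a point on $\partial B_0^+$ cannot simultaneously lie deep inside another horoball. This is the essential cusp-geometric step that allows $M_1$ to depend only on $M$ and $\kappa$, with the data of the cusp uniform action absorbed into the universal constants $\delta$, $\kappa'$, and $L$.
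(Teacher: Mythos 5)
You are not actually competing with a proof in this paper: Theorem~\ref{prop:NearTwoPeripherals} is quoted from the transition-point argument of \cite[Prop~7.8]{HruskaWise09}, which works with geodesics in the relative Cayley graph $\Cayley(G,\Set{S}\cup\Set{P})$, whereas you work in the cusped space $\widetriangle{X}$ with horoballs. Your overall strategy---that the exit point of a geodesic joining the two horoballs $B_0,B_1$ must be uniformly close both to $g_0P_0$ and to every $\kappa$--relatively quasiconvex $A$ that comes $M$--close to both cosets---is the correct cusped-space analogue of that transition point. However, as written the argument has two genuine gaps at its crucial steps.

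First, the fellow-traveling claim is invoked exactly where it fails. In the quadrilateral with vertices $a_0^{A_0},a_1^{A_0},a_1^A,a_0^A$, the sides $[a_0^{A_0},a_0^A]$ and $[a_1^{A_0},a_1^A]$ can be arbitrarily long; all one knows (via Lemma~\ref{lem:GeodesicPenetratesHoroball}) is that they lie in bounded neighborhoods of $B_0$ and $B_1$. Thinness therefore gives that $\gamma_A$ and $\gamma_{A_0}$ are uniformly close only \emph{outside} suitable neighborhoods of $B_0\cup B_1$, while your point $t_{A_0}$ sits on $\partial B_0^+$, i.e.\ precisely in the excluded region. To repair this you must take the exit point from $\nbd{B_0}{R}$ with $R$ large compared with $2\delta$ plus the constants bounding those two sides (or argue with points just beyond the exit, which are then forced to be $2\delta$--close to $\gamma_A$ itself), and you must treat the degenerate cases where $\gamma_{A_0}$ never enters $B_0^+$ or exits only near $B_1$. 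Second, the step you yourself flag as the hard part is not closed by the observation you offer. If the point $q\in\gamma_A$ nearest $t_{A_0}$ lies in a third horoball $B'$, disjointness of horoballs does show $q$ has bounded depth in $B'$, but bounded depth alone does not place $q$ near $A$: the inclusion $\gamma_A\subseteq\nbd{A}{\kappa'}\cup\mathcal{B}$ says nothing about shallow points of an excursion into $B'$. You need the additional excursion argument: by Lemma~\ref{lem:GeodesicPenetratesHoroball} applied inside $B'$, the portion of the excursion at bounded depth lies within bounded distance of its entry/exit points, and those points are $\kappa'$--close to $A$; only then does $t_{A_0}$ land in a uniform neighborhood of $A$. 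Finally, a smaller inaccuracy: the orbit map $G\to\widetriangle{X}$ is not a quasi-isometric embedding (peripheral subgroups are exponentially distorted in the cusped space); what your transfer of constants really uses is that the map is coarsely Lipschitz and that metric properness of the action converts bounded $\widetriangle{X}$--distances between orbit points back into bounded $\dist_{\Set{S}}$--distances, which is what the conclusion requires.
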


Let $G$ be a finitely generated group with Cayley graph $\Gamma$.
The \emph{frontier} of a subset $S \subseteq G$ is the set of vertices
of $S$ that are adjacent to a vertex of $G - S$ in $\Gamma$.
When $S$ is an $H$--invariant subset of a subgroup $H$,
we say the frontier of $S$ is \emph{$H$--finite}
if it consists of finitely many $H$--orbits.
While the frontier of $S$ obviously depends on $\Gamma$,
its $H$--finiteness does not.
Indeed let $\Gamma,\Gamma'$ be two Cayley graphs of $G$ with respect
to finite generating sets $A,A'$.
An edge in $\Gamma$ between $S$ and $G-S$
yields a finite path in $\Gamma'$ whose length is bounded above by
a constant $D$.
Some vertex on that path must be in the frontier of $S$ with respect
to $\Gamma'$.
Therefore every vertex $v$ in the frontier with respect to $\Gamma$
lies within a distance $D$ in $\Gamma'$ of a vertex $v'$ in the frontier
with respect to $\Gamma'$.

\begin{lem}
\label{lem:HFinite}
Let $(G,\mathbb{P})$ be relatively hyperbolic with a finite generating set
$\mathcal{S}$, and let $H$
be a subgroup that is relatively quasiconvex and finitely generated.
Let $U$ be an $H$--invariant subset
whose frontier $F$ is $H$--finite.
Then $U$ is relatively quasiconvex.
\end{lem}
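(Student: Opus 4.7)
The plan is to verify the relative quasiconvexity of $U$ via the first of the two equivalent definitions stated earlier in the section. First I would enlarge $\Set{S}$, if necessary, to include a finite generating set for each peripheral subgroup $P\in\mathbb{P}$; the $H$-finiteness of $F$ is insensitive to such an enlargement, and the enlargement makes every peripheral coset $gP$ a connected subgraph of $\Gamma=\Cayley(G,\Set{S})$. Writing the $H$-finite frontier as $F=\bigcup_{i=1}^{n} Hf_i$ and setting $c=\max_i \dist_\Set{S}(1,f_i)$, one immediately obtains $F\subseteq \nbd{H}{c}$ and $H\subseteq \nbd{F}{c}$ in $\Gamma$, so $H$ and $F\subseteq U$ lie at bounded Hausdorff distance in $\Gamma$.

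Given $u_1,u_2\in U$ and a geodesic $\gamma$ in $\Cayley(G,\Set{S}\cup\Set{P})$ from $u_1$ to $u_2$, I would decompose $\gamma$ into maximal subgeodesics $\gamma_i\colon a_i\to v_0\to\cdots\to v_k\to b_i$ with $a_i,b_i\in U$ and all interior vertices in $G-U$; vertices of $\gamma$ already in $U$ need no further argument. The key geometric observation is that each endpoint $a_i$ (and symmetrically $b_i$) already lies in the saturation of some $h_1\in H$. To see this, examine the edge $a_i\to v_0$. If it is an $\Set{S}$-edge (case $\alpha$), then $a_i\in F$ and $\dist_\Set{S}(a_i,H)\le c$. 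If it is a $\Set{P}$-edge (case $\beta$), then $a_i$ and $v_0$ share a peripheral coset $gP$; since $gP$ is connected in $\Gamma$ and contains vertices of both $U$ and $G-U$, any path in $gP$ from $a_i$ to $v_0$ crosses $F$, producing some $f\in gP\cap F$. Choosing $h_1\in H$ with $\dist_\Set{S}(f,h_1)\le c$, the entire coset $gP$ lies in $\Sat_c(\{h_1\})$, so in particular $a_i\in \Sat_c(\{h_1\})$.

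With $h_1,h_2\in H$ so obtained, let $\gamma'$ be a geodesic from $h_1$ to $h_2$ in $\Cayley(G,\Set{S}\cup\Set{P})$. Then the endpoints $a_i,b_i$ of $\gamma_i$ lie within $\Set{S}$-distance $c$ of $\Sat_c\bigl(\textup{Vert}(\gamma')\bigr)$, and Proposition~\ref{prop:SaturationRelQC} applied with $\tau=c$ produces a constant $\lambda=\lambda(c)$ such that each vertex of $\gamma_i$ lies within $\Set{S}$-distance $\lambda$ of $a_i$, $b_i$, or some vertex of $\gamma'$. The relative quasiconvexity of $H$ places each vertex of $\gamma'$ within $\Set{S}$-distance $\kappa_H$ of $H$, and hence within $\Set{S}$-distance $\kappa_H+c$ of $F\subseteq U$. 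Combined with $a_i,b_i\in U$, this yields a uniform bound $\kappa:=\lambda+\kappa_H+c$ on the $\Set{S}$-distance from any vertex of $\gamma$ to $U$, establishing $\kappa$-relative quasiconvexity of $U$.

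The main obstacle I anticipate is the careful bookkeeping in case $\beta$: this is precisely where the $H$-finiteness of $F$ interacts nontrivially with the peripheral structure. The hypothesis forces the peripheral coset traversed by any cone excursion of $\gamma$ to meet $F$, so that this coset must itself be close to $H$ and can be absorbed into the saturation required by Proposition~\ref{prop:SaturationRelQC}. Without this ingredient, a cone edge of $\gamma$ could route through a peripheral coset arbitrarily far from $H$ and from $U$, and neither the saturation hypothesis nor the final bound on $\dist_\Set{S}(v,U)$ would survive.
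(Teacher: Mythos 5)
Your proof is correct and follows essentially the same route as the paper: enlarge $\mathcal{S}$ to contain peripheral generating sets, use connectivity of peripheral cosets to locate frontier points where the geodesic exits $U$, and then apply Proposition~\ref{prop:SaturationRelQC} together with relative quasiconvexity to bound every vertex near $U$. The only cosmetic difference is that you run $\gamma'$ between nearby elements of $H$ and invoke relative quasiconvexity of $H$ directly, whereas the paper runs $\gamma'$ between the frontier points $f_0,f_1$ after first observing that $F$ is itself relatively quasiconvex (being $H$--finite, hence at bounded Hausdorff distance from $H$).
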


\begin{proof}
Finite neighborhoods in $G$ of relatively quasiconvex
subspaces are relatively quasiconvex.
Therefore the frontier $F$ of $U$ is relatively quasiconvex.
But any set with a relatively quasiconvex frontier
is itself relatively quasiconvex.
Indeed, it suffices to consider the case when
$\mathcal{S}$ is a finite generating set for $G$
that contains generating sets for each $P \in \mathbb{P}$.
Let $\mathcal{P} = \bigcup_{P\in \mathbb{P}} P-\{e\}$.
Let $\gamma$ be a geodesic in $\Cayley(G,\mathcal{S} \cup \mathcal{P})$
whose endpoints $u_0,u_1$
lie in $U$ but whose internal vertices are not in $U$.
If the first edge of $\gamma$ is labeled by an element of $\mathcal{S}$
then $u_0=f_0$ for some $f_0 \in F$.
If the first edge $e$ of $\gamma$ is labeled by
an element $p_0 \in P_0$ for some $P_0 \in \mathbb{P}$
then $e$ connects $u_0$ with an element $u_0p_0 \in G-U$ .
In this case, the coset $u_0P_0$ intersects $F$ since it intersects
both $U$ and $G-U$; i.e., $u_0\in f_0P_0$ for some $f_0\in F$.
Similarly either $u_1=f_1$ or $u_1\in f_1P_1$ for some $f_1 \in F$ and
$P_1 \in \P$.

Let $\gamma'$ be the geodesic in $\Cayley(G,\mathcal{S} \cup \mathcal{P})$
from $f_0$ to $f_1$.
By Proposition~\ref{prop:SaturationRelQC}
each vertex of $\gamma$ is within an $\mathcal{S}$--distance $\lambda$
of $\{u_0,u_1\} \cup \textup{Vert}(\gamma')$.
Since $F$ is relatively quasiconvex, each vertex of $\gamma'$
is within an $\mathcal{S}$--distance $\kappa$ of $F$, which is a
subset of $U$.
Thus each vertex of $\gamma$ is within an $\mathcal{S}$--distance
$\lambda+\kappa$ of $U$.
\end{proof}

We now state and prove the main result of this paper.

\begin{thm} [Relative cocompactness for relatively hyperbolic groups]
\label{thm:MainResult}
Let $(G,\mathbb{P})$ be relatively hyperbolic.
Let $H_1,\dots,H_r$ be a collection of relatively quasiconvex
finitely generated subgroups.
Suppose an $H_i$--wall $\{U_i,V_i\}$
is chosen for each $i$.
Let $(G,\V)$ be the associated wallspace.

Let $\P:=\bigset{gP}{g\in G, P \in \mathbb{P}}$.
Let $C=C(G)$ be the cube complex dual to $(G,\V)$.  Then we have the following
conclusions:
\begin{enumerate}
\item There exists a compact $K_*$ such that
$C = GK_* \cup \bigcup_{P} G C_*(P)$
and $gC_*(P) \cap g'C_*(P')$ is contained in $GK_*$ unless $gP=g'P'$.
\item There exists $r_0$ such that for any $r\ge r_0$
there exists a compact $K_r$ such that
$C = GK_r \cup \bigcup_{P} G C_r(P)$
and $gC_r(P) \cap g'C_r(P')$ is contained in $GK_r$ unless $gP=g'P'$.
\item There exists $r_0$ such that for any $r\ge r_0$
there exists a compact $K_{r*}$ such that
$C = GK_{r*} \cup \bigcup_{P} G C_{r*}(P)$
and $gC_{r*}(P) \cap g'C_{r*}(P')$ is contained in $GK_{r*}$ unless $gP=g'P'$.
\end{enumerate}
\end{thm}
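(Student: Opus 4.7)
The plan is to verify the hypotheses of Proposition~\ref{prop:Tautology} for each of the three hemiwallspaces $\U_{_\heartsuit}$ with $\heartsuit \in \{*, r, r*\}$. Specifically, I need to establish: (I) there exists $m$ such that every maximal cube of depth $\geq m$ is represented in some periphery $gP \in \P$; and (II) the family $\P = \{gP\}$ is isolated with respect to $\V$. By Proposition~\ref{prop:MaximalCorrespondence}, a maximal cube $c$ corresponds to a maximal collection $\{W_1,\dots,W_k\}$ of pairwise transverse walls together with the fixed orientations of its dependent walls. Writing each $W_i = g_i\{U_{j_i},V_{j_i}\}$, one associates the coset $g_iH_{j_i}$ to $W_i$, and Lemma~\ref{lem:TransverseToCloseSubgroups} supplies a uniform $D$ making all these cosets pairwise $D$-close.

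The main step is to apply Theorem~\ref{thm:RelPacking} with this constant $D$, yielding $R$ and $M$. If $\bigcap_i \nbd{g_iH_{j_i}}{M} \ne \emptyset$ with common point $y$, then a counting argument bounds $k$ uniformly (only finitely many cosets of each $H_j$ lie in an $M$-ball about $y$), and a hyperbolic-style cocompactness argument (analogous to the one Sageev uses in Lemma~\ref{lem:Sageev}, now applied in the truncated space) shows that the depth of $c$ from $c_y$ is bounded by some uniform $m_0$; such bounded-depth cubes are absorbed into the compact piece $K_{_\heartsuit}$. Otherwise Theorem~\ref{thm:RelPacking} produces a unique peripheral coset $bP$ with every $g_iH_{j_i}$ contained in $\nbd{bP}{R}$, with $\nbd{bP}{R}\cap\nbd{g_iH_{j_i}}{R}$ unbounded, and with $L_i := bPb^{-1}\cap g_iH_{j_i}g_i^{-1}$ infinite.

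I then show that such a deep $c$ is represented in $bP$. For each independent wall $W_i$, the index-$\leq 2$ subgroup $\ddot L_i\leq L_i$ preserves each halfspace of $W_i$; applied to a point of $bP\cap\nbd{g_iH_{j_i}}{R}$, and combined with the fact that each halfspace of an $H_{j_i}$-wall coarsely contains an $\ddot H_{j_i}$-cocompact slab of $g_iH_{j_i}$, this produces infinite-diameter subsets of both $g_iU_{j_i}\cap\nbd{bP}{r}$ and $g_iV_{j_i}\cap\nbd{bP}{r}$ once $r\geq r_0 := R$. Hence both halfspaces of each $W_i$ lie in $\U_{_\heartsuit}(bP)$ in all three variants. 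For a dependent wall $W$, non-transversality with some $W_i$ forces $\overleftarrow{c}(W)\supseteq\overleftarrow{c}(W_i)$ (the analysis from Section~\ref{subsec:Cubes}), so the fixed halfspace of $W$ inherits the required intersection with $bP$. This establishes (I).

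For isolation (II), suppose a deep maximal cube $c$ were represented in two distinct peripheral cosets $bP$ and $b'P'$. For $\heartsuit\in\{*,r*\}$, the infinite intersections with $b'P'$ of both halfspaces of each $W_i$ force $b'P'$ to pass infinitely often through a bounded neighborhood of the frontier of the $H_{j_i}$-wall, which is coarsely confined to $g_iH_{j_i}$; Lemma~\ref{lem:CoarseIntersection} then upgrades this to uniform closeness at some distance $s$. A parallel argument handles $\U_r$ by exploiting the analogous containment relations. Thus both $bP$ and $b'P'$ are uniformly close to every $g_iH_{j_i}$, and Proposition~\ref{prop:NearTwoPeripherals} produces a common point in $\nbd{bP}{M_1}\cap\bigcap_i\nbd{g_iH_{j_i}}{M_1}$, placing us back in Case A and contradicting the depth of $c$ being larger than $m_0$. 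The main technical obstacle is the bounded-depth claim in Case A, which requires unwinding how the combinatorial structure of $c$ around the common neighborhood point $y$ forces all its dependent walls to be ``compatible'' with $y$ up to a uniform error, together with choosing the constants $r_0$ and $m$ uniformly so that a single argument covers all three hemiwallspace variants simultaneously.
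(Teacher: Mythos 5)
Your strategy is the same as the paper's (transverse walls $\to$ pairwise $D$--close cosets via Lemma~\ref{lem:TransverseToCloseSubgroups} $\to$ the dichotomy of Theorem~\ref{thm:RelPacking} $\to$ the two hypotheses of Proposition~\ref{prop:Tautology}), but three steps do not close as written. First, you never treat cubes that are not contained in any maximal cube. Nothing in the hypotheses makes $C$ finite dimensional (the peripheral pieces $C_{_\heartsuit}(P)$ typically are not), so such cubes genuinely occur, and the covering statement needs \emph{every} deep cube to be represented, not just every deep maximal one. The fix is short: extend the independent walls of such a cube by Zorn's Lemma to an infinite maximal pairwise transverse family; infinitely many pairwise $D$--close cosets cannot all meet a single $M$--ball, so the intersection in Theorem~\ref{thm:RelPacking} is automatically empty and the theorem hands you the peripheral coset. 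Second, the ``bounded depth in Case A'' that you flag as the main obstacle needs no truncated-space cocompactness argument: after translating by $x^{-1}$ all associated cosets meet the $M$--ball at $1$, so only finitely many cosets, hence finitely many walls, hence finitely many maximal transverse families arise, and each determines a \emph{unique} maximal cube by Lemma~\ref{lem:RepresentingFiniteMaximal}; these cubes form finitely many $G$--orbits and are simply declared part of $K$.

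The more serious issue is isolation. Your argument addresses only \emph{maximal} cubes, but uniqueness of representation for a maximal cube does not pass to its faces: a face has strictly fewer halfspaces to test against $\U_{_\heartsuit}$ (each wall that is independent for the big cube but dependent for the face contributes one halfspace instead of two), so a face can be represented in \emph{more} peripheries than the cube containing it; and a deep $0$--cube has no independent walls $W_i$ at all, so your argument via the cosets $g_iH_{j_i}$ says nothing about it. Moreover your frontier/infinite-intersection mechanism uses $\diam(U\cap \nbd{bP}{r})=\infty$, which is the $\U_*$/$\U_{r*}$ condition; for $\U_r$ one only knows $U\cap\nbd{P}{r}\neq\emptyset$, and the promised ``parallel argument'' is running in the opposite, harder direction. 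The paper's route avoids all of this: apply Proposition~\ref{prop:NearTwoPeripherals} to the full collection of halfspaces of an \emph{arbitrary} cube $c$ (relatively quasiconvex by Lemma~\ref{lem:HFinite}), which needs only that each halfspace come within distance $r$ of both peripheral cosets — exactly the $\U_r$ condition — to produce a point within $M_1$ of every halfspace; translating that point to $1$, finitely many walls cross the $M_1$--ball and all remaining walls must be oriented toward $1$, so such cubes lie in finitely many orbits; finally $\U_*$ reduces to $\U_s$ by Lemma~\ref{lem:CoarseIntersection}. You should restructure the isolation step along these lines rather than through the independent-wall cosets.
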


Note that the three conclusions of the theorem
are nearly identical except for the method of
inducing hemiwallspaces from peripheral subgroups.
The key point enabling these conclusions is that the resulting
peripheral cubulations should be large enough to nearly cover all of $C$.  Shifting the method of inducing hemiwallspaces only really affects
the size of the compact subcomplex $K$ that is necessary for isolation
and covering the rest of $C$.
Recall our convention that $\P$ is a collection of \emph{infinite}
subgroups.

\begin{rem}
\label{rem:Interpretation}
A special case of relative cocompactness is
that $C(G)$ is ``relatively finite dimensional''.
In other words, if each $C_{_\heartsuit}(P)$ is finite dimensional
then so is $C(G)$.
\end{rem}

\begin{proof}[Proof of Theorem~\ref{thm:MainResult}]
Let $\Set{H} = (H_1,\dots,H_r)$,
let $\bar{\Set{H}} = (\bar{H}_1,\dots,\bar{H}_r)$,
and note that we are viewing this as an ordered list,
which could have duplicates.

Consider the map from walls to cosets defined by
$\{gU_i,gV_i\}_i \mapsto g \bar{H}_i$.
This is well-defined since $\{ g \bar{h}_i U_i, g \bar{h}_i V_i\}_i$
maps to $g \bar{h}_i \bar{H}_i = g \bar{H}_i$.
If we view these as ``indexed cosets'' then this map is actually a bijection.
We shall now forget the indexing, and view two indexed cosets as equal
if they are identical subsets of $G$.

The map from walls $gW_i$ to associated cosets $g\bar{H}_i$ is finite-to-one.
Indeed if $g\bar{H}_i = g'\bar{H}_j$, then
$\bar{H}_i=\bar{H}_j$.
The finiteness of $\bar{\Set{H}}$ gives an upper bound $r$ on the number of
walls mapping to a given coset.
We emphasize that commensurable subgroups $H_i$ and $H_j$
could lead to walls with the same underlying halfspace pairs,
depending upon our choices.
This is the source of the noninjectivity of the map from walls
to cosets.

By Lemma~\ref{lem:TransverseToCloseSubgroups}
there is a constant $D$
such that the following holds:
If $W_i=\{U_i,V_i\}$ and $W_j = \{U_j,V_j\}$ and
the walls $gW_i$ and $g'W_j$ are transverse, then
$\dist(g\bar{H}_i,g'\bar{H}_j) < D$.
Henceforth we use the notation $g \bar{H}$ to indicate a coset,
and drop the subscripts which no longer play an indexing role.

A subtle aspect of the argument is that $C$ could be infinite
dimensional.
Let $c$ be a cube of $C$ and let $d$ be a $0$--cube of $c$.
By Zorn's Lemma the collection of hyperplanes dual to $1$--cubes of $c$
extends to a maximal collection of pairwise crossing
hyperplanes dual to $1$--cubes at $d$.
This maximal collection is finite when $c$ lies in a maximal cube
of $C$.

In the first part of the argument we examine the maximal
cubes of $C$.  Some of the maximal cubes will be contained in $GK$;
others will be contained in some $gC_{_\heartsuit}(P)$.
Each cube that is not contained in a maximal cube will be shown
to lie in some $gC_{_\heartsuit}(P)$ as well.

By Proposition~\ref{prop:MaximalCorrespondence},
maximal cubes $c$ in $C$ are in one-to-one correspondence with
finite cardinality
maximal collections of pairwise transverse walls.
We have already indicated a finite-to-one map from collections of
walls to collections of cosets $g\bar{H}$ with $g\in G$ and
$\bar{H} \in \bar{\Set{H}}$.
Moreover a collection of pairwise transverse walls leads to a
collection $\Set{A}$ of pairwise $D$--close cosets.
The dimension of $c$ is equal to the cardinality
of $\Set{A}$.

Each $\bar{H}$ is relatively quasiconvex since its
finite index subgroup $H$ is relatively quasiconvex.
Let $R$ and $M$ be the constants of Theorem~\ref{thm:RelPacking}.
Suppose there exists
\[
   x \in \bigcap_{a\bar{H} \in \Set{A}} \nbd{a\bar{H}}{M}.
\]
Translating $c$ by the element $x^{-1}$ gives a cube $x^{-1}c$
whose corresponding cosets $x^{-1}a\bar{H}$ all intersect the ball of radius
$M$ centered at the identity.
There are only finitely many such cosets counted with multiplicities,
since each coset occurs with multiplicity at most $r$
and the ball is a finite set.

On the other hand, if $\bigcap_{a\bar{H} \in \Set{A}} \nbd{a\bar{H}}{M}$ is empty,
then Theorem~\ref{thm:RelPacking} gives a (unique) peripheral
coset $bP$ such that for all $a\bar{H} \in \Set{A}$ the intersection
$\nbd{bP}{R} \cap \nbd{a\bar{H}}{R}$ is nonempty.
Furthermore, $\nbd{bP}{R} \cap \nbd{a\bar{H}}{R}$ is unbounded for all
$a\bar{H} \in \Set{A}$.

Therefore,
if $aW$ is a wall associated with $a\bar{H}$, then
both halfspaces of $aW$
have infinite intersection with the $S$--neighborhood of $bP$,
for some $S=S(R,\bar{H},W)$.
Indeed $\nbd{a\bar{H}}{R}$ is contained in an
$(S-R)$--neighborhood of each halfspace
of $aW$ for some $S$.
By maximality of the collection of pairwise transverse walls,
each dependent wall $W'$ of $c$ is not transverse with some independent
wall $aW$ of $c$.  The halfspace corresponding to the
$c$--orientation of $W'$ contains one of the halfspaces of $aW$,
which has infinite intersection with $\neb_S(bP)$.
In particular, $c$ is in $C_{r*}(bP)$ for each $r \ge S$.
Let $r_0=\max\{S(R,\bar{H},W)\}$ where the maximum is taken over the finitely
many choices for $\bar{H}$ and $W$.

Suppose now that $c$ does not lie in a maximal cube of $C$.
As mentioned above, the independent walls of $c$ extend to an
infinite cardinality maximal
collection of pairwise transverse walls dual to $1$--cubes at a $0$--cube
$d$ of $c$.
As above, this family of walls leads to a
collection $\Set{A}$ of pairwise $D$--close cosets.
Any ball of radius $M$ can intersect only finitely many such cosets.
Therefore Theorem~\ref{thm:RelPacking} gives a (unique) peripheral
coset $bP$ such that for all $a\bar{H} \in \Set{A}$ the intersection
$\nbd{bP}{R} \cap \nbd{a\bar{H}}{R}$ is nonempty.
Again we conclude that
$\nbd{bP}{R} \cap \nbd{a\bar{H}}{R}$ is unbounded for all
$a\bar{H} \in \Set{A}$.
We conclude that $c$ is in $C_{r*}(bP)$ for each $r \ge S$
as before.

Let us first verify the almost uniqueness of peripheral representations of cubes in the $C_r$ case (as well as the $C_{r*}$ case).
This almost uniqueness holds for any $r>0$.
Indeed suppose $c$ is a cube (not necessarily maximal) represented in
two peripheries.
Thinking of $c$ as a collection of halfspaces, we see that
each of its halfspaces intersects
the $r$--neighborhoods of these two peripheries.
Observe that $\ddot H$ is finitely generated and relatively quasiconvex
since $H$ is.
The halfspaces of $c$ are $\kappa$--relatively quasiconvex for some $\kappa$
by Lemma~\ref{lem:HFinite}
applied to $(U,\ddot H)$ and $(V,\ddot H)$.
Thus by Proposition~\ref{prop:NearTwoPeripherals}
there exists $g \in G$ such that each halfspace
comes within a distance $M_1=M_1(r,\kappa)$ of $g$.
As before, after translating $c$ by $g^{-1}$ we can assume that every halfspace in $c$ intersects the ball of radius $M_1$ centered at the identity.
There are only finitely many possibilities for such a cube $c$,
since the finitely many walls intersecting this ball
can be oriented in only finitely many ways,
and the remaining walls must all be oriented towards the identity.
Thus the original cube $c$ lies in one of finitely many orbits.

To verify almost uniqueness in the $C_*$ case we observe that by
Lemma~\ref{lem:CoarseIntersection}
there exists $s$ such that
whenever $g\bar{H}$, $g'P$ have infinite coarse intersection
then the distance between $g\bar{H}$ and $g'P$ is at most $s$.
It follows that each $C_*(gP)$ is contained in $C_s(gP)$
and therefore almost uniqueness follows from the above argument.
\end{proof}

\subsection{The structure of the dual when $G$ acts
properly and cocompactly on $X$}
\label{subsec:ProperCocompact}

The previous subsection focused on the ``pure case'' where
$G$ is itself a wallspace.
The main result of this subsection is
Theorem~\ref{thm:RelCocompactWallspace}, which
applies Theorem~\ref{thm:MainResult}
to the more general setting of $G$ acting on a general
wallspace $(X,\W)$.
Theorem~\ref{thm:RelCocompactWallspace}
provides additional information about the structure of $C(X)$
by explaining that each $C_{_\heartsuit}(Y)$ is attached to $GK$
along $PK$.

The stabilizer $H$ of a wall $W = \{U,V\}$
\emph{acts cocompactly on $W$} if $H$ acts cocompactly
on the intersection of closed neighborhoods
$\bar\neb_r(U) \cap \bar\neb_r(V)$ for all $r$.

\begin{thm}
\label{thm:RelCocompactWallspace}
Let $(X,\W)$ be a wallspace such that $X$ is also a length space.
Suppose $G$ acts properly and cocompactly on $X$ preserving both its metric and wallspace structures,
and the action on $\W$ has only finitely many $G$--orbits of walls.
Suppose $G$ is hyperbolic relative to $\mathbb{P}$,
and each $H=\Stab(W)$ acts cocompactly on $W$ and $H$ is relatively quasiconvex.
Choose $\heartsuit \in \{r,*,r*\}$.
For each $P \in \mathbb{P}$ let $Y=Y(P) \subset X$ be a
nonempty $P$--invariant $P$--cocompact subspace.

Then there exists a compact $K$ with $GK$ connected
such that $C(X)$ has the following
structure:
\begin{enumerate}
\item $C(X) = GK \cup \bigcup_{P} G C_{_\heartsuit}(Y)$
\item $gC_{_\heartsuit}(Y) \cap g'C_{_\heartsuit}(Y')$
is contained in $GK$ unless $gP=g'P'$.
\item $C_r(Y)\cap GK = PK'$ for some compact $K'$.
In particular, $C_r(Y) \cap GK$ lies in a finite neighborhood
of $PK$ within $GK$ with respect to the path metric on
the $1$--skeleton $GK^1$.
The same conclusion holds for $C_*(Y) \cap GK$.
\end{enumerate}
\end{thm}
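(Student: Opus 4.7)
The overall strategy is to reduce conclusions~(1) and~(2) to Theorem~\ref{thm:MainResult} applied to an induced wallspace on $G$, and then to establish the new statement~(3) by a combinatorial-geometric argument using convexity of $C_r(Y)$ in $C(X)$.

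For the reduction, I would fix a basepoint $x_0\in X$ and pull back each wall of $X$ via the orbit map $g\mapsto gx_0$: for each $G$--orbit representative $W=\{U,V\}$ of $\W$ with stabilizer $H$, set $U'=\{g\in G:gx_0\in U\}$ and $V'=\{g\in G:gx_0\in V\}$. One verifies that $\{U',V'\}$ is an $H$--wall in the sense of Definition~\ref{def:HWall}: the covering condition is immediate, $H$--invariance is direct, while the cocompactness of $H$ on $W$ combined with properness of the $G$--action on $X$ supplies $H$--finiteness of $U'\cap V'$ and $\ddot H$--finiteness of the Cayley graph frontier of $U'$. Taking $G$--translates produces a wallspace $(G,\V)$ to which Theorem~\ref{thm:MainResult} applies, yielding $C(G)=GK_0\cup\bigcup_P GC_\heartsuit(P)$ with the asserted almost-uniqueness.

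To transfer back to $C(X)$, I would use that $C(G)$ and $C(X)$ are canonically identified on their canonical components: the walls of $G$ and $X$ are in $G$--equivariant bijection with matching halfspaces, so the $0$--cubes, being orientations that satisfy the finiteness axioms of Construction~\ref{const:DualCubeComplex}, correspond on the two dual cube complexes. Since $Y=Y(P)$ lies at bounded Hausdorff distance from the orbit $Px_0$, the hemiwallspace $C_\heartsuit(Y)$ on the $X$--side matches $C_\heartsuit(P)$ on the $G$--side after adjusting the parameter $r$ by that Hausdorff distance constant. Enlarging $K_0$ to a compact $K$ by adjoining finitely many $G$--orbits of edges to ensure $GK$ is connected establishes conclusions~(1) and~(2).

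Conclusion~(3) is the main novel content. Fix $r$ large enough that $C_*(Y)\subseteq C_r(Y)$; this inclusion holds by Lemma~\ref{lem:CoarseIntersection}, since any halfspace with infinite coarse intersection with the peripheral subspace $Y$ lies within a uniformly bounded distance of $Y$. Let $c\in C_r(Y)\cap GK$. Since $c\in GK$ there is a canonical cube $c_x$ with cubical distance $\dist_C(c,c_x)\le m$, where $m$ bounds the depth of cubes in $K$. Because $C_r(Y)$ is a convex subcomplex of $C(X)$ by Lemma~\ref{lem:ConvexSubcomplex}, the distance from $c_x$ to $C_r(Y)$ is at most $m$, and this distance equals the number of walls $W$ strictly separating $x$ from $\neb_r(Y)$ (each such wall is oriented by $c_x$ towards $x$ whereas $C_r(Y)$ forces the opposite orientation). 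The principal obstacle is the geometric claim that if $\dist_X(x,Y)>D$ for a sufficiently large constant $D=D(r,m)$, then strictly more than $m$ walls separate $x$ from $\neb_r(Y)$; I would prove this by iterating Lemma~\ref{lem:compact-compact} along a geodesic from $x$ to $Y$ to produce a chain of walls separating successive balls, and then use the relative quasiconvexity and $H$--cocompactness of walls to arrange that each such wall in fact separates $x$ from $\neb_r(Y)$ globally rather than only from its neighboring ball. Once $x\in\neb_D(Y)$ is established, $P$--cocompactness of $Y$ together with properness of the $G$--action gives that $Gx_0\cap\neb_D(Y)$ consists of finitely many $P$--orbits, so the depth--$m$ cubical neighborhood of their canonical cubes is $P$--cocompact, giving $C_r(Y)\cap GK=PK'$ for some compact $K'$ and in particular lying in a bounded cubical neighborhood of $PK$ within $GK^1$.
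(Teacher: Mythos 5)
Your reduction of (1) and (2) to Theorem~\ref{thm:MainResult} follows the paper's strategy, but your pullback of walls is too naive. Setting $U'=\{g: gx_0\in U\}$ preserves the covering condition, but it does \emph{not} preserve intersection of halfspaces: $U\cap U''$ may be nonempty in $X$ while containing no orbit point, so the pulled-back halfspaces can be disjoint. Then an orientation satisfying Condition~(1) of Construction~\ref{const:DualCubeComplex} in $C(X)$ need not satisfy it in $C(G)$, and your claimed ``canonical identification'' of the two dual complexes breaks down. This is exactly why the paper pulls back the $D$--thickened halfspaces $\neb_D(U_i)$, with $D$ chosen so that $X=\neb_D(Gx)$: then any point of $U\cap U''$ is within $D$ of an orbit point, intersecting halfspaces map to intersecting halfspaces, and one obtains a $G$--equivariant isometric embedding $C(X)\to C(G)$ (not an isomorphism), which is all that is needed to restrict the decomposition of $C(G)$ back to $C(X)$ by intersection.

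The more serious gap is in your argument for conclusion~(3). You invoke Lemma~\ref{lem:compact-compact}, but that lemma assumes $G$ acts properly on the dual cube complex $C(X)$, which is \emph{not} a hypothesis of Theorem~\ref{thm:RelCocompactWallspace}. More fundamentally, the implication you are after --- that $\dist_X(x,Y)>D$ forces more than $m$ walls to separate $x$ from $\neb_r(Y)$ --- is a linear-separation-type statement that simply does not follow from the hypotheses: the wall system $\W$ may be sparse, so that no wall at all separates $x$ from $\neb_r(Y)$ even though $x$ is arbitrarily far from $Y$ in $X$. In that situation the canonical cube of $x$ does lie in $C_r(Y)\cap GK$, and your strategy of deducing $x\in\neb_D(Y)$ collapses. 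The paper's Lemma~\ref{lem:Footprint} avoids this entirely: it never bounds $\dist_X(x,Y)$. Instead it uses compactness of $K$ to find a point $z$ within $B$ of every halfspace of the cube, observes that every halfspace also meets $\neb_r(Y)$, and then applies Lemma~\ref{lem:PointNearY} --- which rests on Proposition~\ref{prop:NearTwoPeripherals} (relative hyperbolicity) together with relative quasiconvexity of the halfspaces from Lemma~\ref{lem:HFinite} --- to produce a single point $y\in Y$ uniformly close to \emph{all} the halfspaces. The cube then lies in $C_A(y)\subseteq pC_{L+A}(y_0)$ for some $p\in P$, giving $C_r(Y)\cap GK\subseteq PK'$. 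You would need to replace your separating-walls argument with this relatively hyperbolic thin-triangle input for (3) to go through.
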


\begin{proof}
Our strategy will be to use
$(X,\W)$ to produce a wallspace $(G,\bar\W)$
such that there is a one-to-one correspondence between
$\W$ and $\bar\W$, and transverse walls of $\W$
map to transverse walls of $\bar\W$.

Let $\bigl\{ \{U_i,V_i\} \bigr\}$
be a set of representatives of the finitely many $G$--orbits of walls
in $\W$. Let $H_i$ be the stabilizer of $\{U_i,V_i\}$.

Let $x \in X$, and choose $D$ such that $X=\neb_D(Gx)$.
Let $\phi\colon G \to X$ be the map $g \mapsto gx$.
For each wall $W_i=\{U_i,V_i\}$ in $X$ we obtain an
$H_i$--wall
$\bar{W}_i
=\bigl\{ \bar U_i, \bar V_i \bigr\}
=\bigl\{\phi^{-1} \bigl(\neb_D(U_i)\bigr),
\phi^{-1} \bigl( \neb_D(V_i) \bigr) \bigr\}$ in $G$.
As usual
this extend equivariantly to a collection $\bar\W$ of walls in $G$.

Note that $\bar{W}$ has no walls that are genuine
partitions, with the possible exception of vacuous
walls $\{\emptyset, G\}$ arising from $\{\emptyset,X\}$.
A reader uncomfortable with duplicate walls can avoid them by
varying the ``thickening'' parameter $D$ to $D_i$
dependent on $H_i$, so that distinct $H_i$--walls
will be distinguished by varying amounts of overlaps
(i.e., betwixtness).

Transverse walls in $\W_X$ map
to transverse walls in $\W_G$.
More specifically, any pair of intersecting halfspaces of $X$
maps to a pair of intersecting halfspaces of $G$.
Indeed, if $p\in U \cap U'$ in $X$,
then there exists $g$ such that $\dist(gx,p)<D$,
so $gx \in \neb_D(U) \cap \neb_D(U')$.
Thus $g \in \bar U \cap \bar U'$.

Observe that there is a $G$--equivariant
cubical map from $C(X)$ to $C(G)$.
Indeed since intersecting halfspaces map to intersecting
halfspaces, a $0$--cube in $C(X)$ naturally maps to a
$0$--cube in $C(G)$.
Since transverse walls map to transverse walls,
this embedding extends naturally to the higher dimensional cubes.

The embedding $C(X) \to C(G)$ is an isometric embedding.
Indeed, the distance between two $0$--cubes of $C(X)$
equals the number of walls that they orient differently.
These map to $0$--cubes of $C(G)$ that orient differently
the corresponding walls (and no others).

Since $G$ acts properly, cocompactly on $X$, it is generated by a
finite set $A$.  We let $\Gamma$ denote the Cayley graph of $G$
with respect to $A$ and use the word metric on $G$.
Furthermore, there are constants $\alpha,\beta$ so that
for all $g,g' \in G$:
\[
   \frac{1}{\alpha}\,\dist_G(g,g')-\beta
   \le  \dist_X(gx,g'x) \le \alpha\, \dist_G(g,g') + \beta.
\]

For each $P \in \mathbb{P}$ let $Y=Y(P) \subset X$ be a periphery
stabilized by $P$.
For each choice of $\heartsuit \in \{r,*,r*\}$
we will make a
corresponding choice $\bar\heartsuit \in \{\bar{r},*,\bar{r}*\}$.
Each hemiwallspace $\mathcal{U}_{_{\heartsuit}}$ maps to
a hemiwallspace $\mathcal{U}_{_{\bar\heartsuit}}$.
There are consequently embeddings
$C_{_{\heartsuit}}(Y) \subseteq C_{_{\bar\heartsuit}}(P)$.

As $G$ acts cocompactly on $C(G)$
relative to the $C_{_{\bar\heartsuit}}(P)$
by Theorem~\ref{thm:MainResult},
$G$ also acts cocompactly on the subspace $C(X)$
relative to the $C_{_{\heartsuit}}(Y)$.
Indeed, specifically if
$C(G) = G\bar{K} \cup \bigcup_P G C_{_{\bar\heartsuit}}(P)$
with distinct $g C_{_{\bar\heartsuit}}(P)$'s
intersecting in $G\bar{K}$,
then $C(X) = GK \cup \bigcup_Y G C_{_{\heartsuit}}(Y)$,
where $K = C(X) \cap \bar{K}$.
Notice that the intersection of distinct $g C_{_{\heartsuit}}(Y)$'s
lies in the intersection of
the corresponding distinct $g C_{_{\bar\heartsuit}}(P)$'s,
which therefore lies in $C(X) \cap G\bar{K} = GK$.

To ensure that the peripheral hemiwallspaces of $X$ map to
peripheral hemiwallspaces of $G$,
we choose $D$ sufficiently large
that $Y=Y(P) \subset \neb_D(P x)$ for each $P \in \mathbb{P}$.
For each $r<\infty$ there exists $\bar{r}=\alpha r + \beta$
so that $\mathcal{U}_r(Y)$ maps to $\mathcal{U}_{\bar{r}}(P)$,
since if a halfspace $U$ intersects $\neb_r(Y)$
then $\neb_D(U)$ necessarily intersects $\neb_r(P x)$.
Hence $\phi^{-1}\bigl( \neb_D(U) \bigr)$ intersects
$\neb_{\alpha r+\beta}(P)$,
so $\bar{U} \cap P \ne \emptyset$.
The cases $\heartsuit =*$ and $\heartsuit=r*$ are similar.
We leave the details for the reader.

The third claim is proved separately in Lemma~\ref{lem:Footprint}.
\end{proof}

The following two lemmas are within the framework and hypotheses
of Theorem~\ref{thm:RelCocompactWallspace}.

\begin{lem}[Footprint of $C(Y)$ is $PK$]
\label{lem:Footprint}
Suppose $G$ acts cocompactly on the wallspace $X$.
Then $C_r(Y)\cap GK = PK'$ for some compact $K'$.
In particular, if $GK$ is connected,
then $C_r(Y) \cap GK$ lies in a finite neighborhood
of $PK$ within $GK$ with respect to the path metric on
the $1$--skeleton $GK^1$.
The same conclusion holds for $C_*(Y) \cap GK$.
\end{lem}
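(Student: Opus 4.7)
The plan is to show that $Z := C_r(Y) \cap GK$ is $P$-invariant and $P$-cocompact; a compact fundamental domain $K'$ for the $P$-action on $Z$ then satisfies $Z = PK'$. The finite-neighborhood conclusion follows automatically: since $K$ and $K'$ are compact subcomplexes of the connected complex $GK$, there is a uniform $D$ bounding the $GK^1$-path-distance from points of $K'$ to $K$, and by $P$-equivariance $PK' \subseteq \nbd{PK}{D}$ in the path metric on $GK^1$. The $P$-invariance of $Z$ is immediate: $P$ preserves $Y$, hence $\mathcal{U}_r(Y)$ and $C_r(Y)$, and $P \subseteq G$ preserves $GK$.

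For $P$-cocompactness, let $\sigma_1,\dots,\sigma_n$ be representatives of the cells of $K$. Every cell of $Z$ has the form $g\sigma_i$ for some $g \in G$, so for each $i$ we set
\[
   T_i \ = \ \bigset{g \in G}{g\sigma_i \in C_r(Y)}.
\]
Because $C_r(Y)$ is $P$-invariant, $T_i$ is closed under left multiplication by $P$ and hence is a union of right cosets $Pg$. The condition $g\sigma_i \in C_r(Y)$ is equivalent to $\sigma_i \in C_r(g^{-1}Y)$, and since $P$ is the stabilizer of $Y$, distinct right cosets $Pg$ produce distinct translates $g^{-1}Y$. Thus $T_i / P$ is in bijection with the set of $G$-translates $Y'$ of $Y$ satisfying $\sigma_i \in C_r(Y')$, and it suffices to show this set is finite for each $i$.

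To bound this set, I would transfer the problem to the pure case in $C(G)$ via the embedding $C(X) \hookrightarrow C(G)$ constructed in the proof of Theorem~\ref{thm:RelCocompactWallspace}. That embedding sends $C_r(Y)$ into $C_{\bar r}(P)$, sends $K$ into a compact subcomplex $\bar K_0$ of $C(G)$, and identifies $C_r(g^{-1}Y)$ with $C_{\bar r}(g^{-1}P)$. The task reduces to showing that the image $\bar \sigma_i$ lies in $C_{\bar r}(bP)$ for only finitely many peripheral cosets $bP$. The halfspaces of $\bar\sigma_i$ are relatively quasiconvex by Lemma~\ref{lem:HFinite}. For each wall dual to a $1$-cube of $\bar\sigma_i$, both halfspaces are chosen by some $0$-cube of $\bar\sigma_i$, so both must meet $\nbd{bP}{\bar r}$, placing $bP$ on both sides of the wall within a bounded distance. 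Since only finitely many walls are dual to $1$-cubes incident to cells of the compact subcomplex $\bar K_0$, the associated relatively quasiconvex cosets $g_W\bar H_W$ range over a fixed finite collection. Combining the resulting two-sided constraints with the bounded packing property for the peripheral subgroup $P$ (Theorem~\ref{thm:RelHypBoundedPacking}) yields the desired finite bound. An isolated $0$-cube of $\bar K_0$ having no incident $1$-cube within $\bar K_0$ is handled by connecting it through $C(G)^1$ to a canonical $0$-cube at a chosen basepoint, and applying the same bounded-packing argument to the finitely many walls crossed by this path. The case of $C_*(Y)$ proceeds identically with $\mathcal{U}_r$ replaced by $\mathcal{U}_*$.

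The main obstacle is the bounded-packing step in the preceding paragraph. The subtlety is that a halfspace of $\bar\sigma_i$ is an infinite subset of $G$, so requiring merely that it meet $\nbd{bP}{\bar r}$ does not on its own confine $bP$. The essential device is to exploit the fact that for walls dual to $1$-cubes of $\bar\sigma_i$ \emph{both} halfspaces must meet $\nbd{bP}{\bar r}$; this two-sided constraint, together with relative quasiconvexity and bounded packing, localizes $bP$ near the finitely many relatively quasiconvex cosets attached to $\bar K_0$. This parallels the strategy in the proof of Theorem~\ref{thm:MainResult}, where pairwise transversality of walls in a maximal cube was used via Theorem~\ref{thm:RelPacking} to localize the associated peripheral coset.
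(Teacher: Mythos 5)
Your opening reductions are mostly sound: the footprint is $P$--invariant, and bounding, for each cell $\sigma_i$ of $K$, the peripheral translates $Y'$ with $\sigma_i \in C_r(Y')$ would indeed suffice (with the caveat that a cube of $K$ may have infinite stabilizer, so you should count $P$--orbits of cubes, i.e.\ double cosets $P\backslash T_i/\Stab(\sigma_i)$, rather than cosets $P\backslash T_i$; the latter can be infinite in degenerate situations, for instance when $\W$ is empty, even though the lemma still holds). The genuine gap is in the finiteness step. The only constraints you extract are that, for each of the finitely many walls $W$ dual to $1$--cubes meeting $\bar K_0$, both halfspaces of $W$ meet $\nbd{bP}{\bar r}$; this does place $bP$ within uniform distance of the coset $g_W\bar H_W$, but it does not bound the number of such $bP$. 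A single relatively quasiconvex coset can lie within bounded distance of infinitely many peripheral cosets (e.g.\ a relatively quasiconvex subgroup whose limit set contains infinitely many parabolic points meets infinitely many distinct cosets $bP$), and these candidate cosets are not pairwise close to one another---they are spread out along the unbounded set $g_W\bar H_W$---so Theorem~\ref{thm:RelHypBoundedPacking}, which only bounds collections of \emph{pairwise} $D$--close cosets, cannot be invoked to finish. The objection is even sharper for your isolated $0$--cube case, where there are no independent walls at all. The constraint that actually excludes far-away peripheral cosets comes from the \emph{dependent} walls (every halfspace chosen by the cube must meet $\nbd{bP}{\bar r}$), and your argument never uses them.

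This is precisely the ingredient the paper's proof supplies. Since the cube is a translate of a cube of the compact $K$, all of its halfspaces pass within a uniform distance $B$ of a common point $z$; combining this with the fact that they all meet $\neb_r(Y)$, Lemma~\ref{lem:PointNearY} (an application of Proposition~\ref{prop:NearTwoPeripherals}, treating $\{z\}$ as a coset of a trivial peripheral subgroup, with relative quasiconvexity of halfspaces from Lemma~\ref{lem:HFinite}) produces a single point $y\in Y$ within uniform distance $A$ of \emph{every} halfspace of the cube, dependent walls included. The cube then lies in $C_A(y)$, which by $P$--cocompactness of $Y$ and local finiteness of the walls crossing a fixed ball is contained in a $P$--translate of the fixed compact complex $K'=C_{L+A}(y_0)$, giving $C_r(Y)\cap GK\subseteq PK'$; the $C_*$ case follows since $C_*(Y)\subseteq C_s(Y)$ by Lemma~\ref{lem:CoarseIntersection}. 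Without some ingredient of this kind---a common point of $Y$ uniformly near all halfspaces of the cube, not merely proximity of $bP$ to the finitely many wall cosets attached to $K$---your finiteness claim is unproved, and the proposed combination of two-sided constraints with bounded packing does not close it. (Your deduction of the ``finite neighborhood of $PK$'' statement from compactness of $K'$ is fine.)
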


\begin{proof}
Recall that a $d$--cube $u$ corresponds to a hemiwallspace
with $d$ independent walls (that are pairwise transverse).
Since $K$ contains finitely many cubes,
there exists $B>0$ such that for each cube $u$ in $GK$
there is a point $z \in X$ such that the $B$--neighborhood
of each halfspace of $u$ contains $z$.
If $u$ also lies in $C_r(Y)$ then all halfspaces of $u$  intersect
$\neb_r(Y)$.

By Lemma~\ref{lem:PointNearY}
there exists a uniform $A$ with the following property:
for any collection of halfspaces that intersect $\neb_r(Y)$ and
also intersect $\neb_B(z)$,
there exists a point $y \in Y$ such that they all intersect
$\neb_A(y)$.
Consequently, each cube $u$
in $C_r(Y) \cap GK$
lies in some cube complex
$C_A(y) = C\bigl( \mathcal{U}_0\bigl( \neb_{A}(y) \bigr)
\bigr)$.

Choose $L$ such that for some $y_0\in Y$
the $P$--translates of the ball $\neb_L(y_0)$
cover $Y$.
Thus for each $y \in Y$ there exists $p \in P$ so that
$\neb_A(y) \subset p\neb_{L+A}(y_0)$.
Moreover, we can assume that $L$ is chosen large enough
so that $\neb_{L+A}(y_0)$ intersects every halfspace of
each cube of $K$.
Observe that $K \subseteq K' = C_{L+A}(y_0)$.

In summary, we have shown that $u$ lies in
the subcomplex $C_{A}(y)$,
which is contained in a $P$--translate of
$K' = C_{L+A}(y_0)$.
Consequently $C_r(Y) \cap GK$ lies in $PK'$,
which contains only finitely many $P$--orbits of cubes
since $K'$ is finite.

The result also holds for $C_*(Y)$ since $C_*(Y)
\subseteq C_r(Y)$ when $r$ equals the constant $s$ of
Lemma~\ref{lem:CoarseIntersection}.
\end{proof}

\begin{lem}[Nearby point]
\label{lem:PointNearY}
Let $\{V_i\}$ denote a collection of halfspaces of $X$
whose stabilizers are uniformly relatively quasiconvex,
and let $Y$ denote a peripheral subspace.
There exists $A=A(\kappa, \lambda, \mu, X)$ such that:
if each $V_i\cap \neb_\mu(Y) \neq \emptyset$ and each $V_i\cap \neb_\lambda(x)\neq \emptyset$ for some $x\in X$,
then there exists $y\in Y$ such that each $V_i\cap \neb_A(y)\neq \emptyset$.
\end{lem}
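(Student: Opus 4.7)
The plan is to use the relative quasiconvexity of the halfspaces (which follows from Lemma~\ref{lem:HFinite} applied to the $\ddot{H}$--cocompact frontier of each $V_i$) together with hyperbolic geometry in a cusped model space $\widetriangle{X}$ to locate a single point $y \in Y$ uniformly close to each $V_i$. I would split the argument into two cases depending on whether $\dist(x, Y)$ is small or large.

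\textbf{Easy case.} If $\dist(x, Y) \le D$ for a constant $D$ to be fixed below, simply let $y \in Y$ be a closest point to $x$. Each $V_i$ meets $\neb_\lambda(x) \subseteq \neb_{\lambda + D}(y)$, so $A = \lambda + D$ works.

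\textbf{Main case.} Assume $\dist(x, Y) \gg 0$. Pass to a cusp-uniform action of $(G, \mathbb{P})$ on a proper $\delta$--hyperbolic space $\widetriangle{X}$, and let $B \subset B'$ be nested horoballs centered at the parabolic fixed point $\xi$ associated with $Y$, chosen so that $Y \subset \bar B$ and $\dist(x, B')$ is still large. For each $i$ choose $v_i \in V_i \cap \neb_\lambda(x)$ and $w_i \in V_i \cap \neb_\mu(Y)$, and let $\gamma_i$ be a geodesic in $\widetriangle{X}$ from $v_i$ to $w_i$. Applying Lemma~\ref{lem:GeodesicPenetratesHoroball} to $B \subset B'$, the geodesic $\gamma_i$ first enters $B'$ at a point $z_i$ that lies outside $B$ and hence within uniformly bounded distance of $\boundary B'$. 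Because all $v_i$ lie in $\neb_\lambda(x)$ and $w_i$ lie in the horoball $B'$, the thin-triangle property for ideal triangles with vertices in $\widetriangle{X} \cup \boundary \widetriangle{X}$ forces each $\gamma_i$ to fellow-travel a fixed geodesic ray from $x$ to $\xi$ until the first time it reaches $\boundary B'$. Consequently the entry points $z_i$ all lie within a uniform constant $C = C(\delta, \lambda)$ of one another.

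It remains to show that each $z_i$ lies within a uniform distance of $V_i$. Since $V_i$ is $\kappa'$--relatively quasiconvex (with $\kappa'$ depending only on $\kappa$, by Lemma~\ref{lem:HFinite}) and $v_i, w_i$ both lie within a uniform distance of the orbit $V_i$ in the appropriate group setting, Definition~\ref{def:RelQC} guarantees that $\gamma_i \subseteq \neb_{\kappa''}(V_i) \cup {\mathcal B}$. Since $z_i$ lies outside $B$ and, by its position near $\boundary B'$, also outside every other horoball (only finitely many horoballs meet the bounded region around $z_i$, and all but $B'$ are far away when $\dist(x,Y)$ is large), we conclude $\dist(z_i, V_i) \le \kappa''$. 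Choosing $y \in Y$ at bounded distance from $z_1$, we obtain $\dist(z_i, y) \le C + C'$ for all $i$ and some constant $C'$, so each $V_i$ meets $\neb_A(y)$ with $A = C + C' + \kappa''$, depending only on $\kappa$, $\lambda$, $\mu$, and the geometry of $\widetriangle{X}$.

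The main obstacle is the final step of controlling $\dist(z_i, V_i)$: relative quasiconvexity only places $\gamma_i$ near $V_i \cup {\mathcal B}$, so one must argue that the shallow entry point $z_i$ is not being absorbed by a stray horoball other than $B'$. This is handled by taking $\dist(x, B)$ sufficiently large and using local finiteness of the horoball system together with the position of $z_i$ on $\boundary B'$, which leaves only $B'$ as a potentially relevant horoball; once this is pinned down, the constant $A$ is extracted by aggregating the bounds from hyperbolic fellow-travelling, horoball penetration, and relative quasiconvexity.
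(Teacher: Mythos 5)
Your overall strategy (work directly in a cusp-uniform $\delta$--hyperbolic model $\widetriangle{X}$, run geodesics $\gamma_i$ from points of $V_i$ near $x$ to points of $V_i$ near $Y$, and locate a common landing point near the horosphere) is genuinely different from the paper, which instead pulls the halfspaces back to $G$ by a quasi-isometry, notes via Lemma~\ref{lem:HFinite} that they are relatively quasiconvex, adjoins the trivial subgroup to the peripheral structure so that $x$ becomes a peripheral coset, and then quotes Proposition~\ref{prop:NearTwoPeripherals}. Your first step is essentially sound (the mutual closeness of the entry points $z_i$ is true, though your ``fellow-travel until $\boundary B'$'' justification is loose; it is cleaner to say each $\gamma_i$ passes uniformly close to the coarse projection of $v_i$, hence of $x$, to the horoball), but the proposal has a genuine gap at exactly the step you flag as ``the main obstacle.''

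The gap is the claim that $z_i$ lies outside every open horoball of the family $\mathcal{B}$, so that Definition~\ref{def:RelQC} forces $\dist(z_i,V_i)\le\kappa''$. Two problems. First, your $B'$ cannot be a member of the equivariant disjoint family $\mathcal{B}$: $Y$ is coarsely the horosphere of the family horoball at $\xi$, so to contain $\neb_\mu(Y)$ you must enlarge that horoball, and then lying on $\boundary B'$ gives no protection --- a point of $\boundary B'$ may perfectly well sit inside a \emph{different} open horoball $B_\eta\in\mathcal{B}$, since the family members are only required to be disjoint from each other, not from your enlarged $B'$. Second, your proposed remedy does not address this: making $\dist(x,B')$ large does not move a stray horoball away from the fixed landing region around $q_0=\operatorname{proj}_{B'}(x)$ (its presence is a feature of the horoball pattern near $Y$, independent of where $x$ is), and local finiteness only bounds how many horoballs meet a ball, not whether one of them contains $q_0$. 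If some $B_\eta$ does contain that region, then \emph{all} the $z_i$ lie inside $B_\eta$, where relative quasiconvexity of $V_i$ says nothing; rerouting to entry/exit points of $B_\eta$ then raises the new problem that those transition points must still be shown to be uniformly close to one another and to $Y$, which is precisely the nontrivial ``transition point'' analysis carried out in the proof of \cite[Prop~7.8]{HruskaWise09} (Proposition~\ref{prop:NearTwoPeripherals} here). In effect your argument needs that proposition (or a re-proof of it) at the very point where it is hand-waved; as written the step would fail, so the constant $A$ is not obtained.
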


\begin{proof}
Consider the quasi-isometry $\phi\colon G \to X$
induced by $g \mapsto g x_0$ for some $x_0 \in X$.
Observe that $Y$ is coarsely equal to $\phi(P_0)$ for some
peripheral subgroup $P_0$.
Let $U_i = \phi^{-1}(V_i)$.
By Lemma~\ref{lem:HFinite} each $U_i$ is relatively quasiconvex in $G$.
Choose $g_1$ coarsely equal to $\phi^{-1}(x)$.
For consistency with the statement of
Proposition~\ref{prop:NearTwoPeripherals},
we add the trivial subgroup, denoted by $P_1$,
to the peripheral structure of $G$.
Note that $\{g_1\} = g_1P_1$.
Since nearness in $X$ provides nearness of preimages in $G$,
Proposition~\ref{prop:NearTwoPeripherals} implies that
$\{U_i\}$ are uniformly near a point in $P_0$.
Applying $\phi$, we see that $\{V_i\}$ are uniformly near a
point $y \in Y$.
\end{proof}

The following corollary to Theorem~\ref{thm:RelCocompactWallspace}
is aimed at situations like finite volume hyperbolic
lattices with sufficiently many regular hyperplanes.

\begin{cor}
Let $(G,\mathbb{P})$ be relatively hyperbolic.
Suppose $G$ acts on a proper $\delta$--hyperbolic space $\widetriangle{X}$,
each $P \in \mathbb{P}$ stabilizes a quasiconvex subspace $Y=Y(P)$, and
$\widetriangle{X} = GK \cup \bigcup GY$ where $K$ is compact and assume that
$gY \cap g'Y'$ is contained in $GK$ unless $gP = g'P'$.

Suppose $(\widetriangle{X},\W)$ is a wallspace with a $G$--action,
the halfspaces are quasiconvex, and each wall separates $GK$.
Let $Z$ be a $G$--invariant $G$--finite subset of $X$.
Assume that distinct halfspaces of $X$ have distinct intersection with $Z$.
Assume that $Z \cap U_i \cap V_j \ne \emptyset$
whenever
two walls $\{U_1,V_1\}$ and $\{U_2,V_2\}$ of $X$ are transverse
then all four ``quarterspaces'' intersect $Z$;
e.g., $U_1 \cap V_2 \cap Z \ne \emptyset$.

Then all but finitely many orbits of cubes are represented in a unique
periphery \textup{(}that is, $C_{_\heartsuit}(gY)$\textup{)}.
Consequently, $G$ acts relatively cocompactly on the dual cube complex
$C(\widetriangle{X})$.
\end{cor}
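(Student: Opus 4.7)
The plan is to reduce this corollary to Theorem~\ref{thm:MainResult} by using the $G$--finite set $Z$ to transfer the wallspace structure on $\widetilde{X}$ to an equivalent wallspace on the group $G$, and then to pull back the relative cocompactness conclusion through a cubical isomorphism $C(\widetilde{X}) \cong C(G)$.

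First I would exploit the hypothesis that distinct halfspaces of $\widetilde{X}$ have distinct intersections with $Z$ together with $G$--finiteness of $Z$ to conclude that there are finitely many $G$--orbits of walls; let $W_1,\dots,W_k$ be representatives with stabilizers $H_1,\dots,H_k$. Since each halfspace is quasiconvex in $\widetilde{X}$, the intersection $U_i \cap V_i$ is uniformly quasiconvex, and the stabilizer $H_i$ acts cocompactly on it (using that $Z \cap U_i \cap V_i$ is $H_i$--finite and that distinct halfspaces have distinct traces on $Z$, so an element stabilizing $W_i$ is determined by its action on a finite set). Quasiconvex subspaces of $\widetilde{X}$ have relatively quasiconvex stabilizers in $(G,\mathbb{P})$, and they are finitely generated, so the $H_i$ meet the hypotheses of Theorem~\ref{thm:MainResult}. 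Fixing a basepoint $z_0 \in Z$ and pulling back along $g \mapsto g z_0$, I would then convert each $W_i$ into an $H_i$--wall $\bar W_i = \{\bar U_i, \bar V_i\}$ on $G$ in the sense of Definition~\ref{def:HWall}; the $\ddot H_i$--finiteness of the frontier of $\bar U_i$ in the Cayley graph is where the hypothesis that each wall separates $GK$ enters, since that condition forces each wall to cut across the $G$--cocompact core, so the frontier lies within uniformly finitely many $H_i$--orbits of vertices.

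Applying Theorem~\ref{thm:MainResult} to $(G,\mathbb{P})$ with these $H_i$--walls yields a dual cube complex $C(G)$ with a compact $K_{_\heartsuit}$ and subcomplexes $C_{_\heartsuit}(P)$ satisfying the required decomposition. I would then exhibit a $G$--equivariant cubical isomorphism $C(\widetilde{X}) \to C(G)$: orientations of walls in $\widetilde{X}$ correspond bijectively to orientations of the induced walls on $G$ (by distinctness of halfspaces on $Z$), and the intersecting--halfspaces axiom for $0$--cubes transfers across because each pairwise intersection of halfspaces meets $Z$. Transversality is preserved in both directions; this is precisely the role of the ``four quarterspaces intersect $Z$'' hypothesis, which says transverse walls in $\widetilde{X}$ correspond to transverse walls in the $Z$--wallspace (hence in $G$), and conversely. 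Transferring the decomposition of $C(G)$ across this isomorphism, and matching $C_{_\heartsuit}(Y)$ in $C(\widetilde{X})$ with $C_{_\heartsuit}(P)$ in $C(G)$ via the basepoint correspondence (using $P$--cocompactness of $Y$), gives the desired relative cocompactness on $C(\widetilde{X})$ with all but finitely many cube orbits represented in a unique $C_{_\heartsuit}(gY)$.

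The main obstacle I anticipate is the bookkeeping around matching the peripheral hemiwallspaces $C_{_\heartsuit}(Y)$ with $C_{_\heartsuit}(P)$ uniformly across the three choices of $\heartsuit \in \{r, *, r*\}$: one must verify that a halfspace $U$ of $\widetilde{X}$ satisfies the relevant ``meets $\nbd{Y}{r}$'' or ``has infinite diameter intersection'' condition if and only if the pulled-back $\bar U \subset G$ satisfies the analogous condition with respect to $P$. This follows from the quasi-isometry between $G$ and a $G$--equivariant net in the $G$--cocompact part of $\widetilde{X}$ enlarged by the peripheral subspaces, but the relationship between $r$ in $\widetilde{X}$ and $\bar r$ in $G$, and the $G$--finiteness of the intersection pattern between halfspaces and peripheries, must be tracked as in the proof of Theorem~\ref{thm:RelCocompactWallspace}.
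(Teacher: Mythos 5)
Your overall route is essentially the one the paper takes: its proof of this corollary is the one‑liner ``remove the cusps of $\widetriangle{X}$ and apply Theorem~\ref{thm:RelCocompactWallspace}'', and the proof of Theorem~\ref{thm:RelCocompactWallspace} is exactly the argument you write out --- transfer the walls of the space to $H_i$--walls on $G$ via an orbit map, check that the correspondence of walls is injective and preserves transversality, apply Theorem~\ref{thm:MainResult} to $C(G)$, and pull the relative cocompactness back through the resulting cubical map. You have also correctly diagnosed what the $Z$--hypotheses are for: they make the wall correspondence faithful and the transversality correspondence two‑way (in Theorem~\ref{thm:RelCocompactWallspace} this is instead arranged by thickening halfspaces by $D$ where $X=\nbd{Gx}{D}$).

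There are, however, concrete gaps. First, ``distinct halfspaces have distinct traces on $Z$'' together with $G$--finiteness of $Z$ does \emph{not} yield finitely many $G$--orbits of walls, and quasiconvexity of the halfspaces does not yield cocompactness of $\Stab(W)$ on $W$ (equivalently, $H_i$--finiteness of $Z\cap U_i\cap V_i$ and of the frontiers); both are standing hypotheses of Theorem~\ref{thm:RelCocompactWallspace} that must be imported rather than derived, and your parenthetical about an element of $H_i$ being determined by its action on a finite set says nothing about cocompactness. Second, pulling back along $g\mapsto gz_0$ for a single $z_0$ only sees the orbit $Gz_0$, while the hypotheses only guarantee that quarterspaces and distinct halfspaces are detected by all of $Z$, which may consist of several $G$--orbits; relatedly, the quarterspace condition concerns transverse pairs only, so it does not by itself show that every pairwise intersection of halfspaces occurring in a $0$--cube of $C(\widetriangle{X})$ meets the chosen orbit, which is what your map on $0$--cubes needs in order to land in $C(G)$. (The paper settles for an isometric embedding of $C(X)$ onto a convex subcomplex of $C(G)$ rather than an isomorphism, which suffices to transfer the decomposition.) Finally, you never remove the horoballs: verifying that the pulled-back pairs are genuine $H_i$--walls --- in particular that their frontiers in the Cayley graph are $\ddot H_i$--finite --- must contend with halfspace boundaries running into the cusps, and taming that is precisely the point of the truncation step in the paper's proof.
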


\begin{proof}
Remove the cusps of $\widetriangle{X}$ and apply
Theorem~\ref{thm:RelCocompactWallspace}.
\end{proof}

\section{Local finiteness and properness in the relatively
hyperbolic case}
\label{sec:FinitenessRH}

The goal of this section is to show that,
within the framework of Theorem~\ref{thm:RelCocompactWallspace},
additional assumptions lead to further finiteness properties of
the action of $G$ on $C(X)$.
The main result of this section is 
Theorem~\ref{thm:WallSeparationImpliesExcellent},
which can play an important role in supporting
the proof of virtual specialness of certain relatively hyperbolic groups.

In Section~\ref{sub:AdditionalProper}
we collect those properties that are consequences
of the additional assumption that $G$ acts properly on $C(X)$.
These are listed in Proposition~\ref{prop:ProperImpliesALot}.
Section~\ref{sec:RHLocalFiniteness} focuses on finiteness properties
of $C(X)$ that are consequences of hypotheses on the nature of
the walls in each periphery $Y$ of $X$.
These consequences are stated in
Theorem~\ref{thm:WallSeparationImpliesExcellent}.
Section~\ref{sub:Saturation} provides some background on the geometry
of relatively hyperbolic groups.
Section~\ref{subsec:WallWallRH} explains that the
Wall-Wall Separation property holds for $X$
provided that various analogous properties hold for each periphery $Y$.
The results of Sections
\ref{sub:Saturation}~and~\ref{subsec:WallWallRH} are used to support the
proof of Theorem~\ref{thm:WallSeparationImpliesExcellent}.

\subsection{Additional finiteness properties in the presence of a proper
action on $C(X)$}
\label{sub:AdditionalProper}

We again emphasize that we are working
under the hypothesis of Theorem~\ref{thm:RelCocompactWallspace}.
In this case
when we also know that $G$ acts properly on $C(X)$, i.e., with finite
stabilizers,
then there are numerous additional finiteness properties,
which we collect together in the following statement.
These statements are proven separately in this subsection.

\begin{prop}
\label{prop:ProperImpliesALot}
Let $G$, $X$, $C(X)$, $P$, $Y=Y(P)$, $K$, $\heartsuit$,\dots be as in
Theorem~\ref{thm:RelCocompactWallspace}.
Suppose $G$ acts properly on $C(X)$.

\begin{enumerate}
\item For each $P \in \mathbb{P}$ and corresponding $Y$,
the subcomplex $C_*(Y)$ is superconvex in $C(X)$.
\item Each $0$--cube of $GK$ lies in uniformly finitely many $gC(Y_i)$.
\item If each $P$ acts metrically properly on $C(Y)$ then $G$ acts metrically properly on $C(X)$.
\item
If each subwallspace $Y$ has linear separation
\textup{(}with respect to the subspace metric on $Y$\textup{)}, then $X$ has linear separation.
\item If each $C_{_\heartsuit}(Y)$ is locally finite
then $C(X)$ is locally finite, and $G$ acts metrically properly
on $C(X)$.
\end{enumerate}
\end{prop}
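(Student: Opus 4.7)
The overall plan is to exploit the relative cocompactness decomposition $C(X) = GK \cup \bigcup G C_{_\heartsuit}(Y)$ from Theorem~\ref{thm:RelCocompactWallspace}, combining it with the properness assumption to bootstrap each conclusion. The statements split naturally: (1), (2), and (5) are structural statements about how the peripheral pieces $g C_{_\heartsuit}(Y)$ sit inside $C(X)$, while (3) and (4) transfer quantitative properties (metric properness, linear separation) from peripheral pieces to the whole.

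For (1), superconvexity, convexity is already furnished by Lemma~\ref{lem:ConvexSubcomplex}, so only bi-infinite combinatorial geodesics $\gamma \subset C(X)$ with ends in $C_*(Y)$ need attention. If $U$ is a halfspace of $X$ dual to a hyperplane crossed by $\gamma$, properness of the $G$-action on $C(X)$ combined with cocompactness of $G$ on $X$ forces both tails of $\gamma$ to coarsely track $G$-orbits escaping to infinity, and the structural decomposition means these tails must project into $\neb_r(Y)$; hence $U \cap \neb_r(Y)$ is unbounded, so $U \in \mathcal{U}_*(Y)$, and every $0$-cube on $\gamma$ has all its halfspaces in $\mathcal{U}_*(Y)$. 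For (2), Theorem~\ref{thm:RelCocompactWallspace}(3) says $C_*(Y) \cap GK \subseteq PK'$ for a compact $K'$. If $v \in GK$ lies in $g C_*(Y)$, then $g^{-1}v = pk$ for some $p \in P$ and $0$-cube $k \in K'$, so $v = (gp)k$, i.e. $gp$ lies in the finite coset $\{g' : g'k = v\}$ of $\Stab(k)$, which is finite by properness. The number of distinct cosets $gP$ is bounded by $|K'| \cdot \max_{k} |\Stab(k)|$, and this bound is uniform over $v \in GK$ because $GK$ contains only finitely many $G$-orbits of $0$-cubes.

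For (3), given a divergent sequence $(g_n)$ in $G$, by (2) each $g_n c$ lies in only boundedly many peripheral pieces; if infinitely many $g_n c$ stay inside a single $g C_*(Y)$, divergence within that piece follows from metric properness of $P$ on $C(Y)$ via the identification in (1); otherwise divergence comes from $g_n c$ exiting every compact piece of $GK$, which uses properness on $X$ combined with the decomposition. For (4), decompose a geodesic $\gamma$ in $X$ from $x$ to $y$ using relative hyperbolicity: $\gamma$ is a concatenation of deep excursions into translates $gY$ interleaved with segments of length bounded by the diameter of $GK / G$. Each peripheral excursion contributes linearly many separating walls of $X$ by the hypothesis (walls of $Y$ in the induced subwallspace extend to walls of $X$), and the bridging segments contribute $O(1)$ walls each; summing gives $\#(x,y) \ge \kappa\,\dist(x,y) - \epsilon$. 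Finally (5) is a synthesis: by (2), each $0$-cube $v$ lies in only finitely many pieces $gC_{_\heartsuit}(Y)$, within each of which $v$ has finitely many neighbors, and the remaining neighbors lie in the locally finite subcomplex $GK$ near $v$; metric properness follows from (3) using the peripheral metric properness that is implied by local finiteness of each $C_{_\heartsuit}(Y)$ together with proper cocompact action of $P$.

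The main obstacle is (1): bounding bi-infinite combinatorial geodesics in $C(X)$ forces one to relate escape in the combinatorial $C(X)$-metric to escape in the coarse $X$-metric, which is where properness is crucially used to rule out halfspaces oriented only by one tail. A secondary technical difficulty lies in (4), since the decomposition of a geodesic in a relatively hyperbolic space into peripheral and non-peripheral parts is not canonical; controlling the walls contributed by the transitional segments requires care using Lemma~\ref{lem:TransverseToCloseSubgroups} and the cocompactness of $G$ on the compact ``core'' $K$.
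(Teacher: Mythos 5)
There are genuine gaps, concentrated in parts (1), (3) and (4); your treatments of (2) and of the local-finiteness half of (5) are essentially sound and close to the paper's (properness of $G$ on $C(X)$ gives properness on $GK$, hence local finiteness of $GK$ and of the family $\{gP K'\}$ from Lemma~\ref{lem:Footprint}). For (1), note first that the paper's notion of superconvexity is not about bi-infinite geodesics with ``ends in $C_*(Y)$'' but about hanging flaps $[0,1]\times[0,d]$ attached to $C_*(Y)$ along $\{0\}\times[0,d]$, so your argument is aimed at a different statement; and even on its own terms the key step --- ``properness forces both tails to coarsely track orbits escaping to infinity, hence they project into $\neb_r(Y)$, hence $U\in\mathcal U_*(Y)$'' --- is an assertion, not a proof. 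The paper's actual mechanism is quite different and is where the work lies: by Lemma~\ref{lem:Footprint} the footprint $C_*(Y)\cap GK$ is $P$--cocompact, almost malnormality of the peripherals bounds how long a flap can travel inside any other $g'C_*(Y')$, so a long flap must contain an internal $1$--cube in $GK$; if flaps of unbounded length existed, local finiteness of $GK$ (from properness) lets one pass to a subsequence in which these internal $1$--cubes coincide, producing a single hyperplane $V$ with $\Stab(V)\cap P$ infinite, whence the corresponding wall lies in $\mathcal U_*(Y)$ and $V$ meets $C_*(Y)$, a contradiction. None of this (footprint, almost malnormality, extraction of a $P$--periodic hyperplane) appears in your sketch.

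For (4) the decomposition you invoke is false: a geodesic in $X$ is \emph{not} a concatenation of peripheral excursions separated by segments ``of length bounded by the diameter of $GK/G$'' --- the thick portions can be arbitrarily long, and compactness of the quotient does not bound them. You then have no source of linearly many walls along thick segments (that requires something like Ball--Ball separation, Lemma~\ref{lem:compact-compact}, which itself uses properness of $G$ on $C(X)$ and which you never invoke here), and moreover walls of the subwallspace $Y$ separating the entry and exit points of an excursion are not shown to separate the original endpoints $x,y$ in $X$. The paper avoids all of this by working inside $C(X)$: since $\#(x,y)=\dist_{C(X)}(c_x,c_y)$, it suffices to bound $\dist_{GK}$ above by a multiple of $\dist_{C(X)}$, and this is done by taking a $C(X)$--geodesic and replacing each subpath through $C_r(Y_i)-GK$ by a path in the footprint $GK\cap C_r(Y_i)$, whose length is linearly controlled because the footprint is $P_i$--cocompact, quasi-isometric to $Y_i$, and $Y_i$ has linear separation; the statement (3) is the same argument with proper functions $f_i$ in place of linear factors. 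Your case analysis for (3) misses exactly the relevant difficulty (two far-apart points of $GK$ whose $C(X)$--geodesic shortcuts through peripheral complexes), and in (5) your appeal to a ``proper cocompact action of $P$'' on $C_{_\heartsuit}(Y)$ is unjustified --- these peripheral duals are generally not $P$--cocompact --- whereas the intended conclusion is immediate once $C(X)$ is locally finite, since a proper action on a locally finite cube complex is metrically proper.
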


The convex subcomplex $A\subset B$ is \emph{superconvex} if
there is a uniform upper bound on the diameter $d$ of
any \emph{hanging flap} $[0,1]\times [0,d] \subset B$
that is a subcomplex not contained in $A$ but with
$\{0\}\times [0,d] \subset A$ isometrically embedded.

\begin{lem}\label{lem:superconvexity of Cstar(Y)}
For each $P \in \mathbb{P}$ with periphery $Y$
the subcomplex $C_*(Y)$ is superconvex in $C(X)$.
\end{lem}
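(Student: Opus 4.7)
The plan is to bound the diameter $d$ of any hanging flap by a uniform constant via bounded packing. Given a hanging flap $[0,1]\times[0,d]$ at $C_*(Y)$, let $W^*$ denote the wall dual to the vertical edges and let $W_0,\dots,W_{d-1}$ denote the walls dual to the $d$ horizontal edges of $\{0\}\times[0,d]$. Let $U^*,V^*$ be the halfspaces of $W^*$ so that the $\{0\}$-side of the flap orients $W^*$ toward $U^*$. Since $\{0\}\times[0,d]$ is isometrically embedded in $C_*(Y)$ and flips the orientations of the $W_i$'s one by one, each $W_i$ has both halfspaces in $\U_*$, and $U^*\in\U_*$. Because the flap is not contained in $C_*(Y)$ we have $V^*\notin\U_*$, so $V^*\cap\neb_r(Y)$ is bounded for every $r>0$; since $W^*\subseteq V^*$, the body $W^*\cap\neb_r(Y)$ is likewise bounded. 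Finally, $W_i\cap W^*\ne\emptyset$ by Lemma~\ref{lem:TransverseWallsMeet}.

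I would then transfer to the Cayley graph $\Gamma$ of $G$ via the quasi-isometry $\phi\colon g\mapsto gx_0$ of the proof of Theorem~\ref{thm:RelCocompactWallspace}. Let $g_i\bar H_i$ and $g^*\bar H^*$ be the stabilizer cosets of $W_i$ and $W^*$, and let $aP$ be the peripheral coset corresponding to $Y$. Lemma~\ref{lem:TransverseToCloseSubgroups} gives $\dist_\Gamma(g_i\bar H_i, g^*\bar H^*) < D$ for a uniform $D$. Since each halfspace of $W_i$ is relatively quasiconvex by Lemma~\ref{lem:HFinite}, and both halfspaces have infinite diameter intersection with some $\neb_{r_i}(aP)$, an extension of Lemma~\ref{lem:CoarseIntersection} to relatively quasiconvex halfspaces yields $\dist_\Gamma(g_i\bar H_i, aP) < s$ for a uniform $s$. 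The boundedness of $W^*\cap\neb_r(Y)$ transfers to: $g^*\bar H^*\cap\neb_r(aP)$ is bounded for every $r$, so $\neb_D(g^*\bar H^*)\cap\neb_s(aP)$ lies in a ball of uniform radius $L$. A geodesic in $\Gamma$ along $g_i\bar H_i$ from a point near $g^*\bar H^*$ to a point near $aP$, combined with relative quasiconvexity of $g_i\bar H_i$ and the horoball geometry of $aP$, then forces $g_i\bar H_i$ to come within a uniform distance of this bounded transition region.

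With all cosets $g_i\bar H_i$ lying in a ball of uniform radius, they are pairwise $D'$-close for a uniform $D'$. The walls $W_i$ fall into finitely many $G$-orbits, so each $\bar H_i$ is conjugate into one of a finite list of relatively quasiconvex subgroups, each having bounded packing in $G$ by Theorem~\ref{thm:RelHypBoundedPacking}. Bounded packing caps the number of distinct cosets $g_i\bar H_i$ at a uniform $k=k(D')$, and since the wall-to-coset map is uniformly finite-to-one (as in the proof of Theorem~\ref{thm:MainResult}), this bounds $d$ uniformly.

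The principal obstacle is the uniform estimate $\dist_\Gamma(g_i\bar H_i, aP) < s$, which extends Lemma~\ref{lem:CoarseIntersection} from cosets to relatively quasiconvex halfspaces. Infinite coarse intersection of $U_i$ or $V_i$ with $\neb_r(aP)$ must be promoted to bounded distance between the body coset $g_i\bar H_i$ and $aP$. I would handle this by exploiting $\ddot H_i$-cocompactness of the frontier of $U_i$: either the frontier has infinite coarse intersection with $aP$, in which case Lemma~\ref{lem:CoarseIntersection} applies to the body coset directly; or the deep part of $U_i$ does, in which case the relatively quasiconvex horoball-geodesic argument of Lemma~\ref{lem:CoarseIntersection} forces the body coset to approach $aP$ along the same peripheral direction.
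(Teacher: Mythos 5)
Your setup is sound: identifying the long hyperplane $W^*$ and the crossing walls $W_0,\dots,W_{d-1}$, observing that each $W_i$ has both halfspaces in $\U_*$ while the far halfspace $V^*$ of $W^*$ is not in $\U_*$, is exactly the right dichotomy (it is also the engine of the paper's contradiction). But your route from there has two gaps, one of them structural.

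First, bounded packing is not available under the hypotheses of this lemma. Theorem~\ref{thm:RelHypBoundedPacking} requires that $H_j\cap gPg^{-1}$ have bounded packing in $gPg^{-1}$; that assumption enters the paper only later, as Hypothesis~(3) of Theorem~\ref{thm:WallSeparationImpliesExcellent}. The standing hypotheses here are just those of Theorem~\ref{thm:RelCocompactWallspace} plus properness of the $G$--action on $C(X)$, so any proof of superconvexity that runs through bounded packing of the wall stabilizers in $G$ is importing an assumption the lemma does not have. Second, the localization step at the heart of your argument is unproven and, as stated, does not follow. Knowing that $g_i\bar{H}_i$ comes within $D$ of $g^*\bar{H}^*$ at some point and within $s$ of $aP$ at some \emph{other} point does not force $g_i\bar{H}_i$ near the bounded set $\neb_D(g^*\bar{H}^*)\cap\neb_s(aP)$: the two near-points can be arbitrarily far apart (picture a plane in $\Hyp^3$ running up the cusp of $aP$ while crossing the plane of $W^*$ far from wherever that plane is closest to the horoball). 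Relative quasiconvexity of $g_i\bar{H}_i$ controls geodesics between its own points, not where it happens to meet two other unbounded sets, so the "geodesic along $g_i\bar{H}_i$" does not have to pass through the transition region. Without this localization the cosets $g_i\bar{H}_i$ need not be pairwise close, and the packing count never starts. (A lesser issue: the uniformity of the diameter bound on $\neb_D(g^*\bar{H}^*)\cap\neb_s(aP)$ over all flaps is asserted, not proved.)

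For contrast, the paper's proof avoids both problems with a soft compactness argument: the base of a flap lies in the footprint $C_*(Y)\cap GK=PK'$ (Lemma~\ref{lem:Footprint}); almost malnormality of the peripheral subgroups forces every sufficiently long flap to contain a rung $[0,1]\times\{e\}$ in $GK$ with $e$ uniformly bounded; if arbitrarily long flaps existed, properness and local finiteness of $GK$ would let one pass to a subsequence in which these rungs coincide, so the hyperplane dual to the rungs (your $W^*$) has infinitely many dual $1$--cubes in the $P$--cocompact set $\neb_1(PK)$, whence $\Stab(W^*)\cap P$ is infinite, whence both halfspaces of $W^*$ lie in $\U_*$ and the flap was contained in $C_*(Y)$ after all. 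No bounded packing, and no attempt to localize the crossing walls, is needed.
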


\begin{proof}
Let $K$ be a compact subcomplex of $C(X)$
satisfying the conclusion of
Theorem~\ref{thm:RelCocompactWallspace}.
By Lemma~\ref{lem:Footprint}, there exists a compact subcomplex $K'$
such that for each of the finitely many $P \in \mathbb{P}$,
we have $C_*(Y)\cap GK \subset PK'$.
By hypothesis $G$ acts properly on $C(X)$,
so $P$ also acts properly on $C_*(Y)\cap GK$.

By almost malnormality of the peripheral subgroups, there exists
an upper bound $M$ on the diameter $P_iK' \cap gP_jK'$ for all
$P_i,P_j \in \mathcal{P}$ with $P_i \ne gP_j$.
Consequently for a hanging flap $F$ whose base lies on $C_*(Y) \cap GK$,
the intersection between $F$ and any $gC_*(Y')$ has diameter at most $M$.
Since $g_i C_*(Y_i) \cap g_j C_*(Y_j) \subset GK$,
we deduce that each sufficiently large diameter flap
$F=[0,1]\times [0,d]$ has some internal $1$--cube $[0,1]\times \{e\}$
in $GK$.

Suppose there were arbitrarily large diameter hanging flaps.
Since $P$ acts properly and cocompactly on $C_*(Y) \cap GK$,
there is a sequence of
hanging flaps whose base lines $\{0\} \times [-d,d]$
in $C_*(Y) \cap GK$ are an increasing sequence with union a line
$\{0\} \times \R$.
For each flap $F_\ell$ there exists $0<e_\ell < M$ such that
the internal $1$--cube $[0,1] \times \{e_\ell\}$ of $F_\ell$
lies in $GK$.
Passing to a subsequence, we can assume $e_\ell = e$ is independent
of $\ell$.
As $GK$ is locally finite, we can pass to a further subsequence
so that the $1$--cubes $[0,1] \times \{e\}$ of each $F_\ell$
coincide.
This $1$--cube is dual to a hyperplane $V$ of $C(X)$.

We now show $\Stab(V) \cap P$ is infinite.
Our infinite sequence of hanging flaps shows that $GK$
contains infinitely many $1$--cubes dual to $V$.
Each of these lies in the $1$--neighborhood of $PK$.
Since $GK$ is locally finite, the
$1$--neighborhood of $PK$ in $GK$ is $P$--cocompact.
Consequently infinitely many elements of $P$
map dual $1$--cubes of $V$ to dual $1$--cubes of $V$,
and thus stabilize $V$.
We conclude that $\Stab(V) \cap P$ is infinite.

Let $W$ denote the wall of $X$
corresponding to $V$.
Since $\Stab(V) = \Stab(W)$, we see that $\Stab(W) \cap P$ is infinite.
It follows
that both halfspaces of $W$ have unbounded
intersection with some $\neb_r(Px)$.
But then $W\in \mathcal U_*$ and hence $V$ intersects $C_*P$
which is a contradiction.
\end{proof}

\begin{cor}[Local finiteness of $\{C(Y_i)\}$]
\label{lem:LocFinC(Y_i)}
Each $0$--cube of $GK$ lies in uniformly finitely many $gC(Y_i)$.
\end{cor}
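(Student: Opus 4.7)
The plan is to leverage Lemma~\ref{lem:Footprint}, which controls exactly how each translate $gC_{_\heartsuit}(Y)$ meets $GK$. First I would unify the compact subcomplex: since $\mathbb{P}$ is finite, Lemma~\ref{lem:Footprint} supplies, for each $P \in \mathbb{P}$ with periphery $Y=Y(P)$, a compact $K'_P$ with $C_{_\heartsuit}(Y) \cap GK \subseteq P K'_P$; taking $K' = \bigcup_{P \in \mathbb{P}} K'_P$ gives a single compact subcomplex (with only finitely many $0$--cubes) that works uniformly for all peripheral representatives. Translating, we have
\[
    gC_{_\heartsuit}(Y) \cap GK \ \subseteq \ gPK'
\]
for every $g \in G$ and every $P \in \mathbb{P}$.

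Next I would parametrize the translates containing a fixed $0$--cube $v \in GK$. If $v \in g C_{_\heartsuit}(Y)$ with $Y=Y(P)$, then $v \in gPK'$, so $v = gpk$ for some $p \in P$ and some $0$--cube $k$ of $K'$. This forces
\[
   gP \ = \ vk^{-1}P,
\]
so the coset $gP$ is determined by the pair $(P, k)$. Conversely, the subcomplex $gC_{_\heartsuit}(Y)$ is determined by the coset $gP$ together with $P \in \mathbb{P}$: within a fixed $P$, distinct translates correspond to distinct $gP$ because $P$ is the $G$--stabilizer of $C_{_\heartsuit}(Y)$, and across distinct $P, P' \in \mathbb{P}$ the translates are genuinely distinct since $Y(P)$ and $Y(P')$ lie in different $G$--orbits.

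Consequently the number of distinct translates $gC_{_\heartsuit}(Y_i)$ containing $v$ is bounded by the number of pairs $(P,k)$ with $P \in \mathbb{P}$ and $k$ a $0$--cube of $K'$, that is, by $|\mathbb{P}|\cdot |(K')^0|$. This bound is independent of $v$, giving the uniform finiteness. There is no real obstacle here; the only subtlety is making sure the $K'$ is chosen uniformly across all peripheral subgroups, which is immediate from finiteness of $\mathbb{P}$.
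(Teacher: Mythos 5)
There is a genuine gap, and it sits exactly where you write ``$v = gpk$ \dots This forces $gP = vk^{-1}P$, so the coset $gP$ is determined by the pair $(P,k)$.'' First, the expression $vk^{-1}$ is not meaningful: $v$ and $k$ are $0$--cubes of $C(X)$, not group elements, so you cannot invert $k$ or multiply cubes. Second, and more importantly, the correct reading of $v=(gp)\cdot k$ only pins down the element $gp$ up to the stabilizer of the $0$--cube $k$ (equivalently, of $v$): if $h\cdot k=v$ and $h_0\cdot k=v$ then $h^{-1}h_0\in\Stab_G(k)$, so $gP$ ranges over a set of cosets of size up to $\abs{\Stab_G(k)}$ for each choice of $(P,k)$, not over a single coset. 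If a $0$--cube of $GK$ had infinite stabilizer, infinitely many distinct cosets $gP$ (hence potentially infinitely many distinct translates $gC_{_\heartsuit}(Y)$) could arise this way, and your bound $\abs{\mathbb{P}}\cdot\abs{(K')^0}$ collapses. In other words, your argument never uses the standing hypothesis of Proposition~\ref{prop:ProperImpliesALot} that $G$ acts \emph{properly} on $C(X)$, yet the statement is false without it; the paper's one-line proof invokes precisely this properness (together with finiteness of $\mathbb{P}$ and Lemma~\ref{lem:Footprint}) to conclude that the family of subcomplexes $gPK'$ is locally finite.

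The repair is straightforward: keep your use of Lemma~\ref{lem:Footprint} and the reduction to counting cosets $gP$ with $v\in gPK'$, but then invoke properness of the $G$--action on $C(X)$ to get that $\Stab_G(v)$ (equivalently $\Stab_G(k)$ for the finitely many $0$--cubes $k$ of $K'$) is finite, which bounds the number of such cosets by $\abs{\mathbb{P}}\cdot\abs{(K')^0}\cdot\max\abs{\Stab_G(k)}$. For uniformity over $v$, note that $GK$ (and $K'$) contain only finitely many $G$--orbits of $0$--cubes, so the stabilizer orders are uniformly bounded; this is implicit in the paper's appeal to properness of the action on the cocompact complex $GK$. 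Your observation that distinct peripheral representatives and distinct cosets can only overcount the translates is fine and only helps the bound.
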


\begin{proof}
Since $G$ acts properly on $C(X)$ it acts properly on $GK$.
There are finitely many peripheral subgroups $P \in \mathbb{P}$,
so the collection of subcomplexes of the form $gPK'$
arising in Lemma~\ref{lem:Footprint}
is a locally finite collection.
\end{proof}

\begin{cor}[Inheriting metrical properness]
\label{cor:InheritingMetricProp}
If each $P_i$ acts metrically properly on $C(Y_i)$ then $G$ acts metrically properly on $C(X)$.
If each subwallspace $Y_i$ has linear separation
\textup{(}with respect to the subspace metric on $Y_i$\textup{)},
then $X$ has linear separation.
\end{cor}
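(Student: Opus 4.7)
\emph{Proof plan.} My approach to the first assertion is to verify the hypothesis of Theorem~\ref{thm:LinearSeparationProper}, namely that $\#(x,g_n x)\to\infty$ whenever $g_n\to\infty$ in $G$, for any fixed basepoint $x\in X$. Under the setup of Theorem~\ref{thm:RelCocompactWallspace} I have the decomposition
\[
   C(X)=GK\cup\bigcup_{P\in\mathbb{P}}GC_*(Y(P))
\]
with peripheral intersections controlled by Lemma~\ref{lem:Footprint}, and the subcomplexes $C_*(Y)$ are superconvex by Lemma~\ref{lem:superconvexity of Cstar(Y)}. For a sequence $g_n\to\infty$, by passing to a subsequence I would split into two cases. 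In the first case, the cubes $c_{g_nx}$ lie, up to $GK$--correction, in a single translated peripheral subcomplex $g_0C_*(Y)$; writing $g_n=g_0 p_n k_n$ with $k_n$ in a finite set, we must have $p_n\to\infty$ in $P$, and metric properness of $P$ on $C(Y)$ then gives $d_{C(Y)}(c_{x'},p_n c_{x'})\to\infty$, which survives in $C(X)$ by superconvexity. In the second case, any combinatorial geodesic from $c_x$ to $c_{g_nx}$ enters and exits arbitrarily many distinct peripheral subcomplexes $gC_*(Y)$; using that $G$ acts properly on $C(X)$ and that each $0$--cube of $GK$ lies in only finitely many peripheral subcomplexes (Corollary~\ref{lem:LocFinC(Y_i)}), I would conclude that the geodesic length forces $\#(x,g_n x)\to\infty$, and then Theorem~\ref{thm:LinearSeparationProper} delivers the desired metric properness.

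For the second assertion, fix $x,y\in X$ and a geodesic $\gamma$ from $x$ to $y$. Since $(G,\mathbb{P})$ is relatively hyperbolic and $G$ acts properly and cocompactly on the length space $X$, I would decompose $\gamma$, up to bounded modification, into peripheral subarcs $\beta_j\subseteq g_j Y(P_{i_j})$ and thick subarcs $\alpha_j$ lying uniformly close to some $G$--translate of a compact fundamental domain for the non-peripheral part of $X$. On each peripheral subarc the linear separation hypothesis for $Y_{i_j}$ yields $\kappa_{i_j}\,\ell(\beta_j)-\epsilon_{i_j}$ walls of the induced subwallspace separating the endpoints of $\beta_j$, and each such subwallspace wall lifts to a wall of $X$ separating those same endpoints. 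On each thick subarc I would apply Ball--Ball Separation (Lemma~\ref{lem:compact-compact}), available because $G$ acts cocompactly on $X$ and properly on $C(X)$: this yields a constant $m$ so that subdividing $\alpha_j$ into pieces of length $m$ produces at least one wall of $X$ per piece separating the endpoints of that piece. Summing these contributions gives a bound of the form $\#(x,y)\ge\kappa\,\dist(x,y)-\epsilon$ with uniform $\kappa,\epsilon$.

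The main obstacle I anticipate is in the second part: ensuring that walls counted within a single subarc of $\gamma$ actually separate the global endpoints $x$ and $y$ rather than merely separating consecutive interior points of $\gamma$. Because halfspaces here are only relatively quasiconvex rather than genuinely convex, a wall could cross $\gamma$ an even number of times and contribute to the separation of an interior pair without separating $x$ from $y$. I would address this by passing to a relative quasi-geodesic in the coned-off Cayley graph and invoking the bounded coset penetration property of relatively hyperbolic groups, which together with relative quasiconvexity of the $H_i$--walls bounds the number of times any single wall can cross $\gamma$; the wall counts from subarcs then relate to $\#(x,y)$ with only a uniformly bounded multiplicative loss, preserving the linear bound.
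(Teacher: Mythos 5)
Your plan for the second assertion runs the argument in the opposite direction from the paper, and the obstacle you yourself flag is precisely where it breaks down; your proposed repair does not close it. Suppose you do get, from BCP and relative quasiconvexity, a uniform bound $N$ on the number of times a single wall can cross the $X$--geodesic $\gamma$ (already doubtful, see below). That still does not convert your per-piece counts into a lower bound for $\#(x,y)$: a wall that crosses $\gamma$ an even number of times separates the endpoints of one or more subarcs and so is counted by your piecewise tally, yet it separates $x$ from $y$ not at all. Dividing the total count by $N$ therefore bounds the number of \emph{distinct walls meeting $\gamma$}, not the number of walls separating $x$ from $y$; in the worst case all counted walls could double back and $\#(x,y)$ could stay small while your tally grows linearly. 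Moreover the crossing bound itself is unjustified in this generality: inside a peripheral region the trace of a wall on $Y$ is only constrained by the coarse data of its (relatively quasiconvex, cocompact) stabilizer, the peripheral groups are arbitrary finitely generated groups, and an $X$--geodesic between points of $Y$ is merely a uniform quasigeodesic of $Y$ there—such a quasigeodesic can oscillate across the trace of a single wall arbitrarily many times. BCP controls how relative geodesics penetrate peripheral cosets; it says nothing about how often a wall meets a geodesic within one peripheral region. So the second part has a genuine gap, and it is the crux, not a technicality.

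The paper avoids this entirely by proving the equivalent inequality in the other direction, inside the cube complex rather than inside $X$: since $\dist_{C(X)}(d,d')=\#(x,y)$ for canonical cubes, and $X$ is quasi-isometric to $GK$ (enlarged to be connected and to contain all canonical cubes), linear separation reduces to $\dist_{GK}(d,d')\le\kappa\,\dist_{C(X)}(d,d')$. One takes a geodesic in $C(X)$—whose hyperplanes automatically all separate its endpoints, so no double-crossing issue arises—and replaces each excursion into some $gC_r(Y_i)-GK$ by a path in the footprint $GK\cap gC_r(Y_i)$ (Lemma~\ref{lem:Footprint}), whose length is controlled by the linear separation of the subwallspace $Y_i$ because distance in the footprint is comparable to distance in $Y_i$, which is in turn bounded by the number of separating walls, i.e.\ by the length of the excursion. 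The first assertion is then the same replacement argument with proper functions $f_i$ in place of the linear factors $\kappa_i$; by contrast your subsequence dichotomy for the first part is not exhaustive (it omits, e.g., sequences whose geodesics make boundedly many peripheral excursions each of bounded $C(Y)$--length), and the conclusion of your second case is asserted rather than derived. If you want to keep an argument in $X$ for the second assertion, you would need a genuine mechanism forcing walls counted deep inside a piece to separate $x$ from $y$, which is exactly what is not available here; adopting the path-replacement direction is the cleaner fix.
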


\begin{proof}
We prove the latter statement.
First observe that if $x,x'$ are points of $X$ with corresponding
canonical cubes $d,d'$, then $\dist_{C(X)}(d,d')$ equals the number of
walls of $X$ separating $x,x'$.
Enlarge $K$ slightly to ensure that $GK$ is connected
and contains all canonical cubes.
Since $G$ acts properly, cocompactly on both $GK$ and on $X$,
we see that $X$ is quasi-isometric to $GK$ with its induced path metric.
In order to verify linear separation, it suffices to show that
$\dist_{GK}(d,d') \le \kappa\, \dist_{C(X)}(d,d')$
for some constant $\kappa$.

By hypothesis, each subwallspace $Y_i$ has linear separation.
As above if $y,y'$ are points of $Y_i$ with corresponding
canonical cubes $c,c'$, then
$\dist_{C(X)} (c,c')$ equals the number of
walls of $X$ separating $y,y'$, which also equals the number of
walls of the subwallspace $Y_i$ separating $y,y'$.
By Lemma~\ref{lem:Footprint} the footprint $GK\cap C_r(Y_i)$
consists of finitely many $P_i$--orbits of cubes.
Each peripheral subgroup $P_i$ is f.g. since $G$ is f.g.
\cite{OsinBook06}.
Enlarge $K$ further to ensure that each
$GK\cap C_r(Y_i)$ is connected.
Since $P_i$ acts properly, cocompactly on both
$GK \cap C_r(Y_i)$ and on $Y_i$,
we see that
$Y_i$ is quasi-isometric to
$GK\cap C_r(Y_i)$ with its induced path metric.
By linear separation, there is a constant $\kappa_i$ such that
the distance
in the footprint $GK\cap C_r(Y_i)$  is bounded above by
 $\kappa_i$ times the distance between $c$ and $c'$ in
 $C_r(Y_i)$ and hence in $C(X)$.

To complete the proof, consider a geodesic $\gamma$
in $C(X)$ from $d$ to $d'$.
For each subpath $\sigma$ of $\gamma$ in some $C_r(Y_i) - GK$ with endpoints in $GK$, we replace it by a path $\sigma'$
in $GK\cap C_r(Y_i)$ with
$\abs{\sigma'} \le \kappa \, \abs{\sigma}$
for some uniform $\kappa$.

To prove the metric properness, we would use
functions $f_i \colon \R_+ \to \R_+$ instead of
the linear factors $\kappa_i$.
 \end{proof}

\begin{cor}
\label{cor:FootprintImpliesNice}
If each $C_{_\heartsuit}(Y)$ is locally finite
then $C(X)$ is locally finite, and $G$ acts metrically properly
on $C(X)$.
\end{cor}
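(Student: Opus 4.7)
The plan is to verify local finiteness of $C(X)$ vertex by vertex, using the near-disjointness of the peripheral subcomplexes established in Theorem~\ref{thm:RelCocompactWallspace}(2) together with Corollary~\ref{lem:LocFinC(Y_i)}, and then to deduce metric properness from the standard fact that a proper action on a locally finite cube complex is metrically proper. The decomposition $C(X) = GK \cup \bigcup_P G C_{_\heartsuit}(Y)$ is a union of subcomplexes, so every $1$--cube lies in $g'K$ for some $g'$ or in $g'C_{_\heartsuit}(Y')$ for some $g'$ and some $Y'$.

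The first step handles $0$--cubes $v \notin GK$. By the second conclusion of Theorem~\ref{thm:RelCocompactWallspace}, such a $v$ lies in a \emph{unique} translate $gC_{_\heartsuit}(Y)$. If $e$ is any $1$--cube incident to $v$, then $e$ is itself a cube of $C(X)$ and lies either in $g'K$ (impossible since that would place $v$ in $GK$) or in some $g'C_{_\heartsuit}(Y')$. In the latter case $v \in g'C_{_\heartsuit}(Y')$, and the uniqueness forces $g'C_{_\heartsuit}(Y') = gC_{_\heartsuit}(Y)$, so $e$ and its other endpoint lie in $gC_{_\heartsuit}(Y)$. Hence the link of $v$ in $C(X)$ coincides with its link in $gC_{_\heartsuit}(Y)$, which is finite by hypothesis.

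The second step handles $0$--cubes $v \in GK$. Properness of the $G$--action on $C(X)$ implies that $\Stab(v)$ is finite, so only finitely many translates $gK$ contain $v$ (for each of the finitely many vertices $v' \in K$, there are at most $|\Stab(v')|$ elements carrying $v'$ to $v$). Since $K$ is finite, $v$ has only finitely many neighbors inside $GK$. By Corollary~\ref{lem:LocFinC(Y_i)}, $v$ lies in at most finitely many translates $g_1C_{_\heartsuit}(Y_1),\dots,g_nC_{_\heartsuit}(Y_n)$, and each contributes finitely many neighbors by the local finiteness hypothesis. Every $1$--cube at $v$ belongs to $GK$ or to some $gC_{_\heartsuit}(Y)$ containing $v$, so the total valence at $v$ is finite.

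With local finiteness in hand, $C(X)$ becomes a proper metric space (closed balls are compact), so the proper $G$--action is automatically metrically proper, completing the argument. I do not foresee a significant obstacle: the two structural tools (uniqueness of the peripheral piece outside $GK$ and finite covering inside $GK$) exactly partition the analysis into the two clean cases above, and the remaining observations are direct.
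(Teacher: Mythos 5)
Your proof is correct and takes essentially the same route as the paper's: properness gives local finiteness of $GK$ (finitely many translates of the finite complex $K$ at each $0$--cube), Lemma~\ref{lem:Footprint}/Corollary~\ref{lem:LocFinC(Y_i)} bound the number of peripheral subcomplexes meeting a vertex of $GK$, the local finiteness hypothesis handles the peripheral pieces, and metric properness follows from the standard fact that a proper action on a locally finite cube complex is metrically proper. The only difference is presentational: you make explicit the two-case vertex analysis (inside versus outside $GK$, using the isolation conclusion of Theorem~\ref{thm:RelCocompactWallspace} for the latter), which the paper's shorter proof leaves implicit.
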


\begin{proof}
Since $G$ acts properly on $C(X)$, it also acts properly on $GK$.
Therefore $GK$ is locally finite, as it admits a proper, cocompact
group action.
If we endow $GK$ with its induced path metric,
then Lemma~\ref{lem:Footprint} implies that each finite ball in
$GK$ intersects only finitely many of the subcomplexes
$gC_{_{\heartsuit}}(Y)$.
Since each $gC_{_{\heartsuit}}(Y)$ is itself locally finite, by
hypothesis, it follows immediately that $C(X)$ is
locally finite.

Finally,
a proper action on a locally finite complex is always metrically proper.
\end{proof}

\subsection{Background on saturations
and relatively thin triangles}
\label{sub:Saturation}

In this subsection we consider a relatively hyperbolic group
$(G,\mathbb{P})$ acting properly, cocompactly on a space $X$.
For each $P_i \in \mathbb{P}$ there is an associated $P_i$--cocompact
nonempty subspace $Y_i$.
The peripheries of $X$ are the $G$--translates of the various $Y_i$.

The \emph{$J$--saturation} of a subspace $A \subseteq X$,
denoted $\Sat_J(A)$,
is the union of $\neb_J(A)$ together with all peripheries
$gY_i$ intersecting $\neb_J(A)$.

\begin{lem}
\label{lem:SaturationProps}
Let $H= \Stab(A)$ act cocompactly on $A$,
and suppose $H$ is relatively quasiconvex in $G$.
For all sufficiently large $J$, the saturation
$\Sat_J(A)$ has the following properties:
\begin{enumerate}
\item $\Sat_J(A)$ is $\kappa$--quasi-isometrically embedded
in $X$.
\item For any $\delta$
there exists $M=M(\delta)$ such that $\Sat_J(A)$ is ``isolated'' in the sense that  for any periphery $Y$, either
$\neb_\delta(Y) \cap  \Sat_J(A)$
has diameter $\leq M$,
or $Y\subset\Sat_J(A)$.
\end{enumerate}
\end{lem}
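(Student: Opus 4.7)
Both parts rely on three tools: the relative quasiconvexity of $H$, Proposition~\ref{prop:SaturationRelQC}, and the coarse-intersection control of Lemma~\ref{lem:CoarseIntersection}. The guiding picture is that $\Sat_J(A)$, viewed inside the cusped hyperbolic space associated to $(G,\mathbb{P})$, should be a quasiconvex subset obtained from $A$ by filling in every horoball (peripheral coset) that $A$ penetrates.

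For \textbf{part~(1)}, I would begin by reducing to the case where $x,y\in A$ itself: any $gY_i\subset\Sat_J(A)$ meets $\neb_J(A)$ by definition, so both endpoints can be translated into $A$ at a uniformly bounded cost. I would then shadow an $X$-geodesic between $x$ and $y$ (equivalently a Cayley-graph geodesic in $\Gamma(G,S)$, using that the $G$-action on $X$ is proper and cocompact) by a geodesic in the relative Cayley graph $\Gamma(G,S\cup\mathcal{P})$. Relative quasiconvexity of $H$ together with Proposition~\ref{prop:SaturationRelQC} forces this relative geodesic to remain within a uniformly bounded $S$-distance of $\Sat_\tau(H)$. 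Re-expanding to an $X$-geodesic via bounded coset penetration, every point of the $X$-geodesic lies either within bounded distance of $A$ or on a peripheral coset meeting $\neb_J(A)$; for $J$ larger than the quasiconvexity constant and the expansion constant, this places the whole geodesic in a bounded neighborhood of $\Sat_J(A)$. Standard thickening then produces a path in $\Sat_J(A)$ whose length is linearly comparable to $\dist_X(x,y)$, giving the quasi-isometric embedding.

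For \textbf{part~(2)}, fix $\delta$ and suppose $Y\not\subset\Sat_J(A)$, which for $J$ large enough is equivalent to $Y\cap\neb_J(A)=\emptyset$. I would decompose
\[
  \neb_\delta(Y)\cap\Sat_J(A)
  \;=\;\bigl(\neb_\delta(Y)\cap\neb_J(A)\bigr)
  \;\cup\bigcup_{\substack{gY_i\subset\Sat_J(A)\\ gY_i\neq Y}}
  \bigl(\neb_\delta(Y)\cap gY_i\bigr)
\]
and bound each piece uniformly. For the first summand, any point in the intersection forces $\neb_{\delta+J}(Y)\cap A\ne\emptyset$; Lemma~\ref{lem:CoarseIntersection} rules out infinite coarse intersection, since otherwise $Y$ would already come within a uniform distance $s$ of $A$, and choosing $J\geq s$ would contradict $Y\not\subset\Sat_J(A)$. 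Cocompactness of $H$ on $A$ together with finiteness of peripheral orbits then yields a uniform bound $M_0(\delta,J)$. For each individual piece in the second summand, almost malnormality of peripherals gives a diameter bound $M_1(\delta)$ since $gY_i\neq Y$.

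\textbf{The main obstacle} is that there could be many distinct $gY_i\subset\Sat_J(A)$ each contributing a bounded piece, and a priori these pieces could be spread far apart along $Y$. I plan to handle this by the following thin-triangle argument: any $gY_i$ contributing to the second summand is simultaneously within $\delta$ of $Y$ and within $J$ of $A$, so a variant of Proposition~\ref{prop:NearTwoPeripherals} (treating $A$ and $Y$ as the two relatively quasiconvex targets and $gY_i$ as a peripheral coset close to each) produces a center $c_{gY_i}$ lying in a uniform neighborhood of all three of $A$, $Y$, and $gY_i$. In particular $c_{gY_i}\in\neb_{M'}(Y)\cap\neb_{M'}(A)$ for a uniform $M'$, which is contained in the bounded region controlled by the first summand. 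Almost malnormality applied \emph{inside} $gY_i$ then forces $\neb_\delta(Y)\cap gY_i$ to lie within bounded distance of $c_{gY_i}$, so the whole second summand is confined to a uniformly bounded neighborhood of the first. Combining the bounds produces the required $M=M(\delta)$.
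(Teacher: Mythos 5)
Your part~(2) breaks exactly at the step you flagged as ``the main obstacle.'' The proposed ``variant of Proposition~\ref{prop:NearTwoPeripherals}'' does not exist: that proposition requires \emph{two distinct peripheral cosets}, each within $M$ of every member of the relatively quasiconvex family, and here you have only one such coset, $gY_i$; you cannot use $Y$ as the second one, since $\dist(Y,A)$ is precisely the quantity you do not control. Worse, the conclusion you want is false in general: a single peripheral coset can come $\delta$--close to $Y$ at one spot and $J$--close to $A$ at a far-away spot while $Y$ and $A$ stay far apart (picture a large horoball whose boundary approaches the horosphere $Y$ near its top while $A$ grazes its flank far away), so the centers $c_{gY_i}$ near all three of $A$, $Y$, $gY_i$ need not exist even in situations where the lemma's conclusion holds. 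Hence the spread-out peripheral pieces are never confined, and the bound $M(\delta)$ is not obtained. The paper's mechanism is different: taking $J$ larger than the constant $s$ of Lemma~\ref{lem:CoarseIntersection}, it uses the \Drutu--Sapir structure of saturations (coarse intersections of distinct peripheries are uniformly bounded, and intersections of distinct peripheries of $\Sat_J(A)$ lie in $\neb_{J'}(A)$) to show that if $\neb_\delta(Y)\cap\Sat_J(A)$ has large diameter and $Y$ is not a saturating periphery, then $\neb_\delta(Y)\cap\neb_{J'}(A)$ has large diameter; a sufficiently large diameter forces an infinite one (finitely many orbits plus cocompactness), and then Lemma~\ref{lem:CoarseIntersection} puts $Y$ within $s\le J$ of $A$, so $Y\subset\Sat_J(A)$. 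Your treatment of the first summand agrees with this; it is the peripheral summand that is left unproved.

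Part~(1) also has a gap at its first step: a periphery $gY_i\subset\Sat_J(A)$ is only required to meet $\neb_J(A)$, so a point of $gY_i$ may be arbitrarily far from $A$, and ``translating both endpoints into $A$ at uniformly bounded cost'' is unjustified; it also discards the main case of two points deep inside one periphery, where the quasi-isometric embedding rests on the undistortedness of the periphery rather than on $A$. Moreover, showing that a geodesic between points of $\Sat_J(A)$ stays in a bounded neighborhood of $\Sat_J(A)$ gives quasiconvexity, but passing from that to a comparable-length path \emph{inside} $\Sat_J(A)$ requires a further argument you do not supply. The paper sidesteps all of this by simply invoking the quasiconvexity of saturations from \cite{DrutuSapir2005}.
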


\begin{proof}
The first claim follows from the quasiconvexity of saturations
due to \Drutu--Sapir \cite{DrutuSapir2005}.
The key property of $\Sat_J(A)$ is that there exists $J'$
such that the intersection of
distinct peripheries lies within $\neb_{J'}(A)$.

The second claim holds when $J$ exceeds
the constant $s$ provided by
Lemma~\ref{lem:CoarseIntersection}
(after applying an appropriate quasi-isometry).
Indeed if $\neb_\delta(Y) \cap \Sat_J(A)$
has large diameter, then since there is a uniform bound
on the diameters of coarse intersection between
distinct peripheries,
we see that
either $Y$ equals one of the $gY'$ added to form the
saturation, or using the key property,
$\neb_\delta(Y) \cap \neb_{J'}(A)$
must have large diameter.
But a sufficiently large diameter implies an infinite
diameter, which
implies that $Y$ comes $s$--close to $A$, and hence
$Y \subset \Sat_J(A)$.
\end{proof}

The following property of quasigeodesic triangles
is a simplification of a result from
\cite[Sec~8.1.3]{DrutuSapir2005}.

\begin{thm}
\label{thm:Triangle}
Let $(G,\mathbb{P})$ be relatively hyperbolic.
For each $\epsilon$ there is a constant $\delta$
such that
if $\Delta$ is an $\epsilon$--quasigeodesic triangle
with sides $c_0$, $c_1$ and $c_2$,
then there is either:
\begin{enumerate}
  \item a point $p$ such that $\nbd{p}{\delta/2}$
  intersects each side of~$\Delta$, or
  \item a peripheral coset $gP$ such that
  $\nbd{gP}{\delta}$ intersects each side of~$\Delta$.
\end{enumerate}
In the second case, illustrated in
Figure~\ref{fig:Triangle},
each side $c_i$ of $\Delta$ has a subpath $c_i'$
that lies in $\neb_\delta(gP)$ such that
the terminal endpoint of $c_i'$
and the initial endpoint of $c'_{i+1}$
are mutually within a distance~$\delta$
\textup{(}indices modulo~$3$\textup{)}.
\end{thm}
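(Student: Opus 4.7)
My plan is to use Bowditch's cusp-uniform model for relative hyperbolicity: $(G,\mathbb{P})$ acts properly on a proper $\delta$-hyperbolic space $\widetilde{X}$ with a $G$-invariant collection of disjoint open horoballs $\mathcal{B}$ centered at the parabolic points, where the maximal parabolic subgroups $gPg^{-1}$ are in one-to-one correspondence with the parabolic points $\xi$ (and hence with the peripheral cosets $gP$, via the natural orbit map $G \to \widetilde{X}$). The quasi-isometry between $G$ and the truncated space $\widetilde{X}-\mathcal{B}$ lets me transfer the statement back to the group.

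First I would replace the three $\epsilon$-quasigeodesic sides of $\Delta$ with geodesic sides in $\widetilde{X}$. By the Morse stability lemma in $\delta$-hyperbolic spaces, each $\epsilon$-quasigeodesic side lies within Hausdorff distance $K = K(\epsilon,\delta)$ of a geodesic with the same endpoints. This turns $\Delta$ into a genuine geodesic triangle $\bar\Delta$ at the cost of a bounded error; it suffices to establish the two alternatives for $\bar\Delta$ and then enlarge $\delta$ by $K$. By $\delta$-thinness, $\bar\Delta$ has a center $w\in\widetilde{X}$ with $\ball{w}{\delta}$ meeting all three geodesic sides.

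Now I split on the location of $w$. If $w$ lies at bounded depth in some horoball of $\mathcal{B}$ — say within distance $D$ of the horoball complement $\widetilde{X}-\mathcal{B}$ for a fixed threshold $D$ — then I take $p$ to be any nearest point in the thick part; it is close to all three sides, yielding case~(1). Otherwise $w$ lies much deeper than $D$ inside a horoball $B$ centered at some parabolic point $\xi$, with corresponding peripheral coset $gP$ (obtained from the associated maximal parabolic subgroup applied to the basepoint). Choose a slightly larger concentric horoball $B' \supset B$ whose boundary stays uniformly close to $gP$. Since $w$ lies in $B$ within $\delta$ of each side, and since geodesics between points in $B'-B$ cannot cross $B$ without lying within bounded distance of their endpoints by Lemma~\ref{lem:GeodesicPenetratesHoroball}, each geodesic side must actually penetrate $B$, giving a subarc inside $B'$ that passes near $w$ and therefore comes close to $gP$.

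To obtain the final subpath description in case (2), I let $c_i'$ be the maximal subarc of the $i$-th side lying in $\neb_\delta(gP)$ (so in particular the portion within $B'$). The terminal endpoint of $c_i'$ and the initial endpoint of $c_{i+1}'$ are both ``exit'' and ``entry'' points of consecutive sides between the same adjacent pair of quasigeodesics that meet at a common vertex of $\Delta$; by applying Lemma~\ref{lem:GeodesicPenetratesHoroball} to the two-geodesic configuration near that vertex, these exit/entry points must lie within a uniformly bounded distance of each other, and we absorb this bound into $\delta$. The main technical obstacle I foresee is Step~3: controlling how the three sides enter and leave the horoball in a coordinated way so that the consecutive subpath endpoints are genuinely within $\delta$. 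This requires carefully comparing horofunction values along each side using the fact that, in a $\delta$-hyperbolic space, geodesic segments that share a center deep inside a horoball must track each other ``parallel to the horosphere'' until they exit the slightly enlarged horoball $B'$, after which they diverge at definite speed.
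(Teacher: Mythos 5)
Your reduction to the cusped space fails at its first step. The sides of $\Delta$ are $\epsilon$--quasigeodesics for the word metric on $G$ (equivalently, for a space on which $G$ acts properly and cocompactly; this is how the theorem is used in Lemma~\ref{lem:bounded accumulation}, where the triangle lives in the cocompact wallspace $X$ and the ``peripheral coset'' is a periphery $Y\subset X$). The orbit map to the cusped space $\widetilde{X}$ is a quasi-isometry onto the \emph{truncated} space with its induced path metric, but that truncated space is exponentially distorted inside $\widetilde{X}$: a word geodesic joining two far-apart points of a peripheral coset runs along a horosphere, while the $\widetilde{X}$--geodesic dives to depth comparable to the logarithm of their horospherical distance. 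So the images of your sides are \emph{not} quasigeodesics of $\widetilde{X}$ with constants depending only on $\epsilon$, the Morse stability lemma does not apply, and the claim that $\Delta$ can be replaced by a geodesic triangle of $\widetilde{X}$ ``at the cost of a bounded error'' is false. Everything downstream---the thin-triangle center $w$, the assertion that the sides penetrate the horoball $B$, and the entry/exit analysis---is then carried out for the replacement geodesics rather than for $c_0,c_1,c_2$.

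Even granting the passage to honest $\widetilde{X}$--geodesics between the endpoints of the sides, what that yields is $\widetilde{X}$--closeness to a point or to a horoball, whereas the conclusion is stated in the group metric: the neighborhood $\nbd{gP}{\delta}$, the subpaths $c_i'\subset \neb_\delta(gP)$, and the $\delta$--closeness of consecutive entry and exit points are all measured in $G$. Converting ``the $\widetilde{X}$--geodesic between the endpoints of $c_i$ penetrates the horoball of $gP$ deeply'' into ``$c_i$ itself passes $\delta$--close to $gP$, with controlled entry/exit points'' is exactly the bounded-coset-penetration/saturation technology for quasigeodesics in relatively hyperbolic groups (Osin, \Drutu--Sapir), which your outline neither proves nor cites; it is not supplied by Lemma~\ref{lem:GeodesicPenetratesHoroball}, which concerns genuine geodesics of $\widetilde{X}$. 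Note also that the paper does not prove Theorem~\ref{thm:Triangle} at all: it quotes it as a simplification of \Drutu--Sapir \cite{DrutuSapir2005}, whose argument proceeds via asymptotic cones and tree-graded spaces. A cusped-space proof along your lines is possible in principle, but the quasigeodesic BCP machinery you are missing is its backbone, not a bookkeeping footnote, and the coordination problem you flag as the ``main obstacle'' is downstream of that.
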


\begin{lem}
\label{lem:bounded accumulation}
Let $(X,\W)$ be a wallspace where $X$ is a geodesic metric space.
Let $(G,\mathbb{P})$ be relatively hyperbolic.
Suppose $G$ acts properly and cocompactly on $X$,
and each $P \in \mathbb{P}$ acts cocompactly on a chosen connected
subspace $Y=Y(P)$.
Finally suppose each wall of $\W$ has
a relatively quasiconvex stabilizer.

Let $W'$ be a connected subspace with
relatively quasiconvex cocompact stabilizer.
Choose $J$ sufficiently large as in
Lemma~\ref{lem:SaturationProps}.
Let $\gamma$ be a geodesic from a point $p$
to $\Sat_J(W')$.
Let $q'$ be the first point on $\gamma$ that lies in
$\Sat_J(W')$.
There exists $U>0$ such that each wall $V$ that crosses
both $pq'$ and $W'$
must also cross $\neb_U(q')$.
\end{lem}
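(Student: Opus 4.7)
The plan is as follows. Fix $x \in V \cap pq'$ and $y \in V \cap W' \subseteq \Sat_J(W')$, and form the geodesic triangle $T$ with vertices $x, q', y$, whose three sides are the subpath $\alpha \subseteq \gamma$ from $x$ to $q'$, a geodesic $\beta$ from $q'$ to $y$, and a geodesic $\sigma$ from $x$ to $y$. The crucial property of $\gamma$ that I will exploit is that $q'$ is a closest point of $\Sat_J(W')$ to $p$; therefore along every point $z$ of $\alpha$ one has $\dist\bigl(z, \Sat_J(W')\bigr) = \dist(z,q')$, since otherwise $\gamma$ could be shortened by rerouting through a closer point of $\Sat_J(W')$. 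I also record that since $\Stab(V)$ is relatively quasiconvex and acts cocompactly on $V$, the wall $V$ itself is relatively quasiconvex as a subspace of $X$, so geodesics with endpoints in $V$ remain within a bounded neighborhood of $V$ except where they penetrate peripherals that coarsely intersect $V$.

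Now I apply Theorem~\ref{thm:Triangle} to $T$, splitting into the thin-center case and the peripheral case. In the thin-center case there are points $z_\alpha \in \alpha$, $z_\beta \in \beta$, $z_\sigma \in \sigma$ pairwise within $2\delta$. Using Lemma~\ref{lem:SaturationProps}(1) together with the fact that the endpoints of $\beta$ both lie in $\Sat_J(W')$, the point $z_\beta$ lies within a uniformly bounded distance of $\Sat_J(W')$; combining this with the $\gamma$-minimality identity above forces $\dist(z_\alpha, q')$ to be uniformly bounded, and hence $\dist(z_\sigma, q')$ as well. In the peripheral case a peripheral coset $gQ$ has $\neb_\delta(gQ)$ meeting each side. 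I use the isolation property Lemma~\ref{lem:SaturationProps}(2) to split into subcases: if $gQ \subseteq \Sat_J(W')$, then the point of $\alpha \cap \neb_\delta(gQ)$ is within $\delta$ of $\Sat_J(W')$, hence within $2\delta$ of $q'$ by minimality; if $gQ \not\subseteq \Sat_J(W')$, then $\neb_\delta(gQ) \cap \Sat_J(W')$ has diameter bounded by some $M(\delta)$, and since the point of $\beta \cap \neb_\delta(gQ)$ lies within a bounded distance of $\Sat_J(W')$ by Lemma~\ref{lem:SaturationProps}(1), all three meeting points lie uniformly close to $q'$.

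The main obstacle is then to translate the conclusion that some point of $\sigma$ is close to $q'$ into the conclusion that some point of $V$ is close to $q'$. For this I use relative quasiconvexity of $V$: the geodesic $\sigma$, joining two points of $V$, stays in a bounded neighborhood of $V$ except where it enters a peripheral $gP$. If $z_\sigma$ (or the analogous point from the peripheral case) lies within a bounded distance of $V$ directly, we conclude immediately. Otherwise $z_\sigma$ lies in a peripheral $gP$ that $\sigma$ penetrates; but since the endpoints of $\sigma$ lie in $V$ and $\sigma$ penetrates $gP$ deeply enough to reach $z_\sigma$, Lemma~\ref{lem:CoarseIntersection} guarantees that $V$ and $gP$ have uniformly controlled coarse intersection, producing a point of $V$ within a uniform constant of $z_\sigma$ and hence of $q'$.

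The hardest step will be this last translation in the peripheral subcase, where one needs to rule out that $\sigma$ dives arbitrarily deeply into a peripheral on the way from $x$ to $y$ without $V$ following suit. The hypotheses that $\Stab(V)$ is relatively quasiconvex and finitely generated, combined with Lemma~\ref{lem:CoarseIntersection}, provide exactly the control needed: any peripheral along which $\sigma$ travels a long distance must coarsely intersect $V$ in an unbounded set, giving a uniform constant $s$ such that $V$ comes within distance $s$ of that peripheral at the relevant location, and hence of $q'$. Chasing the constants through both cases yields the desired uniform $U$.
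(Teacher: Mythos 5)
Your skeleton runs parallel to the paper's proof (same three vertices $x=t$, $q'$, $y=s$, the relatively thin triangle theorem, isolation of $\Sat_J(W')$, and the minimality of $\gamma$ to pull points of $\gamma$ near the saturation back to $q'$), but you take the sides $\beta$ and $\sigma$ to be geodesics of $X$ rather than paths lying in $\Sat_J(W')$ and in $V$, and that choice creates a genuine gap at the final translation step. From ``$z_\sigma$ is uniformly close to $q'$'' you must produce a point of $V$ uniformly close to $q'$, and your justification via Lemma~\ref{lem:CoarseIntersection} does not do this. That lemma only says that if $V$ and a peripheral coset $gP$ have unbounded coarse intersection, then they already coarsely intersect at a uniform radius $s$; it gives no control on \emph{where} along $gP$ that intersection sits. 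The coarse intersection is (coarsely) a single coset of $\Stab(V)\cap gPg^{-1}$ inside the peripheral region, while $z_\sigma$ can lie arbitrarily far from it: inside a peripheral region, a geodesic of $X$ joining two points of $V$ is only a quasigeodesic for the peripheral's intrinsic geometry, and (already for $P$ virtually $\Z^2$) such a quasigeodesic can stray a distance linear in its length from the coset along which $V$ travels. So ``producing a point of $V$ within a uniform constant of $z_\sigma$'' is unjustified, and the same objection applies verbatim to the phrase ``at the relevant location'' in your peripheral subcase. (A secondary, fixable, imprecision: for ``$z_\beta$ is boundedly close to $\Sat_J(W')$'' you need the \Drutu--Sapir quasiconvexity of saturations --- geodesics with endpoints in the saturation stay in a bounded neighborhood of it --- not merely the quasi-isometric embeddedness stated in Lemma~\ref{lem:SaturationProps}(1).)

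The paper avoids this problem by building the triangle so that the side from $t\in V\cap\gamma$ to $s\in V\cap W'$ is a quasigeodesic lying \emph{in} $V$ (using that $\Stab(V)$ acts cocompactly and, in the simplified case treated, that $V$ is quasi-isometrically embedded; in general one uses a quasigeodesic in $\Sat_J(V)$ and checks the two transition points are boundedly close to $V$), and the side from $s$ to $q'$ to lie in $\Sat_J(W')$. Then the point of the $V$--side near the triangle's center or central periphery is already a point of (or uniformly near) $V$, and the bound $\dist(V,q')\le 3\delta+M$ drops out of isolation plus the minimality of $\gamma$, with nothing to recover a posteriori. To repair your argument you would essentially have to make the same move --- replace $\sigma$ (and ideally $\beta$) by paths inside $V$ (resp.\ $\Sat_J(W')$) --- since relative quasiconvexity alone cannot force a geodesic of $X$ to stay uniformly close to $V$ through peripheral regions.
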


\begin{figure}
   \centering
   \begin{minipage}[t]{0.50\linewidth}
      \labellist \small \hair 2pt
         \pinlabel $c'_1$ at 112 64
         \pinlabel $c'_2$ at 184 64
         \pinlabel $c'_0$ at 142 7
         \pinlabel $gP$ at 146 47
      \endlabellist
      \centering
      \includegraphics[width=\linewidth]{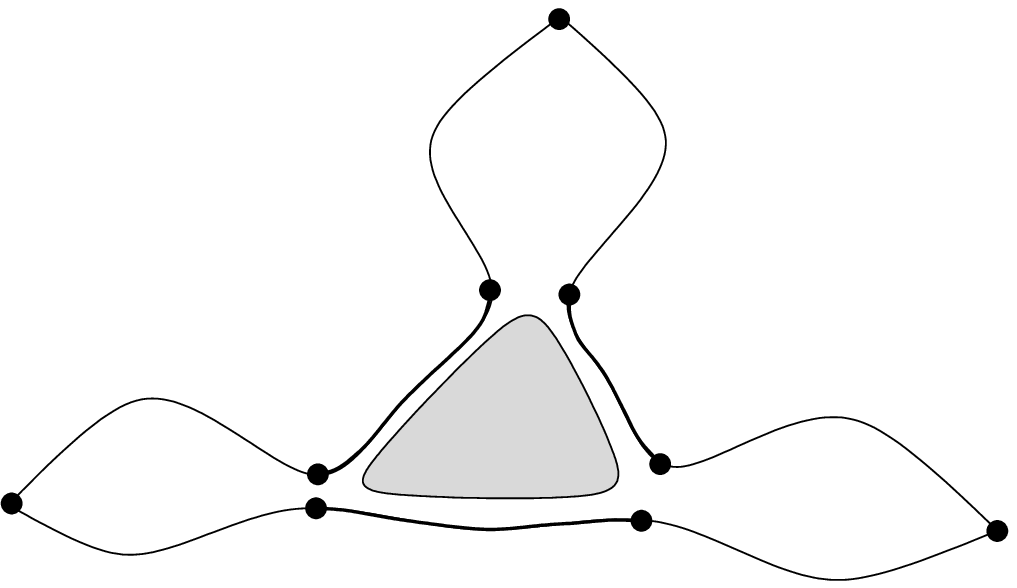}
      \captionsetup{width=.9\textwidth}
      \caption{A quasigeodesic triangle in a relatively hyperbolic group
      with a peripheral coset at the center.
      \label{fig:Triangle}}
   \end{minipage}%
   \hfill%
   \begin{minipage}[t]{0.425\linewidth}
      \labellist \small \hair 3pt
      \pinlabel \color{red}{$a$} [br] at 183 381
      \pinlabel \color{red}{$\bar{a}$} [t] at 224 262
      \pinlabel \color{red}{$b$} [tl] at 462 305
      \pinlabel \color{red}{$\bar{b}$} [t] at 375 265
      \pinlabel \color{red}{$c$} [b] at 409 515
      \pinlabel \color{red}{$\bar{c}$} [l] at 530 395
      \pinlabel $p$ [tr] at 55 255
      \pinlabel $t$ [tl] at 103 255
      \pinlabel $q'$ [t] at 430 245
      \pinlabel $s$ [br] at 535 550
      \pinlabel $V$ [tl] at 40 100
      \pinlabel $W'$ [l] at 525 55
      \endlabellist
      \centering
      \includegraphics[width=.77\linewidth]{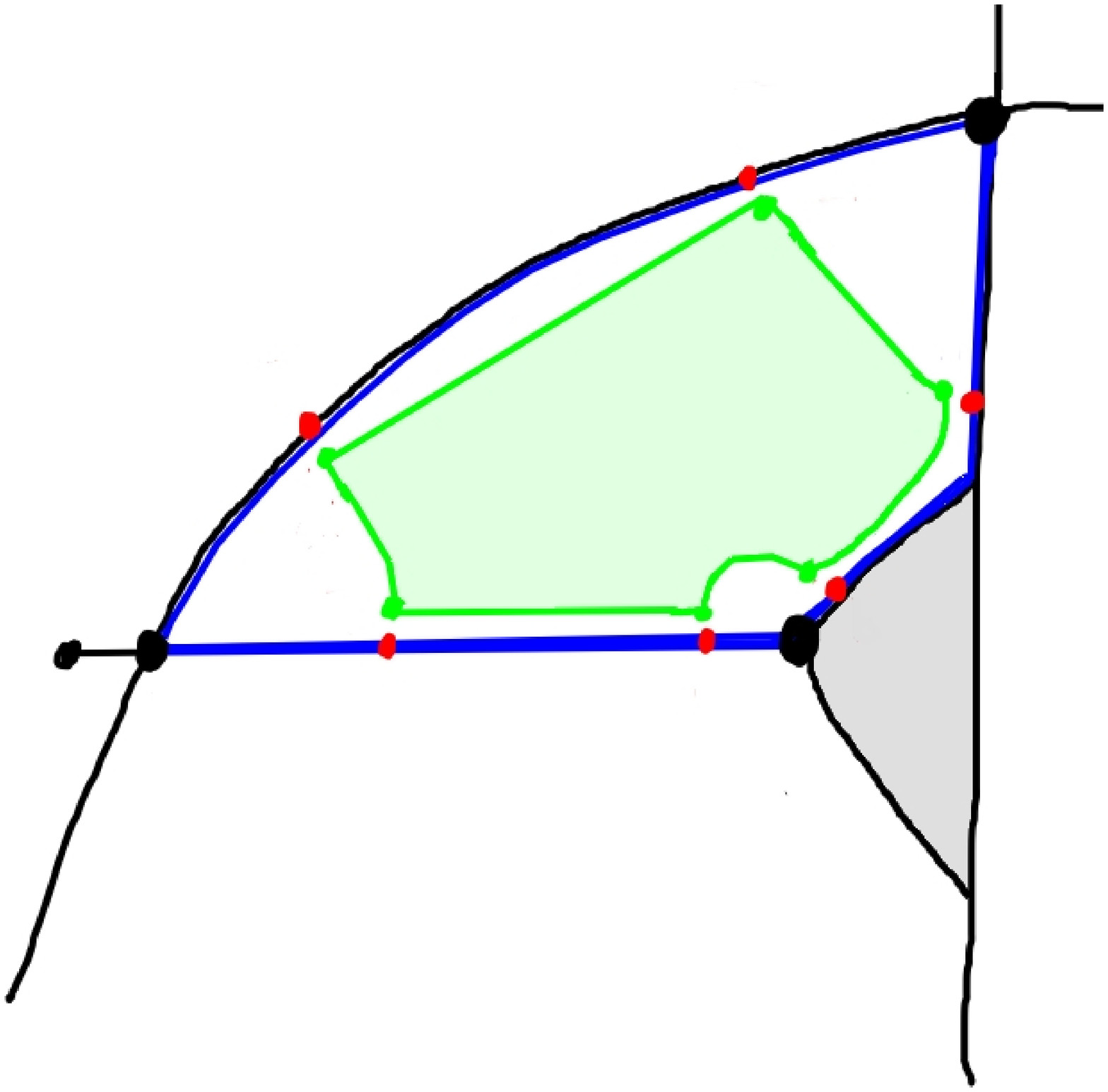}
      \captionsetup{width=\textwidth}
      \caption{The triangle $\Delta(stq')$ contains a central periphery
      $Y$, whose overlap with $\Sat_J(W')$ is approximated by
      $\dist(b,\bar{c})$. \label{fig:Accumulation}}
   \end{minipage}
\end{figure}

We will explain the proof in the special case
when $X$ is a geometric wallspace.
The general case is almost identical.
One applies the same line of reasoning to each closed
halfspace of $V$---these are also relatively quasiconvex
in an appropriate sense generalizing Definition~\ref{def:RelQC}.

We will also explain the proof in the case where
each wall $V$ of $X$ is quasi-isometrically embedded.
This happens precisely when the parabolic subgroups of
$\Stab(V)$ are quasi-isometrically embedded in the peripheral subgroups of $G$ by
\cite[Thm~10.5]{HruskaRelQC}.

To obtain the proof in general, one would have
to consider a quasigeodesic in the saturation of $V$
and observe that the points $a$ and $c$
in Figure~\ref{fig:Accumulation}
are within a bounded distance of $V$.

\begin{proof}[Proof of Lemma~\ref{lem:bounded accumulation}]
Let $s$ be a point in $V \cap W'$.
Let $t$ be a point in $V \cap \gamma$.
Consider a quasigeodesic triangle $\Delta(stq')$ whose three sides are in $V$, $\Sat_J(W')$, and $pq'$.

There exists $M$ such that $\Sat_J(W')$ is ``isolated'' in the sense that  for any periphery $Y$, either
$\neb_\delta(Y) \cap  \Sat_J(W')$ has diameter $\leq M$, or $Y$ lies entirely within  $\Sat_J(W')$.

We will prove the statement for $U=3\delta +M$.
The proof breaks into two cases according to whether the central periphery $Y$ within $\Delta(stq')$ has large or small
overlap with $\Sat_J(W')$
in the sense that $\dist(b,\bar c) \leq M$ or $\dist(b,\bar c)\geq M$.
(The small overlap case includes the situation where the central periphery consists of a singleton.)
We refer the reader to Figure~\ref{fig:Accumulation}.

If $\dist(b,\bar c)\leq M$ then
\[
   \dist(V,q') \leq
   \dist(c,q')\leq \dist(c,\bar c) + \dist(\bar c, b) + \dist(b, \bar b)
   + \dist(\bar b, q')
   \leq \delta + M + \delta + \delta \leq 3\delta +M.
\]

If $\dist(b,\bar c)\geq M$ then $Y\subset \Sat_J(W')$.
Thus $\dist(\bar a, q')\leq \dist(\bar a, Y)\leq \delta$
since $q'$ is a closest point to $\Sat_J(W')$.
We thus have:
\[
   \dist(V,q') \leq
\dist(a,q') \leq \dist(a,\bar a)+ \dist(\bar a,q') \leq \delta+\delta.
\qedhere
\]
\end{proof}

\subsection{Wall-Wall Separation and relative hyperbolicity}
\label{subsec:WallWallRH}

Let $(X,\W)$ be a wallspace where $(X,\dist)$
is a metric space.
We say $(X,\W)$
has the \emph{Wall-Wall Separation Property}
if there exists a constant $D>0$
such that any two walls $W,W'$ in $X$ with $\dist(W,W') > D$
are separated by another wall $W''$
in the sense that they lie in distinct open halfspaces of $W''$.
The Wall-Wall Separation Property was
termed ``Strong Parallel Walls'' (see \cite{NibloReeves03}).
We say $(X,\W)$ has the \emph{Ball-Ball Separation Property}
if for each $r$ there exists $m$ such that if $\dist(x_1,x_2) > m$
then $\neb_r(x_1)$ and $\neb_r(x_2)$ are separated by a wall.

Let $Y$ be a connected subspace of the wallspace $X$.
We say $Y$ has \emph{Ball-WallNbd Separation}
if for each $r>0$
there exists $s$ such that if $q \in Y$ and
$W$ is a wall of $X$
and
$\dist\bigl( \neb_r(q) \cap Y, \neb_r(W) \cap Y\bigr) > s$,
then there exists a wall $W'' \cap Y$ of $Y$
separating the sets
$\neb_r(q) \cap Y$ and $\neb_r(W) \cap Y$.

We say $Y$ has \emph{WallNbd-WallNbd Separation
\textup{[}for osculating walls\textup{]}}
if for each $r>0$
there exists $s$ such that if $W,W'$ are [osculating] walls of $X$
and
$\dist\bigl( \neb_r(W) \cap Y, \neb_r(W') \cap Y\bigr) > s$,
then there exists a wall $W'' \cap Y$ of $Y$
separating the sets
$\neb_r(W) \cap Y$ and $\neb_r(W') \cap Y$.
Note that when one or both of these sets are empty,
then any wall separates them.
Note also that all infinite intersections
arise for some sufficiently large critical value of
$r$---specifically, there exists $r_0$
such that if $\neb_r(W)\cap Y$ is infinite for some $r$ then
$\neb_{r_0}(W)\cap Y$ is infinite.

\begin{thm}
\label{thm:StrongParallelWalls}
Let $G$ act properly and
cocompactly on a wallspace $(X,\W)$.
Suppose $(G,\mathbb{P})$ is relatively hyperbolic.
Suppose for each wall $W$, the stabilizer $H$
is relatively quasiconvex.
For each $P \in \mathbb{P}$ let $Y$ be a $P$--cocompact connected subspace.
Suppose $X$ has
Ball-Ball Separation, and each $Y$ has
WallNbd-WallNbd Separation for osculating walls
and Ball-WallNbd Separation.

Then $(X,\W)$ has Wall-Wall Separation.
\end{thm}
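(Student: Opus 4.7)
The plan is to fix walls $W,W'$ with $\dist(W,W') > D$ for a large constant $D$ to be determined, take a geodesic $\gamma$ from $p\in W$ to $p'\in W'$, and produce a separating wall $W''$ that crosses $\gamma$ while avoiding certain forbidden neighborhoods dictated by Lemma~\ref{lem:bounded accumulation}. Fix a saturation constant $J$ large enough that both Lemma~\ref{lem:SaturationProps} and Lemma~\ref{lem:bounded accumulation} apply to $W$ and $W'$. Let $q$ be the last point of $\gamma$ lying in $\Sat_J(W)$ and let $q'$ be the first point of $\gamma$ lying in $\Sat_J(W')$ beyond $q$. By Lemma~\ref{lem:bounded accumulation} applied to $\gamma$ and to its reverse, any wall crossing both $\gamma$ and $W$ (respectively $W'$) must also cross $\neb_U(q)$ (respectively $\neb_U(q')$) for a suitable constant $U$. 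It therefore suffices to exhibit a wall $W''$ that meets $\gamma$ at a point strictly between $q$ and $q'$ while being disjoint from $\neb_U(q) \cup \neb_U(q')$: such a $W''$ is not transverse to $W$ or to $W'$, and since $p\in W$ and $p'\in W'$ sit in opposite open halfspaces of $W''$, connectedness of $W$ and $W'$ forces them into opposite open halfspaces as well.

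In the thick-middle case $\dist(q,q') > m(U)$, where $m(U)$ is the Ball-Ball Separation constant of $X$ at radius $U$, the hypothesis directly yields a wall $W''$ placing $\neb_U(q)$ and $\neb_U(q')$ into opposite open halfspaces. This $W''$ is automatically disjoint from both neighborhoods, meets $\gamma$ strictly between $q$ and $q'$, and satisfies the sufficient condition.

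The delicate case is $\dist(q,q') \le m(U)$. Here the inequality $\dist(p,q) + \dist(q',p') > D - m(U)$ forces one of these sub-segments to be long; by symmetry suppose $\gamma_0 := \gamma|_{[p,q]}$ has length exceeding $(D-m(U))/2$ and lies inside $\Sat_J(W) = \neb_J(W) \cup \bigcup_{Y \subset \Sat_J(W)} Y$. If $\gamma_0$ were confined to $\neb_J(W)$ and if $q'$ also lay in $\neb_J(W')$, the triangle inequality would give $\dist(W,W') \le 2J + m(U)$, contradicting our choice of $D$. Using the isolation statement of Lemma~\ref{lem:SaturationProps}(2) one rules out the configuration in which $q'$ lives in a periphery that is simultaneously close to both $W$ and $W'$ (this would force the periphery into both saturations, contradicting the defining property of $q$). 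What remains is the situation where a sub-arc of $\gamma_0$ traverses a large $Y$-distance inside a single periphery $Y \subset \Sat_J(W)$. Applying Ball-WallNbd Separation of $Y$ to the exit point $y_1$ of this sub-arc and to the wall $W$ then produces a wall $W''$ of $X$ whose restriction to $Y$ separates $\neb_r(y_1) \cap Y$ from $\neb_r(W) \cap Y$. Combining the isolation of $Y$ from other peripheries (Lemma~\ref{lem:SaturationProps}(2)) with a second application of Lemma~\ref{lem:bounded accumulation} shows that $W''$ cannot cross $W$ or $W'$ outside $Y$, so the peripheral separation upgrades to a global one. The WallNbd-WallNbd hypothesis plays the analogous role in the parallel situation where $W$ and $W'$ both leave substantial footprints in $Y$ that happen to osculate.

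The principal obstacle is the peripheral case, in which the constants $J$, $U$, $m$, $r$, $s$, and ultimately $D$ must be chosen in the correct order so that the wall $W''$ extracted from the peripheral hypothesis actually lives in the complement of $\neb_U(q) \cup \neb_U(q')$, and so that the saturation isolation of $Y$ genuinely prevents $W''$ from accidentally becoming transverse to $W$ or $W'$ far from $Y$. A subsidiary obstacle is verifying that a long $\Sat_J(W)$-segment of $\gamma$ must actually penetrate some periphery deeply rather than just hugging $\neb_J(W)$ for its entire length; this is controlled by the thin-triangle description of relatively hyperbolic geodesics (Theorem~\ref{thm:Triangle}) together with relative quasiconvexity of $\Stab(W)$.
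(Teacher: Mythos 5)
Your proposal has the right general flavor (a case analysis driven by saturations, using Ball--Ball, Ball--WallNbd and WallNbd--WallNbd separation in the appropriate regimes, with your ``thick middle'' case matching the paper's Ball--Ball case), but it has genuine gaps. The central one is your use of Lemma~\ref{lem:bounded accumulation}. That lemma is stated for a geodesic from a point $p$ \emph{to the saturation} $\Sat_J(W')$, and its proof uses that the entry point $q'$ is a nearest point of $\Sat_J(W')$ along the geodesic (the step $\dist(\bar a,q')\le\dist(\bar a,Y)$). Your $\gamma$ is a geodesic between prescribed points $p\in W$ and $p'\in W'$; its entry point into $\Sat_J(W')$ is in general not a nearest point, so the lemma does not apply, and your key claim that any wall crossing both $\gamma$ and $W$ (resp.\ $W'$) must cross $\neb_U(q)$ (resp.\ $\neb_U(q')$) is unavailable. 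Moreover, even where the lemma does apply it only controls walls crossing the segment \emph{before} the first entry into the saturation, and your $q'$ (first point beyond $q$) need not be the first entry of $\gamma$ into $\Sat_J(W')$ when the two saturations interleave along $\gamma$ --- which happens exactly when they share a periphery, the configuration the paper has to treat as a separate case. A second gap: your ``sufficient condition'' is not sufficient. A wall $W''$ meeting $\gamma$ strictly between $q$ and $q'$, missing $\neb_U(q)\cup\neb_U(q')$, and not transverse to $W$ or $W'$ can still fail to separate them, because it may cross the outer segments of $\gamma$ (or the saturations), leaving $W$ and $W'$ on the same side. The paper avoids this by taking $qq'$ to be a \emph{shortest} geodesic between $\Sat_J(W)$ and $\Sat_J(W')$ and proving Lemma~\ref{lem:AAAA}, which controls walls crossing the (connected) saturations rather than just the walls; the saturations then carry $W$ and $W'$ to opposite sides of $W''$. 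Your wall-to-wall geodesic gives no analogue of that control.

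There is also a structural mismatch with the hypotheses: the peripheral hypothesis is WallNbd--WallNbd Separation \emph{only for osculating walls}. The paper therefore argues contrapositively: it may assume $W,W'$ osculate (this is harmless, since non-transverse walls not separated by any wall osculate by definition, and transverse geometric walls intersect), and it bounds the distance between osculating walls; osculation is then used in every case (``$W''$ must cross $W$ or $W'$''), and Lemmas \ref{lem:AAA}~and~\ref{lem:AAAA} turn that into a contradiction. Your direct construction never makes this reduction, so your appeal to the osculating-only WallNbd--WallNbd hypothesis in the shared-periphery situation is unjustified as written, and your claim that the wall produced by Ball--WallNbd separation inside a periphery ``upgrades to a global'' separation of $W$ from $W'$ is asserted rather than proved --- that upgrade is precisely the content of Lemmas \ref{lem:AAA}~and~\ref{lem:AAAA} together with the osculation assumption. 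To repair the argument you would essentially need to reorganize it along the paper's lines: reduce to osculating $W,W'$, replace your wall-to-wall geodesic by a shortest geodesic between the two saturations (or the shared-periphery dichotomy), and prove the crossing-control lemmas for that configuration.
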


\begin{cor}
Under the hypotheses of Theorem~\ref{thm:StrongParallelWalls},
if $Y$ has WallNbd-WallNbd Separation for arbitrary walls,
it follows that $X$ has WallNbd-WallNbd Separation for arbitrary walls.
\end{cor}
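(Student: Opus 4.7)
The plan is to follow the proof of Theorem~\ref{thm:StrongParallelWalls} \emph{mutatis mutandis}, propagating the parameter $r$ throughout and invoking the strengthened peripheral hypothesis wherever the original argument invokes WallNbd-WallNbd Separation for osculating walls. Fix $r>0$, and let $W,W'$ be walls of $X$ with $\dist\bigl(\neb_r(W),\neb_r(W')\bigr)$ larger than a threshold $D=D(r)$ to be determined. The goal is to produce a wall $W''$ of $X$ placing $\neb_r(W)$ and $\neb_r(W')$ in opposite open halfspaces.

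First, choose $J$ large as in Lemma~\ref{lem:SaturationProps} so that $\Sat_J(W)$ and $\Sat_J(W')$ are quasi-isometrically embedded and isolated from peripheries not contained in them. Take an $X$-geodesic $\gamma$ realizing $\dist\bigl(\neb_r(W),\neb_r(W')\bigr)$ and form a quasigeodesic triangle $\Delta$ with sides along $\Sat_J(W)$, $\Sat_J(W')$, and $\gamma$. Apply Theorem~\ref{thm:Triangle} to $\Delta$. Either $\Delta$ admits a center $p$ that is $\delta/2$-close to all three sides, or there exists a central periphery $Y=gY(P)$ whose $\delta$-neighborhood meets all three sides.

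In the thin-center case, the two endpoints of $\gamma$ lie in balls of a uniform radius $C=C(r,J,\delta)$ whose centers sit on $\Sat_J(W)$ and $\Sat_J(W')$ respectively and remain far apart; Ball-Ball Separation for $X$ yields a wall $W''$ separating these two balls, and once $D$ is chosen to absorb $C$, also separating $\neb_r(W)$ from $\neb_r(W')$. In the peripheral case, applying Lemma~\ref{lem:bounded accumulation} in turn to $W$ (against $\Sat_J(W')$) and to $W'$ (against $\Sat_J(W)$) forces every wall crossing both $W,W'$ to pass near the two transition points on $Y$. Consequently $\neb_r(W)\cap Y$ and $\neb_r(W')\cap Y$ are nonempty bounded sets whose mutual distance in $Y$ grows without bound as $\dist\bigl(\neb_r(W),\neb_r(W')\bigr)$ does. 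The strengthened hypothesis that $Y$ has WallNbd-WallNbd Separation \emph{for arbitrary walls} then yields a wall $W''$ of $X$ whose trace $W''\cap Y$ separates these two subsets within $Y$.

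The main obstacle, just as in the proof of Theorem~\ref{thm:StrongParallelWalls}, is upgrading this trace-level separation inside $Y$ to a genuine separation of the full $r$-neighborhoods in $X$. The closing argument from that theorem applies verbatim: the isolation clause of Lemma~\ref{lem:SaturationProps} guarantees that $\neb_r(W)$ cannot leave its chosen halfspace of $W''$ without either re-entering the neighborhood of $Y$ (ruled out by the trace separation) or approaching another periphery not belonging to $\Sat_J(W)$ (ruled out by the choice of $J$), and symmetrically for $\neb_r(W')$. Tracking the accumulated constants produces a threshold $D(r)$ that witnesses WallNbd-WallNbd Separation for arbitrary walls in $X$, completing the proof.
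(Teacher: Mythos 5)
Your plan of re-running the proof of Theorem~\ref{thm:StrongParallelWalls} at scale $r$ breaks down at exactly the point you flag as the ``main obstacle.'' The proof of that theorem never upgrades a trace-level separation inside $Y$ (or a separation of two balls $\neb_r(q),\neb_r(q')$) to a separation of $W$ from $W'$ by a direct construction; it is an indirect argument that bounds $\dist(W,W')$ for an \emph{osculating} pair, and Wall-Wall Separation then follows from the definition of osculation: a far-apart pair is neither transverse nor osculating, hence some wall separates it. So there is no ``closing argument'' there that applies verbatim, and the replacement you sketch does not work: a wall $W''$ whose trace separates $\neb_r(W)\cap Y$ from $\neb_r(W')\cap Y$, or which separates two balls around the endpoints of your geodesic $\gamma$, need not separate the full (typically unbounded) neighborhoods $\neb_r(W)$ and $\neb_r(W')$, which can meet $W''$ far from $Y$ and far from $\gamma$; no choice of the threshold $D(r)$ ``absorbs'' this. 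Moreover, the contradiction steps of the original proof use that the candidate $W''$ must \emph{cross} $W$ or $W'$ (this is where osculation and connectedness of the walls enter), after which Lemmas \ref{lem:AAA} and \ref{lem:AAAA} apply; in your setting the failure of $W''$ to separate the $r$--neighborhoods only tells you that $W''$ meets $\neb_r(W)$ or $\neb_r(W')$, and coming $r$--close to $W$ is strictly weaker than crossing $W$, so those lemmas do not apply as stated. To push your route through you would have to formulate and prove scale-$r$ analogues of Lemmas \ref{lem:AAA} and \ref{lem:AAAA} and an ``osculation at scale $r$'' notion to run the indirect argument.

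The paper sidesteps all of this with a rescaling trick: for each $t\ge 0$ it forms the wallspace $(X,\W_t)$ whose walls are the thickened pairs $\bigl\{\neb_t(U),\neb_t(V)\bigr\}$, checks that Ball-Ball Separation for $X$ and Ball-WallNbd and WallNbd-WallNbd Separation for each $Y$ pass to $(X,\W_t)$ with adjusted constants --- and this is precisely where the hypothesis for \emph{arbitrary} walls is needed, since thickened walls that osculate in $\W_t$ may come from original walls that neither osculate nor cross --- and then applies Theorem~\ref{thm:StrongParallelWalls} wholesale to $(X,\W_t)$. Wall-Wall Separation for the thickened walls unpacks exactly to WallNbd-WallNbd Separation for $(X,\W)$. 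That formalization is what your ``mutatis mutandis'' is implicitly trying to do by hand, and it is the missing ingredient in your write-up.
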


\begin{proof}
Given a wallspace $(X,\W)$, for each $t\ge 0$
there is a new wallspace $(X,\W_t)$
whose walls are $\{ \neb_t(U),\neb_t(V) \}$ for each wall $\{U,V\}$
of $\W$.
In particular, for geometric wallspaces the walls of $\W_t$
have the form $W_t=\neb_t(W)$ for each geometric wall $W \in \W$.

Observe that $X$ has Ball-Ball Separation and
$Y$ has WallNbd-WallNbd Separation and Ball-WallNbd Separation
in $(X,\W_t)$ if the corresponding properties hold in $(X,\W)$.
For instance, if
$\dist\bigl( \neb_{r+2t}(W) \cap Y, \neb_{r+2t}(W') \cap Y\bigr) > s=s(r+2t)$,
then there exists a wall $W'' \cap Y$ of $Y$ separating the sets
$\neb_{r+t}(W_t) \cap Y$ and $\neb_{r+t}(W_t') \cap Y$,
and hence $W''_t$ separates
$\neb_{r}(W_t) \cap Y$ and $\neb_{r}(W_t') \cap Y$.

We apply Theorem~\ref{thm:StrongParallelWalls}
to conclude that $(X,\W_t)$ has Wall-Wall Separation for every $t\ge 0$.
We infer that $(X,\W)$ has WallNbd-WallNbd Separation.
\end{proof}

\begin{proof}[Proof of Theorem~\ref{thm:StrongParallelWalls}]
Choose $J$ sufficiently large that
Lemma~\ref{lem:SaturationProps} holds when $A$ is any of the
finitely many types of walls of $X$.
We will show that if $W,W'$ osculate then
\[
   \dist(W,W') \le \max\bigl\{s + 2r_1,t + 2(u+2r)\bigr\}
\]
where the constants are chosen in the cases below.

\begin{figure}
   \labellist
   \small\hair 2pt
   \pinlabel \color{blue}{$q$} [tl] at 914 203
   \pinlabel \color{blue}{$q'$} [tr] at 1235 206
   \pinlabel \color{blue}{$q$} [tl] at 1652 206
   \pinlabel \color{blue}{$q'$} [tr] at 1973 211
   \pinlabel $W$ [t] at 55 29
   \pinlabel $W'$ [t] at 660 26
   \pinlabel $W$ [t] at 788 28
   \pinlabel $W'$ [t] at 1391 28
   \pinlabel $W$ [t] at 1516 27
   \pinlabel $W'$ [t] at 2129 30
\endlabellist
\begin{center}
\includegraphics[width=.99\textwidth]{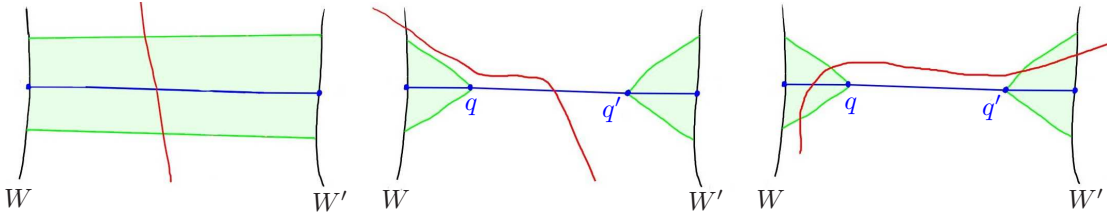}
\bigskip
\end{center}
\caption{In each case, there exists $W''$ separating $W,W'$.}
\label{fig:WallsBetweenWalls}
\end{figure}

We first consider two osculating walls $W,W'$ of $X$
with the property that $\Sat_J(W)$ and $\Sat_J(W')$
contain a common periphery $Y$.
By Lemma~\ref{lem:AAA}
we can choose $r_1$ such that if some wall $W''$ intersects $Y$,
and $W''$ crosses a wall $W$ of $X$,
and $W$ crosses the $J$--neighborhood of $Y$,
then $W''$ crosses $\neb_{r_1}(W) \cap Y$.
Moreover we shall assume that $r_1>J$ to ensure that
$\neb_{r_1}(W) \cap Y$ and $\neb_{r_1}(W') \cap Y$ are nonempty.
Let $s=s(r_1)$ be the WallNbd-WallNbd constant.
If $\dist(W,W') > s+2r_1$, then $\dist\bigl( \neb_{r_1}(W)\cap Y,
\neb_{r_1}(W')\cap Y \bigr) >s$.
By hypothesis there exists a wall $W''$ separating
$\neb_{r_1}(W) \cap Y$ and $\neb_{r_1}(W') \cap Y$,
as shown on the left of Figure~\ref{fig:WallsBetweenWalls}.
Since $W,W'$ osculate, $W''$ must cross either $W$ or $W'$,
say $W$.
But then our choice of $r_1$ guarantees that
$W''$ crosses $\neb_{r_1}(W) \cap Y$,
which is impossible.

We now consider the case where $\Sat_J(W)$ and $\Sat_J(W')$
do not contain a common periphery.
We may also assume that
$\neb_J(W) \cap \neb_J(W') = \emptyset$,
since otherwise $d(W,W')<2J$
and so the conclusion holds.
Consider
a shortest geodesic $qq'$ from $\Sat_J(W)$ to $\Sat_J(W')$.
Choose $r > \max\{J,r_1,r_2\}$, where $r_1,r_2$ are the constants
from Lemmas \ref{lem:AAA}~and~\ref{lem:AAAA}.
Let $u=u(r)$ be the Ball-WallNbd constant.

Suppose $\dist(q,W)>u+2r$.
Since $\dist(q,W) > J$, we have
$q \in Y$ for some periphery $Y \subset \Sat_J(W)$.
It follows that
$\dist \bigl( \neb_{r}(q)\cap Y, \neb_{r}(W) \cap Y \bigr) >u$
and so
Ball-WallNbd Separation in $Y$ yields a wall $W''$
that separates $\neb_{r}(q) \cap Y$ from $\neb_{r}(W) \cap Y$.
Since $W,W'$ osculate, $W''$ must cross either $W$ or $W'$.
But if $W''$ crosses $W$
then Lemma~\ref{lem:AAA} guarantees that
$W''$ crosses $\neb_{r}(W) \cap Y$,
which is impossible.
Similarly if $W''$ crosses $W'$
(as shown on the right side of Figure~\ref{fig:WallsBetweenWalls})
then Lemma~\ref{lem:AAAA} guarantees that
$W''$ crosses $\neb_r(q) \cap Y$,
which is impossible.
An analogous argument works if $\dist(q',W')>u+2r$.

Now suppose $\dist(q,W)$ and $\dist(q',W')$ are both
$\le u+2r$.
By Ball-Ball Separation, there exists $t=t(r)$ such that
whenever $\dist(q,q')>t$, there exists a wall $W''$
that separates
$\neb_{r}(q)$ and $\neb_{r}(q')$.
If
$\dist(W,W') > t + 2(u+2r)$, then $\dist(q,q')>t$.
Consequently there exists a wall $W''$
that separates
$\neb_{r}(q)$ and $\neb_{r}(q')$.
Hence $W''$ separates $W,W'$ by Lemma~\ref{lem:AAAA}.
This contradicts that $W,W'$ osculate.
\end{proof}

\begin{lem}
\label{lem:AAA}
Let $Y$ be a periphery and $W,W''$ be walls in $X$.
Suppose $W''$ crosses $W$, and $W''$ crosses $Y$,
and $\dist(W,Y)<J$.
Then $\neb_{r_1}(W) \cap W'' \cap Y$ is nonempty
for $r_1=r_1(J)$.
\end{lem}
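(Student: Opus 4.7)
The plan is to build a quasigeodesic triangle whose vertices witness the three hypotheses---a point of $W \cap W''$, a point of $W'' \cap Y$, and a point of $W$ that is $J$--close to a point of $Y$---and then apply the relatively thin triangles property, Theorem~\ref{thm:Triangle}, to extract the desired crossing of $W''$ with $Y$ close to $W$.

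First I would set up the three distinguished points. Since $W''$ crosses $W$, Lemma~\ref{lem:TransverseWallsMeet} produces $p \in W \cap W''$. Since $W''$ crosses $Y$, the connected geometric wall $Y$ meets both open halfspaces of $W''$, and its path-connectedness lets me choose $q \in W'' \cap Y$ (moving along a path in $Y$ joining the two halves). The hypothesis $\dist(W,Y)<J$ yields $w \in W$ and $y \in Y$ with $\dist(w,y)<J$. I then form three uniform quasigeodesics in $X$: an arc $\alpha$ from $p$ to $q$ running inside $W''$; an arc $\beta$ from $p$ to $y$ obtained by concatenating a $W$--arc from $p$ to $w$ with the short $X$--geodesic of length $<J$ from $w$ to $y$; and an arc $\gamma$ from $y$ to $q$ lying in $Y$. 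Each of $W$, $W''$, and $Y$ is a connected subspace with relatively quasiconvex cocompact stabilizer, so $\alpha$, $\beta$, and $\gamma$ are quasigeodesics in the ambient $\delta$--hyperbolic model for $(G,\mathbb{P})$ with constants depending only on the wallspace data.

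Next I would apply Theorem~\ref{thm:Triangle} to the quasigeodesic triangle $\Delta(p,q,y)$ and split into its two conclusions. In the central point case, a point $z$ lies $\delta/2$--close to each of $\alpha,\beta,\gamma$, so $z$ is $\delta/2$--close to $W''$ and is $(\delta/2 + J)$--close to $W$ (via the $[p,w]$ portion of $\beta$, or via the $[w,y]$ segment of length $<J$); since $z$ is also $\delta/2$--close to the periphery $Y$ (via $\gamma$), I can push $z$ a bounded distance to land on the connected subspace $W'' \cap Y$ using that $W''$ crosses $Y$ and that all three subspaces are cocompactly acted on by their stabilizers, producing $q' \in W'' \cap Y$ with $\dist(q',W)$ bounded in terms of $\delta$ and $J$. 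In the central peripheral coset case, a peripheral $Y' = gP'$ meets each of the three quasigeodesics within $\delta$. If $Y' \ne Y$, then $\neb_\delta(Y') \cap Y$ has uniformly bounded diameter by the almost-malnormality of $\mathbb{P}$ (equivalently, by the isolation property of saturations in Lemma~\ref{lem:SaturationProps}(2)), which collapses the portions of $\alpha,\beta,\gamma$ near $Y'$ together and returns us to a slim-triangle situation bounded by $J$ and $\delta$. If $Y' = Y$, then the portion of $\beta$ entering $\neb_\delta(Y)$ starts from a point that is within $J$ of $W$, and the corresponding entry of $\alpha$ into $\neb_\delta(Y)$ lies within $\delta$ of that point; this entry is a point of $W''$ close both to $Y$ and to $W$, and one final push (again using cocompactness of $\Stab(W'')$ on $W''$ and of $P$ on $Y$) yields the desired $q' \in W'' \cap Y$ within bounded distance of $W$.

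Setting $r_1$ to be the maximum of the bounds produced in each case completes the argument. The main obstacle I anticipate is the last step in both cases: converting a point that is merely close to $W''$ and close to $Y$ into an honest point of the intersection $W'' \cap Y$ that still lies close to $W$. The resolution hinges on using that the stabilizers of $W''$ and of $Y$ are finitely generated with cocompact actions, so a bounded local search in their intersection (or in a controlled thickening and then projection) produces the actual crossing point within a controlled distance of the approximate one; the uniformity of the bound comes from the fact that only finitely many $G$--orbits of wall/periphery pairs arise.
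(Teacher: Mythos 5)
There are two genuine gaps. First, your triangle is built from arcs lying \emph{inside} $W$, $W''$ and $Y$, and you assert these are uniform quasigeodesics ``because the stabilizers are relatively quasiconvex and cocompact.'' That does not follow: relative quasiconvexity does not imply that a wall is quasi-isometrically embedded in $X$ (the paper is explicit about this distinction just before Lemma~\ref{lem:bounded accumulation}, where the quasi-isometrically embedded case is singled out as a simplifying assumption), and in the cusped $\delta$--hyperbolic model a peripheral subspace $Y$ is certainly not quasi-isometrically embedded, so the side $\gamma\subset Y$ is not a quasigeodesic there at all. So Theorem~\ref{thm:Triangle} cannot be applied to your triangle as constructed. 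The paper sidesteps this entirely: the coarse triple intersection point is obtained in one step from \cite[Prop~7.5(2)]{HruskaWise09}, which directly yields $A=A(J)$ and a point $p\in \neb_A(W)\cap\neb_A(W'')\cap Y$ from the pairwise closeness of $W$, $W''$, $Y$.

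Second, and more seriously, the step you yourself flag as ``the main obstacle''---upgrading a point that is merely close to $W''$ and to $Y$ (and to $W$) into an honest point of $W''\cap Y$ at bounded distance from $W$---is the actual content of the lemma, and your proposed resolution (``a bounded local search,'' uniform because ``only finitely many $G$--orbits of wall/periphery pairs arise'') is not an argument: the quantification is over triples $(W,W'',Y)$, and finiteness of orbits of pairs does not by itself bound the distance from a near-intersection point of $W''$ with $Y$ to the genuine intersection $W''\cap Y$, which a priori could sit far away along $W''$. The paper's proof does real work here: it uses that $\Stab(W'')$ is a finitely generated relatively hyperbolic group whose peripheral subgroups, including $\Stab(W'')\cap\Stab(Y)$, are finitely generated (citing \cite{OsinBook06}), so that $W''\cap\neb_B(Y)$ is coarsely connected and one can join the approximate point $p''\in W''$ to a point $q\in W''\cap Y$ by a path in $W''\cap\neb_B(Y)$; a pigeonhole/cocompactness shortening argument then bounds the length of a near-minimal such path by a uniform $s=s(A)$, giving $r_1=s+2A$. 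Without an argument of this kind (or an equivalent substitute), your proof plan establishes only that $W''$ passes uniformly close to $Y$ near $W$, which is strictly weaker than the statement of the lemma.
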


\begin{proof}
For simplicity we assume that $X$ is a geometric wallspace.
By \cite[Prop~7.5(2)]{HruskaWise09} there exists $A=A(J)$
such that the intersection
$\neb_A(W) \cap \neb_A(W'') \cap Y$ contains a point $p$.
Let $p''$ denote a point in $W'' \cap \neb_A(p)$.
For each $q \in W'' \cap Y$
there exists a path $p''q$ from $p''$ to $q$ in
$W'' \cap \neb_B(Y)$ where $B$ is a uniform constant.
Indeed observe that
$\Stab(W'')$ is a finitely generated relatively hyperbolic
group and $\Stab(W'') \cap \Stab(Y)$ is one of its
peripheral subgroups.  There are only finitely many
conjugacy classes of such subgroups and each is finitely
generated by \cite{OsinBook06}.
Let $q$ denote a point of $W'' \cap Y$
where the path $p''q$ is within $1$ of the infimal length.
We claim that the length of $p''q$ is uniformly bounded.
Indeed there exists $s=s(A)$ so that
if $p''q$ is longer than $s$, a standard argument
using the pigeonhole principle and cocompactness
allows us to
chop out a segment and effectively translate $q$
to a point $q'' \in W'' \cap Y$ connected to $p''$
by a shorter path $p''q''$.
The result follows with $r_1=s+2A$.
\end{proof}

\begin{lem}
\label{lem:AAAA}
Assume $\neb_J(W)\cap \neb_J(W')= \emptyset$ and
$\Sat_J(W) \cap \Sat_J(W')$ does not contain a periphery.
There exists $r_2=r_2(J)$ such that the following holds:
Let $qq'$ be a shortest geodesic between
$\Sat_J(W)$ and $\Sat_J(W')$.
Any wall $W''$ crossing both saturations
must intersect both $\neb_{r_2}(q)$ and $\neb_{r_2}(q')$.
Similarly, any wall $W''$ crossing both $\Sat_J(W)$ and $qq'$
must intersect $\neb_{r_2}(q)$.
\end{lem}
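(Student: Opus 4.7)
The proof will closely mirror that of Lemma~\ref{lem:bounded accumulation}, using Theorem~\ref{thm:Triangle} on thin quasigeodesic triangles together with the quasiconvexity and isolation of $J$-saturations supplied by Lemma~\ref{lem:SaturationProps}. The crucial extra ingredient is the following \emph{monotonicity} observation, which I would prove at the outset: for any $p$ on the shortest geodesic $qq'$ between $\Sat_J(W)$ and $\Sat_J(W')$, one has $\dist(p,q)\leq \dist\bigl(p,\Sat_J(W)\bigr)$ and $\dist(p,q')\leq \dist\bigl(p,\Sat_J(W')\bigr)$. This follows from the triangle inequality applied to a point realizing the distance from $p$ to $\Sat_J(W)$, combined with the equality $|qp|+|pq'|=|qq'|=\dist(\Sat_J(W),\Sat_J(W'))$. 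This observation converts ``close to the saturation'' into ``close to $q$ (or $q'$)''.

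For the ``similarly'' assertion, I pick $a\in W''\cap \Sat_J(W)$ and $t\in W''\cap qq'$, and form the quasigeodesic triangle $\Delta(a,t,q)$ whose sides lie in $W''$, along $qq'$, and inside $\Sat_J(W)$. Theorem~\ref{thm:Triangle} yields two cases. In the near-point case, the common near-point $p$ is simultaneously close to $qq'$ and to $\Sat_J(W)$; monotonicity places $p$ near $q$, so $W''$ enters $\neb_{r_2}(q)$. In the central-peripheral case with coset $gP$, Lemma~\ref{lem:SaturationProps}(2) leaves two sub-cases: either $gP\subseteq \Sat_J(W)$, in which case the $qq'$-subpath in $\neb_\delta(gP)$ is within $\delta$ of $\Sat_J(W)$ and hence, by monotonicity, near $q$; or $\neb_\delta(gP)\cap \Sat_J(W)$ has uniformly bounded diameter $M=M(\delta)$, in which case the $\Sat_J(W)$-subpath of length at most $M$ forces its transition endpoint on $qq'$ to be within $\delta$ of $\Sat_J(W)$ and hence within $O(\delta)$ of $q$. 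In both sub-cases, the transition endpoints of Theorem~\ref{thm:Triangle} then bring the $W''$-subpath within $M+O(\delta)$ of $q$.

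For the main assertion, if $W''$ also crosses $qq'$ then two applications of the ``similarly'' assertion (once as stated, and once after swapping the roles of $(q,W)$ and $(q',W')$) yield the result. Otherwise $qq'$ lies in a single closed halfspace of $W''$, and I pick $a\in W''\cap \Sat_J(W)$ and $a'\in W''\cap \Sat_J(W')$. I form the quasigeodesic quadrilateral $(q,a,a',q')$ whose sides lie respectively inside $\Sat_J(W)$, $W''$, $\Sat_J(W')$, and along $qq'$, and split it via a geodesic diagonal $qa'$ into triangles $\Delta(q,q',a')$ and $\Delta(q,a,a')$. Applying the monotonicity-with-thin-triangles analysis to $\Delta(q,q',a')$---exactly as in Part~2 but with $\Sat_J(W')$ in place of $\Sat_J(W)$ and the isolation hypothesis applied to the $\Sat_J(W')$-side---shows that the diagonal $qa'$ enters a bounded neighborhood of $q'$. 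A second such analysis applied to $\Delta(q,a,a')$ then places the $W''$-side within $r_2$ of $q$; here I invoke the hypothesis that $\Sat_J(W)\cap \Sat_J(W')$ contains no periphery to rule out the degenerate sub-case where a common central peripheral $gP$ would lie in both saturations. Repeating with the diagonal $aq'$ (or by a symmetric quadrilateral decomposition) puts $W''$ within $r_2$ of $q'$.

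The main obstacle is controlling the central-peripheral sub-cases uniformly. One must track the subpath endpoints and transitions from Theorem~\ref{thm:Triangle} carefully through Lemma~\ref{lem:SaturationProps}(2) so that the resulting constant $r_2$ depends only on $J$. The hypothesis forbidding a common peripheral in both saturations, combined with the monotonicity of the shortest geodesic $qq'$, is precisely what allows one to transport the ``near-saturation'' information from one side of each triangle to the $W''$-side, arriving at a uniform neighborhood of the relevant endpoint.
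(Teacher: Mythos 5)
Your monotonicity observation is correct and does the real work in the ``similarly'' assertion: for $p$ on $qq'$ one indeed has $\dist(p,q)\le \dist\bigl(p,\Sat_J(W)\bigr)$, and with that in hand your triangle $\Delta(a,t,q)$ argument is a sound re-run of the proof of Lemma~\ref{lem:bounded accumulation}. The first case of the main assertion (when $W''$ also crosses $qq'$) then follows as you say. Your route is genuinely different from the paper's, which instead observes that $U=\Sat_J(W)\cup qq'\cup \Sat_J(W')$ is uniformly quasiconvex and that $\neb_{r_2}(q)$ is a ``bottleneck'' separating $W$ from $W'$ inside $\neb_s(U)$.

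The gap is in the second triangle of your quadrilateral decomposition. The analysis of $\Delta(q,q',a')$ works precisely because one of its sides is the shortest geodesic $qq'$, to which monotonicity applies. But in $\Delta(q,a,a')$ the role of that side is played by the diagonal $qa'$, and the diagonal does \emph{not} inherit monotonicity: a point $p$ on a geodesic from $q$ to $a'$ with $\dist\bigl(p,\Sat_J(W)\bigr)=d$ satisfies only $\dist(p,q)\le |qa'|-|qq'|+d$, and $|qa'|$ can be arbitrarily larger than $|qq'|$ when $a'$ sits deep inside $\Sat_J(W')$. So ``the $W''$-side is $\delta$--close to the $\Sat_J(W)$--side and to the diagonal'' cannot be converted into ``the $W''$-side is close to $q$'' by the same mechanism. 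What the two triangles actually give is that the $W''$-side lies in a uniform neighborhood of $\Sat_J(W)\cup qq'\cup\Sat_J(W')$; to finish you still need that any point simultaneously $O(\delta+s)$--close to $\Sat_J(W)$ and to $qq'\cup\Sat_J(W')$ is uniformly close to $q$. For the $qq'$ alternative this is monotonicity again, but for the $\Sat_J(W')$ alternative it is exactly the assertion that the coarse intersection of the two saturations is uniformly bounded and located near $\{q,q'\}$ --- and this is where the hypotheses $\neb_J(W)\cap\neb_J(W')=\emptyset$ and the absence of a common periphery must enter, via Lemma~\ref{lem:SaturationProps}(2) or Theorem~\ref{thm:RelPacking}. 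You invoke the no-common-periphery hypothesis only to exclude a degenerate central-periphery sub-case, which does not supply this bounded coarse intersection; without it the constant $r_2$ cannot be made uniform. Supplying that bounded-coarse-intersection statement (which is also the content of the bottleneck claim in the paper's sketch) would close the argument.
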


\begin{proof}[Sketch]
We describe the proof in the case when walls are
$\mu$--quasi-isometrically embedded.
The union $U=\Sat_J(W) \cup qq' \cup \Sat_J(W')$
is uniformly quasiconvex by the methods of
\cite{DrutuSapir2005}.
Moreover for each $s>0$ there exists $r_2$ such that
$W,W'$ lie in distinct components
of $\neb_s(U)-\neb_{r_2}(q)$ and also of $\neb_s(U)-\neb_{r_2}(q')$.
If there exists a quasigeodesic $\gamma$ in $W''$ from
$p \in \Sat_J(W)$ to $p' \in \Sat_J(W')$, then
$\gamma \subset \neb_s(U)$ so
$\gamma$ must come $r_2$--close to both $q$ and $q'$.
Similarly if there exists a quasigeodesic $\gamma'$
in $W''$ from $p \in \Sat_J(W)$ to $p'' \in qq'$,
then $\gamma \subset \neb_s(U)$ so
$\gamma$ must come $r_2$--close to $q$
(see the middle of Figure~\ref{fig:WallsBetweenWalls}).

In general, when the walls are $\mu$--relatively quasiconvex,
one uses an argument similar to
the proof of \cite[Thm~10.5]{HruskaRelQC}.
\end{proof}

\subsection{Structure and Local finiteness of $C(X)$ in the Relatively Hyperbolic Case with Wall-Wall separation}
\label{sec:RHLocalFiniteness}

The purpose of
Theorem~\ref{thm:WallSeparationImpliesExcellent}
is to assert that $C(X)$ has very strong hyperplane
finiteness properties
under the additional assumption of appropriate peripheral
finiteness properties.
It is most desirable that $C(X)$ be cocompact,
but this often fails to hold outside of a hyperbolic setting.
In many cases having finitely many orbits of hyperplanes
is a suitable generalization of cocompactness,
and this is an immediate consequence of having finitely many
orbits of walls in $X$.
The conclusion of
Theorem~\ref{thm:WallSeparationImpliesExcellent}
ensures that there are finitely many orbits of ``ways''
in which hyperplanes of $C(X)$ can interact,
i.e., osculate or cross.
This finiteness property enables one to prove
virtual specialness in the presence of corresponding
double hyperplane coset separability \cite{HaglundWiseCoxeter}.

\begin{thm}
\label{thm:WallSeparationImpliesExcellent}
Let $G$ act properly, cocompactly on a geometric wallspace $(X,\W)$.
Suppose $(G,\mathbb{P})$ is relatively hyperbolic.
Let $W_1,\dots,W_\ell$ be representatives of the finitely many
$G$--orbits of walls in $X$.
Let $H_j = \Stab_G(W_j)$, and assume each $H_j$ is finitely generated
and relatively quasiconvex.
For each $P \in \mathbb{P}$
let $Y=Y(P)$ be a $P$--cocompact subspace of $X$.
Suppose
\begin{enumerate}
\item
  \label{item:YHasGoodSeparation}
  Each $Y$ has WallNbd-WallNbd and Compact-WallNbd Separation.
\item
  \label{item:GActsProperly}
  $G$ acts properly on $C(X)$.
\item
  \label{item:BoundedPackingInP}
   $H_j \cap gPg^{-1}$ has bounded packing
in $gPg^{-1}$ for each $g\in G$, $P \in \mathbb{P}$, and each $j$.
\end{enumerate}

Then
\begin{enumerate}
\item
\label{item:FinManyGOrbits}
\label{item:CloseEdgesGiveCloseWalls}
There are finitely many $G$--orbits of pairs of
osculating hyperplanes and also of pairs of transverse hyperplanes in
$C(X)$.
\item
\label{item:UnifLocFinCX}
$C(X)$ is uniformly locally finite.
\item
\label{item:MetricallyProperCX}
$G$ acts metrically properly on $C(X)$.
\item
\label{item:FinManyHOrbits}
For each hyperplane $L$ in $C(X)$
there are finitely many $\Stab_G(L)$--orbits
of hyperplanes $L'$
in $C(X)$ that are transverse with or osculate with $L$.
\item
\label{item:FinManyPOrbits}
For each hyperplane $M$ in $C_{_\heartsuit}(Y)$
there are finitely many $\Stab_P(M)$--orbits
of hyperplanes $M'$
in $C_{_\heartsuit}(Y)$ that are transverse with or osculate with $M$.
Here $\heartsuit \in \{r,*,r* \}$.
\end{enumerate}
\end{thm}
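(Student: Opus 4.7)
The plan is to leverage Theorem~\ref{thm:StrongParallelWalls} together with Theorem~\ref{thm:BoundedPacking} to deduce the uniform finiteness conclusions, and then extract the orbit-counting statements (\ref{item:FinManyHOrbits}) and (\ref{item:FinManyPOrbits}).

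First I would establish Wall-Wall Separation on $X$. Lemma~\ref{lem:compact-compact}, whose hypotheses are met by the cocompact action on $(X,\W)$ (finite balls cross only finitely many walls) and the proper action on $C(X)$ (hypothesis~(\ref{item:GActsProperly})), gives Ball-Ball Separation on $X$. Since WallNbd-WallNbd and Compact-WallNbd Separation on $Y$ imply the weaker osculating-walls and Ball-WallNbd variants, all hypotheses of Theorem~\ref{thm:StrongParallelWalls} are satisfied, so $(X,\W)$ has Wall-Wall Separation. Lemma~\ref{lem:TransverseToCloseSubgroups} already gives a uniform $D_0$ such that transverse walls correspond to cosets $g\bar{H}_i,g'\bar{H}_j$ at Cayley distance below $D_0$, and Wall-Wall Separation combined with the quasi-isometry between $X$ and $G$ yields the same conclusion for osculating pairs after enlarging the constant to some $D$.

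Next, hypothesis~(\ref{item:BoundedPackingInP}) feeds Theorem~\ref{thm:RelHypBoundedPacking} to give bounded packing of each $H_j$ (hence $\bar{H}_j$) in $G$. With finitely many orbits of walls this bounds the number of $G$-orbits of $D$-close coset pairs, yielding finitely many $G$-orbits of transverse and of osculating wall pairs; passing to hyperplanes via the dictionary of Section~\ref{subsec:Cubes} proves~(\ref{item:FinManyGOrbits}). Theorem~\ref{thm:BoundedPacking} then delivers uniform local finiteness of $C(X)$, which is~(\ref{item:UnifLocFinCX}); since a proper action on a uniformly locally finite complex is metrically proper, (\ref{item:MetricallyProperCX}) follows. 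For~(\ref{item:FinManyHOrbits}), fix a hyperplane $L$: the $\Stab_G(L)$-orbits of hyperplanes $L'$ crossing or osculating $L$ are in bijection with the $G$-orbits of ordered pairs $(L,L')$ with first coordinate $L$, and these are bounded by the total count from~(\ref{item:FinManyGOrbits}).

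For~(\ref{item:FinManyPOrbits}) I would run the analogous argument inside each periphery. Hyperplanes of $C_{_\heartsuit}(Y)$ correspond to walls of $X$ whose halfspaces both lie in $\mathcal{U}_{_\heartsuit}$, and two such hyperplanes crossing or osculating in $C_{_\heartsuit}(Y)$ forces the associated wall stabilizers to yield cosets close in $P$. Applying the packing bound for $H_j\cap gPg^{-1}$ inside $gPg^{-1}$ from hypothesis~(\ref{item:BoundedPackingInP}), together with the WallNbd-WallNbd and Compact-WallNbd separation on $Y$, reproduces inside the peripheral subcomplex the same finiteness of orbits of related hyperplane pairs under $P$. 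Fixing $M$ and passing to $\Stab_P(M)$-orbits concludes as before.

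The main obstacle is (\ref{item:FinManyPOrbits}): the peripheral version of the Wall-Wall Separation argument must be run with $P$ acting on $Y$ rather than $G$ acting on $X$, and the hemiwallspace structure on $Y$ differs slightly from a genuine wallspace (there are unpaired halfspaces), so some care is required to show that osculation inside $C_{_\heartsuit}(Y)$ still forces nearness of the corresponding cosets in $P$, and to match this up with the bounded packing hypothesis within $P$ rather than within $G$.
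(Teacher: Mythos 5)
Your treatment of conclusions (1)--(4) is essentially the paper's own argument: Ball--Ball Separation from Lemma~\ref{lem:compact-compact}, Wall--Wall Separation from Theorem~\ref{thm:StrongParallelWalls}, a uniform distance bound $A$ for osculating pairs (with Lemma~\ref{lem:TransverseWallsMeet} handling transverse pairs), bounded packing of the $H_j$ in $G$ via Theorem~\ref{thm:RelHypBoundedPacking} feeding Theorem~\ref{thm:BoundedPacking} for uniform local finiteness, and metric properness from properness plus local finiteness. One small remark: the orbit-counting in (1) does not come from bounded packing but from the uniform distance bound together with cocompactness of $G$ on $X$ and local finiteness of the wall collection; bounded packing bounds the \emph{size} of pairwise-close families, which is what (2) needs, not the number of orbits of close pairs. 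Your deduction of (4) from (1) by counting $G$--orbits of ordered pairs is valid.

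The genuine gap is in conclusion~(\ref{item:FinManyPOrbits}). You propose to ``reproduce the argument inside the periphery,'' invoking bounded packing of $H_j\cap gPg^{-1}$ in $gPg^{-1}$ together with the separation hypotheses on $Y$, and you yourself flag that it is unclear why osculation in $C_{_\heartsuit}(Y)$ forces nearness of ``cosets in $P$.'' That is precisely the point that needs an argument, and the relativized strategy does not obviously close it: the hyperplane stabilizers are subgroups of $G$, not of $P$, and there is no wallspace on which $P$ acts cocompactly to which Theorem~\ref{thm:BoundedPacking} could be applied. The paper's route is different and does not use the peripheral bounded packing here at all. Since $C_{_\heartsuit}(Y)$ is convex in $C(X)$, hyperplanes $M,M'$ that cross or osculate in $C_{_\heartsuit}(Y)$ correspond to walls $W,W'$ that cross or osculate in $X$, hence lie within the uniform distance $A$ from conclusion~(1); by definition of the hemiwallspace both $W$ and $W'$ meet $\neb_t(Y)$ for a uniform $t$. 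The crux is then a coarse Helly statement (\cite[Prop~7.5(2)]{HruskaWise09}) producing a point in $\neb_s(W)\cap\neb_t(Y)\cap\neb_s(W')$ for uniform $s$, combined with the fact that $\Stab_P(M)$ acts \emph{cocompactly} on the coarse intersection $\neb_s(W)\cap\neb_t(Y)$ (a consequence of relative quasiconvexity of $\Stab(W)$). Local finiteness of the wall collection then gives finitely many $\Stab_P(M)$--orbits of such $W'$. Without these two ingredients --- the three-way coarse intersection and the cocompactness of $\Stab_P(M)$ on the footprint of $W$ in $\neb_t(Y)$ --- your sketch of (5) does not go through.
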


There are two natural settings in which
Theorem~\ref{thm:WallSeparationImpliesExcellent}
applies: the first is where $X$ is a truncated finite volume
hyperbolic $3$--manifold; the second is where $X$ is a ``mixed''
$3$--manifold \cite{PrzytyckiWiseMixed}.

In applications, verifying the hypotheses of
Theorem~\ref{thm:WallSeparationImpliesExcellent}
requires some care, but is primarily confined to features of the
peripheral subgroups.
For instance Hypothesis~\eqref{item:BoundedPackingInP}
holds in the finite volume hyperbolic case
since all subgroups of $\Z\times \Z$ have bounded packing.
In the mixed case, the peripheral subgroups are fundamental groups of
graph manifolds, so the walls must be chosen carefully to ensure
Hypothesis~\eqref{item:BoundedPackingInP}.

We have discussed conditions that imply
Hypothesis~\eqref{item:GActsProperly}
in Section~\ref{sec:Properness}.

The consequences of Hypothesis~\eqref{item:YHasGoodSeparation}
are the focus of Section~\ref{subsec:WallWallRH}.
As a hemiwallspace on $Y$ is somewhat extrinsic and depends on the
features of the embedding $Y \to X$,
we decided to couch things in terms of
WallNbd-WallNbd and Compact-WallNbd Separation,
which depend only on interactions between various subsets of $Y$.

\begin{proof}[Proof of Theorem~\ref{thm:WallSeparationImpliesExcellent}]
We first prove Conclusion~\eqref{item:FinManyGOrbits}.
Since $G$ acts properly on $C(X)$, Lemma~\ref{lem:compact-compact}
implies that $X$ has Ball-Ball Separation.
By Theorem~\ref{thm:StrongParallelWalls},
$X$ has Wall-Wall Separation, which gives
a uniform upper bound $A$ on the distance in $X$
between any two osculating walls.
By Lemma~\ref{lem:TransverseWallsMeet}
any two transverse walls are uniformly close
(they intersect).
Therefore there are only finitely many
$G$--orbits of osculating and/or transverse pairs
of walls.

We now prove Conclusion~\eqref{item:UnifLocFinCX}.
Our hypothesis that each $H_j$ is relatively quasiconvex,
together with Hypothesis~\eqref{item:BoundedPackingInP}
allows us to conclude that each $H_j$ has bounded packing in $G$ by Theorem~\ref{thm:RelHypBoundedPacking}.
Combining this with Conclusion~\eqref{item:FinManyGOrbits},
we can apply Theorem~\ref{thm:BoundedPacking}
to conclude that $C(X)$ is uniformly locally finite.

Conclusion~\eqref{item:MetricallyProperCX} follows from
Conclusion~\eqref{item:UnifLocFinCX}
together with our hypothesis that $G$
acts properly on $C(X)$.

To see Conclusion~\eqref{item:FinManyHOrbits},
as in the proof of
Conclusion~\eqref{item:FinManyGOrbits}
there is a uniform bound $A$ on the distance between
pairs of osculating and/or transverse walls of $X$.
Since $G$ acts properly and cocompactly on $X$,
there are only finitely many $H_j$--orbits of walls
intersecting $\neb_A(W_j)$.

Let us now verify Conclusion~\eqref{item:FinManyPOrbits}.
Let $W$ denote the wall of $X$ corresponding to $M$.
Any hyperplane $M'$ of $C_{_\heartsuit}(Y)$
that is transverse with or osculates with $M$
corresponds to a wall $W'$ of $X$
that is transverse with or osculates with $W$.
By the definition of $C_{_\heartsuit}(Y)$
there exists a constant $t=t(\heartsuit)$
such that $W$ and $W'$ intersect $\neb_t(Y)$.
By \cite[Prop~7.5(2)]{HruskaWise09}
there exists $s=s(t)$ such that
$\bigl( \neb_s(W) \cap \neb_t(Y) \bigr)
\cap \neb_s(W')$ is nonempty.
The result follows since
$\Stab_P(M)$ acts cocompactly on
$\neb_s(W) \cap \neb_t(Y)$.
\end{proof}

\section{CAT(0) truncations}
\label{sec:Truncating}

The goal of this section is to explain that the cube complex
obtained within the relatively hyperbolic situation studied in
Section~\ref{sec:RelCocompact}
can often be ``truncated'' to obtain a cocompact $\CAT(0)$ space.

If $G$ acts on a space $B$,
a subset $A \subset B$ is \emph{$G$--cocompact}
provided that $A$ contains a compact set $D$ such that $A=GD$.
Note that outside of a hyperbolic situation it is not always
possible to obtain a $G$--cocompact convex subcomplex.
For instance, let $G$ be any diagonal action of an infinite cyclic group
on the square tiling of $\E^2$.
Then there is no $G$--cocompact convex subcomplex
but there is a $G$--cocompact subspace that is convex with respect to
the $\CAT(0)$ metric.
The possibility of obtaining $G$--cocompact subspaces
is poorly understood.

\begin{defn}
The $G$ action on $X$ has the \emph{convex core property}
if each compact
subset $D\subset X$ lies in a closed, convex,
$G$--cocompact subset $E\subset X$.
\end{defn}

\begin{exmp}
If $G$ is virtually $\Z^n$ and $X$ is a proper CAT(0) space
with a semi-simple $G$--action then $(G,X)$ has the convex core property.

Indeed, let $F\subset X$ be a flat such that $GF=F$ and $F$ is $G$--cocompact.
Then any closed neighborhood $\neb_r(F)$ has the property that:
\begin{enumerate}
\item $\neb_r(F)$ is a convex subspace of $X$.
\item $\neb_r(F)$ is $G$--stable and $G$--cocompact (since $X$ is proper
and $\neb_r(F)$ is cobounded).
\end{enumerate}
\end{exmp}

There are many examples of groups $G$ that act properly
and cocompactly on a $\CAT(0)$ cube complex $X$,
such that $G$ contains a subgroup $H$ that does not act properly and
cocompactly on a $\CAT(0)$ space.
For instance, $H$ might not have a quadratic isoperimetric function,
or $H$ might not have finite homotopy type.
In this case
the $H$--action on $X$ cannot have the convex core property.
Typical free subgroups of $F_2 \times \Z$ do not have the
convex core property for the action on a tree cross a line.

\begin{thm}\label{thm:convex core property}
Let $G$ be a finitely generated group acting
properly on the complete $\CAT(0)$ space $C$.
Let $\set{C_i}{i\in I}$ be a collection of closed
convex subspaces of $C$,
and let $G_i = \stabilizer(C_i)$ for each $i$.
Assume that the family of translates $\{gC_i\}$ is locally finite,
so any finite ball intersects only finitely many of
the translates.

Suppose that $C=GK \cup \bigcup_i GC_i$ for some compact $K\subset C$.
And suppose that $g_iC_i\cap g_jC_j \subset GK$ unless $C_i=C_j$
and $g_j^{-1}g_i\in G_i$.

Then $(G,C)$ has the convex core property provided that
$(G_i,C_i)$ has the convex core property for each $i$.
\end{thm}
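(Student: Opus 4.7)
The plan is to enlarge $D$ to a closed convex $G$--cocompact subspace $E$ of $C$ by attaching, inside each $C_i$, a $G_i$--cocompact ``core'' supplied by the convex core property of $(G_i,C_i)$.  After replacing $D$ by $D\cup K$ we may assume $K\subseteq D$, so that $C=GD\cup\bigcup_iGC_i$.  For each $i$, the set $C_i\cap GD$ is $G_i$--cocompact in $C_i$: local finiteness of $\{gC_i\}$ ensures that only finitely many translates $g_{i,1}^{-1}C_i,\ldots,g_{i,n_i}^{-1}C_i$ meet $D$, and then $C_i\cap GD=G_iD_i$ where $D_i:=\bigcup_j(C_i\cap g_{i,j}D)$ is a compact subset of $C_i$.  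Apply the convex core property of $(G_i,C_i)$ to $D_i$ to obtain a closed convex $G_i$--invariant $G_i$--cocompact subspace $E_i\subseteq C_i$ containing $D_i$; since $E_i$ is $G_i$--invariant, $gE_i\supseteq gC_i\cap GD$ for every $g\in G$.  Define
\[
   E\ :=\ GD\cup\bigcup_i GE_i.
\]

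Writing $E_i=G_iF_i$ with $F_i\subseteq E_i$ compact, one has $E=G\bigl(D\cup\bigcup_i F_i\bigr)$, so $E$ is $G$--cocompact provided there are finitely many $G$--orbits of $C_i$'s (automatic in the applications to Theorem~\ref{thm:MainResult} with finitely many peripheral conjugacy classes); properness of the action makes $GD$ and each $GE_i$ closed, hence $E$ is closed.  For convexity, let $x,y\in E$ and let $\gamma=[x,y]$ be the $\CAT(0)$ geodesic.  Local finiteness means $\gamma$ meets only finitely many translates $g_1C_{i_1},\ldots,g_nC_{i_n}$, and convexity makes each $\gamma\cap g_kC_{i_k}=[p'_k,p''_k]$ a closed subinterval.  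Any $p\in\gamma$ in no $g_kC_{i_k}$ lies in $GK\subseteq GD\subseteq E$, so it suffices to show each $p'_k,p''_k\in g_kE_{i_k}$; convexity of $g_kE_{i_k}$ in $C$ then yields $[p'_k,p''_k]\subseteq g_kE_{i_k}\subseteq E$.  If $p'_k$ is an interior transition point of $\gamma$, then $p'_k$ is a limit of points of $\gamma$ lying either in $GK$ (forcing $p'_k\in GK$ by closedness of $GK$) or inside some $g_{k'}C_{i_{k'}}$ with $k'\neq k$ (forcing $p'_k\in g_kC_{i_k}\cap g_{k'}C_{i_{k'}}\subseteq GK$ by hypothesis); either way $p'_k\in g_kC_{i_k}\cap GD\subseteq g_kE_{i_k}$.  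If instead $p'_k=x$ is an endpoint of $\gamma$, membership $x\in E$ reduces to the same conclusion: $x\in GD$ directly gives $x\in g_kE_{i_k}$, while $x\in g'E_{i'}$ either forces $g'E_{i'}=g_kE_{i_k}$ via $G_{i_k}$--invariance of $E_{i_k}$ (when $g'C_{i'}=g_kC_{i_k}$) or places $x\in g_kC_{i_k}\cap g'C_{i'}\subseteq GK\subseteq GD$ (otherwise).  The same analysis applies to $p''_k$, establishing convexity.

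The main obstacle is the transition-point analysis above, which hinges on two complementary inputs: closedness of $GK$ (guaranteed by properness of the $G$--action, which makes any bounded region meet only finitely many translates of $K$) and the standing hypothesis $g_iC_i\cap g_jC_j\subseteq GK$ for distinct translates.  Together they force every entry and exit point of $\gamma$ along a peripheral translate $g_kC_{i_k}$ into the thick part $g_kC_{i_k}\cap GD$, which by construction is contained in the chosen core $g_kE_{i_k}$; this is precisely the compatibility that lets the convex cores glue together along any geodesic into a single convex subspace.
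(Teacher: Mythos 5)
Your construction of $E$ is essentially the paper's: enlarge $D$ to contain $K$, show $C_i\cap GD$ is $G_i$--cocompact via local finiteness (the paper's argument with the translates $g_j^{-1}C_i$ and the observation that equal translates correspond to a single right coset of $G_i$ is exactly your identification $C_i\cap GD=G_iD_i$), take convex cores $E_i\supseteq D_i$, and set $E=GD\cup\bigcup GE_i$. Where you genuinely diverge is the convexity step. The paper takes $E_i$ to contain the $1$--neighborhood of $D_i$, so that $GK$ lies in the interior of $E$ and every point of $E-GK$ has a convex neighborhood inside a single $gC_i$; it then shows $E$ is closed, connected, and \emph{locally} convex and invokes the local-to-global principle that such a subspace of a complete $\CAT(0)$ space is convex. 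You instead prove convexity directly: a geodesic $[x,y]$ meets only finitely many translates $g_kC_{i_k}$, each in a closed subinterval, and the isolation hypothesis $g_iC_i\cap g_jC_j\subseteq GK$ together with closedness of $GK$ forces every transition point into $g_kC_{i_k}\cap GD\subseteq g_kE_{i_k}$, whence each subinterval lies in the corresponding core. This is correct and is arguably more self-contained, since it avoids the local-to-global convexity theorem (and the need for the $1$--neighborhood trick); the paper's route is shorter once that theorem is granted. Two small points: closedness of $GE_i=\bigcup_g gE_i$ follows from local finiteness of $\{gC_i\}$ (a locally finite union of closed sets is closed), not from properness of the action alone, which only handles $GD$; and your caveat about needing finitely many $G$--orbits among the $C_i$ for cocompactness of $E$ is a legitimate observation --- the paper asserts this step as ``clearly $G$--cocompact'' and implicitly assumes it, as is the case in all the intended applications.
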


\begin{proof}
Since $G$ is finitely generated, we can
assume that the stated compact set $K$ is large enough
that $GK$ is also connected.

Let $D$ be a compact subspace of $C$.
For each $i$, let $D_i=   C_i \cap (GK\cup GD)$,
and note that $D_i$ is $G_i$--cocompact.

Indeed if $A\subset C$ is compact then $C_i \cap GA$ is clearly
$G$--cocompact.  We will show that it is also $G_i$--cocompact.
Let $\set{g_j A}{j \in J}$ be a minimal family of $G$--translates of $A$ that covers
$C_i \cap GA$.
Consider the family of subspaces $\set{g_j^{-1} C_i}{j\in J}$.
This family is finite, since each of its elements is a subspace
intersecting the bounded set $A$.
If $g_j^{-1} C_i = g_k^{-1} C_i$ then $g_j g_k^{-1} \in G_i$.
In other words, $g_j$ and $g_k$ lie in the same right coset
$G_i g_j = G_i g_k$.
So the family $\set{g_j A}{j\in J}$ lies in finitely many $G_i$--orbits.
Let $A' = A_1 \cup \cdots \cup A_\ell$
be a union of representatives of these orbits.
Then $G_i A'$ contains $C_i \cap GA$.

By hypothesis, we can choose $E_i\subset C_i$
to be a $G_i$--invariant, $G_i$--cocompact convex subset
containing $\nbd{D_i}{1}$.
The union
\[
   E = GK\cup \bigcup_{i\in I,g\in G} gE_i
\]
is clearly $G$--cocompact.
Observe that $E$ is connected because $GK$ is connected, each $gE_i$ is connected, and each $gE_i$ intersects $GK$.
Furthermore $E$ is closed and
locally convex because $GK$ is in the interior of $E$
and each point of $E-GK$ lies in a unique $gC_i$
and hence has a convex open neighborhood in
$gC_i - GK$
not meeting any other $g'C_j$.

The convexity holds because it is a closed, connected, locally convex
subspace of a complete $\CAT(0)$ space.
\end{proof}

The following is obtained by combining
Proposition~\ref{prop:Tautology} with Theorem~\ref{thm:convex core property}.

\begin{cor}\label{cor:convex core property}
Suppose the finitely generated group $G$ acts on a wallspace $X$
with an isolated collection $\P$ of peripheries.
Suppose the $\stabilizer(P)$ action on $C(P)$ has the convex
core property for each periphery $P$.
Then the $G$ action on $C(X)$ has the convex core property.
\end{cor}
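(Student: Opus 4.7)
The plan is to apply Theorem~\ref{thm:convex core property} to $C(X)$, taking the distinguished convex subspaces to be representatives $\{C_i\}$ of the $G$--orbits of the family $\bigl\{ C_{_\heartsuit}(P) : P \in \P \bigr\}$, with $G_i := \stabilizer(C_i)$ equal to the stabilizer of the corresponding periphery. First I would invoke Proposition~\ref{prop:Tautology}: the isolation hypothesis furnishes a constant $m$ such that every cube of depth at least $m$ is represented in a unique periphery, and therefore produces a compact subcomplex $K$ together with
\[
   C(X) = GK \cup \bigcup_i G C_i
   \quad\text{and}\quad
   g C_i \cap g' C_j \subset GK \text{ unless } g C_i = g' C_j.
\]
Each $C_i$ is convex in the combinatorial (and hence the $\CAT(0)$) metric by Lemma~\ref{lem:ConvexSubcomplex} together with the ``shifted halfspace'' refinement that follows it; closedness is automatic for full subcomplexes, and $C(X)$ is a complete $\CAT(0)$ space by the Gromov--Moussong--Leary theorem cited in Section~\ref{sec:Cubulating}.

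Once these structural items are collected, Theorem~\ref{thm:convex core property} delivers the convex core property for $(G, C(X))$ directly from the hypothesis that each $\bigl(\stabilizer(P), C(P)\bigr)$, and hence each $(G_i, C_i)$, has the convex core property. Unwinding the argument: for a given compact $D \subset C(X)$, one sets $D_i := C_i \cap (GK \cup GD)$, which the proof of that theorem shows to be $G_i$--cocompact, enlarges $D_i$ to a $G_i$--invariant convex $G_i$--cocompact $E_i \subset C_i$ supplied by the hypothesis, and assembles
\[
   E := GK \;\cup\; \bigcup_{g \in G,\, i} g E_i.
\]
The subcomplex $E$ is $G$--cocompact by construction, connected because $GK$ is connected and each $gE_i$ meets it, closed and locally convex because $GK$ lies in its interior while each point of $E - GK$ has a convex neighborhood inside a unique $gC_i$, and therefore convex in the complete $\CAT(0)$ space $C(X)$.

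The principal obstacle is verifying the two hypotheses of Theorem~\ref{thm:convex core property} that the corollary does not state explicitly: that $G$ acts properly on $C(X)$, and that the family $\{gC_i\}$ is locally finite. These are implicit in the ambient framework: the convex core property for the peripheral actions of $\stabilizer(P)$ on $C(P)$ is only meaningful in a discrete (and hence essentially proper) setting, and once properness is in hand, local finiteness of $\{gC_i\}$ follows because every compact subset of $C(X)$ meets $GK$ in a locally finite set of cubes while the disjointness clause of Proposition~\ref{prop:Tautology} confines intersections of distinct translates $gC_i \cap g'C_j$ to $GK$. The delicate step will be confirming that the corollary is intended within the implicit properness framework of the preceding section, since without that additional input one could in principle have an accumulation of peripheral translates near a fixed compact set that would break the hypotheses of Theorem~\ref{thm:convex core property}.
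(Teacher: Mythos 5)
Your proposal is correct and follows exactly the route the paper takes: its entire proof of this corollary is the single sentence that it is ``obtained by combining Proposition~\ref{prop:Tautology} with Theorem~\ref{thm:convex core property},'' which is precisely your argument. You are also right that properness of the $G$--action on $C(X)$ and local finiteness of the family $\{gC_i\}$ are hypotheses of Theorem~\ref{thm:convex core property} that the corollary's statement leaves implicit; flagging them is appropriate, and they must indeed be supplied by the ambient framework (e.g.\ via Proposition~\ref{prop:ProperImpliesALot} or the standing assumptions of Section~\ref{sec:FinitenessRH}) rather than derived from the stated hypotheses alone.
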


In particular as a consequence of Theorem~\ref{thm:MainResult}
we have the following:

\begin{cor}
\label{cor:AbelianTruncation}
Suppose $G$ is hyperbolic relative to virtually abelian subgroups.
Let $H_1,\dots,H_r$ be a collection of quasi-isometrically embedded
subgroups with chosen $H_i$--walls
so that $G$ acts properly on the associated wallspace.
Then $G$ acts properly and cocompactly on a $\CAT(0)$ space.
\end{cor}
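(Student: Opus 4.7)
My plan is to combine the relative cocompactness statement of Theorem~\ref{thm:MainResult} with the convex core transfer principle of Corollary~\ref{cor:convex core property}, using the Flat Torus Theorem to supply the convex core property for each virtually abelian peripheral action.

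First I would verify the hypotheses of Theorem~\ref{thm:MainResult}: a quasi-isometrically embedded subgroup of a relatively hyperbolic group is relatively quasiconvex, and the wall stabilizers $\bar H_i$ are commensurable with the $H_i$ and therefore also relatively quasiconvex and finitely generated. Theorem~\ref{thm:MainResult} then produces the dual CAT(0) cube complex $C=C(G)$ and the decomposition
\[
    C \;=\; GK_* \ \cup \ \bigcup_{P \in \P} G\,C_*(P),
\]
with $gC_*(P)\cap g'C_*(P')\subset GK_*$ whenever $gP\neq g'P'$, where $\P$ runs over $G$--conjugates of $P_1,\dots,P_j$. The hypothesis that $G$ acts properly on the wallspace means that $G$ acts properly on $C$.

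Next I would show that $C$ (and hence every $C_*(P)$) is proper. Every subgroup of a virtually abelian group has bounded packing, so Theorem~\ref{thm:RelHypBoundedPacking} ensures that each wall stabilizer has bounded packing in $G$. Combined with Lemma~\ref{lem:TransverseToCloseSubgroups} and the finitely many $G$--orbits of walls, Theorem~\ref{thm:BoundedPacking} yields finite dimensionality of $C$, and a parallel argument handling osculating pairs (as in Theorem~\ref{thm:WallSeparationImpliesExcellent}) upgrades this to uniform local finiteness. As a convex subcomplex of a locally finite complete CAT(0) cube complex, each $C_*(P)$ is then a proper CAT(0) space.

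Because cube complex automorphisms act by semi-simple isometries, $P$ (virtually $\Z^n$) acts properly and semi-simply on the proper CAT(0) space $C_*(P)$; the Flat Torus Theorem produces a $P$--invariant, $P$--cocompact Euclidean flat $F\subset C_*(P)$. The example preceding Theorem~\ref{thm:convex core property} then shows that the closed neighborhoods $\nbd{F}{r}$ are convex, $P$--invariant, and $P$--cocompact, so every compact subset of $C_*(P)$ lies in such a neighborhood. Hence $(P,C_*(P))$ satisfies the convex core property, and Corollary~\ref{cor:convex core property} promotes this to the convex core property for $(G,C)$. Any sufficiently large compact set of $C$ thus lies in a closed, convex, $G$--cocompact subspace $E\subseteq C$, and since $G$ acts properly on $C$ it acts properly and cocompactly on the CAT(0) space $E$. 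The main obstacle is the passage to uniform local finiteness of $C$, which in turn depends critically on the bounded packing supplied by the virtually abelian peripheral hypothesis: without it, the Flat Torus argument cannot conclude that the neighborhoods of $F$ are $P$--cocompact.
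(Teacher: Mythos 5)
Your argument follows precisely the route the paper takes: Theorem~\ref{thm:MainResult} supplies the relatively cocompact decomposition with isolated peripheral subcomplexes, the example preceding Theorem~\ref{thm:convex core property} (via the Flat Torus Theorem) supplies the convex core property for each virtually abelian peripheral action, and Corollary~\ref{cor:convex core property} transfers it to $(G,C)$. The paper records nothing beyond this citation chain, so your discussion of finite dimensionality and properness of the peripheral complexes is filling in details the paper leaves implicit, and the bounded-packing step (subgroups of virtually abelian groups have bounded packing, hence Theorem~\ref{thm:RelHypBoundedPacking} applies) is exactly the right mechanism for finite dimensionality and semisimplicity.

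The one place where your justification outruns the results you cite is the passage to uniform local finiteness of $C$. Theorem~\ref{thm:BoundedPacking} yields uniform local finiteness only under the \emph{additional} hypothesis that there are finitely many $G$--orbits of pairs of osculating walls, and the route to that fact via Theorem~\ref{thm:WallSeparationImpliesExcellent} requires WallNbd-WallNbd and Ball-WallNbd Separation in each periphery $Y$ --- hypotheses that do not appear in Corollary~\ref{cor:AbelianTruncation}. You correctly isolate this as the delicate point: local finiteness of $C_*(P)$ (not just finite dimensionality) is genuinely needed so that $C_*(P)$ is a proper CAT(0) space and $\nbd{F}{r}$ is $P$--cocompact. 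But as written this step is asserted rather than derived from the corollary's stated hypotheses. To be fair, the paper's own one-line ``proof'' does not address this at all, so this is a gap you have inherited and made visible rather than one you have introduced.
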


We also obtain the following result, which
was claimed without proof  in \cite{WiseSmallCanCube04}:

\begin{cor}
Every finitely presented $B(6)$ group
acts properly and cocompactly on a $\CAT(0)$ space.
\end{cor}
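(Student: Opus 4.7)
The plan is to deduce this from Corollary~\ref{cor:AbelianTruncation}, by exhibiting the appropriate wallspace, peripheral structure, and quasi-isometric embedding data for a finitely presented $B(6)$ group $G$.

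First, I would invoke the cubulation of $B(6)$ groups from \cite{WiseSmallCanCube04}. Given a finitely presented $B(6)$ presentation, the standard $2$--complex $X$ with its piecewise Euclidean structure (one can take $2$--cells to be regular hexagons, so that $\widetilde{X}$ is a $\CAT(0)$ $2$--complex) carries a natural system of \emph{hypergraph} walls: these are the tracks obtained by cutting across each $2$--cell through midpoints of opposite edges, extended equivariantly. The $B(6)$ condition is precisely what guarantees that each hypergraph is an embedded $2$--sided tree in $\widetilde{X}$, and that the resulting collection $\W$ is a locally finite $G$--invariant geometric wallspace. Finitely many $G$--orbits of hypergraphs arise (one for each orbit of $1$--cells of $X$), their stabilizers $H_1,\dots,H_r$ act cocompactly on the corresponding hypergraphs, and the Linear Separation Property holds for $(\widetilde X,\W)$. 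Hence by Theorem~\ref{thm:LinearSeparationProper} the action of $G$ on the dual cube complex $C(\widetilde X,\W)$ is metrically proper.

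Next I would address relative hyperbolicity with virtually abelian peripherals. In a finitely presented $B(6)$ presentation only finitely many hexagonal $2$--cell types occur, and any non-hyperbolic behaviour is concentrated along ``flat strips'' of hexagons glued along their edges. Because the presentation is finite, the maximal isometric flat subspaces of $\widetilde X$ admit only finitely many $G$--orbits, and each stabilizer of such a maximal flat in a $B(6)$ group is virtually abelian (indeed virtually $\Z^2$ or virtually $\Z$). Collecting a set $\mathbb{P}=\{P_1,\dots,P_j\}$ of conjugacy representatives of these maximal virtually abelian subgroups, one verifies along the usual lines (by cutting $\widetriangle{X}$ along boundaries of $J$--neighbourhoods of flats and checking that the truncated space is hyperbolic and $G$--cocompact) that $(G,\mathbb{P})$ is relatively hyperbolic. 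This yields hypothesis (a) of Corollary~\ref{cor:AbelianTruncation}.

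Finally, the hypergraph stabilizers $H_i$ are quasi-isometrically embedded in $G$: each $H_i$ acts cocompactly on a convex (in the $\CAT(0)$ sense) tree $T_i \subset \widetriangle X$, and convex subspaces of a $\CAT(0)$ space are isometrically embedded, hence quasi-isometrically embedded in $\widetriangle X$ and so in $G$. Equivalently, using \cite{HruskaRelQC}, one checks that each $H_i$ is relatively quasiconvex and that $H_i \cap gP_jg^{-1}$ is quasi-isometrically embedded in $gP_jg^{-1}$ (which is automatic since the peripherals are virtually abelian). With all three ingredients in hand---a $G$--wallspace with proper dual action, relative hyperbolicity with virtually abelian peripherals, and quasi-isometrically embedded $H_i$--walls---Corollary~\ref{cor:AbelianTruncation} applies and produces a proper cocompact action of $G$ on a $\CAT(0)$ space.

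The main obstacle will be the second step: rigorously identifying the peripheral structure. While it is folklore that the ``non-hyperbolic part'' of a finitely presented $B(6)$ group is concentrated in virtually abelian flats, the technical verification of relative hyperbolicity requires identifying the maximal flat substructures of $\widetriangle X$ and showing that their stabilizers are almost malnormal and virtually abelian, which in turn rests on the finiteness of the presentation and the precise combinatorial constraints imposed by $B(6)$.
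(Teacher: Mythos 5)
Your proposal is correct and follows essentially the same route as the paper's own proof: combine the cubulation of $B(6)$ groups from \cite{WiseSmallCanCube04} with the fact (asserted there, as here, without detailed proof) that finitely presented $B(6)$ groups are hyperbolic relative to virtually abelian subgroups, and then invoke Corollary~\ref{cor:AbelianTruncation}. The paper also records an alternative phrasing via Theorem~\ref{thm:convex core property} applied directly to the cube complex $C = GK \cup \bigcup_i C(P_i)$ from \cite{WiseSmallCanCube04}, but your argument matches its primary one.
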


\begin{proof}
It can be proven directly that a $B(6)$ group $G$ is
hyperbolic relative to virtually abelian subgroups.
Thus the corollary follows by combining
Corollary~\ref{cor:AbelianTruncation} with the main result of
\cite{WiseSmallCanCube04}.

Alternatively, the main result of \cite{WiseSmallCanCube04} shows that
$G$ acts properly on a $\CAT(0)$ cube complex $C = GK \cup \bigcup_i C(P_i)$
satisfying the hypothesis of Theorem~\ref{thm:convex core property}.
Moreover each $G_i = \stabilizer(P_i)$ is virtually abelian.
Consequently $C$ has a $G$--cocompact convex core $\bar{C}$.
\end{proof}

\noindent
{\bf Acknowledgement:} We are grateful to Mark Hagen,
Fr\'ed\'eric Haglund, Piotr Przytycki, Hung Tran, and Wenyuan Yang for corrections
and especially grateful to the referees for many helpful corrections that improved this paper.

\bibliographystyle{alpha}
\bibliography{Axioms}

\newcommand{\etalchar}[1]{$^{#1}$}
\begin{thebibliography}{GMRS98}

\bibitem[AB06]{AlibegovicBestvina2006}
E.~Alibegovi\'{c} and M.~Bestvina.
\newblock Limit groups are $\textup{CAT}(0)$.
\newblock {\em J. London Math. Soc. \textup{(}2\textup{)}}, 74(1):259--272,
  2006.

\bibitem[AR90]{AitchisonRubinstein90}
I.R. Aitchison and J.H. Rubinstein.
\newblock An introduction to polyhedral metrics of nonpositive curvature on
  {$3$}--manifolds.
\newblock In S.K. Donaldson and C.B. Thomas, editors, {\em Geometry of
  low-dimensional manifolds: 2. {S}ymplectic manifolds and {J}ones--{W}itten
  theory \textup{(}{D}urham, 1989\textup{)}}, volume 151 of {\em London Math.
  Soc. Lecture Note Ser.}, pages 127--161. Cambridge Univ. Press, 1990.

\bibitem[BH93]{BrinkHowlett93}
B.~Brink and R.B. Howlett.
\newblock A finiteness property and an automatic structure for {C}oxeter
  groups.
\newblock {\em Math. Ann.}, 296(1):179--190, 1993.

\bibitem[BH99]{BridsonHaefliger}
M.R. Bridson and A.~Haefliger.
\newblock {\em Metric spaces of non-positive curvature}.
\newblock Springer-Verlag, Berlin, 1999.

\bibitem[BHW11]{BergeronHaglundWiseSimple}
N.~Bergeron, F.~Haglund, and D.T. Wise.
\newblock Hyperplane sections in arithmetic hyperbolic manifolds.
\newblock {\em J. Lond. Math. Soc. \textup{(}2\textup{)}}, 83(2):431--448,
  2011.

\bibitem[Bow12]{Bowditch12}
B.H. Bowditch.
\newblock Relatively hyperbolic groups.
\newblock {\em Internat. J. Algebra Comput.}, 22(3):1250016, 66, 2012.

\bibitem[BP]{BarrePichot2010}
S.~Barr{\'e} and M.~Pichot.
\newblock Removing chambers in {B}ruhat--{T}its buildings.
\newblock Preprint. arXiv:1003.4614 [math.MG].

\bibitem[BW12]{BergeronWiseBoundary}
N.~Bergeron and D.T. Wise.
\newblock A boundary criterion for cubulation.
\newblock {\em Amer. J. Math.}, 134(3):843--859, 2012.

\bibitem[CCJ{\etalchar{+}}01]{HaagerupBook01}
P.-A. Cherix, M.~Cowling, P.~Jolissaint, P.~Julg, and A.~Valette.
\newblock {\em Groups with the {H}aagerup property: {G}romov's
  a-{T}-menability}, volume 197 of {\em Progress in Mathematics}.
\newblock Birkh\"auser Verlag, Basel, 2001.

\bibitem[CN05]{ChatterjiNiblo04}
I.~Chatterji and G.~Niblo.
\newblock From wall spaces to {$\rm CAT(0)$} cube complexes.
\newblock {\em Internat. J. Algebra Comput.}, 15(5-6):875--885, 2005.

\bibitem[CP11]{CapracePrzytycki2011}
P.-E. Caprace and P.~Przytycki.
\newblock Bipolar {C}oxeter groups.
\newblock {\em J. Algebra}, 338:35--55, 2011.

\bibitem[CS11]{CapraceSageev2011}
P.-E. Caprace and M.~Sageev.
\newblock Rank rigidity for $\textup{CAT}(0)$ cube complexes.
\newblock {\em Geom. Funct. Anal.}, 21(4):851--891, 2011.

\bibitem[Dah03]{Dahmani03}
F.~Dahmani.
\newblock Combination of convergence groups.
\newblock {\em Geom. Topol.}, 7:933--963, 2003.

\bibitem[DS05]{DrutuSapir2005}
C.~Dru{\RomanianComma{t}}u and M.~Sapir.
\newblock Tree-graded spaces and asymptotic cones of groups.
\newblock {\em Topology}, 44(5):959--1058, 2005.
\newblock With an appendix by Denis Osin and Sapir.

\bibitem[Far03]{Farley2003}
D.S. Farley.
\newblock Finiteness and {$\rm CAT(0)$} properties of diagram groups.
\newblock {\em Topology}, 42(5):1065--1082, 2003.

\bibitem[Gau12]{Gautero12}
F.~Gautero.
\newblock A non-trivial example of a free-by-free group with the {H}aagerup
  property.
\newblock {\em Groups Geom. Dyn.}, 6(4):677--699, 2012.

\bibitem[Geo08]{GeogheganBook2008}
R.~Geoghegan.
\newblock {\em Topological methods in group theory}, volume 243 of {\em
  Graduate Texts in Mathematics}.
\newblock Springer, New York, 2008.

\bibitem[Ger97]{Gerasimov97}
V.N. Gerasimov.
\newblock Semi-splittings of groups and actions on cubings.
\newblock In Yu.G. Reshetnyak, L.A. Bokut{$'$}, S.K. Vodop{$'$}yanov, and I.A.
  Ta{\u\i}manov, editors, {\em Algebra, geometry, analysis and mathematical
  physics \textup{(}Russian\textup{)} \textup{(}Novosibirsk, 1996\textup{)}},
  pages 91--109. Izdat. Ross. Akad. Nauk Sib. Otd. Inst. Mat., 1997.
\newblock English translation published as ``Fixed-point-free actions on
  cubings.'' \textit{Siberian Adv. Math.}, 8(3):36--58, 1998.

\bibitem[GMRS98]{GMRS98}
R.~Gitik, M.~Mitra, E.~Rips, and M.~Sageev.
\newblock Widths of subgroups.
\newblock {\em Trans. Amer. Math. Soc.}, 350(1):321--329, 1998.

\bibitem[Gro87]{Gromov87}
M.~Gromov.
\newblock Hyperbolic groups.
\newblock In S.M. Gersten, editor, {\em Essays in group theory}, volume~8 of
  {\em Math. Sci. Res. Inst. Publ.}, pages 75--263. Springer, New York, 1987.

\bibitem[GS06]{GubaSapir2005}
V.S. Guba and M.V. Sapir.
\newblock Diagram groups and directed $2$--complexes: {H}omotopy and homology.
\newblock {\em J. Pure Appl. Algebra}, 205(1):1--47, 2006.

\bibitem[Gura]{GuralnikBoundaries}
D.~Guralnik.
\newblock Coarse decompositions of boundaries for $\textup{CAT}(0)$ groups.
\newblock Preprint. arXiv:math/0611006 [math.GR].

\bibitem[Gurb]{GuralnikLocalFiniteness}
D.~Guralnik.
\newblock Local finiteness for cubulations of $\textup{CAT}(0)$ groups.
\newblock Preprint. arXiv:math/0610950 [math.GR].

\bibitem[Hag]{HagenArboreal}
M.F. Hagen.
\newblock Weak hyperbolicity of cube complexes and quasi-arboreal groups.
\newblock To appear in \emph{J. Topology}. arXiv:1101.5191 [math.GR].

\bibitem[HP98]{HaglundPaulin98}
F.~Haglund and F.~Paulin.
\newblock Simplicit\'e de groupes d'automorphismes d'espaces \`a courbure
  n\'egative.
\newblock In I.~Rivin, C.~Rourke, and C.~Series, editors, {\em The Epstein
  birthday schrift}, volume~1 of {\em Geom. Topol. Monogr.}, pages 181--248.
  Geom. Topol., Coventry, 1998.

\bibitem[Hru10]{HruskaRelQC}
G.C. Hruska.
\newblock Relative hyperbolicity and relative quasiconvexity for countable
  groups.
\newblock {\em Algebr. Geom. Topol.}, 10(3):1807--1856, 2010.

\bibitem[HS92]{HassScott92}
J.~Hass and P.~Scott.
\newblock Homotopy equivalence and homeomorphism of {$3$}--manifolds.
\newblock {\em Topology}, 31(3):493--517, 1992.

\bibitem[HW]{HsuWiseCubulatingMalnormal}
T.~Hsu and D.T. Wise.
\newblock Cubulating malnormal amalgams.
\newblock pages 1--20.
\newblock Submitted.

\bibitem[HW08]{HaglundWiseSpecial}
F.~Haglund and D.T. Wise.
\newblock Special cube complexes.
\newblock {\em Geom. Funct. Anal.}, 17(5):1551--1620, 2008.

\bibitem[HW09]{HruskaWise09}
G.C. Hruska and D.T. Wise.
\newblock Packing subgroups in relatively hyperbolic groups.
\newblock {\em Geom. Topol.}, 13(4):1945--1988, 2009.

\bibitem[HW10a]{HaglundWiseCoxeter}
F.~Haglund and D.T. Wise.
\newblock Coxeter groups are virtually special.
\newblock {\em Adv. Math.}, 224(5):1890--1903, 2010.

\bibitem[HW10b]{HsuWiseCubulating}
T.~Hsu and D.T. Wise.
\newblock Cubulating graphs of free groups with cyclic edge groups.
\newblock {\em Amer. J. Math.}, 132(5):1153--1188, 2010.

\bibitem[JW13]{JanzenWise13}
D.~Janzen and D.T. Wise.
\newblock Cubulating rhombus groups.
\newblock {\em Groups Geom. Dyn.}, 7(2):419--442, 2013.

\bibitem[KK05]{KapovichKleiner2005}
M.~Kapovich and B.~Kleiner.
\newblock Coarse {A}lexander duality and duality groups.
\newblock {\em J. Differential Geom.}, 69(2):279--352, 2005.

\bibitem[KR89]{KrophollerRoller89}
P.H. Kropholler and M.A. Roller.
\newblock Relative ends and duality groups.
\newblock {\em J. Pure Appl. Algebra}, 61(2):197--210, 1989.

\bibitem[Lea13]{Leary_KanThurston}
I.J. Leary.
\newblock A metric {K}an-{T}hurston theorem.
\newblock {\em J. Topol.}, 6(1):251--284, 2013.

\bibitem[LW]{LauerWise07}
J.~Lauer and D.T. Wise.
\newblock Cubulating one-relator groups with torsion.
\newblock Submitted, 2011.

\bibitem[McC09]{McCammond2009}
J.P. McCammond.
\newblock Constructing non-positively curved spaces and groups.
\newblock In M.R. Bridson, P.H. Kropholler, and I.J. Leary, editors, {\em
  Geometric and cohomological methods in group theory \textup{(}Durham,
  2003\textup{)}}, volume 358 of {\em London Math. Soc. Lecture Note Ser.},
  pages 162--224. Cambridge Univ. Press, 2009.

\bibitem[Mou88]{Moussong88}
G.~Moussong.
\newblock {\em Hyperbolic {C}oxeter groups}.
\newblock PhD thesis, Ohio State University, 1988.

\bibitem[Nic04]{NicaCubulating04}
B.~Nica.
\newblock Cubulating spaces with walls.
\newblock {\em Algebr. Geom. Topol.}, 4:297--309, 2004.

\bibitem[NR97]{NibloReeves97}
G.~Niblo and L.~Reeves.
\newblock Groups acting on ${{\rm CAT}(0)}$ cube complexes.
\newblock {\em Geom. Topol.}, 1:1--7, 1997.

\bibitem[NR98a]{NibloReeves98}
G.A. Niblo and L.D. Reeves.
\newblock The geometry of cube complexes and the complexity of their
  fundamental groups.
\newblock {\em Topology}, 37(3):621--633, 1998.

\bibitem[NR98b]{NibloRoller98}
G.A. Niblo and M.A. Roller.
\newblock Groups acting on cubes and {K}azhdan's property ({T}).
\newblock {\em Proc. Amer. Math. Soc.}, 126(3):693--699, 1998.

\bibitem[NR03]{NibloReeves03}
G.A. Niblo and L.D. Reeves.
\newblock Coxeter groups act on {${\rm CAT}(0)$} cube complexes.
\newblock {\em J. Group Theory}, 6(3):399--413, 2003.

\bibitem[NS]{NevoSageevBoundary}
A.~Nevo and M.~Sageev.
\newblock The {P}oisson boundary of \textrm{CAT}$(0)$ cube complex groups.
\newblock Preprint. arXiv:1105.1675 [math.GT].

\bibitem[Osi06]{OsinBook06}
D.V. Osin.
\newblock Relatively hyperbolic groups: {I}ntrinsic geometry, algebraic
  properties, and algorithmic problems.
\newblock {\em Mem. Amer. Math. Soc.}, 179(843):vi+100, 2006.

\bibitem[OW11]{OllivierWiseDensity}
Y.~Ollivier and D.T. Wise.
\newblock Cubulating random groups at density less than {$1/6$}.
\newblock {\em Trans. Amer. Math. Soc.}, 363(9):4701--4733, 2011.

\bibitem[PW]{PrzytyckiWiseMixed}
P.~Przytycki and D.T. Wise.
\newblock Mixed $3$--manifolds are virtually special.
\newblock Submitted. arXiv:1205.6742 [math.GR].

\bibitem[Rol98]{RollerPocSets}
M.A. Roller.
\newblock Poc sets, median algebras and group actions: {A}n extended study of
  {D}unwoody's construction and {S}ageev's theorem.
\newblock Preprint, 1998.

\bibitem[RS99]{RubinsteinSageev99}
H.~Rubinstein and M.~Sageev.
\newblock Intersection patterns of essential surfaces in {$3$}--manifolds.
\newblock {\em Topology}, 38(6):1281--1291, 1999.

\bibitem[RW98]{RubinsteinWang98}
J.H. Rubinstein and S.~Wang.
\newblock $\pi\sb 1$--injective surfaces in graph manifolds.
\newblock {\em Comment. Math. Helv.}, 73(4):499--515, 1998.

\bibitem[Sag95]{Sageev95}
M.~Sageev.
\newblock Ends of group pairs and non-positively curved cube complexes.
\newblock {\em Proc. London Math. Soc. \textup{(}3\textup{)}}, 71(3):585--617,
  1995.

\bibitem[Sag97]{Sageev97}
M.~Sageev.
\newblock Codimension--$1$ subgroups and splittings of groups.
\newblock {\em J. Algebra}, 189(2):377--389, 1997.

\bibitem[Sco77]{Scott77}
P.~Scott.
\newblock Ends of pairs of groups.
\newblock {\em J. Pure Appl. Algebra}, 11(1--3):179--198, 1977.

\bibitem[Sco83]{Scott83}
P.~Scott.
\newblock There are no fake {S}eifert fibre spaces with infinite {$\pi
  \sb{1}$}.
\newblock {\em Ann. of Math. \textup{(}2\textup{)}}, 117(1):35--70, 1983.

\bibitem[Ser77]{Serre77}
J.-P. Serre.
\newblock {\em Arbres, amalgames, {${\rm SL}\sb{2}$}}, volume~46 of {\em
  Ast\'erisque}.
\newblock Soci\'et\'e Math\'ematique de France, Paris, 1977.
\newblock Written in collaboration with H. Bass.

\bibitem[SW05]{SageevWiseTits}
M.~Sageev and D.T. Wise.
\newblock The {T}its alternative for {${\rm CAT}(0)$} cubical complexes.
\newblock {\em Bull. London Math. Soc.}, 37(5):706--710, 2005.

\bibitem[Wis]{WiseIsraelHierarchy}
D.T. Wise.
\newblock The structure of groups with a quasiconvex hierarchy.
\newblock pages 1--189.
\newblock Submitted.

\bibitem[Wis04]{WiseSmallCanCube04}
D.T. Wise.
\newblock Cubulating small cancellation groups.
\newblock {\em Geom. Funct. Anal.}, 14(1):150--214, 2004.

\bibitem[Wis06]{WiseFigure8}
D.T. Wise.
\newblock Subgroup separability of the figure 8 knot group.
\newblock {\em Topology}, 45(3):421--463, 2006.

\bibitem[Wis09]{WiseStructureAnnouncement09}
D.T. Wise.
\newblock Research announcement: {T}he structure of groups with a quasiconvex
  hierarchy.
\newblock {\em Electron. Res. Announc. Amer. Math. Soc.}, 16:44--55, 2009.

\bibitem[Wis12]{WiseFreeCubulation}
D.T. Wise.
\newblock Recubulating free groups.
\newblock {\em Israel J. Math.}, 191:337--345, 2012.

\bibitem[Wri12]{WrightDimension}
N.~Wright.
\newblock Finite asymptotic dimension for $\textrm{CAT}(0)$ cube complexes.
\newblock {\em Geom. Topol.}, 16(1):527--554, 2012.

\end{thebibliography}
\end{document}